\setlist[enumerate]{itemsep=0pt}
\def\R{\mathbb{R}}
\def\B{\mathbb{B}}
\def\N{\mathbb{N}}
\def\Z{\mathbb{Z}}
\def\C{\mathbb{C}}
\newcommand{\ben}{\begin{enumerate}}
\newcommand{\bit}{\begin{itemize}}
\newcommand{\een}{\end{enumerate}}
\newcommand{\eit}{\end{itemize}}
\newcommand{\ed}{\end{document}}
\def\cA{\mathcal{A}}
\def\cU{\mathcal{U}}
\def\cP{\mathcal{P}}
\def\cD{\mathcal{D}}
\def\cB{\mathcal{B}}
\def\cW{\mathcal{W}}
\def\cV{\mathcal{V}}
\def\cJ{\mathcal{J}}
\def\cN{\mathcal{N}}
\def\cQ{\mathcal{Q}}
\let\hat=\widehat
\let\tilde=\widetilde
\let\landa=\lambda
\let\alfa=\alpha
\let\parc=\partial
\def\ep{\varepsilon}
\def\vart{\vartheta}
\def\landa{\lambda}
\def\flecha{\rightarrow}
\def\esiz{\langle}
\def\esde{\rangle}
\def\cte.{\mathop{\rm cte.}\nolimits}
\def\cosh{\mathop{\rm cosh }\nolimits}
\def\tanh{\mathop{\rm tanh }\nolimits}
\def\N{\mathbb{N}}
\def\B{\mathbb{B}}
\def\R{\mathbb{R}}
\def\Z{\mathbb{Z}}
\def\C{\mathbb{C}}
\def\H{\mathbb{H}}
\def\S{\mathbb{S}}
\titleformat{\subsection}[runin]
{\bfseries} {\thesubsection{.}}{0.15cm}{}[.]
\titleformat{\subsubsection}[runin]
{\em}{\thesubsubsection{.}}{0.15cm}{}[.]
\newtheorem{theorem}{Theorem}[section]
\newtheorem{lemma}[theorem]{Lemma}
\newtheorem{proposition}[theorem]{Proposition}
\newtheorem{remark}[theorem]{Remark}
\newtheorem{corollary}[theorem]{Corollary}
\newtheorem{definition}[theorem]{Definition}
\newtheorem{conjecture}[theorem]{Conjecture}
\numberwithin{equation}{section}
\numberwithin{figure}{section}
\begin{document}
\fancyhead[LO]{On the classification of Serrin planar domains}
\fancyhead[RE]{Alberto Cerezo, Isabel Fernández, Pablo Mira}
\fancyhead[RO,LE]{\thepage}

\thispagestyle{empty}

\begin{center}
{\bf \LARGE On the classification of Serrin planar domains}

\vspace*{5mm}

\hspace{0.2cm} {\Large Alberto Cerezo, Isabel Fernández and Pablo Mira}
\end{center}

%



\footnote[0]{
\noindent \emph{Mathematics Subject Classification}: 35N25, 35B32, 53A10. \\ \mbox{} \hspace{0.25cm} \emph{Keywords}: Serrin problem, torsion equation, overdetermined elliptic problem, mKdV potentials, global bifurcation, capillary curves, elliptic functions}

\vspace*{-7mm}

\begin{quote}
{\small
\noindent {\bf Abstract}\hspace*{0.1cm}
We show that all smooth ring domains $\Omega\subset \R^2$ that admit a solution to Serrin's classical problem $\Delta u+2=0$ with locally constant overdetermined boundary conditions along $\parc \Omega$ can be described as algebro-geometric potentials of the mKdV hierarchy. The same result holds for periodic unbounded domains with two boundary components. In particular, any such domain is determined by suitable holomorphic data in some algebraic curve. As a consequence, the space of all Serrin ring domains, or periodic Serrin bands, can be ordered into a sequence of finite-dimensional complexity levels. By studying the first non-trivial level, given by elliptic functions, we construct: $(i)$ a global $1$-parameter family of periodic solutions to Serrin's problem that interpolates between a flat band and a chain of disks along an axis, following an unduloid pattern, and $(ii)$ for any $n>1$, a two-dimensional moduli space ${\bf T}_n$ of non-radial Serrin ring domains with a dihedral symmetry group of order $2n$. This moduli space ${\bf T}_n$ is geometrically a triangle, and has radial bands on one side of ${\bf T}_n$, and a necklace of $n$ pairwise tangent disks distributed along the unit circle at its opposite vertex in ${\bf T}_n$. 

\vspace*{0.1cm}

}
\end{quote}

\begin{spacing}{0.1}
\tableofcontents
\end{spacing}

\section{Introduction and presentation of the main results}

\subsection{Statement of the problem}\label{sec:introintro}
A fundamental result in the theory of overdetermined elliptic problems is Serrin's theorem \cite{Se}: if the problem 
\begin{equation}\label{serrinprob}\def\arraystretch{1.8}\left\{\begin{array}{lll} \Delta u +2=0 & \text{ in } & \Omega, \\
u=a, \hspace{0.5cm} \displaystyle \frac{\parc u}{\parc \nu} = b  & \text{ on } & \parc \Omega, 
\end{array} \right.
\end{equation} 
can be solved on a smooth bounded domain $\Omega\subset \R^n$, then $\Omega$ is a ball and $u$ is radial. Here, $a,b\in \R$ are constants, and $\nu$ is the exterior unit normal of $\parc \Omega$. This result is the overdetermined version of Alexandrov's soap bubble theorem \cite{A1}, according to which compact embedded constant mean curvature hypersurfaces in $\R^{n+1}$ are spheres.

Serrin's theorem has generated, among others, two fruitful lines of research. One of them is its extension to more general equations and boundary conditions, primarily based on applications of the maximum principle via moving planes or adequate $P$-functions, as in Weinberger's approach \cite{Wn} to \eqref{serrinprob}. We will not discuss this extense line here. The other one is the construction of \emph{exceptional domains}, i.e., non-symmetric domains $\Omega\subset \R^n$ supporting solutions to elliptic equations $\Delta u +f(u)=0$ with constant overdetermined boundary conditions along $\parc \Omega$. This approach is typically done via bifurcation or desingularization techniques starting from a family of simple examples. Without any attempt at completeness, we mention \cite{ABM,CF,DZ,DPW,EFRS,FMW1,FMW2,HHP,KS,LWW,RRS2,Ru,RSW,Sic,SS,T,Wh} as some relevant existence results (see also the survey \cite{Sic2}).

All these ideas merge beautifully in the study of two well-known overdetermined problems for the original torsion equation $\Delta u+2=0$ in $\R^2$. Both seek to understand the scope of Serrin's original theorem, and have been studied for some time. These problems, which we describe below, appear naturally in fluid dynamics and elasticity theory, and so they have interesting physical applications \cite{ABM,NT,Se,Sir2}.

\begin{enumerate}
\item[{\bf (I)}] Let $\Omega\subset \R^2$ be a smooth, bounded domain with boundary $\parc \Omega=\cup_{j=1}^n \parc_j\Omega$, $n\geq 2$, where 
\begin{equation}\label{overeq00}\def\arraystretch{1.8}\left\{\begin{array}{lll} \Delta u +2=0 & \text{ in } & \Omega, \\
u=a_j, \hspace{0.5cm} \displaystyle \frac{\parc u}{\parc \nu} = b_j  & \text{ on } & \parc_j \Omega, 
\end{array} \right.
\end{equation}
can be solved for $a_j,b_j\in \R$, $j=1,\dots ,n$. \emph{Is then $\Omega$ a radial annulus?}
\end{enumerate}
\begin{enumerate}
\item[{\bf (II)}] Let $\Omega\subset \R^2$ be a smooth unbounded domain where \eqref{serrinprob} can be solved. \emph{Is then $\Omega$ a flat band, i.e., is $\parc \Omega$ formed by two parallel lines?}
\end{enumerate}

Starting with the work by Payne and Philippin in 1991 \cite{PP}, several authors have provided affirmative answers to these problems under additional hypotheses, see e.g. \cite{ABM,Bo,KSV,Rei,RS,Sir,WGS}. However, the answer to both {\bf (I)} and {\bf (II)} is negative, even in their simplest topological versions (ring domains and bands, respectively). 
Specifically, first Kamburov and Sciaraffia \cite{KS} for $b_1=b_2$, and then Agostiniani, Borghini and Mazzieri \cite{ABM} for $a_1=a_2$, have constructed non-radial ring-type solutions to {\bf (I)}, by bifurcation from adequate radial annuli. Regarding problem {\bf (II)}, Fall, Minlend and Weth \cite{FMW1} constructed non-flat periodic solutions, bifurcating from flat bands. And in \cite{DDMW}, Dávila, Del Pino, Musso and Wheeler constructed new periodic band-type solutions to {\bf (II)}, this time by desingularizing a chain of disks along the $x_1$-axis. 

These results justify the following definition:
\begin{definition}\label{def:serrin}
A smooth bounded doubly connected domain $\Omega\subset \R^2$ where problem {\bf (I)} can be solved will be called a \emph{Serrin ring domain}.

A smooth (unbounded) domain $\Omega\subset \R^2$ with two boundary components where problem {\bf (II)} can be solved will be called a \emph{Serrin band}. If $\Omega$ is invariant by a non-zero translation in $\R^2$, we will say that $\Omega$ is a \emph{periodic Serrin band}.
\end{definition}

\subsection{Summary of our main results}

The examples in \cite{ABM,DDMW,FMW1,KS} show that the space of Serrin ring domains and periodic bands is very large, and so their classification seems challenging. In this paper we present three global structure theorems for such domains. Each of them is, up to our knowledge, the first of its nature in the context of Serrin-type overdetermined problems.

\begin{enumerate}
\item
We construct a real analytic family $\{\Omega_\tau : \tau\in (0,1)\}$ of periodic Serrin bands that converge to a radial band as $\tau\to 1$, and to a chain of tangent disks along an axis as $\tau\to 0$. This family presents a Delaunay pattern, and its existence proves, in particular, that the bifurcation branches of the Serrin bands of Fall, Minlend and Weth \cite{FMW1} persist until reaching a singular configuration with the properties of the domains by Dávila, Del Pino, Musso and Wheeler \cite{DDMW}. See Figure \ref{figbandas}.
\item
We construct, for any $n\geq 2$, a real-analytic $2$-parameter family of Serrin ring domains $\Omega(s,\tau)$ with a dihedral symmetry group $D_n$, and whose associated moduli space is a triangle ${\bf T}_n$. In this triangle, one side corresponds to radial annuli, while its opposite vertex represents a \emph{necklace} of $n$ pairwise tangent disks along the unit circle $\S^1$ (Theorem \ref{th:main}). See Figures \ref{fig1:annuli} and \ref{fig2:annuli}. In this moduli space there exist two small curves $\Upsilon_1$ (resp. $\Upsilon_2$) emanating from two specific points on the \emph{radial} side of ${\bf T}_n$, and along which $b_1=b_2$ (resp. $a_1=a_2$) holds in \eqref{overeq00}. The examples along these two curves correspond to the ring domains obtained in \cite{KS,ABM}.
\item
We prove that any Serrin ring domain, as well any periodic Serrin band, can be described in terms of a family of well-known algebraic objects from integrable systems theory, called \emph{algebro-geometric potentials} of the modified Korteweg-de Vries (mKdV) hierarchy  (Theorem \ref{th:kdvintro}). 
In particular, this shows that the space of all such Serrin domains is ordered into a sequence of complexity levels, labeled by the spectral genus $\mathfrak{m}\in \N$ of its associated mKdV potential. In this algebraic description, radial annuli and flat bands describe the level $\mathfrak{m}=0$, and the moduli spaces of items (1), (2) above belong to level $\mathfrak{m}=1$, determined by elliptic functions.
\end{enumerate}

To prove these results, we develop a new framework for the description of Serrin domains, based on the notion of \emph{capillary curves} associated to them. This approach has a geometric nature and allows, in many cases, an explicit control of the resulting domains.

We state our contributions in detail below, and indicate how they compare with previous results regarding Serrin-type problems and also in connection with some fundamental theorems of constant mean curvature surface theory.

\subsection{The Serrin problem on ring domains}\label{intro:rings}
Our main existence result for Serrin ring domains is Theorem \ref{th:main} below.

\begin{theorem}\label{th:main}
Fix $n\in \N$, $n\geq 2$. Then, there exist continuous functions $$\mathfrak{h}_0(\tau)<\mathfrak{h}_1(\tau):(0,1)\flecha (-\8,0)$$ with $$\lim_{\tau\to 0} \, (\mathfrak{h}_0(\tau),\mathfrak{h}_1(\tau))=(-\pi,-\pi),\hspace{0.5cm} \lim_{\tau\to 1} \,  (\mathfrak{h}_0(\tau),\mathfrak{h}_1(\tau))=(-\8, 0),$$ such that, if we denote 
\begin{equation}\label{def:wo}
\cW_0:=\{(s,\tau): \tau \in (0,1), \mathfrak{h}_0(\tau)<s<\mathfrak{h}_1(\tau)\},
\end{equation}
then:
\begin{enumerate}\itemsep=0pt
\item
For any $(s,\tau)\in \cW_0$ there exists a non-radial Serrin ring domain $\Omega_{(s,\tau)}$ with a dihedral symmetry group of order $2n$ centered at the origin. The dependence of $\Omega_{(s,\tau)}$ on the parameters $(s,\tau)\in \cW_0$ is real analytic.
\item
If $(s,\tau)\neq (s',\tau')$, then $\Omega_{(s,\tau)}$ and $\Omega_{(s',\tau')}$ do not differ by a similarity of $\R^2$.
\item
If $(s,\tau)\to (\mathfrak{h}_0(\tau_0),\tau_0)$ for some $\tau_0\in (0,1)$, then the exterior component of $\parc \Omega_{(s,\tau)}$ converges to a regular, real analytic closed curve that thas $n$ points of tangential self-intersection.
\item
If $(s,\tau)\to (\mathfrak{h}_1(\tau_0),\tau_0)$ for some $\tau_0\in (0,1)$, then the interior component of $\parc \Omega_{(s,\tau)}$ has the property of item (3) above.
\item
If $(s,\tau)\to (\hat{s},1)$ for some $\hat{s}<0$, then $\Omega_{(s,\tau)}$ converges analytically to a radial annular domain $\Omega_{(\hat{s},1)}$.
\item
As $(s,\tau)\to (-\pi,0)$, the Serrin ring domains $\Omega_{(s,\tau)}$ converge to a singular necklace of $n$ pairwise tangent disks of the same radius along the unit circle $\S^1$.
\item
The domains $\Omega_{(s,\tau)}$ are described explicitly in terms of elliptic functions.
\end{enumerate}
\end{theorem}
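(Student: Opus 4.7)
The plan is to realize the moduli $\cW_0$ as a region of the period data of the genus-one mKdV potentials provided by Theorem \ref{th:kdvintro}, using the capillary curve framework to turn these potentials into the boundary components of the desired ring domains. At spectral genus one, the signed curvature $\kappa(\sigma)$ of a capillary component, as a function of arclength, satisfies a second-order ODE whose first integral is a cubic polynomial in $\kappa^2$; the bounded non-constant solutions are Jacobi elliptic functions, whose modulus is a function of $\tau\in(0,1)$, while $s$ records the remaining phase freedom relative to the symmetry axis. Integrating the Frenet equations with this $\kappa$ yields a planar arc $\gamma_{s,\tau}$, and the overdetermined Serrin condition on the resulting boundary is already encoded by the capillary framework: on each component the normal derivative of $u$ is related algebraically to $\kappa$ through the first integral, so constancy of $\kappa$ along the component (which is automatic since $\kappa$ is a function of arclength alone) forces constancy of $\partial u/\partial\nu$.

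I would then organize the construction of $\Omega_{(s,\tau)}$ around three conditions. First, the closing condition $\Theta(s,\tau)=2\pi/n$, where $\Theta$ is the total turning of the tangent of $\gamma_{s,\tau}$ over one period of $\kappa$ (expressible via complete elliptic integrals of the first, second, and third kinds), pins down the $n$-fold rotational symmetry; imposing it while retaining the phase freedom $s$ cuts out a real-analytic two-dimensional region in parameter space. Second, the outer and inner boundary components are taken from the same elliptic potential, shifted by a half-period in an auxiliary variable, so that their concatenation bounds a doubly connected domain invariant under the full dihedral group of order $2n$; this is the definition of $\Omega_{(s,\tau)}$. Third, the real-analytic dependence on $(s,\tau)$ asserted in (1) is inherited from the real-analytic dependence of Jacobi elliptic functions on modulus and phase, and the non-similarity claim in (2) follows from the fact that the scale-invariant ratios of the four overdetermined constants $(a_j,b_j)$ on the two boundary components recover $(s,\tau)$ uniquely.

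The hard part, and where I expect the bulk of the technical work to sit, is the degeneration analysis that produces items (3)--(6) and determines the two curves $s=\mathfrak{h}_0(\tau)$ and $s=\mathfrak{h}_1(\tau)$. For fixed $\tau\in(0,1)$, as $s\to \mathfrak{h}_0(\tau)$ (resp. $s\to \mathfrak{h}_1(\tau)$) the cubic first integral develops a double root, forcing the outer (resp. inner) capillary arc to close onto itself before completing a fundamental piece; in the $D_n$-symmetric picture this produces the $n$ tangential self-intersections of items (3)--(4), and this mechanism defines $\mathfrak{h}_0,\mathfrak{h}_1$ implicitly through the period condition. As $\tau\to 1$ the Jacobi functions degenerate trigonometrically, $\kappa$ tends to a constant, and the capillary arcs become circular arcs, so $\Omega_{(s,\tau)}$ converges analytically to a radial annulus, which is item (5). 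The most delicate case is the limit $\tau\to 0$: here the elliptic profile degenerates into a soliton regime in which $\kappa$ concentrates on a scale large compared to one period, the fundamental arc limits to a full circle of radius one, the $n$ consecutive copies align along $\S^1$ and become pairwise tangent, and the two boundary components of $\Omega_{(s,\tau)}$ merge into the singular necklace of item (6); controlling this limit uniformly requires tracking both boundary components simultaneously in terms of the underlying mKdV spectral data. Finally, item (7) is tautological from the construction, since $\gamma_{s,\tau}$ is obtained by quadrature of Jacobi elliptic functions in $(s,\tau)$.
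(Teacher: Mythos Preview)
Your outline has the right flavor but contains several genuine gaps that would prevent the argument from going through.

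\textbf{The Serrin condition is not automatic.} Your claim that ``constancy of $\kappa$ along the component (which is automatic since $\kappa$ is a function of arclength alone) forces constancy of $\partial u/\partial\nu$'' is where the argument breaks. The curvature $\kappa$ is \emph{not} constant along the boundary components; it oscillates as a Jacobi elliptic function of arclength, so this sentence is self-contradictory. More fundamentally, a capillary curve for $u$ only satisfies $u=P$ and $\|\nabla(u-P)\|={\rm const}$ along it for some rotational paraboloid $P$; to get the Serrin data $u={\rm const}$ and $\partial u/\partial\nu={\rm const}$ you need $P$ to be a horizontal plane, i.e.\ the paraboloid coefficient $c$ must vanish. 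In the paper this forces a specific algebraic constraint: the two boundary values $s<0<s^*$ must satisfy $\beta(s)/\alpha(s)=\beta(s^*)/\alpha(s^*)$, and then the Hopf differential $\mathfrak{q}$ is fixed by $2\mathfrak{q}=\alpha(s)/\beta(s)$ so that $c(s)=c(s^*)=0$. This pairing of the two boundary components is not a half-period shift, and one must still construct the actual solution $v(x,y)=a(x)+c(x)\sigma(x,y)$ on the whole annulus and verify it is smooth across the singularity of $a,c$ at $x=0$; this is the content of the paper's Theorem~5.3 and is entirely missing from your sketch.

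\textbf{The mechanism for $\mathfrak{h}_0,\mathfrak{h}_1$ is misidentified.} You say these curves arise when ``the cubic first integral develops a double root,'' but in the paper the double-root locus is precisely $\tau=1$, the radial limit, not the lateral sides of the triangle. The functions $\mathfrak{h}_0(\tau),\mathfrak{h}_1(\tau)$ instead record the first value of $s$ (resp.\ $s^*$) at which the closed curve $y\mapsto g(s+iy)$ loses embeddedness. This is established in Section~6 by a nodal-set analysis of the harmonic functions ${\rm Im}(e^{-i\theta}g)$ on a rectangle, combined with the Schwarz-reflection symmetry $g(-x+iy)=1/\overline{g(x+iy)}$ across $\mathbb{S}^1$; the proof that $\mathfrak{h}_0(\tau)<\mathfrak{h}_1(\tau)$ requires a separate continuity argument tracking where the curves fail to be radial graphs. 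None of this is visible from the elliptic first integral for $\kappa$ alone.

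\textbf{Logical order.} You propose to invoke Theorem~\ref{th:kdvintro} to produce the genus-one potentials, but that theorem is a classification statement (every Serrin ring domain is algebro-geometric); it does not by itself manufacture examples. The paper works in the opposite direction: it builds the developing maps $g(z)$ directly from the ODE system $\alpha''=\delta\alpha-2\alpha^2\beta$, $\beta''=\delta\beta-2\alpha\beta^2$ with carefully chosen initial data, proves the period condition ${\rm Per}(\eta,\tau)=1/n$ cuts out a curve $\eta=\eta_n(\tau)$, and only afterwards interprets the result in terms of Weierstrass $\wp$-functions.
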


We should clarify that, when $n=2$, the limit necklace has a different structure. Each of the two ``disks'' is actually a half-plane on one side of the $x_2$-axis, and the singular intersection point where the necks are placed are, in this case, the north and south poles of $\S^1$. This behavior can be intuited in Figure \ref{fig1:annuli}, left.

\begin{figure}[h]
  \centering
  \includegraphics[width=0.333\textwidth]{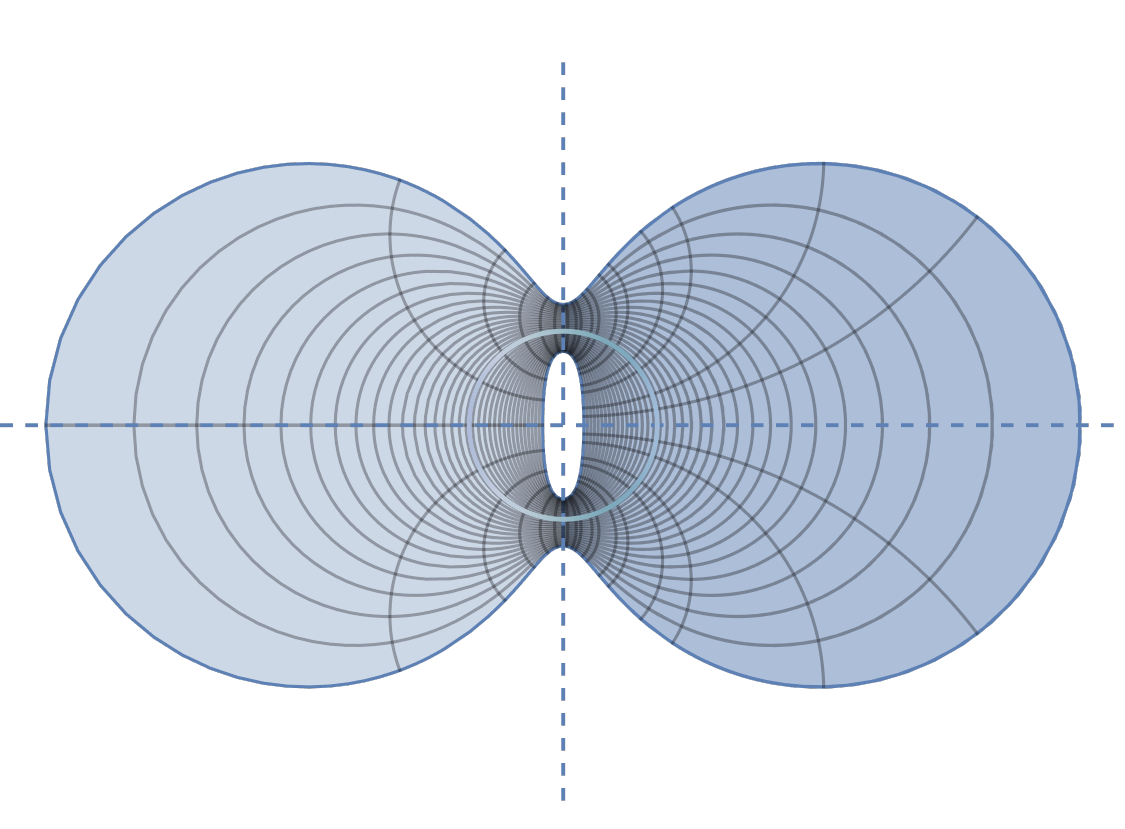}  \hspace{0.5cm} \includegraphics[width=0.29\textwidth]{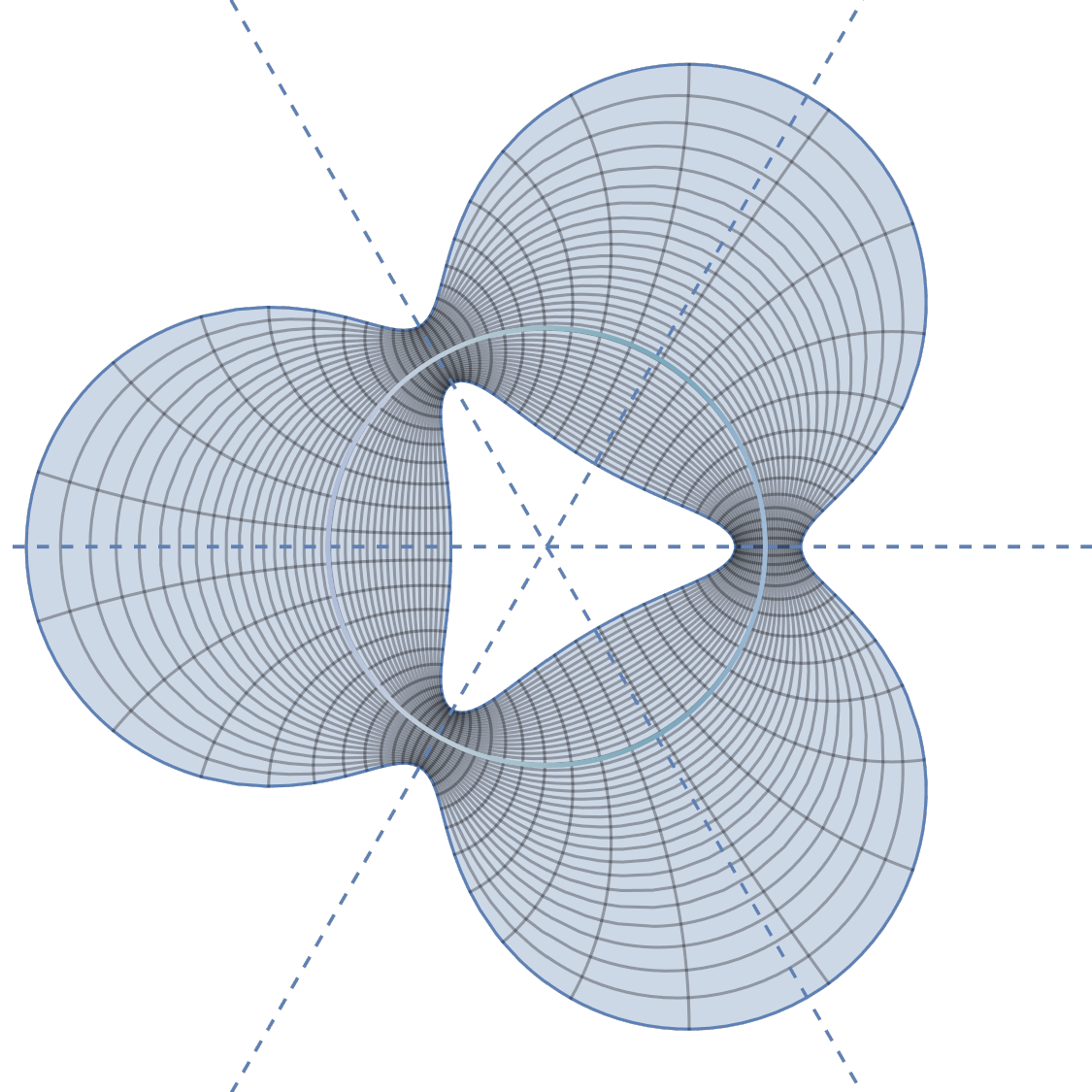}  \hspace{0.5cm}\includegraphics[width=0.29\textwidth]{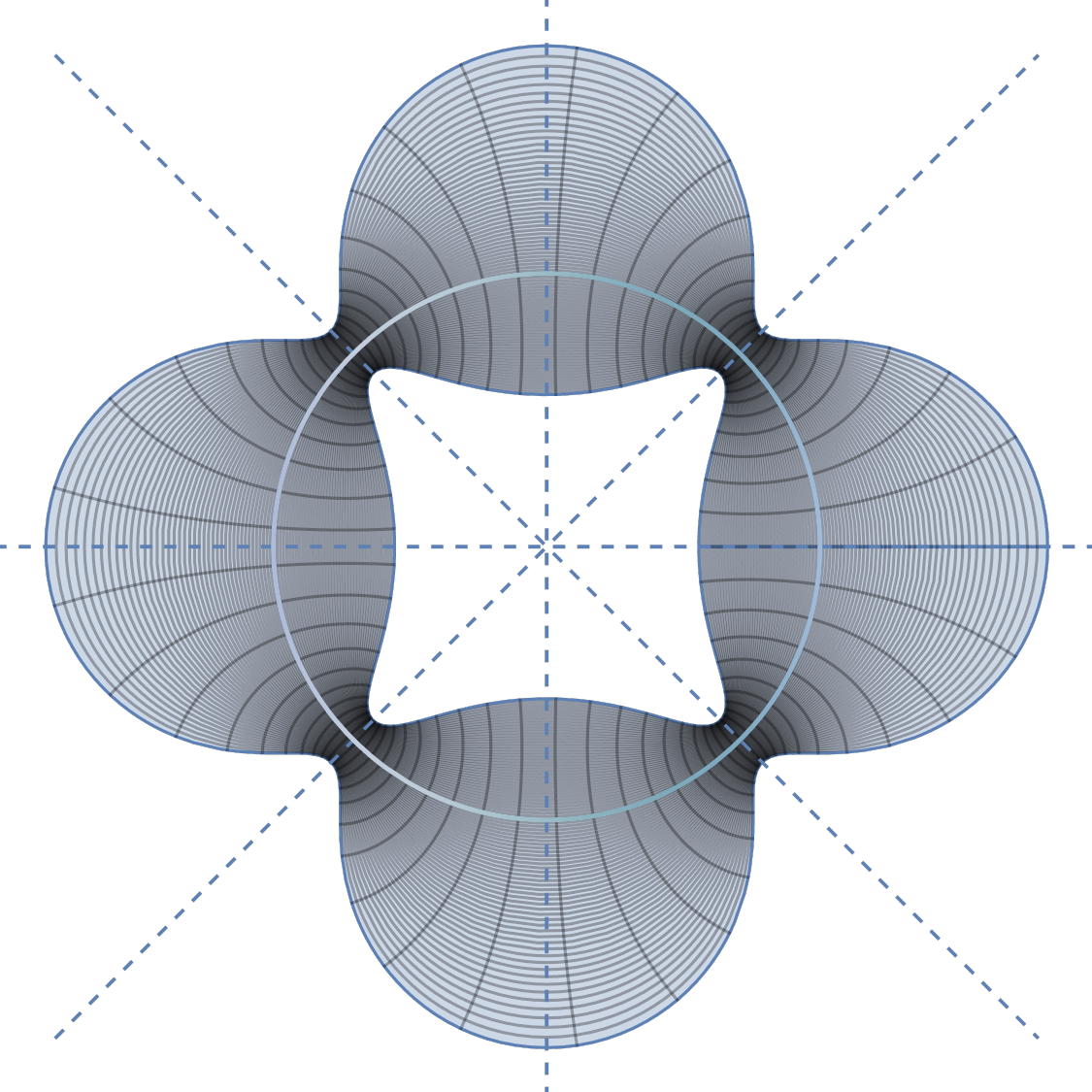}  
\caption{Serrin ring domains for $n=2$, $n=3$ and $n=4$ at an intermediate point of their respective moduli spaces. In all figures parametrizations are conformal, and explicit in terms of elliptic functions. The unit circle is always a parameter curve contained in the domain.\label{fig1:annuli}} 
\end{figure}

\begin{figure}[h]
  \centering
  \includegraphics[width=0.3\textwidth]{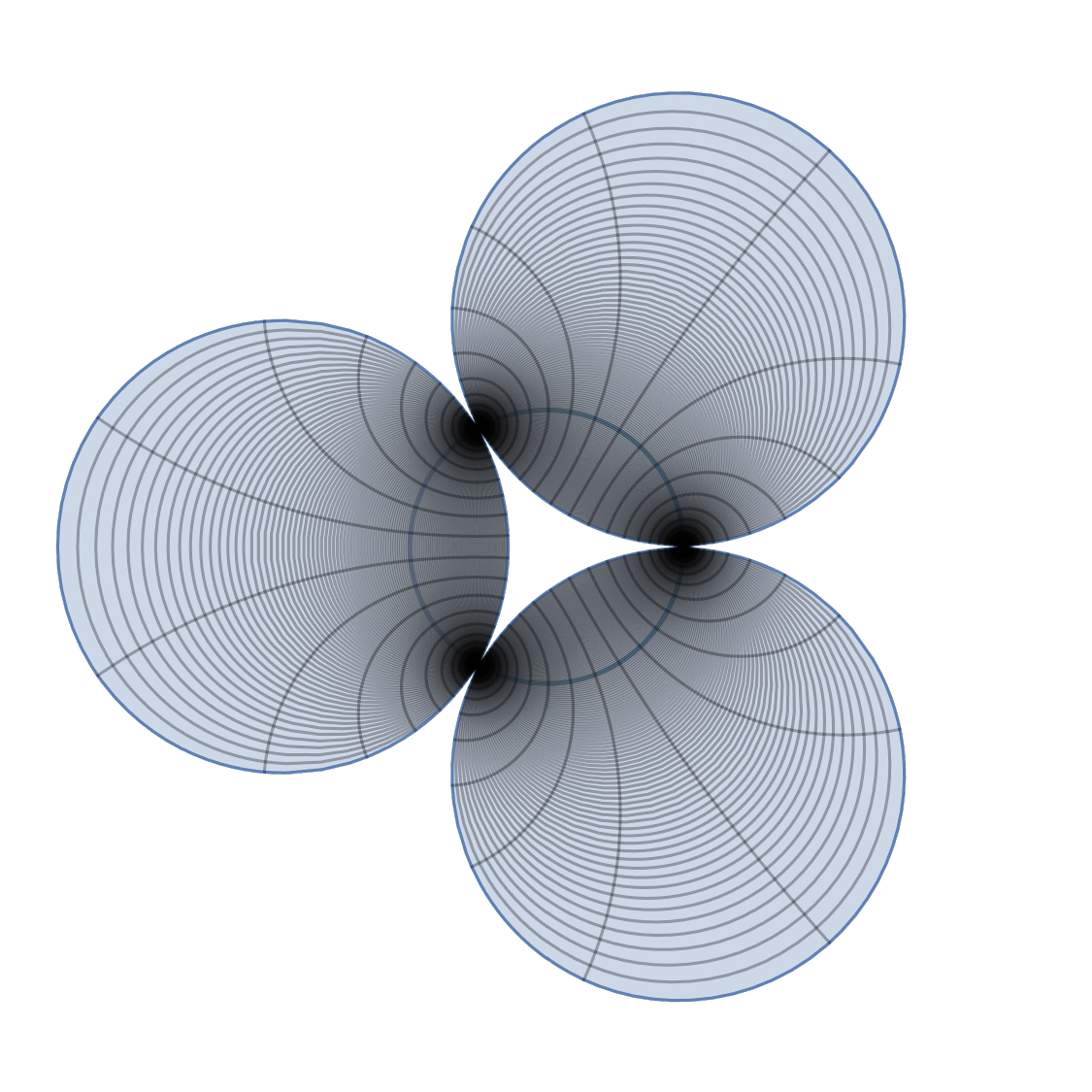}  \hspace{0.32cm} \includegraphics[width=0.3\textwidth]{Per3-12.pdf}  \hspace{0.5cm}\includegraphics[width=0.3\textwidth]{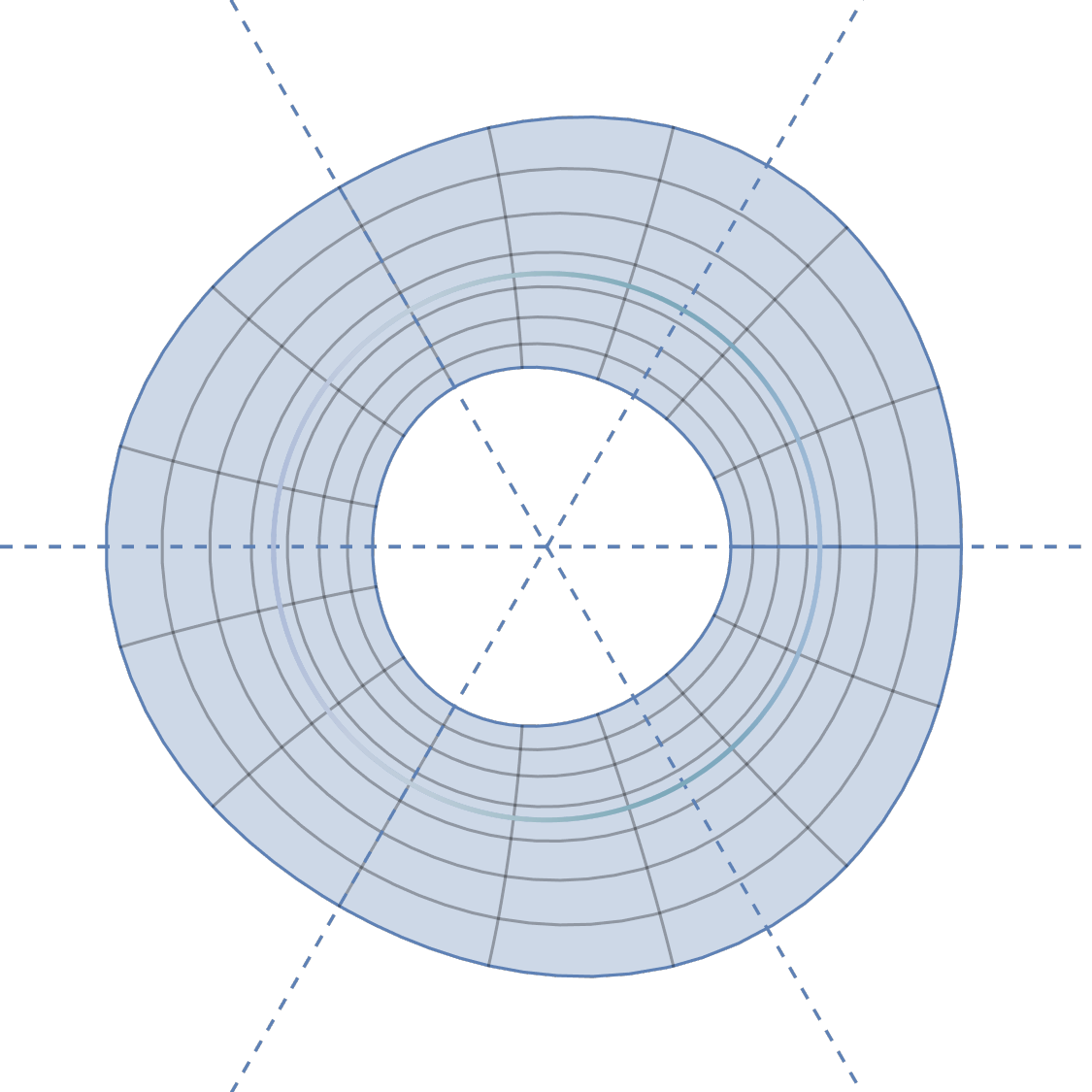}  
  \caption{Three snapshots of Serrin ring domains for $n=3$. Left: $\tau$ close to zero. Middle: $\tau$ at an intermediate value. Right: $\tau$ close to $1$.\label{fig2:annuli}} 
  \end{figure}

The moduli space ${\bf T}_n$ of Serrin ring domains with dihedral symmetry $D_n$, parametrized by $\cW_0$ in \eqref{def:wo}, is geometrically a triangle. The side ${\bf L}_1$ of ${\bf T}_n$ corresponding to $\tau=1$ in $\cW_0$ gives radial annuli. The opposite vertex in ${\bf T}_n$ is given by $(s,\tau)=(-\pi,0)$, and describes a disk $n$-necklace along $\S^1$. The side ${\bf L}_2$ (resp. ${\bf L}_3$) corresponding to $s=\mathfrak{h}_0(\tau)$ (resp. $s=\mathfrak{h}_1(\tau)$) describes domains where the exterior (resp. interior) boundary curve of $\parc \Omega$ self-intersects, and so $\Omega$ stops being a (smooth) Serrin ring domain. In both cases, these self-intersection points are placed on the set of symmetry lines of $\parc \Omega$. See Figure \ref{pierdeembe}.

%
%

By construction, all ring domains $\Omega_{(s,\tau)}$ in the triangular moduli space ${\bf T}_n$ solve \eqref{overeq00} for values $a_j,b_j\in \R$, $j=1,2$ that depend on $(s,\tau)$ and $n$; that is, $a_j,b_j$ vary as we move through ${\bf T}_n$. Now, up to dilations in the domain $\Omega$ and addition of constants to the solution $u:\Omega\flecha \R$, two of these four values can be fixed within each similarity class of Serrin ring domains; e.g., we can take $a_1=0, b_1=1$. Thus, the prescription of an analytic relation $\Phi(a_2,b_2)=0$ with $\Phi\not\equiv (0,0)$ between the remaining values $a_2(s,\tau)$ and $b_2(s,\tau)$ will give a union of real analytic arcs inside ${\bf T}_n$, as long as this set is not empty. We will show at the end of Section \ref{sec:mainth} that there exists a real analytic curve $\Upsilon_1\subset {\bf T}_n$ starting at some specific point on the \emph{radial side} $\tau=1$ of ${\bf T}_n$, and along which $b_1=b_2$ holds. This curve $\Upsilon_1$ can be seen as a $1$-dimensional bifurcation of Serrin ring domains with dihedral symmetry, and the resulting examples correspond, up to similarities of $\R^2$, with those constructed by Kamburov and Sciaraffia \cite{KS} for $n=2$. A similar process can be done for the condition $a_1=a_2$, obtaining a curve $\Upsilon_2$ that represents, up to similarities, the examples in the bifurcation theorem of Agostiniani, Borghini and Mazzieri \cite{ABM}.


\begin{figure}[h]
  \centering
  \includegraphics[width=0.45\textwidth]{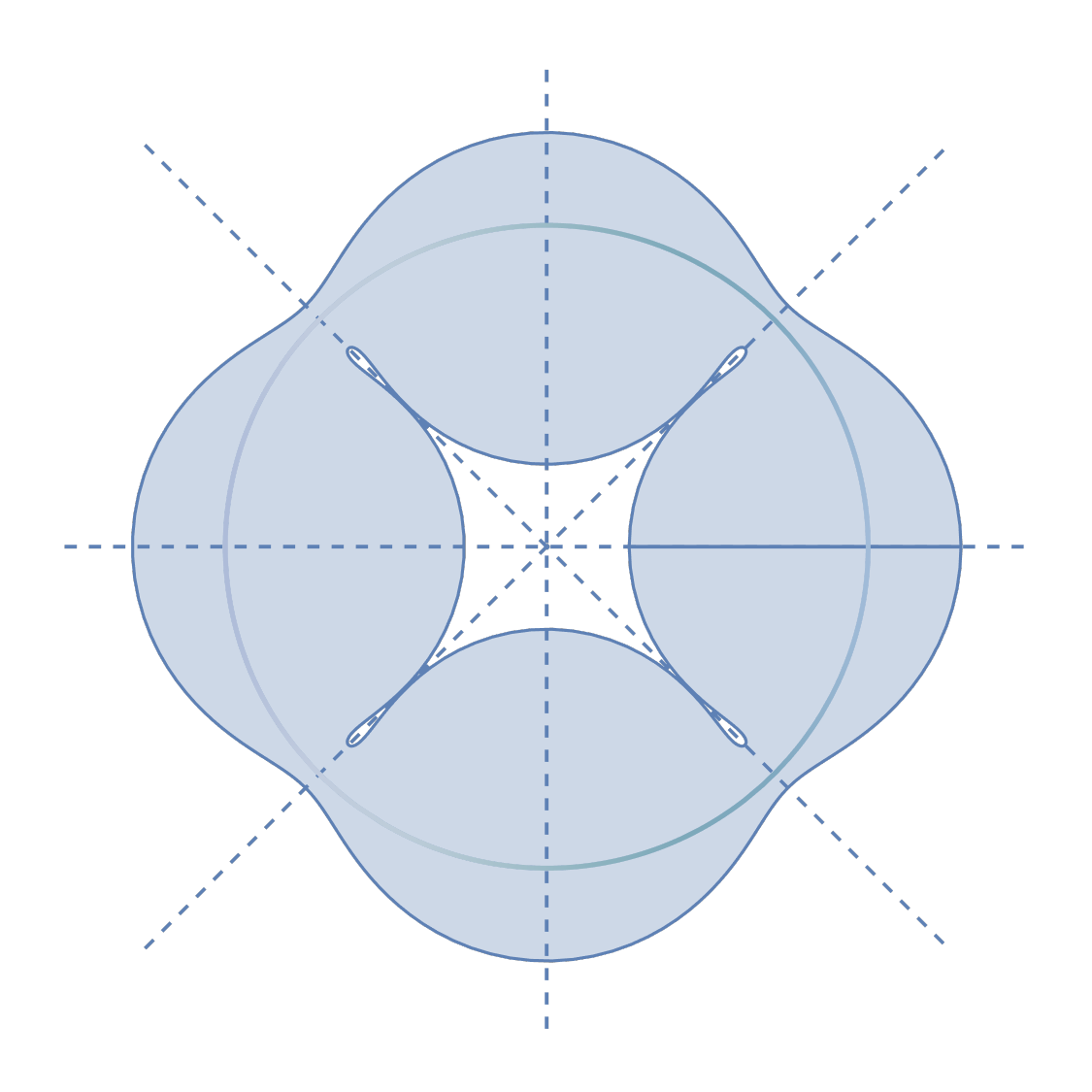}  \hspace{0.4cm} \includegraphics[width=0.45\textwidth]{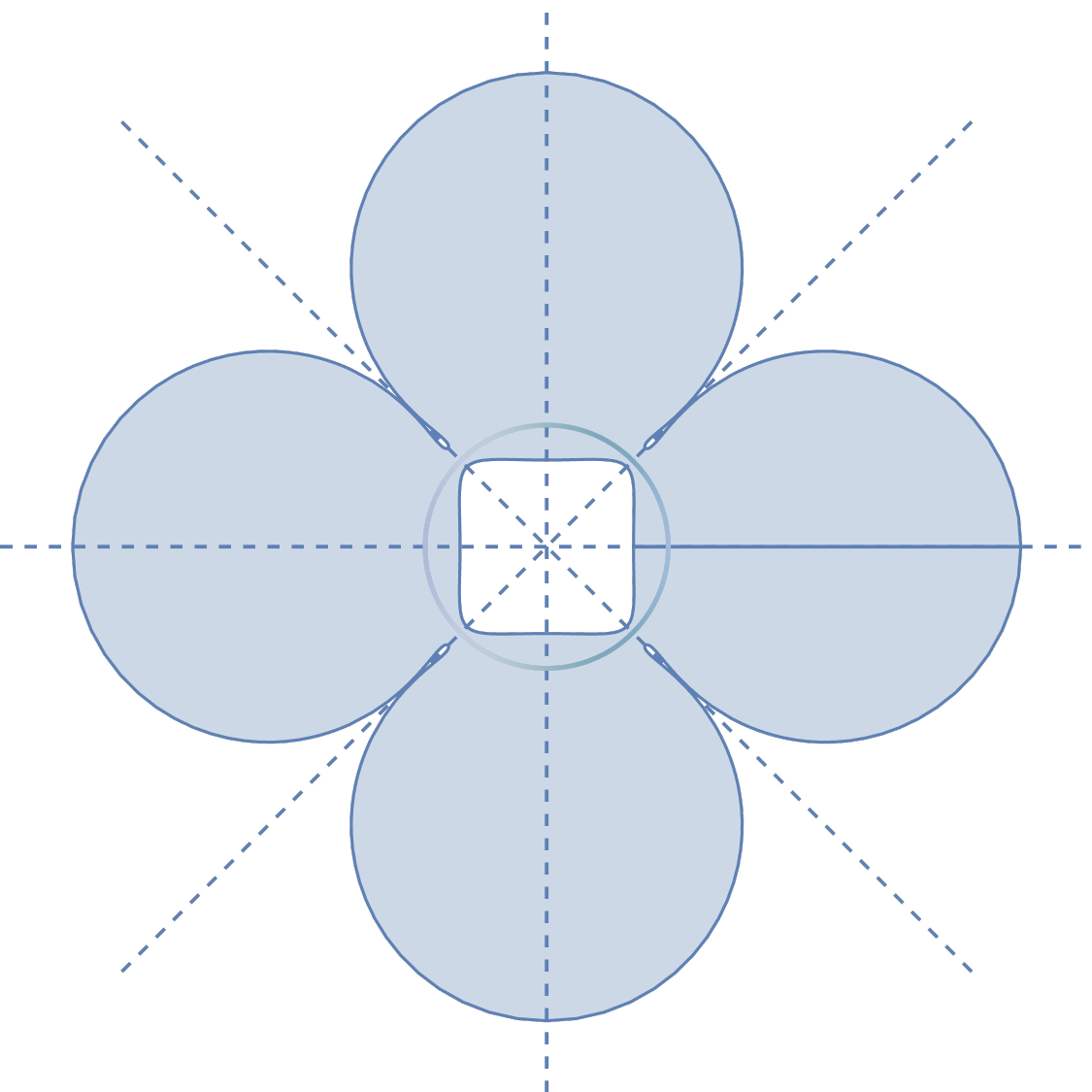}  \caption{Loss of embeddedness for $\parc \Omega$ at the boundary of the moduli space. Left: interior boundary curve. Right: exterior one.\label{pierdeembe}} 
\end{figure}

Theorem \ref{th:main} is the analogue in the context of Serrin's overdetermined problem of an important theorem by Wente \cite{W} for constant mean curvature (CMC) surfaces in $\R^3$. To explain this, we start by recalling an old famous conjecture by Hopf claiming that spheres should be the only closed (i.e. compact without boundary) CMC surfaces in $\R^3$. Alexandrov \cite{A1} gave an affirmative answer in the embedded case, and Hopf himself \cite{Ho} proved it for surfaces of genus zero. But surprisingly, in 1986 Wente \cite{W} found counterexamples to the conjecture, with the topology of a torus. Soon after, Abresch showed in \cite{Ab} that Wente tori have the special feature of being foliated by planar curvature lines, and used this property to describe them in terms of elliptic integrals.

The Serrin ring domains in Theorem \ref{th:main} can then be regarded as \emph{overdetermined Wente tori} for \eqref{overeq00}. Indeed, our examples are constructed by imposing that the annuli $\Omega\subset \R^2$ in Theorem \ref{th:main} are \emph{foliated by capillary curves}. By definition, any such capillary curve $\gamma\subset \Omega$ has the property that the graph of $u$ over $\gamma$ lies in a rotational paraboloid or in a plane $\cP_\gamma\subset \R^3$, and $|\nabla(u-u^\gamma)|$ is constant along $\gamma$, where $u^\gamma$ is the graphing function of $\cP_\gamma$. 

Theorem \ref{th:main} is also related to free boundary CMC surfaces in the unit ball $\B^3$ of $\R^3$. An old problem by Nitsche \cite{Nit} and Wente \cite{W2} asked whether any embedded free boundary CMC surface in $\B^3$ with the topology of an annulus should be rotational, i.e., part of a catenoid or a Delaunay example. A counterexample was recently obtained by the authors in \cite{CFM} for the CMC case. For the minimal case, Fernández, Hauswirth and Mira constructed in \cite{FHM} non-rotational free  boundary minimal annuli in $\B^3$ with self-intersections. In these works, which are one of the main sources of inspiration for the present paper, the new examples were constructed by imposing a foliation structure by \emph{spherical} curvature lines on the surface. See also \cite{Ce,CFM2}.

\subsection{The periodic Serrin problem}\label{intro:bands} Let $\Omega$ be a smooth domain of $\R^2$ where Serrin's problem \eqref{serrinprob} can be solved. If $\Omega$ is bounded, then $\Omega$ is a disk, by Serrin's theorem. When $\Omega$ is not bounded, Ros and Sicbaldi \cite{RS} proved that $\parc \Omega$ must have at least two \emph{unbounded} connected components. Also, if $\parc \Omega$ has exactly two unbounded components, then $\overline\Omega$ is diffeomorphic to $[0,1]\times \R$ and $\Omega$ is a Serrin band (Definition \ref{def:serrin}). Moreover, one has:

\begin{theorem}[Ros-Sicbaldi, \cite{RS}]\label{rosic}
Any Serrin band $\Omega\subset \R^2$ is a symmetric bigraph lying at a finite distance from a line in $\R^2$; that is, up to isometry of $\R^2$, it holds $\Omega=\Omega^{\varphi}$ where
\begin{equation}\label{bigraph}
\Omega^{\varphi}:=\{(x_1,x_2) : -\varphi(x_1)< x_2 < \varphi(x_1)\},
\end{equation} with $\varphi:\R\flecha \R$ a positive, bounded, real analytic function.
\end{theorem}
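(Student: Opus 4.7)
The plan is to reduce the problem to an Alexandrov-type reflection argument, after first controlling the geometry of $\Omega$ at infinity. The two assertions of the theorem — finite distance from a line and symmetric bigraph structure — will both be consequences of a moving planes analysis applied in a direction transverse to the asymptotic axis of $\Omega$.

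First, I would prove that $u$ is bounded on $\overline\Omega$. Since $u=a$ on $\parc\Omega$ and $-\Delta u=2$, a standard Hopf-boundary argument shows $u\geq a$ (up to translation, $a=0$), and that the Serrin constant $b$ is non-zero with a definite sign. The crucial geometric ingredient is the inradius estimate: if $B(x_0,R)\subset \Omega$, the torsion solution $w(x)=a+\tfrac12(R^2-|x-x_0|^2)$ satisfies $\Delta w=-2$ with $w=a$ on $\parc B$, so the maximum principle applied to $u-w$ on $B$ gives $u(x_0)\geq a+R^2/2$. Thus a uniform bound on $u$ forces a uniform bound on the inradius of $\Omega$. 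Combined with the assumption that $\parc\Omega$ has exactly two unbounded components and the resulting product topology $\overline\Omega\cong[0,1]\times\R$, this inradius bound rules out $\Omega$ escaping into arbitrarily thick regions: a blow-down limit argument (or direct isoperimetric estimate using the divergence theorem $-2|D|=\int_{\parc D}\parc_\nu u$ on truncations) would yield, up to rotation, that $\Omega\subset \{|x_2|<R\}$ for some $R>0$.

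Next, I would apply Serrin's moving planes method in the $x_2$-direction. For $\lambda>R$ the cap $\Omega_\lambda:=\Omega\cap\{x_2>\lambda\}$ is empty, so the reflection across $T_\lambda=\{x_2=\lambda\}$ satisfies $\Omega_\lambda^{*}\subset\Omega$ trivially. Decreasing $\lambda$, let $\lambda_0$ be the infimum of values for which the inclusion persists. The function $w(x)=u(x)-u(x^{*})$ on $\Omega_{\lambda_0}$ is harmonic and, by the moving-planes induction, non-negative; it vanishes on $T_{\lambda_0}$. If $\lambda_0$ is critical, then either $\Omega_{\lambda_0}^{*}$ touches $\parc\Omega$ at some interior tangency, or $\parc\Omega$ is orthogonal to $T_{\lambda_0}$ at some point. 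In both cases the overdetermined condition $\parc_\nu u=b$ on $\parc\Omega$ combined with the Hopf boundary lemma (as in Serrin's classical proof) forces $w\equiv 0$, so $\Omega$ is symmetric about the line $T_{\lambda_0}$. The same monotonicity procedure yields the strict inequality $\parc_{x_2}u<0$ in $\Omega\cap\{x_2>\lambda_0\}$, which, together with the two-boundary-components hypothesis, implies that the upper boundary component is the graph of a real analytic positive function $x_2=\varphi(x_1)$ over the symmetry axis, and the lower component is its reflection. This gives the bigraph representation in \eqref{bigraph}.

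The main obstacle is the second half of the first step: proving that $\Omega$ is contained in a strip. The moving planes machinery is standard once a strip containment is in hand, and boundedness of $u$ combined with the torsion comparison immediately bounds the inradius, but passing from ``bounded inradius plus two unbounded boundary components'' to ``contained in a strip'' requires a nontrivial use of the topology of $\overline\Omega\cong[0,1]\times\R$ together with boundary regularity estimates at infinity, to rule out large-scale drift or meandering of $\Omega$. A careful sub/super-solution comparison on long thin truncations, or a blow-down analysis extracting a limiting Serrin solution on a half-plane to contradict the inradius bound, would be needed to close this step.
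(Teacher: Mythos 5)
First, note that the paper does not prove this statement at all: Theorem \ref{rosic} is imported verbatim from Ros--Sicbaldi \cite{RS} and used as a black box (e.g.\ in the embeddedness argument of Section \ref{sec:demflats}), so there is no internal proof to compare yours against. Judged on its own, your outline correctly identifies the known strategy (a priori estimates at infinity, a divergence-theorem/isoperimetric narrowness argument, then Serrin-type moving planes transverse to the asymptotic direction), and the moving-planes half is indeed standard once strip containment is secured, modulo the usual care with non-compactness (the critical position and the touching point could a priori degenerate at infinity, so one needs the narrow-domain maximum principle to start and a compactness argument to conclude symmetry).

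The genuine gap is in your first step, and it is worse than the one you flag. You treat ``$u$ is bounded'' as available and deduce the inradius bound from it, but no argument for an upper bound on $u$ is given; likewise the inequality $u\geq a$ is not automatic, since the minimum principle for the superharmonic function $u$ can fail on an unbounded domain (on a flat strip there are harmonic functions vanishing on the boundary and unbounded below, so boundary data alone do not control the sign of $u-a$). The natural tool, the subharmonic $P$-function $|\nabla u|^2+2(u-a)$ with constant boundary value $b^2$, only yields $u\leq a+b^2/2$ after a Phragm\'en--Lindel\"of argument, which itself requires geometric control of $\Omega$ at infinity --- precisely what you are trying to prove. This circularity between ``$u$, $|\nabla u|$ bounded'' and ``$\Omega$ contained in a strip'' is the actual content of the Ros--Sicbaldi lemmas, and your proposal does not break it; the truncation identity $-2|D|=\int_{\parc D}\parc_\nu u$ that you invoke already presupposes a bound on the flux through the vertical cuts, i.e.\ on $|\nabla u|$. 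So the proof is incomplete at its foundational step, not merely at the ``drift/meandering'' refinement you acknowledge.
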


The simplest examples of such Serrin bands are the \emph{flat bands} $\Omega^{\varphi_0}$, where $\varphi_0>0$ is constant. Non-flat periodic Serrin bands have been constructed in \cite{FMW1} as small bifurcations of flat ones, and in \cite{DDMW} by desingularizing a chain of disks along a line. 

Our main result regarding periodic Serrin bands is Theorem \ref{th:bandas} below. In it, we show that there exists a real analytic $1$-parameter family of periodic Serrin bands in $\R^2$ that presents a Delaunay-type pattern, and whose moduli space connects the geometries observed in \cite{FMW1} and \cite{DDMW}, along with all intermediate situations.

\begin{theorem}\label{th:bandas}
There exists a real analytic family $\{\Omega_\tau :\tau \in (0,1]\}$ of periodic Serrin bands in $\R^2$ with the following properties:
\begin{enumerate}
\item
All domains $\Omega_{\tau}$ are of the form $\Omega^{\varphi_\tau}$ in \eqref{bigraph}, where $\varphi_\tau$ is a real analytic, periodic positive function on $\R$.
\item
If $\tau=1$, then $\varphi_1$ is constant, and so $\Omega_1$ is a flat band.
\item
If $\tau\in (0,1)$, then $\varphi_\tau$ is not constant, and has exactly two critical points (a maximum and a minimum) along each fundamental period.
\item
Different domains $\Omega_\tau$ and $\Omega_{\tau'}$ do not differ by a similarity of $\R^2$.
\item
As $\tau\to 0$, the domains $\Omega_\tau$ converge to a chain of pairwise tangent disks of the same radius along the $x_1$-axis.
\item
The domains $\Omega_\tau$ are described explicitly in terms of elliptic functions.
\end{enumerate}
\end{theorem}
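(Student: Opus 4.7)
The plan is to realize $\{\Omega_\tau\}$ as the \emph{spectral genus one} stratum in the algebro-geometric description of periodic Serrin bands (the level $\mathfrak{m}=1$ of Theorem~\ref{th:kdvintro}) and to parametrize it explicitly by Jacobi elliptic functions. By Theorem~\ref{rosic}, any Serrin band is automatically a symmetric bigraph $\Omega^\varphi$, so it is enough to describe the admissible positive periodic real-analytic profiles $\varphi_\tau$, exploiting the $x_1 \leftrightarrow -x_1$ symmetry at a maximum of $\varphi_\tau$ to further reduce the free data.

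First I would assume $\Omega^\varphi$ is foliated by \emph{capillary curves} running parallel to its axis of symmetry. By the very definition of capillary curve, this foliation hypothesis reduces the overdetermined system $\Delta u + 2 = 0$ on $\Omega^\varphi$, together with Serrin's boundary conditions on $x_2 = \pm \varphi(x_1)$, to a finite-dimensional ODE system for the geometric data of the leaves. The genus-one ansatz furnished by Theorem~\ref{th:kdvintro} then forces this ODE to be a Weierstrass-type equation $(y')^2 = P(y)$ with $P$ a cubic whose coefficients depend on a single modular parameter, to be named $\tau \in (0,1]$.

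Next I would integrate that ODE in closed form with Jacobi elliptic functions, obtaining an explicit parametrization of $\partial \Omega^\varphi$, and hence of $\varphi_\tau$. Since $(\varphi_\tau')^2$ equals a positive cubic in $\varphi_\tau$ between two consecutive simple roots, $\varphi_\tau$ is real-analytic, periodic, positive, and admits exactly one maximum and one minimum per fundamental period; this yields items (1), (3) and (6). The non-similarity claim (4) then follows from the strict monotonicity in $\tau$ of a scale-invariant modulus, such as $\max \varphi_\tau / \min \varphi_\tau$, which can be read off directly from the ratio of roots of $P$.

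The endpoint analysis is where the essential work lies. As $\tau \to 1$ I would check that the elliptic modulus degenerates, the cubic $P$ develops a double root, and the ODE collapses to $\varphi'\equiv 0$, giving the flat band of (2). As $\tau \to 0$ the modulus tends to the opposite singular value, the period of $\varphi_\tau$ approaches a finite limit equal to the diameter of a limit disk, and consecutive fundamental pieces of $\partial \Omega^\varphi$ meet tangentially on the $x_1$-axis, yielding the chain of tangent disks in (5). The principal obstacle I anticipate is precisely this $\tau\to 0$ limit: one must produce uniform estimates on the capillary foliation as it pinches along the axis, in order to upgrade pointwise convergence of the elliptic-function formulas to $C^k$-convergence of the domains and solutions, and to rule out other branches of the genus-one moduli space interfering with the family. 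By contrast, real analyticity in $\tau$ on $(0,1]$ is essentially automatic from the analytic dependence of Jacobi's functions on their modular parameter.
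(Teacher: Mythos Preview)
Your proposal has the right flavor (spectral genus one, elliptic functions, degeneration at the endpoints) but two substantive gaps prevent it from working as written.

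First, you invoke Theorem~\ref{th:kdvintro} as a construction tool, but that result runs in the opposite direction: it says that \emph{existing} periodic Serrin bands are algebro-geometric, not that every genus-one mKdV potential yields a Serrin band. You still need to build the domains from scratch, so the logical order is reversed. Second, the ODE you posit, $(\varphi_\tau')^2 = P(\varphi_\tau)$ for the Cartesian profile, is not what emerges from the capillary foliation ansatz. The capillary structure lives in \emph{conformal} coordinates: one introduces a developing map $g:\cU\to\overline\Omega$ on a vertical strip, sets $\omega=\log|g'|$, and the genus-one reduction yields a Weierstrass-type ODE for $Z(y)=e^{-\omega(0,y)}$ along the central $y$-curve (which parametrizes the $x_1$-axis), not for $\varphi(x_1)$. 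The boundary curve $x_2=\varphi_\tau(x_1)$ is then the image $g(x^*+iy)$ for a specific value $x^*$, and its properties (periodicity, exactly two critical points per period) are read off from the lattice structure of the Weierstrass $\wp$-function, not from an ODE for $\varphi$ itself.

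Concretely, the paper imposes the \emph{planar} capillary ansatz $\beta(x)=-\alpha(x)$ in system~\eqref{system}, which collapses it to a single equation $\alpha''=\delta\alpha+2\alpha^3$ with $\delta=-(\tau+1/\tau)$. From $\alpha$ one builds $\omega$ and then $g$ explicitly via Weierstrass $\zeta$ and $\wp$ (equation~\eqref{gflat}); the solution $v$ is constructed by hand from auxiliary functions $a(x),b(x),f(x)$ and the harmonic function $L={\rm Im}\,g$. The Serrin condition is enforced by choosing the strip width $x^*$ to be the unique zero of $f(x)$. Embeddedness is not proved by direct estimates but by a continuity argument: it holds at $\tau=1$ (flat band), and if it failed at some $\tau_1\in(0,1)$ then just before $\tau_1$ the domain would still be a Serrin band whose boundary is not a graph, contradicting Theorem~\ref{rosic}. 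Item~(4) is proved not via $\max\varphi/\min\varphi$ but by an algebraic comparison of the polynomials $Q_j$ governing the boundary curvature, as in the ring-domain case. For item~(5) the paper performs an explicit rescaling $g_\mu(z)=\mu g(\mu z)$ with $\mu=\sqrt\tau$ and computes the limit $g_0(z)=2i\tan(z/2)$ directly; no uniform estimates on the foliation are needed because the convergence is read off from the elliptic-function formulas.
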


The band domains $\Omega_\tau$ of Theorem \ref{th:bandas} are again foliated by capillary curves but this time, in contrast with the ring domains of Theorem \ref{th:main}, such capillary curves are \emph{planar}. This means that each capillary curve $\gamma\subset \Omega_\tau$ of the foliation has the property that the graph of the solution $u$ intersects a plane of $\R^3$ with a constant angle along $\gamma$.

\begin{figure}[h]
  \centering
  \includegraphics[width=.76\textwidth]{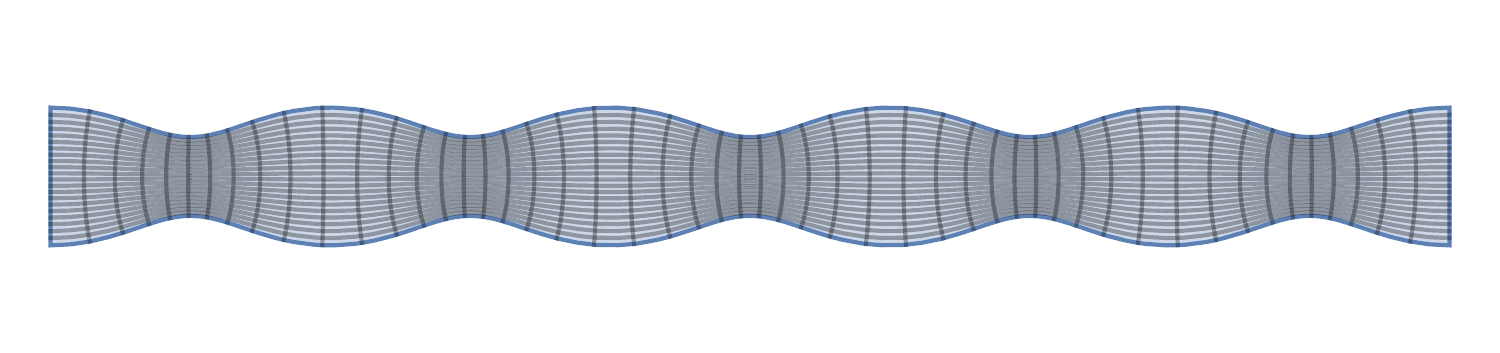} \\  \includegraphics[width=0.71\textwidth]{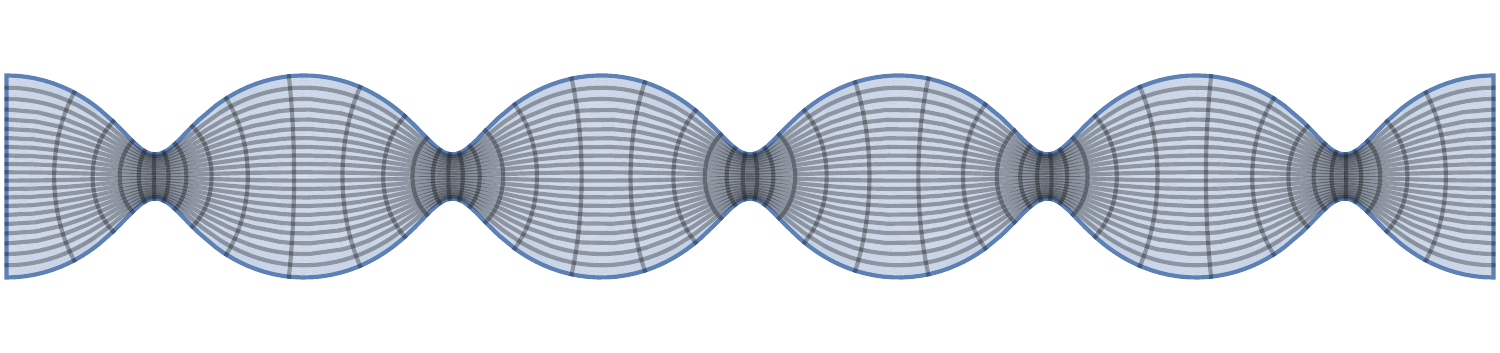} \\ \includegraphics[width=0.72\textwidth]{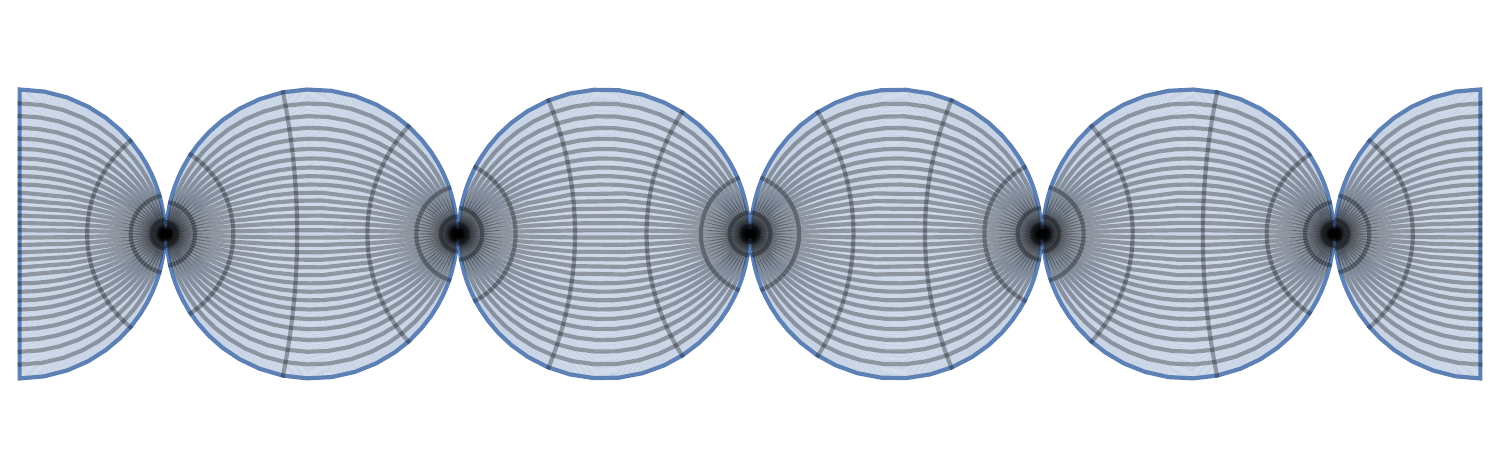}  
  \caption{Top: A Serrin strip domain $\Omega_\tau$ for $\tau$ close to $1$. Middle: $\Omega_\tau$ for an intermediate $\tau\in (0,1)$. Bottom: $\Omega_\tau$ for $\tau>0$ close to zero. In all figures the parameter lines describe a conformal parametrization of the strip. These parametrizations are explicit in terms of a Weierstrass elliptic function $\wp(z)$.\label{figbandas}} 
  \end{figure}

For $\tau$ close enough to $1$, our Serrin bands $\Omega_\tau$ coincide (up to a dilation) with the examples obtained by Fall, Minlend and Weth in \cite{FMW1}. We remark that our approach is totally different to \cite{FMW1}. The proof in \cite{FMW1} actually works in any dimension, and is obtained using Crandall-Rabinowitz bifurcation from flat bands. As a result, the Serrin bands in \cite{FMW1} are \emph{almost flat} and not described explicitly. Our procedure only works for dimension two, but it describes explicitly the resulting domains throughout the whole moduli space, even far way from flat bands.

At the other end of the moduli space, i.e., when $\tau$ is close to zero, our Serrin bands $\Omega_\tau$ converge to a chain of disks along the $x_1$-axis. Serrin bands with this behavior had been recently constructed by Dávila, Del Pino, Musso and Wheeler \cite{DDMW}, by desingularization of such a disk chain. Again, the examples in \cite{DDMW} are shown to exist only close enough to the singular chain of disks. We note that the construction in \cite{DDMW} is part of a more general desingularization process that also applies to the construction of overhanging solitary water waves with constant vorticity. It seems very likely that our domains $\Omega_\tau$ when $\tau\to 0$ coincide (up to similarity) with the Serrin bands in \cite{DDMW}, but we do not have a proof of this fact.

Again, Theorem \ref{th:bandas} is related to minimal and CMC surface theory in different directions. 

On the one hand, the domains $\Omega_\tau\subset \R^2$ appear as the natural analogues of the family of CMC \emph{unduloids} in $\R^3$, obtained by Delaunay in the 19th century. These constitute a $1$-parameter family of embedded rotational CMC surfaces in $\R^3$, with the cylinder on one side of the moduli space, and a singular chain of spheres on the other side. 
We remark that, in contrast, the band domains $\Omega_\tau$ of Theorem \ref{th:bandas} have a discrete symmetry group. 

In another direction, the foliation structure by planar capillary curves of the band domains $\Omega_\tau$ in Theorem \ref{th:bandas} is reminiscent of a classical $1$-parameter family of minimal surfaces discovered by Riemann in the 19th century. These surfaces are foliated by circles in parallel planes, 
and their $1$-dimensional moduli space has as singular limits the catenoid and the helicoid. This relates closely with the structure of the band domains $\Omega_\tau$ in Theorem \ref{th:bandas}, even though Riemann's examples have an infinite number of ends. 

\subsection{The space of Serrin domains}\label{intro:space} Our constructions of Serrin bands and ring domains in Theorems \ref{th:main} and \ref{th:bandas} depend on elliptic functions, and their respective moduli spaces are determined by the coefficients of their defining polynomials. This is not an isolated fact. Our main result regarding the general structure of the space of Serrin ring domains and of periodic Serrin bands (Theorem \ref{th:kdvintro} below) implies that any such domain comes from algebraic data.

To explain this, let $\Omega\subset \R^2$ be a Serrin ring domain, which we view as the image $\overline\Omega=g(\cU)$ of a holomorphic bijection $g$ from a vertical quotient band $\cU\subset \R^2\equiv \C$ given by 
\begin{equation}\label{bandintro}
\cU=([s_1,s_2]\times \R, \sim ), \hspace{0.5cm} (x,y)\sim (x,y+T),
\end{equation}
for some $s_1<s_2$ and $T>0$. We call $g$ the \emph{developing map} of $\Omega$. We assume that the solution $u$ to $\Delta u+2=0$ associated to $\Omega$ is \emph{not} a (trivial) paraboloid solution $P({\bf x})=a -\frac{1}{2}\|{\bf x}-v_0\|^2$.

 
In the same way, any Serrin band $\Omega\subset \R^2$ can be seen in conformal parameters $(x,y)$ as $\overline\Omega=g([s_1,s_2]\times \R)$ with $g(x+iy)$ a holomorphic injective map.
 
On the other hand, we consider the recursive operators $\{Q_n[\eta]: n\in \N\}$ given by the modified Korteweg-de Vries (mKdV) hierarchy. The basic definition and properties of these classical operators will be explained in Section \ref{sec2:kdv}. We merely indicate here that $Q_n[\eta]$ acts on holomorphic functions $\eta(z)$ and their derivatives up to order $2n$ in a polynomial way, and the first elements of the sequence are $Q_{-1}[\eta]=0$,
$$Q_0[\eta] =\eta, \hspace{0.5cm} Q_1[\eta]= \eta''-2\eta^3,\hspace{0.5cm} Q_2[\eta] = \eta^{(4)}-10 \eta'' \eta^2 -10 \eta (\eta')^2 +6 \eta^5.$$
A holomorphic function $\eta(z)$ is called an \emph{algebro-geometric potential} of the mKdV hierarchy if, for some $\mathfrak{m}\in \N$, there exist constants $a_0,c_0,\dots, c_{\mathfrak{m}-1}\in \C$ such that 
\begin{equation}\label{kdvintro1}
Q_{\mathfrak{m}}[\eta]=a_0+\sum_{j=0}^{\mathfrak{m}-1} c_j Q_j[\eta].
\end{equation}
These mKdV potentials have been deeply studied over the last 50 years using techniques from integrable systems and algebraic geometry. We remark for completeness that all of them can be described in terms of Riemann's theta-function on a (maybe singular) compact Riemann surface of genus $\mathfrak{m}$; see e.g. \cite{GW,IM}. However, this theory will not be needed in this paper.

Theorem \ref{th:kdvintro} below introduces Serrin planar domains into this algebraic theory. 

\begin{theorem}\label{th:kdvintro}
Let $\Omega\subset \R^2$ be a Serrin ring domain or a periodic Serrin band. Then, 
\begin{equation}\label{potin}
\eta:=\frac{g''}{2g'}
\end{equation}
is an algebro-geometric potential of the mKdV hierarchy, with real coefficients $a_0,c_j$.
\end{theorem}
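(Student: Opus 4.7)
The overall strategy is to follow the Pinkall--Sterling paradigm for closed constant mean curvature surfaces: derive from the overdetermined boundary conditions a sequence of formal conservation laws for the Serrin problem, matched term by term to the mKdV hierarchy, and use the $y$-quotient compactness of the ring case (resp.\ $y$-periodicity of the band case) to force a finite-type termination. The sinh-Gordon hierarchy of the CMC setting is replaced here by the mKdV hierarchy, and the harmonic function $\rho:=\log|g'|$ plays the role of the CMC conformal factor.

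First I would pass to conformal coordinates $z=x+iy$ on $\cU$ and set $v:=u\circ g$, so that the torsion equation becomes $\Delta v + 2 e^{2\rho}=0$ on $\cU$ and the overdetermined conditions become
\begin{equation*}
v|_{x=x_j}\equiv a_j,\qquad v_x|_{x=x_j}=\ep_j b_j e^{\rho},\qquad v_{xx}|_{x=x_j}=-2 e^{2\rho},
\end{equation*}
for $j=1,2$ and $\ep_j\in\{\pm 1\}$; the last identity uses $v_{yy}|_{x=x_j}=0$. Writing $\eta=h'/2$ for a holomorphic primitive $h$ of $2\eta$ with real part $\rho$, the Cauchy--Riemann equations convert every $y$-derivative of $\rho$ along the boundary into an $x$-derivative of $\eta$. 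Iteratively $y$-differentiating the boundary identity $v_x=\ep_j b_j e^{\rho}$ and combining with successive $y$-derivatives of the PDE produces an infinite tower of polynomial relations in $\eta,\eta',\eta'',\dots$ valid along each boundary line $\{x=x_j\}$.

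The crux of the proof is the identification of this tower with the classical Lenard-type recursion that defines the mKdV polynomials $Q_n[\eta]$. Here I expect the capillary curve framework developed earlier in the paper to be decisive: each capillary curve foliating $\Omega$ contributes one constant of motion along the foliation, so the entire one-parameter family of capillary curves should generate the full sequence of mKdV densities at once. Once this matching is established, the holomorphic functions
\begin{equation*}
\Psi_n(z)\;:=\;Q_n[\eta](z)-\sum_{j=0}^{n-1}c_j\,Q_j[\eta](z)-a_0
\end{equation*}
have purely imaginary boundary values on $\parc\cU$ for suitable real constants $a_0,c_0,\dots,c_{n-1}$. Schwarz reflection across both boundary lines then extends $\Psi_n$ to an entire holomorphic function which is quasi-periodic with the $y$-period inherited from $\cU$, and hence lies in a finite-dimensional space of holomorphic functions of controlled growth. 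As $n$ grows, the $\Psi_n$ cannot remain linearly independent indefinitely in this space, so some first relation $\Psi_\mathfrak{m}\equiv 0$ must hold, which is exactly \eqref{kdvintro1}.

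Reality of the coefficients $a_0,c_j$ follows from the intrinsic mirror symmetries of the Serrin problem: the bigraph structure of Theorem \ref{rosic} in the band case, and reflection across each capillary curve in the ring case. After normalizing $g$ so that $\eta(\bar z)=\overline{\eta(z)}$ on such a reflection line, each $Q_n[\eta]$ is conjugation-equivariant and the defining relation is invariant under $z\mapsto\bar z$, forcing the constants to be real. The main obstacle I foresee is the identification step: matching the combinatorics of the iterated Cauchy data at $\parc\cU$ with the precise Lenard recursion of the mKdV hierarchy, and controlling the length $\mathfrak{m}$ at which $\{\Psi_n\}$ must close. It is precisely here that the $y$-compactness (ring case) or $y$-periodicity (band case) does the essential work, in close parallel with the finite-type arguments underlying the Pinkall--Sterling theory of CMC tori.
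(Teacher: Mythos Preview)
Your overall architecture is right---Pinkall--Sterling paradigm, mKdV recursion, finite-type termination from $y$-compactness---and you correctly isolate the identification step as the crux. However, the mechanism you propose for closing the argument has a gap.

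The claim that $\Psi_n$ can be made to have purely imaginary boundary values on $\parc\cU$ by choosing finitely many real constants $a_0, c_0,\dots,c_{n-1}$ is not correct: that would require killing the real part of a holomorphic function along two entire boundary lines with only $n+1$ scalar parameters. What the paper proves instead (Theorem~\ref{th:finite}) is that each harmonic function $h_n:=\mathrm{Im}\,Q_n[\omega_z]$ satisfies, along each boundary line $\{x=s_j\}$, a \emph{Robin} condition
\[
\frac{\parc h_n}{\parc x}=\tfrac{1}{2}\bigl(\alpha_j e^{-\omega}-\beta_j e^{\omega}\bigr)\, h_n,
\]
with $\alpha_j,\beta_j$ determined directly by the Serrin boundary data via Proposition~\ref{ekicapi}. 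This is a first-order condition, not a zeroth-order one, so Schwarz reflection does not apply. Finite-dimensionality then comes from standard elliptic theory: the space of harmonic functions on the compact quotient annulus $\cU$ satisfying this homogeneous Robin condition on $\parc\cU$ is finite-dimensional, so eventually $h_\mathfrak{m}=\sum_{j<\mathfrak{m}}c_j h_j$ with $c_j\in\R$, whence $Q_\mathfrak{m}[\eta]=a_0+\sum_j c_j Q_j[\eta]$ for some $a_0\in\R$.

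Two further remarks. First, the identification step uses only the single capillary \emph{boundary} condition, not the full capillary foliation you invoke; the paper packages all $Q_n$ into the generating series $\cQ_\lambda$ of \eqref{kpelan}, derives a third-order linear ODE \eqref{odeseries2} for the boundary defect $\cJ_\lambda$, and kills it inductively via a KdV-homogeneity count. Second, reality of $a_0,c_j$ is automatic: the $h_n$ are real harmonic functions solving a Robin problem with real coefficients, so any linear dependence among them is real. No bigraph or reflection-symmetry argument is needed.
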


In particular, this implies that if $\Omega$ is a Serrin ring domain or a periodic Serrin band, then there exists some $\mathfrak{m}\in \N$ such that the holomorphic function $\eta(z)$ in \eqref{potin} 
satisfies an ODE 
\begin{equation}\label{odekdv}
\eta^{(2\mathfrak{m})} = \cP\left(\eta,\dots, \eta^{(2\mathfrak{m}-1)}\right),
\end{equation}
where $\cP$ is a polynomial with real coefficients depending on the (real) constants $c_0,\dots, c_{\mathfrak{m}-1}$ of \eqref{kdvintro1} in a specific way coherent with the mKdV hierarchy. Thus, $\eta$ is determined by its initial conditions $\eta(z_0),\dots, \eta^{(2\mathfrak{m}-1)}(z_0)$ at any point. So, Theorem \ref{th:kdvintro} proves that any Serrin ring domain (or periodic band) is uniquely determined, for some $\mathfrak{m}\in \N$, by the $(2\mathfrak{m}+1)$-jet of its developing map $g(z)$ at any $z_0\in \cU$. From a certain perspective, this is a rather fascinating property, as it links a global problem like \eqref{serrinprob} or \eqref{overeq00} with a specific finite order condition at one arbitrary point of $\Omega$.

The number $\mathfrak{m}\in \N$ measures the complexity of the domain $\Omega$ and its associated solution $u$ to \eqref{serrinprob} or \eqref{overeq00}. For instance, if $\mathfrak{m}=0$ then $\Omega$ is \emph{trivial}, i.e., a radial disk or a flat band. If $\mathfrak{m}=1$, then we will show that the resulting examples are foliated by capillary curves. Thus, our examples in Theorems \ref{th:main} and \ref{th:bandas} correspond to the $\mathfrak{m}=1$ level of the hierarchy. 

As in our previous results, Theorem \ref{th:kdvintro} is connected to a fundamental theorem of CMC theory, this time due to Pinkall and Sterling \cite{PS}. In \cite{PS} these authors devised a theoretical method that finds \emph{all} CMC tori in $\R^3$ in terms of algebraic equations, via a suitable hierarchy of Jacobi fields of CMC surfaces. 
This algebraic nature of CMC tori in $\R^3$ also holds in space forms $\S^3$, $\H^3$, see Hitchin \cite{Hi} and Bobenko \cite{Bob}, and for capillary CMC annuli in the unit ball of $\R^3$, see Kilian-Smith \cite{KSm}. In \cite{MPR}, Meeks, Pérez and Ros found a hierarchy of Jacobi fields for certain minimal surfaces in $\R^3$, based on the KdV hierarchy, with Shiffman's classical function \cite{Sh} as its first element. The study of its algebraic finiteness was a key element in their proof of the outstanding theorem that any properly embedded minimal planar domain in $\R^3$ of infinite topology is a Riemann example \cite{MPR}.

Our Theorem \ref{th:kdvintro} is inspired by the geometric results in \cite{PS,MPR,KSm}, and proves that all Serrin ring domains and all periodic Serrin bands are algebro-geometric, in the sense of the Pinkall-Sterling theorem \cite{PS} explained above. 
Theoretically, this gives a procedure to construct all such Serrin domains. In practice, one setback with respect to the CMC tori case in \cite{PS} is that, in our situation, the study of the boundary conditions via integrable systems is quite challenging. In particular, we do not know if there exist Serrin domains with a level $\mathfrak{m}>1$ in the mKdV hierarchy. 

Theorem \ref{th:kdvintro} can also be seen as the overdetermined version of the algebraic finite-type property of the Meeks-Pérez-Ros hierarchy for minimal surfaces \cite{MPR} mentioned above.

\subsection{Structure of the paper}
In Section \ref{sec:capi} we introduce the notion of \emph{capillary curve} associated to a solution of $\Delta u+2=0$ in a domain $\Omega\subset \R^2$. We consider ring domains $\Omega\subset \R^2$ bounded by capillary curves, and study their basic properties in terms of a conformal representation $g:\cU\flecha \Omega$, where $\cU$ is a vertical quotient band in $\C$ and $g$ is a conformal isomorphism between $\cU$ and $\Omega$. We call $g$ the \emph{developing map} of the capillary ring domain $\Omega$.

In Section \ref{sec:folicapi} we explain the basic structure of planar domains that are foliated by capillary curves. 
In Section \ref{sec:gaussmaps} we construct a $2$-parameter family of holomorphic mappings $g(z)$ compatible with this foliation structure and study their properties. These two sections are inspired by our previous works \cite{CFM,FHM} on free boundary minimal and CMC annuli in $\R^3$.

In Section \ref{sec:immersed}, the holomorphic maps from Section \ref{sec:gaussmaps} are used to create, for each $n\geq 2$, a $2$-parameter family of \emph{immersed} Serrin ring domains in $\R^2$ with a dihedral symmetry group of order $2n$ with respect to the origin. Here, \emph{immersed} means that these domains are given as $\Omega=g(\cU)$ but the developing map $g:\cU\flecha \Omega$ might not be injective. Thus, $\Omega$ might have self-intersections.

In Section \ref{sec:embedded} we characterize the \emph{embeddedness} of $\Omega$, i.e., the injectivity of $g$, within our $2$-dimensional moduli space of immersed domains. Calling $(s,\tau)$ to the coordinates of this moduli space, with $\tau\in (0,1)$, the limit case $\tau=1$ corresponds to radial annuli. In Section \ref{sec:necklaces} we show that as $\tau\to 0$, the annuli $\Omega_{(s,\tau)}$ converge to a necklace of pairwise tangent disks along the unit circle $\S^1\subset \R^2$. In Section \ref{sec:mainth} we put together all this theory and complete the proof of Theorem \ref{th:main}. 

In Section \ref{sec:bandas} we deal with the periodic, unbounded, Serrin problem and prove Theorem \ref{th:bandas}.
For that we use the theory developed in previous sections, but we impose a simpler ansatz, by considering foliations with \emph{planar} capillary curves. This simplification makes the construction more direct than in the case of Serrin ring domains. The embeddedness is also simpler, by Ros-Sicbaldi's Theorem \ref{rosic}.

In Section \ref{sec:kdv} we prove Theorem \ref{th:kdvintro} in a more general setting, for ring domains bounded by capillary curves (not necessarily of Serrin type); see Theorem \ref{th:finite2} and Corollary \ref{cor:finite}. The proof also works for periodic bands. We also characterize the foliation property by capillary curves of these domains in terms of the mKdV hierarchy. The paper closes with Section \ref{sec:problems}, where we discuss the scope of our results as well as some open problems, and with an Appendix where we describe the holomorphic maps constructed in Section \ref{sec:gaussmaps} in terms of Weierstrass $\wp$ functions.

We remark that, in our presentation, we have avoided the fundamental theory of integrable systems via Lax pairs and loop groups, in order to make the paper as self-contained as possible, and accessible to researchers primarily interested in overdetermined elliptic problems. For instance, our discussion of the mKdV hierarchy is carried out in a simple, direct way via recursion operators, and is strictly focused to proving the connection of mKdV potentials with Serrin's overdetermined problem.



{\bf Acknowledgement:} We are grateful to David Ruiz and Pieralberto Sicbaldi for bringing our attention to the global bifurcation problem of periodic Serrin bands, and for pointing out some relevant references for it. We also thank Laurent Hauswirth for many instructive conversations on integrable systems in surface theory.

\section{Capillary curves and solutions to $\Delta u +2=0$}\label{sec:capi}

\subsection{Capillary curves}
Let $u=u(x_1,x_2)$ denote a $C^2$ function on a planar domain $\Omega\subset \R^2$. We say that a regular curve $\gamma(t)$ in $\Omega$ is a \emph{Hessian eigencurve} of $u$ if $\gamma'(t)$ is an eigenvector of $D^2 u(\gamma(t))$ for every $t$. The $D^2u$-eigendirections $w=(w_1,w_2)\in \R^2-\{{\bf 0}\}$ are the solutions to
\begin{equation}\label{prineq0}
-u_{12} (w_1^2-w_2^2) + (u_{11}-u_{22}) w_1 w_2 =0,
\end{equation}
where $u_{11}:=u_{x_1x_1}$, etc. So, $\gamma(t)=(x_1(t),x_2(t))$ is a Hessian eigencurve if and only if
\begin{equation}\label{prineq}
-u_{12}(\gamma(t)) (x_1'(t)^2-x_2'(t)^2) + \big(u_{11}(\gamma(t))-u_{22}(\gamma(t))\big) x_1'(t) x_2'(t) =0
\end{equation}
holds for every $t$. 

Consider next the polynomial 
\begin{equation}\label{cupo}
P(x_1,x_2):= a+ d_1 x_1 + d_2 x_2 -\frac{c}{2}(x_1^2+x_2^2), \hspace{0.5cm} a,d_1,d_2,c\in \R.
\end{equation} Its graph is a rotational elliptic paraboloid $\cP$ in $\R^3$ if $c\neq 0$, or a plane if $c=0$. The following elementary lemma will be useful for our discussion. 

\begin{lemma}\label{joach}
Assume that $u(\gamma(t))=P(\gamma(t))$ along a regular curve $\gamma(t)$ in $\Omega$ for some polynomial $P$ as in \eqref{cupo}. The following three conditions are equivalent:
\begin{enumerate}
\item[(i)]
$\gamma(t)$ is a Hessian eigencurve of $u$.
\item[(ii)]
$\|\nabla (u-P)(\gamma(t))\|$ is constant.
\item[(iii)]
The intersection angle of $z=u-P$ and the plane $z=0$ is constant along $\gamma(t)$.
\end{enumerate}
\end{lemma}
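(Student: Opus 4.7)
The plan is to set $v := u - P$, so that $v \equiv 0$ along $\gamma$ by hypothesis. Since $P$ is quadratic with $D^2 P = -cI$, one has $D^2 v = D^2 u + cI$; hence the eigenvectors of $D^2 v$ and $D^2 u$ coincide at every point, and (i) is equivalent to the condition that $\gamma'(t)$ is an eigenvector of $D^2 v(\gamma(t))$. Differentiating $v(\gamma(t))=0$ once yields $\nabla v \cdot \gamma' = 0$, so $\nabla v$ is pointwise perpendicular to $\gamma'$ along $\gamma$.

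For the equivalence (i)$\Leftrightarrow$(ii), I would apply the chain rule to $\|\nabla v\|^2 = v_{x_1}^2 + v_{x_2}^2$ to obtain
$$
\tfrac{1}{2}\frac{d}{dt}\|\nabla v(\gamma(t))\|^2 \;=\; \bigl(D^2 v(\gamma(t))\,\gamma'(t)\bigr)\cdot \nabla v(\gamma(t)).
$$
Condition (ii) is the vanishing of this quantity for all $t$. At any point where $\nabla v\neq 0$, the vector $\nabla v$ spans the orthogonal line to $\gamma'$, so the right-hand side vanishes iff $D^2 v\,\gamma'$ is proportional to $\gamma'$, which is precisely (i). The implication (i)$\Rightarrow$(ii) is automatic from this formula, and (ii)$\Rightarrow$(i) at isolated zeros of $\nabla v$ along $\gamma$ follows by continuity.

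The equivalence (ii)$\Leftrightarrow$(iii) is a direct geometric observation: the graph $\{z = v(x_1,x_2)\}\subset \R^3$ has upward unit normal $(-v_{x_1},-v_{x_2},1)/\sqrt{1+\|\nabla v\|^2}$, while the horizontal plane $\{z=0\}$ has constant normal $(0,0,1)$, so the cosine of their intersection angle along $\gamma$ equals $1/\sqrt{1+\|\nabla v(\gamma(t))\|^2}$, which is constant in $t$ iff $\|\nabla v(\gamma(t))\|$ is. The main technical obstacle I foresee is the degenerate case in which $\nabla v$ vanishes on a subset of $\gamma$ with nonempty interior; this should be controlled via the real analyticity of $u$ (as a solution to $\Delta u + 2 = 0$) and hence of $v$, which forces $v$ to vanish to second order on an analytic arc and reduces the statement to the trivial subcase $u\equiv P$ on a neighborhood of $\gamma$.
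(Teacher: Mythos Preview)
Your approach is essentially the same as the paper's: set $v=u-P$, observe that $D^2v$ and $D^2u$ share eigenvectors, differentiate $\|\nabla v(\gamma(t))\|^2$, and use $\nabla v\perp\gamma'$ to conclude. The equivalence (ii)$\Leftrightarrow$(iii) via the normal to the graph is also exactly what the paper calls ``immediate''.

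There is, however, a genuine gap in your treatment of the degenerate case. The lemma is stated for $C^2$ functions $u$, not for solutions of $\Delta u+2=0$, so you cannot invoke analyticity. More importantly, even if $v$ were analytic, having $v=0$ and $\nabla v=0$ along an analytic arc does \emph{not} force $v\equiv 0$ on a neighborhood: consider $v(x_1,x_2)=x_2^2$ along the $x_1$-axis. So the reduction to ``the trivial subcase $u\equiv P$'' is false. The correct argument, which is what the paper uses implicitly, is elementary: if $\nabla v(\gamma(t))=0$ on an open subinterval, then differentiating each component of $\nabla v(\gamma(t))$ along that subinterval gives $D^2v(\gamma(t))\,\gamma'(t)=0$, so $\gamma'(t)$ is an eigenvector of $D^2v$ (with eigenvalue zero) there. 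Combined with the continuity argument you already gave at isolated zeros, this closes the gap without any analyticity hypothesis.
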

\begin{proof}
The equivalence between (ii) and (iii) is immediate. To prove that (ii) implies (i) we first consider the case $c=d_j=0$, i.e., $P$ in \eqref{cupo} is constant. In that case, $u(\gamma(t))$ is constant, and so
\begin{equation}\label{joa1}
\nabla u(\gamma(t)) = \landa(t) J (\gamma'(t))
\end{equation} 
for some function $\landa(t)$, where $J$ is the $\pi/2$ rotation in $\R^2$. Assuming (ii), $t\mapsto \|\nabla u(\gamma(t))\|^2$ is constant. Differentiating this expression and using \eqref{joa1}, we obtain directly that \eqref{prineq} holds along $\gamma(t)$ if $\landa(t)\neq 0$. If $\landa(t)\equiv 0$, then it follows directly from \eqref{joa1} that \eqref{prineq} also holds. Thus, in any case, $\gamma(t)$ is a Hessian eigencurve, and we have proved (ii) $\Rightarrow$ (i) for the case $c=d_j=0$.

For the general case, we denote $\hat{u}:=u-P$. It is immediate that \eqref{prineq} holds for $u$ if and only if it holds for $\hat{u}$. Also, assuming (ii), both $\hat{u}(\gamma(t))$ and $\|\nabla \hat{u}(\gamma(t)\|$ are constant. Therefore we can reduce the implication (ii) $\Rightarrow$ (i) to the case $c=d_j=0$, which has already been proved.

These computations can be easily reversed to prove that (i) $\Rightarrow$ (ii) for $c=d_j=0$, and so for every $P$. This proves Lemma \ref{joach}.
\end{proof}

\begin{definition}\label{def:capi}
We say that a curve $\gamma(t)$ in $\Omega$ is a \emph{capillary curve} for $u:\Omega\flecha \R$ if $\gamma(t)$ satisfies the conditions in Lemma \ref{joach}; that is, if $(u-P)(\gamma(t))=0$ and $\|\nabla (u-P)(\gamma(t))\|={\rm const}$ for some polynomial $P$ as in \eqref{cupo}.
\end{definition}

\subsection{Basic formulas in conformal parameters}\label{sec:confpa}
Let $u(x_1,x_2)$ be a solution to $\Delta u+2=0$ on a domain $\Omega\subset \R^2$, and let $g(z)$ be an injective holomorphic map with image contained in $\Omega$. We identify $z=x+iy$ with $(x,y)\in \R^2$. Since $g$ is holomorphic and $g'$ never vanishes, the function
\begin{equation}\label{def:ome}
\omega(x,y):=\log |g'(z)|
\end{equation} is well defined and harmonic, and we have $2\omega_z = g''/g'$. If we let  
\begin{equation}\label{defuve}
v(x,y):=u(g(x+iy)),
\end{equation} 
then equation $\Delta u +2 =0$ transforms under $g(z)$ to 
\begin{equation}\label{pdeom}
\Delta v +2 e^{2\omega}=0,
\end{equation} 
where $\Delta v= v_{xx}+v_{yy}$. Thus,
\begin{equation}\label{rep}
v= h +\sigma, \hspace{0.5cm} \sigma:=- |g|^2/2,
\end{equation}for some harmonic function $h=h(x,y)$. 
An immediate consequence of \eqref{pdeom} is that
\begin{equation}\label{def:q2}
\mathfrak{q}:=v_{zz}-2\omega_z v_z
\end{equation}
is a holomorphic function, i.e. $\mathfrak{q}_{\bar{z}}=0$.

The holomorphic function $\mathfrak{q}$ detects the eigenlines of $D^2 u$. To see this, we first remark that \eqref{prineq0} can be written in complex terms as 
\begin{equation}\label{prinecom0}
{\rm Im}\left(u_{\zeta \zeta}\,d\zeta^2 \right)=0,
\end{equation}
where $\zeta:=x_1+ix_2$ and $d\zeta= w_1+iw_2$. Thus, \eqref{prinecom0} defines a holomorphic quadratic differential. We now make the conformal coordinate change $\zeta=g(z)$ on this quadratic differential, and deduce from \eqref{def:q2} and \eqref{defuve} that \eqref{prinecom0} is written as 
\begin{equation}\label{prineq2}
{\rm Im}\left(\mathfrak{q}\, dz^2 \right)=0
\end{equation}
with respect to the $z$ parameter. 
So, a curve $z(t)$ in the conformal parameter $z=x+iy$ is a Hessian eigencurve for $u$ if and only if 
\begin{equation}\label{prineq3}
{\rm Im}(\mathfrak{q}(z(t))\,z'(t)^2)=0 \hspace{0.5cm} \forall \, t.
\end{equation}
Motivated by surface theory, we call $\mathfrak{q}$ the \emph{Hopf differential} associated to $u$ in the conformal parameter $z=x+iy$. 

\begin{remark}\label{rem:arriba}
The Hopf differential vanishes, i.e. $\mathfrak{q}=0$, if and only if $u_{\zeta \zeta}=0$, and so $u=P$ for some polynomial $P$ as in \eqref{cupo}, with $c=1$ (in order for $\Delta u+2=0$ to hold). 
\end{remark}
Note that the function $\sigma$ in \eqref{rep} is a solution to \eqref{pdeom} with $\mathfrak{q}=0$, i.e., 
\begin{equation}\label{sizz}
\sigma_{zz}= 2\omega_z \sigma_z.
\end{equation}
Conversely, it follows by Remark \ref{rem:arriba} above that any solution to \eqref{pdeom} with $\mathfrak{q}=0$ is of the form $-|A g(z)+B|^2/2$ for some $A,B\in \C$, $A\neq 0$.

\subsection{Detecting capillary curves in conformal parameters}\label{sec:cacon}
Following the notations in Section \ref{sec:confpa}, we consider a solution $u(x_1,x_2)$ to $\Delta u+2=0$ in a smooth domain $\Omega\subset \R^2$, and we let $g(z):\cV\flecha \Omega$ be a bijective holomorphic mapping from a complex domain $\cV$ into $\Omega$.

\begin{definition}\label{eigenline}
We say that $g(z)$ is a \emph{conformal eigenline parametrization} of $\Omega$ for $u$ if each parameter curve $y\mapsto g(x_0+iy)$ is a Hessian eigencurve for $u$. In that situation, the $x$-curves $x\mapsto g(x+iy_0)$ are also Hessian eigencurves of $u$. 
\end{definition}
It follows directly from \eqref{prineq3} that $g(z)$ is a conformal eigenline parametrization of $\Omega$ for $u$ if and only if the Hopf differential $\mathfrak{q}$ of $u$ for the conformal parameter $z=x+iy$ is a real constant:
\begin{equation}\label{qcons}
\mathfrak{q} = {\rm const} \in \R.
\end{equation} 
In that situation, the Hessian eigencurve $y\mapsto g(x_0+iy):\R\flecha \Omega$ will be a capillary curve for $u$ if and only if $v(x_0,y)=P(g(x_0+iy))$ for some $P$ as in \eqref{cupo}, i.e., if and only if there exist $a,d_1,d_2,c\in \R$ so that
\begin{equation}\label{ove1}
v(x_0,y)=a+ c \, \sigma(x_0,y)+L(x_0,y),
\end{equation} 
where $\sigma(x,y)$ is given by \eqref{rep} and 
\begin{equation}\label{def:ele}
L(x,y):=d_1 {\rm Re}(g(x+iy)) + d_2  {\rm Im}(g(x+iy)).
\end{equation}
If $g(x_0+iy)$ is capillary, then from item (ii) in Lemma \ref{joach} and $|g'|=e^{\omega}$ we obtain that $$ \|\nabla (u-P)(g)\| = 2 \left|(u-P)_\zeta (g)\right| = 2 e^{-\omega} \left|(v-(a+c \,\sigma +L))_z\right|$$ is constant along $x=x_0$. Using now \eqref{ove1}, this means that 
\begin{equation}\label{ove2}
v_x(x_0,y)= b\, e^{\omega(x_0,y)}+ c \, \sigma_x(x_0,y)+L_x(x_0,y)
\end{equation}
also holds, for some $b\in \R$. 

\begin{definition}\label{capilarcons}
We call $b$ the \emph{capillarity constant} associated to the capillary curve $g(x_0+iy)$. We note that $|b|$ is the constant value of $\|\nabla (u-P)(\gamma(t))\|$ in item (ii) of Lemma \ref{joach}.
\end{definition}

\begin{remark}[The role of the Hopf differential] By \eqref{rep}, we can write $v=h+\sigma$, with $h$ a harmonic function on $\cV$. Let $\mathfrak{q}\in \R\setminus\{0\}$ be the Hopf differential of $u$ for the complex parameter $z=x+iy$. Then, for any $\landa>0$, the function $v^{\landa}:= \landa h + \sigma$ is a solution to \eqref{pdeom} in $\cV$ with Hopf differential $\landa \mathfrak{q}$, and the same capillary curves as $v$. Specifically, if $g(x_0+iy)$ is a capillary curve for $v$ with associated constants $a,b,c, d_1,d_2$ as in equations \eqref{ove1}, \eqref{def:ele}, \eqref{ove2}, then it is also capillary for $v^{\landa}$, with new constants
$$a_\landa= \landa a, \hspace{0.5cm} b_\landa = \landa b, \hspace{0.5cm} (d_j)_{\landa}= \landa d_j, \hspace{0.5cm} c_\landa= 1+ \landa(c-1).$$ 
Note, however, that the property of $u$ being constant along $g(x_0+iy)$, i.e. the property that $d_1=d_2=c=0$, is not preserved, because $c_\landa$ does not have the same zeros as $c$. 
\end{remark}

The next proposition characterizes capillary curves of $\Omega$ in terms of $\omega$.

\begin{proposition}\label{ekicapi}
Let $u$ be a solution to $\Delta u+2=0$ on a smooth domain $\Omega\subset \R^2$, and let $g:\cV\flecha \Omega$ be a conformal eigenline parametrization of $\Omega$ for $u$.  If $y\mapsto g(x_0,y)$ is a capillary curve of $\Omega$ with capillarity constant $b\neq 0$, then, for any $(x_0,y)$ it holds 
\begin{equation}\label{capicon}
2\omega_x = -\alfa e^{-\omega} -\beta e^{\omega},
\end{equation}
where $\alfa,\beta\in \R$ are given in terms of $a,b,c$ in \eqref{ove1}, \eqref{ove2} by
\begin{equation}\label{relcon}
\alfa= \frac{4\mathfrak{q}}{b}, \hspace{0.5cm} \beta= \frac{2(1-c)}{b}.
\end{equation}
And conversely, if \eqref{capicon} holds along $(x_0,y)$ for some $\alfa\neq 0$, $\beta\in \R$, then the Hessian eigencurve $y\mapsto g(x_0,y)$ is capillary, and \eqref{ove1}, \eqref{ove2} hold for $a,b,c$ as in \eqref{relcon} and for some $d_1,d_2\in \R$.
\end{proposition}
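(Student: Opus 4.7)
The plan is to package the real capillary data $(a, b, c, d_1, d_2)$ into the pair $(a, D) \in \R \times \C$ with $D := d_1 - i d_2$, and to reformulate the two boundary conditions \eqref{ove1}, \eqref{ove2} along $x = x_0$ as a single complex identity. Introducing
\[
F(z) := 2v_z - 2c\sigma_z - b e^{\omega},
\]
a direct check using $L_z = Dg'/2$ and $2v_z = v_x - iv_y$ shows that \eqref{ove1}--\eqref{ove2} hold on $x = x_0$ if and only if $F(x_0 + iy) = D\, g'(x_0 + iy)$ for every $y$, together with $v - c\sigma - \Re(Dg) \equiv a$ there. Indeed, the real part of $F = Dg'$ is precisely \eqref{ove2}, while its imaginary part is equivalent to the vanishing of the $y$-derivative of $v - c\sigma - L$, the value $a$ being the constant of tangential integration.

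The proof therefore reduces to showing that $\widetilde D := F/g'$ is a complex constant along $x = x_0$ if and only if the ODE \eqref{capicon} holds there, with the precise identifications \eqref{relcon}. Since $g''/g' = 2\omega_z$, I would write $\widetilde D_y = i(\widetilde D_z - \widetilde D_{\bar z}) = i\bigl[(F_z - 2F\omega_z) - F_{\bar z}\bigr]/g'$. The key step is then to simplify $F_z - 2F\omega_z$ and $F_{\bar z}$ using four structural ingredients: the eigencurve hypothesis \eqref{qcons} in the form $v_{zz} = 2\omega_z v_z + \mathfrak{q}$; the identity $\sigma_{zz} = 2\omega_z \sigma_z$ from \eqref{sizz}; the equality $v_{z\bar z} = \sigma_{z\bar z} = -e^{2\omega}/2$ coming from the PDE \eqref{pdeom}; and the harmonicity $\omega_{z\bar z} = 0$. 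A direct substitution collapses everything to
\[
F_z - 2F\omega_z = 2\mathfrak{q} + b e^{\omega}\omega_z, \qquad F_{\bar z} = (c-1) e^{2\omega} - b e^{\omega}\omega_{\bar z},
\]
and, using $\omega_z + \omega_{\bar z} = \omega_x$, yields the master identity
\[
\widetilde D_y \;=\; \frac{i}{g'}\bigl[\,2\mathfrak{q} + b e^{\omega}\omega_x - (c-1) e^{2\omega}\,\bigr].
\]
The bracket vanishes exactly when $2\omega_x = \tfrac{2(c-1)}{b} e^{\omega} - \tfrac{4\mathfrak{q}}{b} e^{-\omega}$, which is \eqref{capicon} with coefficients \eqref{relcon}.

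Both directions now fall out of this identity. For the forward implication, the capillarity hypothesis yields $\widetilde D \equiv D$ constant on $x = x_0$ by the reformulation of the first paragraph, so $\widetilde D_y = 0$ and the bracket vanishes, giving \eqref{capicon}. For the converse, $\alpha \neq 0$ forces $\mathfrak{q} \neq 0$ through \eqref{relcon}, so one defines $b := 4\mathfrak{q}/\alpha \in \R\setminus\{0\}$ and $c := 1 - b\beta/2 \in \R$; condition \eqref{capicon} is then exactly the vanishing of the bracket, whence $\widetilde D_y \equiv 0$ along $x = x_0$ and $D := \widetilde D|_{x = x_0}$ is a well-defined complex constant. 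Writing $D = d_1 - i d_2$ with $d_1, d_2 \in \R$, the equality $F = Dg'$ on $x = x_0$ produces \eqref{ove2} from its real part and $(v - c\sigma - L)_y \equiv 0$ from its imaginary part; tangential integration then delivers the constant $a$ in \eqref{ove1}. The only delicate point is the clean algebraic reduction to the two displayed expressions for $F_z - 2F\omega_z$ and $F_{\bar z}$, which relies crucially on the fact that both $\sigma$ and $L$ are ``homogeneous'' solutions of \eqref{pdeom} with vanishing Hopf differential, so that all of their contributions combine into the two scalar coefficients $b$ and $c-1$ multiplying $e^{\omega}$ and $e^{2\omega}$.
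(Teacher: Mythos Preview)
Your proof is correct and takes a genuinely different route from the paper's. The paper works entirely in real variables: for the forward direction it differentiates \eqref{ove1} twice in $y$ and combines the resulting identities for $v_{yy}$, $\sigma_{yy}$, $L_{yy}$ (obtained from the real and imaginary parts of \eqref{def:q2}) to reach $be^\omega\omega_x = (c-1)e^{2\omega} - 2\mathfrak{q}$; for the converse it sets up the real $2\times 2$ linear ODE system satisfied by $(v_x, v_y)|_{x=x_0}$, builds a competing solution from $be^\omega + c\sigma_x + L_x$ and $c\sigma_y + L_y$, matches initial data at $y=0$ by choosing $d_1, d_2$, and invokes ODE uniqueness. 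Your complex packaging of both boundary conditions into $\widetilde D = F/g'$ and the single master identity for $\widetilde D_y$ is more streamlined: both the forward direction and the ODE-uniqueness machinery of the converse collapse into the observation that $\widetilde D_y \equiv 0$ forces $\widetilde D$ to be constant in $y$. The paper's approach makes the role of each real constant $a,b,c,d_1,d_2$ more explicit; yours lets the holomorphic structure (via $g''/g' = 2\omega_z$ together with \eqref{sizz} and \eqref{qcons}) do all the bookkeeping at once. One minor wording issue: your sentence ``$\alpha \neq 0$ forces $\mathfrak{q} \neq 0$ through \eqref{relcon}'' reads as circular, since \eqref{relcon} is part of the conclusion; what you really mean is that the conclusion of the converse is only well-posed when $\mathfrak{q} \neq 0$, which is the standing non-triviality hypothesis throughout the paper (cf.\ Remark~\ref{rem:arriba}).
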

\begin{proof}
Note first of all that, since ${\rm Re}(v_{zz}-2\omega_z v_z) =\mathfrak{q}$, we can deduce from \eqref{pdeom} that 
\begin{equation}\label{vi1}
v_{yy}= - \left(2\mathfrak{q}+e^{2\omega}+v_{x}\omega_x- v_y \omega_y\right).
\end{equation} The same computation for $\sigma$ using \eqref{sizz} and $\Delta \sigma +2 e^{2\omega}=0$ gives 
\begin{equation}\label{vi2}
\sigma_{yy}= - \left(e^{2\omega}+\sigma_x\omega_x -\sigma_y \omega_y\right).
\end{equation}
And since $L(x,y)$ in \eqref{def:ele} is harmonic with $L_{zz}-2\omega_zL_z=0$, we obtain in the same way
\begin{equation}\label{vi22}
L_{yy}= -(L_x \omega_x - L_y \omega_y).
\end{equation}
Similarly, by looking at the corresponding imaginary parts, and since $\mathfrak{q}\in \R$, we have
\begin{equation}\label{vi23}
v_{xy}= \omega_x v_y + \omega_y v_x, \hspace{0.5cm} 
\sigma_{xy}= \omega_x \sigma_y + \omega_y \sigma_x, \hspace{0.5cm} 
L_{xy}= \omega_x L_y + \omega_y L_x.
\end{equation}
Assume now that $y\mapsto g(x_0+iy)$ is a capillary curve, and that $b\neq 0$ in \eqref{ove2}. By \eqref{ove1}, we have $v_{yy}=c\, \sigma_{yy}+L_{yy}$ along $(x_0,y)$. Plugging \eqref{vi1}, \eqref{vi2}  and \eqref{vi22} above into this equation, and using \eqref{ove2} together with $v_{y}=c\, \sigma_{y}+L_{y}$ to cancel out terms, we deduce that 
\begin{equation}\label{ecube}
b\,e^{\omega}\omega_x = (c-1) \, e^{2\omega} -2\mathfrak{q}  \hspace{0.5cm} \text{along $y\mapsto (x_0,y)$}.
\end{equation} 
Since $b\neq 0$, we obtain
$$2\omega_x = -\alfa \, e^{-\omega} -\beta\, e^{\omega} \hspace{0.5cm} \text{along $y\mapsto (x_0,y)$},$$ where $\alfa,\beta\in \R$ are given by \eqref{relcon}.

For the converse, we assume that $\omega(x,y)$ satisfies \eqref{capicon} along $(x_0,y)$ for some $\alfa\neq 0$. Since $y\mapsto g(x_0+iy)$ is a Hessian eigencurve, it suffices to show that \eqref{ove1} holds for every $(x_0,y)$. To start, we consider the functions $F(y):=v_x(x_0,y)$ and $G(y):=v_y(x_0,y)$. By decomposing \eqref{def:q2} into real and imaginary parts and using \eqref{pdeom}, we obtain that $(F,G)$ solves the linear ODE system
\begin{equation}\label{sisfg}
F_y =\omega_x G + \omega_y F, \hspace{0.5cm} G_y = -2\mathfrak{q} - e^{2\omega}-\omega_x F+\omega_y G.
\end{equation} 
On the other hand, let $$\hat{F}(y):=(b e^\omega + c\sigma_x + L_x)(x_0,y), \hspace{0.5cm} \hat{G}(y):= (c\sigma_y + L_y)(x_0,y),$$ where $b,c,d_1,d_2$ are real constants to be determined later on. Then, an elementary computation using \eqref{vi23} along $(x_0,y)$ proves that $(\hat{F},\hat{G})$ solves system \eqref{sisfg} if and only if \eqref{ecube} holds. Chosing $b,c$ so that $\alfa b = 4\mathfrak{q}$ and $\beta b = 2(c-1)$, we deduce from \eqref{capicon} that $(\hat{F},\hat{G})$ is also a solution to \eqref{sisfg}.

In addition, it is direct from \eqref{def:ele} that $$(L_x-iL_y )(x_0,0)=(d_1-i d_2)g'(x_0).$$ Since $g'(x_0)\neq 0$, we can therefore choose unique $d_1,d_2$ so that $(\hat{F},\hat{G})(x_0,0)=(F,G)(x_0,0)$. 

Now, taking into account that any solution to system \eqref{sisfg} is uniquely determined by its value at $y=0$, we conclude that $(\hat{F},\hat{G})=(F,G)$ for every $y$. From $G(y)=\hat{G}(y)$ we obtain after integrating in $y$ that there is some $a\in \R$ such that \eqref{ove1} holds along $(x_0,y)$. This completes the proof.
\end{proof}

We compute next a useful formula for the curvature of a capillary curve.
\begin{lemma}\label{lem:curvatura}
In the conditions of Proposition \ref{ekicapi} above, the curvature $\kappa_\gamma$  of the planar curve $\gamma(y):=g(x_0,y)$ is given by
\begin{equation}\label{curvy}
\kappa_{\gamma}= e^{-\omega}\, {\rm Re} (g'' /g').
\end{equation} 
Moreover, if $\omega$ satisfies \eqref{capicon} along $(x_0,y)$, then
\begin{equation}\label{curcapi}
-2\kappa_\gamma= \beta+\alfa \, e^{-2\omega(x_0,y)}.
\end{equation}
In particular, when $\alfa=0$, $\gamma(y)$ is a circle (or line) of curvature $-\beta/2$.
\end{lemma}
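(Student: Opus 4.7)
The plan is a direct computation based on the parametrization $\gamma(y)=g(x_0+iy)$ and the standard formula for the signed curvature of a planar curve. First I would note that $\gamma'(y)=ig'(z)$ and $\gamma''(y)=-g''(z)$, so that $|\gamma'|=|g'|=e^\omega$ by the definition of $\omega$ in \eqref{def:ome}, and the planar curvature formula $\kappa_\gamma={\rm Im}(\overline{\gamma'}\gamma'')/|\gamma'|^3$ gives
\[
\kappa_\gamma=\frac{{\rm Im}\bigl(i\,\overline{g'}\,g''\bigr)}{e^{3\omega}}=\frac{{\rm Re}\bigl(\overline{g'}g''\bigr)}{e^{3\omega}}=e^{-\omega}\,{\rm Re}(g''/g'),
\]
which is \eqref{curvy}.

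Next, I would use that $g''/g'=2\omega_z=\omega_x-i\omega_y$ (from $\omega=\tfrac12\log(g'\overline{g'})$), so that ${\rm Re}(g''/g')=\omega_x$ and hence $\kappa_\gamma=e^{-\omega}\omega_x$. Plugging the capillary relation \eqref{capicon}, namely $2\omega_x=-\alfa e^{-\omega}-\be e^{\omega}$, directly yields
\[
-2\kappa_\gamma=-2 e^{-\omega}\omega_x=\be+\alfa\,e^{-2\omega(x_0,y)},
\]
which is \eqref{curcapi}. Finally, when $\alfa=0$ the right-hand side is the real constant $\be$, so $\kappa_\gamma\equiv -\be/2$; a planar curve of constant curvature is a circle (if $\be\neq 0$) or a line (if $\be=0$), finishing the last assertion.

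There is no real obstacle here: the whole argument is a one-line application of the Wirtinger identity $2\omega_z=g''/g'$ combined with the capillary ODE from Proposition \ref{ekicapi}. The only point worth checking carefully is the sign convention in the planar curvature formula, so that the identification of ${\rm Im}(i\overline{g'}g'')$ with ${\rm Re}(\overline{g'}g'')$ is performed consistently with the orientation chosen for $\gamma(y)$.
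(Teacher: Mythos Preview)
Your proof is correct and follows essentially the same approach as the paper: compute $\gamma'=ig'$, $\gamma''=-g''$, apply the standard signed curvature formula to obtain \eqref{curvy}, then use $2\omega_z=g''/g'$ so that ${\rm Re}(g''/g')=\omega_x$ and substitute \eqref{capicon}. The paper writes the curvature as $\kappa_\gamma=\langle\gamma'',J\gamma'\rangle/|\gamma'|^3$ rather than ${\rm Im}(\overline{\gamma'}\gamma'')/|\gamma'|^3$, but these are identical expressions.
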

\begin{proof}
First, we work along $(x_0,y)$ and we do not use the capillary hypothesis \eqref{capicon}. Using that $|\gamma'|^2 =|g'|^2= e^{2\omega}$, and $\esiz z_1,z_2\esde =  {\rm Re}(z_1 \overline{z_2})$, we have
\begin{equation*}
\kappa_{\gamma}= \frac{\esiz \gamma'',J\gamma'\esde}{|\gamma'|^3} = e^{-3\omega}\, {\rm Re} (g'' \overline{g'}) = e^{-\omega}\, {\rm Re} (g'' /g').
\end{equation*} 
Recalling now that $g''/g'=2\omega_z$, we have ${\rm Re}(g''/g')=\omega_x$. So, if \eqref{capicon} holds along $(x_0,y)$, we immediately obtain \eqref{curcapi}.
\end{proof}

\subsection{Capillary domains}\label{seccapi} 
By a \emph{ring domain} in $\R^2$ we mean a smooth doubly connected domain $\Omega\subset \R^2$ bounded by two Jordan curves $\gamma_1,\gamma_2$. By a \emph{band domain} we mean a smooth simply connected (unbounded) domain $\Omega\subset \R^2$ with two connected boundary components: $\parc \Omega=\gamma_1\cup \gamma_2$.

We will  be specially interested in a certain type of ring and band domains of $\R^2$:

\begin{definition}\label{def:capicu}
A ring (resp. band) domain $\Omega\subset \R^2$ is a \emph{capillary ring domain} (resp. \emph{capillary band}) if there exists a solution $u$ to $\Delta u +2=0$ in $\Omega$ such that each component $\gamma_1,\gamma_2$ of $\parc \Omega$ is a capillary curve for $u$, as in Definition \ref{def:capi}.

Observe that any Serrin ring domain or any Serrin band (Definition \ref{def:serrin}) is, trivially, a capillary ring domain or a capillary band, respectively.
\end{definition}

Let $\Omega$ be a ring domain in $\R^2$. By uniformization, there exists a holomorphic bijection 
\begin{equation}\label{deman}
g:\cU\flecha \overline\Omega\subset \R^2,
\end{equation} 
where $\cU$ is a vertical quotient band 
\begin{equation}\label{quoban}
\cU=\left([s_1,s_2]\times \R, \sim \right), \hspace{0.5cm} (x,y)\sim (x,y+T),
\end{equation}
for some $s_1<s_2$ and $T>0$. Thus, we have $g(z+iT)=g(z)$, and each boundary curve $\{x=s_j\}$ of $\parc\cU$ goes to a boundary curve $\gamma_j$ of $\parc \Omega$, for $j=1,2$.
We can choose $g(z)$ so that $g(s_1,y)$ parametrizes the \emph{exterior} component of $\parc \Omega$. In that situation, $g$ is unique up to the change $g(z)\mapsto g(\landa z+z_0)$, for $z_0\in \C$ and $\landa>0$.

Assume next that $\Omega$ is a capillary ring domain associated to a solution $u$ to $\Delta u +2=0$. Let $v:=u\circ g: \cU\flecha \R$ be defined by \eqref{defuve}. As $\parc \Omega$ is capillary, then \eqref{prineq3} holds along the curves $t\mapsto (s_j,t)$ of $\parc \cU$. This means that ${\rm Im}(\mathfrak{q})=0$ on $\parc \cU$. Since $\mathfrak{q}$ is holomorphic and $\cU$ is compact, we deduce by the maximum principle that \eqref{qcons} holds. In particular, $g(z)$ is a conformal parametrization by Hessian eigencurves of $\Omega$.

Consider now the case that $\Omega$ is a band domain in $\R^2$ that is \emph{periodic}, that is, $\Omega=\Omega+ {\bf v}_0$ for some ${\bf v}_0\in \R^2-\{{\bf 0}\}$. Then, there exists a holomorphic bijection 
\begin{equation}\label{demanflat}
g:[s_1,s_2]\times \R\flecha \overline\Omega\subset \R^2,
\end{equation} 
such that $g'(z)$ is well defined on a quotient band $\cU$ as in \eqref{quoban}. 

Suppose now that $\Omega$ is a periodic capillary band associated to a solution $u$ to $\Delta u+2=0$ in $\Omega$ that is also ${\bf v}_0$-periodic, i.e., $u({\bf x}+{\bf v}_0)= u({\bf x})$ for every ${\bf x}\in \Omega$. Then, it is direct that $\mathfrak{q}$ is well defined on the quotient $\cU$, and due to the capillarity boundary condition we have  ${\rm Im}(\mathfrak{q})=0$ along $\parc \cU$. As in the case of ring domains, we obtain that \eqref{qcons} holds, and $g(z)$ is a conformal parametrization by Hessian eigencurves of $\Omega$.


\begin{definition}\label{develop}
In the above conditions, we call $g(z)$ the \emph{developing map} of the capillary domain $\Omega$, and say that $(x,y)\in \cU$ are \emph{conformal eigenline parameters} of $\Omega$.
\end{definition}

%
%
%
%
%
%
%

\section{Foliation by capillary curves: local description}\label{sec:folicapi}
In this section we describe from a local perspective the domains $\Omega\subset\R^2$ that are foliated by capillary curves with respect to a solution $u$ to $\Delta u +2=0$, with a special focus on the case that all paraboloids associated to these capillary curves have the same rotation axis. 

\subsection{Local study}\label{sec:local}
Motivated by the discussion in Section \ref{sec:cacon}, we define next:

\begin{definition}\label{def:capring}
Let $u$ be a solution to $\Delta u+2=0$ on a smooth domain $\Omega\subset \R^2$, and let 
$$g(z):\cV:=(s_1,s_2)\times (t_1,t_2) \flecha\Omega$$ be a conformal eigenline parametrization of $\Omega$ for $u$. 
 We say that $\Omega$ is \emph{foliated by capillary curves} (with respect to $u$) if there exist real analytic functions $\alfa(x),\beta(x):(s_1,s_2) \flecha \R$ such that  
\begin{equation}\label{omex}
2\omega_x =- \alfa(x) e^{-\omega} - \beta(x) e^{\omega}
\end{equation}
holds on $\cV$, where $\omega=\log |g'|$. 

By abuse of language, we will also say that the solution $u$ is \emph{foliated by capillary curves}.
\end{definition}
\begin{remark}
Note that if \eqref{omex} holds, it follows by Proposition \ref{ekicapi} that each curve $y\mapsto g(x_0+iy)$ for which $\alfa(x_0)\neq 0$ is capillary for $u$. 
\end{remark}


Consider next a solution $u$ to $\Delta u+2=0$ foliated by capillary curves (Definition \ref{def:capring}). We will assume that $u$ is not a rotational polynomial $P(x_1,x_2)$ as in \eqref{cupo}. We will also assume for simplicity here that $\omega_y\not\equiv 0$. Then, with the notations of Section \ref{sec:confpa}, we have:
\begin{enumerate}
\item[(i)]
The Hopf differential $\mathfrak{q}:=v_{zz}-2\omega_z v_z$ is a real constant $\mathfrak{q}\neq 0$; see Remark \ref{rem:arriba}.
\item[(ii)]
If $\alfa(x)\neq 0$, the curve $y\mapsto (g(x+iy),v(x,y))$ lies in a rotational ``paraboloid'' $\cP(x)\subset \R^3$.
\end{enumerate}
Here, we recall that $\cP(x)$ can degenerate to a plane, see \eqref{cupo}. With this, we have:

\begin{proposition}\label{pro:sis}
The functions $\alfa(x),\beta(x)$ solve the system
\begin{equation}\label{system}
\left\{ \def\arraystretch{1.5}\begin{array}{lll} \alfa'' & = &  \delta \alfa - 2 \alfa^2 \beta, \\ \beta'' & = &  \delta \beta - 
2 \alfa \beta^2, \end{array} \right.
 \end{equation}
with respect to some constant $\delta\in \R$.
\end{proposition}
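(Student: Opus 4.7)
The plan is to exploit two structural facts about $\omega = \log|g'|$: it is harmonic (since $g'$ is holomorphic), and it satisfies the first-order identity \eqref{omex} coming from the capillary foliation. Together these impose an overdetermined condition on $\omega$ whose compatibility is precisely \eqref{system}.

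First I would rewrite \eqref{omex} as $\omega_x = F(x,\omega)$ with $F := -\tfrac{1}{2}(\alpha e^{-\omega}+\beta e^{\omega})$. Differentiating once in $x$ gives $\omega_{xx}=F_x+FF_\omega$, so harmonicity $\omega_{xx}+\omega_{yy}=0$ produces
$$\omega_{yy}=G(x,\omega):=\tfrac{\alpha'}{2}e^{-\omega}+\tfrac{\beta'}{2}e^{\omega}+\tfrac{\alpha^2}{4}e^{-2\omega}-\tfrac{\beta^2}{4}e^{2\omega}.$$
For each fixed $x$ this is an autonomous second-order ODE in $y$, so multiplying by $\omega_y$ and integrating yields the first integral
$$\omega_y^2=\Phi(x,\omega):=-\alpha'e^{-\omega}+\beta'e^{\omega}-\tfrac{\alpha^2}{4}e^{-2\omega}-\tfrac{\beta^2}{4}e^{2\omega}+E(x),$$
valid throughout $\cV$, where $E(x)$ is a single real function of $x$ alone.

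The key step is to impose $x$-compatibility of this first integral. Differentiating \eqref{omex} in $y$ yields $\omega_{xy}=F_\omega\omega_y$. Writing $H(x,\omega):=\int^\omega G(x,s)\,ds$, differentiating $E(x)=\omega_y^2-2H(x,\omega)$ in $x$ and using $\omega_x=F$, $\omega_{xy}=F_\omega\omega_y$, and $\omega_y^2=2H+E$ gives
$$E'(x)=4F_\omega H+2F_\omega E-2H_x-2GF.$$
Since the left-hand side depends only on $x$, while the right-hand side expands as a polynomial in $e^{k\omega}$ for $k\in\{-3,-2,-1,0,1,2,3\}$, each coefficient with $k\neq 0$ must vanish identically. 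A direct computation shows the $|k|\in\{2,3\}$ coefficients cancel automatically, leaving the three surviving relations
$$E'=\tfrac{3}{2}(\alpha\beta)',\qquad \alpha''+\alpha E+\tfrac{1}{2}\alpha^2\beta=0,\qquad \beta''+\beta E+\tfrac{1}{2}\alpha\beta^2=0.$$

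Integrating the first identity gives $E(x)=\tfrac{3}{2}\alpha(x)\beta(x)-\delta$ for some real constant $\delta$. Substituting this expression back into the other two identities yields
$$\alpha''=\delta\alpha-\tfrac{3}{2}\alpha^2\beta-\tfrac{1}{2}\alpha^2\beta=\delta\alpha-2\alpha^2\beta,\qquad \beta''=\delta\beta-2\alpha\beta^2,$$
which is exactly the system \eqref{system}. The main obstacle is the polynomial bookkeeping in the expansion of $4F_\omega H+2F_\omega E-2H_x-2GF$: one must verify that four of the seven $e^{k\omega}$ coefficients cancel identically (a useful internal consistency check) and then extract the three informative equations from the remaining coefficients.
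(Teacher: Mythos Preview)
Your proof is correct and follows essentially the same route as the paper: derive a first integral $\omega_y^2 = \Phi(x,\omega)$ containing an unknown function of $x$ (your $E(x)$ is the paper's $A(x)/4$), then impose $x$-compatibility and read off the vanishing of the coefficients of $e^{k\omega}$. Your organization via $F$, $G$, $H$ is slightly cleaner packaging, but the computation is the same; the only point you should make explicit is that extracting the coefficient equations uses the standing hypothesis $\omega_y\not\equiv 0$ (so that for fixed $x$ the functions $e^{k\omega}$ are not constant in $y$), which the paper invokes at exactly this step.
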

\begin{proof}
We first compute $4\omega_y \omega_{yy} = -4\omega_y (\omega_{x})_x$ using \eqref{omex} and $\Delta \omega=0$ to obtain 
$$4\omega_y \omega_{yy} = e^{-2\omega} \omega_y \left( \alfa^2 +2 \alfa' e^{\omega} + 2\beta' e^{3\omega}-\beta^2 e^{4\omega}\right).$$ 
Integrating with respect to $y$, this yields 
\begin{equation}\label{eq:wy2}
4\omega_y^2 = -e^{-2\omega} \left(\alfa^2 +4 \alfa' e^{\omega} -4\beta' e^{3\omega}+\beta^2 e^{4\omega}\right)+ A(x),
\end{equation} 
for some function $A(x)$. Next, we differentiate \eqref{eq:wy2} with respect to $x$ and use \eqref{omex} to obtain an expression for $
8\omega_y\omega_{yx}$. On the other hand, we can also obtain an expression for $8\omega_y\omega_{yx}$ by differentiation of \eqref{omex} with respect to $y$ and the use of \eqref{eq:wy2}. By comparing these two expressions, we arrive at
\begin{equation}\label{eqa}
A'+ e^{\omega}  \left(4 \beta'' + 2 \alpha \beta^2+A\beta\right)
=
6 \beta \alpha'+6 \alpha \beta' + 
e^{-\omega} \left(4 \alpha''+2 \alpha^2 \beta+A\alpha \right).
\end{equation}
Note that we can view \eqref{eqa}  as an equation of the form $P(x,Z)=0$, where $Z:=e^{-\omega}$ and $P(x,Z)$ is a second degree polynomial in $Z$ with coefficients depending solely on $x$. Since $\omega_y\not\equiv 0$, all these coefficients must vanish, and we obtain 
\begin{equation}\label{eqa2}
\def\arraystretch{1.4}\begin{array}{lll}4\alpha'' &=& - 2 \alpha^2 \beta - A\alpha, \\
4 \beta'' &=& - 2 \alpha \beta^2 - A\beta, \\
A &= & 6\alpha\beta -4 \delta,
\end{array}
\end{equation}
where $\delta\in\R$ is a constant.  This implies \eqref{system} and finishes the proof. 
\end{proof}

\begin{remark}\label{sicha}
The change $(\alfa,\beta)\mapsto (\landa \alfa, \landa^{-1} \beta)$ preserves solutions of \eqref{system}. It corresponds to adding a constant $\log \landa$ to $\omega$, that is, to a dilation $g\mapsto \landa g$ for the developing map $g(z)$.
\end{remark}

It is immediate to check that if $(\alfa(x),\beta(x))$ solves \eqref{system}, then
\begin{equation}\label{khami}
\kappa_1:=\alfa' (x)\beta(x) -\beta'(x) \alfa(x) 
\end{equation} 
and 
\begin{equation}\label{khami2}
\kappa_2 := \alfa'(x)\beta'(x)- \delta \alfa(x)\beta(x) + \alfa(x)^2\beta(x)^2
\end{equation}
are real constants. These first integrals come from the fact that \eqref{system} has a Hamiltonian structure, that will not be used explicitly in this paper.

\begin{remark}
From \eqref{eq:wy2} and \eqref{eqa2} we obtain the useful identity \begin{equation}\label{roy}
 4e^{-2 \omega} \omega_y^2 = 
 -\alfa^2 e^{-4\omega} 
 -4 \alpha' e^{-3\omega}
+(6 \alfa \beta -4 \delta)e^{-2 \omega} 
+4 \beta ' e^{- \omega}
-\beta^2. 
\end{equation}
\end{remark}

\noindent \underline{\bf Note:} From now on, along the rest of this section we will assume furthermore that all the ``paraboloids'' $\cP(x)$ (see item (ii) at the beginning of the section) \emph{have the same rotational axis}, equal to the $x_3$-axis. 

The above property means that, for any $x$ such that $\alfa(x)\neq 0$, the graphing function of $\cP(x)$ is of the form \eqref{cupo}, with $d_1=d_2=0$. Using now \eqref{ove1}, \eqref{ove2}, we obtain the existence of real analytic functions $a(x),b(x),c(x)$ such that
\begin{equation}\label{sobredet4}
v= a(x) + c(x) \sigma, \hspace{0.5cm} v_x = b(x) e^{\omega} + c(x) \sigma_x
\end{equation}
hold for every $(x,y)$ with $\alfa(x)\neq 0$; here, we are denoting as usual $\sigma:=-|g|^2/2$, as in \eqref{rep}. Moreover, in that situation $\cP(x)$ will be a (horizontal) plane if and only if $c(x)=0$.

We will assume next for simplicity that $\omega_y\not\equiv 0$ and work around points where $\alfa(x)\neq 0$. We note that if $\omega_y\equiv 0$, then $\omega$ is a linear function on $x$. This gives rise to radial solutions to $\Delta u+2=0$, as will be explained in Section \ref{sec:tau1}. 


System \eqref{sobredet4} has the compatibility condition
\begin{equation}\label{compati}
a'(x) + c'(x)\sigma =b(x) e^{\omega}.
\end{equation}
Also, by \eqref{relcon}, we have the relation
\begin{equation}\label{relcon2}
\alfa(x)= \frac{4\mathfrak{q}}{b(x)}, \hspace{0.5cm} \beta(x)= \frac{2(1-c(x))}{b(x)}.
\end{equation}
Let us now discuss how to obtain $a,b,c$ from $\alfa,\beta$. First, inverting \eqref{relcon2} we obtain 
\begin{equation}\label{relcon3}
b(x)=\frac{4 \mathfrak{q}}{\alfa(x)}, \hspace{0.5cm} c(x)=1-\frac{2\mathfrak{q}\beta(x)}{\alfa(x)}.
\end{equation}
Also, from \eqref{relcon3} and \eqref{khami} we have 
\begin{equation}\label{relcon4}
c'(x)= \frac{2\kappa_1 \mathfrak{q}}{\alfa(x)^2}.
\end{equation}
The way to obtain $a(x)$ from $\alfa(x),\beta(x)$ is a bit more indirect. First, we use \eqref{sobredet4} and \eqref{compati} to compute $0=\Delta v +2 e^{2\omega}=(v_x)_x+v_{yy}+2e^{2\omega}$. This gives $$\frac{a' c''}{c'}-a'' + e^{\omega} \left(2 b \omega_x+2 b'-\frac{b c''}{c'}\right)+2 e^{2\omega} (1-c) =0.$$ We now plug into this equation the values for $b,b'$ and $c$ in terms of $\alfa,\beta$ via \eqref{relcon3}, those of $c',c''$ by means of \eqref{relcon4}, and the value of $\omega_x$ in \eqref{omex}. Putting all together, we obtain $$a''+\frac{2\alfa'}{\alfa} \,a' +4 \mathfrak{q}=0,$$ which integrates to 
\begin{equation}\label{relcon5}
a'(x)=-\frac{4\mathfrak{q}}{\alfa(x)^2} \left( \hat{c}_0+\int_0^x \alfa(s)^2 ds\right).
\end{equation}
Here $\hat{c}_0$ is an integration constant.

Moreover, once here we can use the relations \eqref{relcon3}, \eqref{relcon4} and \eqref{relcon5} to compute the value of $\sigma(x,y)$ from the compatibility condition \eqref{compati}. We obtain 
\begin{equation}\label{ecusi}
\sigma(x,y)= \frac{2}{\kappa_1}\left(\alfa(x) e^{\omega(x,y)} + \hat{c}_0+ \int_0^x \alfa(s)^2 ds\right).
\end{equation} 
We note for later use that, by \eqref{ecusi}, if $\alfa(0)=0$, then 
\begin{equation}\label{hatcero}
\hat{c}_0=-\kappa_1 |g(0)|^2/4.
\end{equation}

\subsection{Moduli space}
The previous construction implies that the space of solutions to $\Delta u+2=0$ foliated by capillary curves that lie in rotational paraboloids with a common axis is finite dimensional. Indeed, first note that $\alfa(x),\beta(x)$ in \eqref{system} are determined by $5$ free parameters, namely, the four initial conditions at some $x_0$, and the value of $\delta$. Also, by \eqref{omex} and \eqref{roy}, $\alfa(x),\beta(x)$ determine uniquely the harmonic function $\omega(x,y)$ up to fixing the value of $\omega$ at a point $(x_0,y_0)$. Besides, the holomorphic map $g$ is linked to $\omega$ by $e^{2\omega}=|g'|^2$; thus, $g$ is uniquely determined from $\omega$ up to the change $g\mapsto e^{i\theta}g+g_0$, where $g_0\in \C$ and $\theta\in \R$. So, the space of holomorphic maps $g(x+iy)$ for which all $y$-curves are capillary is of dimension $9$. But on the other hand, the previous change for $g(z)$ amounts to translations and rotations of $\R^2$, and the additive constant for $\omega$ represents dilations of $g(z)$, see Remark \ref{sicha}. Finally, we recall that the conformal $(x,y)$ parameters are defined up to translations $(x,y)\mapsto (x+x_0,y+y_0)$ and dilations $(x,y)\mapsto (\landa x,\landa y)$, $\landa>0$. This leaves us, finally, with a $2$-dimensional moduli space, modulo similarities in $\R^2$, of solutions to $\Delta u+2=0$ foliated by capillary curves.

%
%

 Let us describe in more detail this space when $g:\cU\flecha \R^2$ parametrizes a Serrin ring domain $\overline\Omega=g(\cU)$ foliated by capillary curves, with $\cU$ as in \eqref{quoban}, and $\omega_y\not\equiv 0$. The boundary curves of $\parc \cU$ are thus given by $x=s_j$, $j=1,2$, and we will assume for simplicity that $\alfa(s_j)\neq 0$. So, each boundary curve $y\mapsto (g(s_j+iy),v(s_j,y))\in \R^3$ of the graph of $u$ lies in the rotational ``paraboloid'' $\cP(s_j)$, see (ii) at the beginning of Section \ref{sec:local}. But also, due the Serrin boundary condition, it lies in a horizontal plane $x_3=a_j$. Then, by the capillarity condition, either $\Omega$ is a radial annulus, or we must have $c(s_j)=0$, $j=1,2$, i.e., both $\cP(s_j)$ are horizontal planes.
 
We prove next that, in the above conditions, $\alfa:[s_1,s_2]\flecha \R$ must vanish somewhere. Indeed, observe that $c(x)$ in \eqref{relcon3} is monotonic, due to \eqref{khami}, and it is finite everywhere if $\alfa(x)$ does not vanish. In particular, if $\alfa(x)$ has no zeros, then $c(s_j)\neq 0$ for some $j\in\{1,2\}$, a contradiction.

This means that we can fix without loss of generality the initial condition $\alfa(0)=0$ in system \eqref{system}. By Lemma \ref{lem:curvatura}, this implies that $g(iy)$ is a regular parametrization of a closed curve of constant curvature $\beta(0)/2$. Thus, $\beta(0)\neq 0$, and after a dilation of $\Omega$ we can use the change explained in Remark \ref{sicha} and fix $\beta(0)=2$. Then, by \eqref{roy}, we have
\begin{equation}\label{roy2}
Z_y^2 =-\alfa'(0) Z^3 - \delta Z^2 + \beta'(0) Z - 1,
\end{equation}
where $Z(y):=e^{-\omega(0,y)}$. 
The only way that \eqref{roy2} can have periodic positive solutions $Z(y)$ is that the polynomial on the right side of \eqref{roy} has two positive, maybe coinciding, roots $0<\rho_1\leq \rho_2$, and in that case $Z(y)$ would takes its values in $[\rho_1,\rho_2]$. If $\rho_1=\rho_2$, then $Z(y)$ in \eqref{roy2} would be constant, of value $\rho_1(=\rho_2)$.


%

\section{Developing maps for foliations by capillary curves}\label{sec:gaussmaps}
In this section we construct a certain family of holomorphic maps $g(z)$ compatible with the foliation structure by capillary curves described in Section \ref{sec:folicapi}. In later sections we will show that some of these holomorphic maps are developing maps for Serrin ring domains in $\R^2$.


\subsection{Choice of parameters}\label{sec:param}
Taking into consideration the discussion at the end of Section \ref{sec:folicapi}, we define the parameter space 
\begin{equation}\label{def:theta0}
\Lambda:=\{(\eta,\tau): \eta>0, \ \tau\in (0,1]\},
\end{equation} 
and make the following choices for the initial data of \eqref{system}: 
\begin{equation}\label{abd}
\begin{array}{c}
\alfa(0)=0, \hspace{0.5cm} \beta(0)=2 ,  \hspace{0.5cm} \delta=\dfrac{1}{4} \left(\eta ^2 \tau ^2-\tau ^2-1\right),\\[15pt]
\alfa'(0)=\dfrac{\eta \tau^2}{8}, \hspace{0.5cm} \beta'(0)= \dfrac{\left(\eta ^2 \tau ^2+\eta ^2-1\right)}{2 \eta }, \hspace{0.3cm} 
 \end{array}
\end{equation}
In this way, the polynomial  in the right side of \eqref{roy2} is $p(Z)/4$, where
\begin{equation}\label{depe} 
p(s):=-\frac{\eta \tau^2}{2}\left(s-\frac{2}{\eta \tau^2}\right)\left(s -\frac{2}{\eta}\right) \big(s+2\eta \big).
\end{equation} 
Note that $p(s)$ has two positive roots, negative leading term, and the condition $\tau=1$ detects the coincidence of the two positive roots. So, we have the type of algebraic structure we were seeking.

Then, with the above notations, we have:

\begin{lemma}\label{lem:xaxb}
If $\tau\in (0,1)$, there exist unique $x_a^-<x_b^-<0<x_b^+<x_a^+$ such that: 
\begin{enumerate}
\item[i)] $\alpha(x_a^-)=\alpha(x_a^+)=0$, 
\item[ii)]  $\alpha<0$ in $ (x_a^-,0)$ and $\alpha>0$ in $ (0,x_a^+)$ , 
\item[iii)] $\beta(x_b^-)=\beta(x_b^+)=0$, 
\item[iv)]  $\beta<0$ in $ (x_a^-,x_b^-)\cup (x_b^+,x_a^+)$ and $\beta>0$ in $ (x_b^-,x_b^+)$. 
\end{enumerate}
If $\tau=1$, then $ii)$ and $iv)$ hold for $x_a^-=x_b^-=-\8$ and $x_b^+>0$, $x_a^+=\8$. That is, $\alpha(x)$ does not vanish for $x\neq 0$ and $\beta(x)$ has a unique zero at $x=x_b^+>0$.
\end{lemma}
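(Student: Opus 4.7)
The strategy is to reduce the lemma to a qualitative analysis of the scalar quantity $u(x):=\alpha(x)\beta(x)$. Using the two first integrals $\kappa_1,\kappa_2$ from \eqref{khami}, \eqref{khami2} of the Hamiltonian system \eqref{system}, a direct computation (expand $(u')^2=(\alpha'\beta+\alpha\beta')^2$ and eliminate $\alpha'\beta'$ via $\kappa_2$) yields
\begin{equation*}
(u')^{2}=P(u):=-4u^{3}+4\delta u^{2}+4\kappa_{2}u+\kappa_{1}^{2}.
\end{equation*}
Plugging the initial data \eqref{abd} into $\kappa_1,\kappa_2$ gives $\kappa_1=\eta\tau^{2}/4>0$, $\kappa_{2}=\tau^{2}(\eta^{2}\tau^{2}+\eta^{2}-1)/16$, $u(0)=0$, and $u'(0)=\kappa_{1}$. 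The crucial observation is that zeros of $\alpha$ and $\beta$ are simple and mutually exclusive: at a zero $x_*$ of $\alpha$ one has $\alpha'(x_*)\beta(x_*)=\kappa_{1}\neq 0$ and $u'(x_*)=+\kappa_{1}$; symmetrically at a zero of $\beta$, $u'(x_*)=-\kappa_{1}$. Since $(u')^{2}=P(0)=\kappa_{1}^{2}$ at every zero of $u$, these two options exhaust them, and the zero sets of $\alpha$ and $\beta$ partition the zero set of $u$ according to the sign of $u'$.

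The next step is to analyze the root structure of $P$. Since $P(0)=\kappa_{1}^{2}>0$, the value $0$ is never a root. I claim that for $\tau\in(0,1)$ the cubic $P$ has three distinct real roots $\rho_{-}<\rho_{0}<\rho_{+}$ with $\rho_{0}<0<\rho_{+}$, while at $\tau=1$ the two smaller roots coalesce into a negative double root $\rho_{*}$, giving $P(u)=-4(u-\rho_{+})(u-\rho_{*})^{2}$ with $\rho_{+}>0>\rho_{*}$. This is verified by computing the discriminant of $P$ as a polynomial in $\tau$ (with $\eta$ as parameter); using the identity $16\kappa_{2}=4\delta+16\kappa_{1}^{2}+1$, which follows from \eqref{abd}, the discriminant factors as a product containing a perfect square vanishing at $\tau=1$, and is strictly positive on $(0,1)$. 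Continuity then ensures the sign configuration $\rho_{0}<0<\rho_{+}$ throughout $(0,1)$: no root can cross $0$ since $P(0)>0$, and the configuration at $\tau\to 1^{-}$ can be pinned down by Vieta (the product $\rho_+\rho_*^2=\kappa_1^2/4>0$ forces $\rho_+>0$, and the sign of $\rho_*$ is read off from the identity $\delta=\rho_++2\rho_*$ evaluated at $\tau=1$).

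Once this is established, the lemma follows almost formally. For $\tau\in(0,1)$, the phase curve $(u')^{2}=P(u)$ through $(0,\kappa_{1})$ is a closed orbit bounded between $\rho_0$ and $\rho_+$, so $u(x)$ is periodic with some period $T>0$ and has exactly two zeros per period: $x=0$ (zero of $\alpha$, $u'=+\kappa_{1}$) and one interior point $x_{*}\in(0,T)$ (zero of $\beta$, $u'=-\kappa_{1}$). Setting $x_{a}^{+}:=T$, $x_{b}^{+}:=x_{*}$, $x_{a}^{-}:=-T$ and $x_{b}^{-}:=x_{*}-T$ gives the required ordering $x_{a}^{-}<x_{b}^{-}<0<x_{b}^{+}<x_{a}^{+}$. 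The signs on each of the four subintervals are then determined by combining: the sign of $u$ (positive on $(0,x_{b}^{+})$ and $(x_{b}^{-},0)$, negative elsewhere in $(x_a^-,x_a^+)$), the fact that $\alpha$ (resp.\ $\beta$) keeps a constant sign between consecutive zeros, and the initial values $\alpha(0)=0$ with $\alpha'(0)>0$ and $\beta(0)=2>0$. For the degenerate case $\tau=1$, the phase curve is a homoclinic loop tending to the equilibrium $(\rho_{*},0)$ as $x\to\pm\infty$: going forward, $u$ rises from $0$ to $\rho_{+}$, returns to $0$ at some $x_{b}^{+}>0$, and then asymptotes to $\rho_{*}<0$; going backward, $u$ decreases monotonically from $0$ and asymptotes to $\rho_{*}$ as $x\to-\infty$. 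This produces the single zero of $\alpha$ at $0$ and the single zero of $\beta$ at $x_{b}^{+}$, with the asserted signs on $\R\setminus\{0,x_{b}^{+}\}$.

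The hard part will be the discriminant analysis of $P$ in the second paragraph — specifically, the explicit algebraic verification that the discriminant is positive on $(0,1)$ for \emph{every} $\eta>0$, with equality only at $\tau=1$. The rest of the argument is purely qualitative phase-plane analysis of the one-dimensional elliptic-type ODE $(u')^{2}=P(u)$, together with the bookkeeping of signs supplied by the boundary data $\alpha'(0)>0$, $\beta(0)=2>0$.
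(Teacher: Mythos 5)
Your strategy is the same as the paper's: reduce to the scalar quantity $u=\alpha\beta$, derive $(u')^2=P(u)$ with $P(u)=-4u^3+4\delta u^2+4\kappa_2 u+\kappa_1^2$, use the first integral $\kappa_1=\alpha'\beta-\beta'\alpha>0$ to show that zeros of $u$ with $u'=+\kappa_1$ are zeros of $\alpha$ and those with $u'=-\kappa_1$ are zeros of $\beta$, and then read off the lemma from the periodic (resp.\ homoclinic) phase portrait. All of that part of your argument is correct, including the values of $\kappa_1,\kappa_2$ and the sign bookkeeping on the four subintervals.

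The gap is exactly where you flag it: the root structure of $P$ is the crux of the lemma (it is what separates $\tau\in(0,1)$ from $\tau=1$), and you assert it without verifying it, proposing instead a discriminant computation in $(\eta,\tau)$ that you do not carry out. Moreover, the step you offer for pinning down the sign of the double root at $\tau=1$ does not close as stated: Vieta gives $\rho_++2\rho_*=\delta$, but $\delta=\tfrac14(\eta^2-2)$ can have either sign at $\tau=1$, so without knowing $\rho_+$ explicitly this does not determine $\mathrm{sign}(\rho_*)$ — and if $\rho_*$ were positive the orbit through $u=0$ would be monotone and $\beta$ would have no positive zero. The fix is much simpler than a discriminant analysis: the roots of $P$ are explicit, namely $\{-1/4,\,-\tau^2/4,\,\eta^2\tau^2/4\}$, as one checks by expanding $-4(u+\tfrac14)(u+\tfrac{\tau^2}{4})(u-\tfrac{\eta^2\tau^2}{4})$ and comparing coefficients. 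Indeed, the identity you derived, $16\kappa_2=1+4\delta+16\kappa_1^2$, is precisely the statement $P(-1/4)=0$. With the explicit roots in hand, the three-distinct-roots configuration $-1/4<-\tau^2/4<0<\eta^2\tau^2/4$ for $\tau\in(0,1)$ and the negative double root $-1/4$ at $\tau=1$ are immediate, and the rest of your phase-plane argument goes through verbatim; this is exactly how the paper proceeds.
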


\begin{proof} 
Consider the function $s(x)=\alpha(x)\beta(x)$. Note that $s(0)=0$ and $s'(0)=2\alpha'(0)>0$. By \eqref{system} we have 
$$s''(x)=-6s^2+4\delta s +2\kappa_2,$$
where $\kappa_2$ is the constant given by \eqref{khami2}.
Integrating the above equation, and using the constant $\kappa_1=2\alfa'(0)=s'(0)>0$ in \eqref{khami} we obtain 
$$s'(x)^2 = 4 \hat{p}(s), \hspace{0.5cm} \hat{p}(s):=-s^3 + \delta s^2+\kappa_2 s + \frac{\kappa_1^2}{4}.$$
By \eqref{abd} we see that $\hat p(s)$  has three real roots, at $$\{-1/4,-\tau^2/4,\eta^2\tau^2/4\}.$$ 

Let us assume first that $\tau\in (0,1)$. In this case, $\hat p$ has three simple roots, and $s(x)$ is a periodic function taking values in $[-\tau^2/4,\eta^2\tau^2/4]$. 

Let $x_0>0$ be the first positive zero of $s(x)$. Then, $x_0$ is the first positive zero for $\alpha(x)$ or $\beta(x)$. Assume that $\alpha(x_0)=0$. Using the constant $\kappa_1>0$ in \eqref{khami} we infer that $\beta(x_0)<0$. Since $\beta(0)>0$, this contradicts the fact that $x_0$ is the first zero for $s(x)$. Thus, $x_b^+:=x_0$ is the first positive zero of $\beta$. 

Let now $x_1>x_0$ be the second positive zero of $s(x)$. If $\beta(x_1)=0$, then we would have $\beta'(x_1)>0$ and $\alpha(x_1)>0$, contradicting that $\kappa_1>0$. Thus, $x_a^+:=x_1$ is the first positive zero of $\alpha$. 
Similarly, we deduce the existence of $x_a^- < x_b^-<0$ satisfying the desired conditions. 

Finally, assume $\tau=1$. In this case, $\hat p$ has a double root at $-1/4$ and a single root at $\eta^2/4$, and therefore $s(x)$ takes values in $(-1/4,\eta^2/4]$, reaching the value $-1/4$ in infinite time. In particular, $s(x)$ only vanishes at $x=0$ and at some other $x_0>0$ with $s'(x_0)<0$, that is attained after $s(x)$ \emph{hits} the boundary value $s=\eta^2/4$ (recall that $s'(0)>0$) and starts to decrease. Using again $\kappa_1$, we conclude that $\alpha$ cannot vanish at $x_0$, and so $x_b^+:=x_0$ is the unique zero of $\beta$, and the rest of the stated properties for $\tau=1$ hold.
\end{proof}

\subsection{Construction of the harmonic function $\omega$}\label{sec:omega}

Take $(\eta,\tau)\in \Lambda$ and let $(\alfa(x),\beta(x))$ be the unique solution to system \eqref{system} with the initial conditions \eqref{abd}.

Consider now the unique solution $z(y):\R\flecha \R$ to the differential equation 
\begin{equation}\label{edoz}
8 z''(y) = p'(z(y))
\end{equation} 
with initial conditions $z(0)=2/\eta $, $z'(0)=0$, where $p(z)$ is the polynomial in \eqref{depe}. 
If $\tau=1$, then $z=2/\eta$ is a double root of $p(z)$ and therefore $z(y)\equiv 2/\eta$ is constant. If $\tau\in (0,1)$, then $z(y)$ is non constant and it satisfies 
\begin{equation}\label{eq:zy} 
\left\{ 
\begin{array}{rll}
4z'(y)^2 &=& p(z(y)),\\
z(0) &=& 2/\eta. 
\end{array}
\right. 
\end{equation}
It is clear from \eqref{eq:zy} that $z(y)$ is a periodic, real analytic function that takes values in the interval $[2/\eta, 2/(\eta\tau^2)]$, and satisfies $z(-y)=z(y)$.
 
Now for each $y\in\R$, let $x\mapsto \omega(x,y)$ be the unique solution to the differential equation in $x$
\begin{equation}\label{rox}
2\omega_x = - \alpha(x) e^{-\omega} - \beta(x) e^{\omega} 
\end{equation}
with initial condition 
\begin{equation}\label{iniconro}
\omega(0,y) = -\log z(y).
\end{equation} 
This process creates a real analytic function $\omega(x,y)$. We note that if $\tau = 1$, then $\omega = \omega(x)$ depends only on $x$, as $z(y)$ is constant. Recalling the notations of Lemma \ref{lem:xaxb} we have:
\begin{lemma}\label{lem:lio} 
For any $(\eta,\tau)\in\Lambda$, the function $\omega(x,y)$ is harmonic, and well defined in $(x_b^-,x_a^+)\times\R$. 
Moreover, $\omega(x,y)$ cannot be extended to $[x_b^-,x_a^+]\times\R$.
\end{lemma}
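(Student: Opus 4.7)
The strategy substitutes $Z := e^{-\omega}$ and derives a polynomial first integral that yields all three conclusions simultaneously.

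\emph{First integral.} Equation \eqref{rox} becomes the Riccati $2Z_x = \alpha(x) Z^2 + \beta(x)$, $Z(0,y) = z(y)$, whose initial data satisfies $4z'(y)^2 = p(z(y))$ by \eqref{eq:zy}. Introduce
\begin{equation*}
\mathfrak{P}(x,Z) := -\alpha^2 Z^4 - 4\alpha' Z^3 + (6\alpha\beta - 4\delta)Z^2 + 4\beta' Z - \beta^2,
\end{equation*}
and $H(x,y) := 4Z_y^2 - \mathfrak{P}(x, Z(x,y))$. The initial conditions \eqref{abd} are chosen so that $\mathfrak{P}(0,Z) \equiv p(Z)$, hence $H(0,y) = 0$. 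Differentiating the Riccati in $y$ gives $Z_{xy} = \alpha Z Z_y$, so $(4Z_y^2)_x = 2\alpha Z \cdot 4Z_y^2$. Expanding $\partial_x[\mathfrak{P}(x, Z(x,y))]$ via the Riccati and substituting $\alpha'' = \delta\alpha - 2\alpha^2\beta$, $\beta'' = \delta\beta - 2\alpha\beta^2$ from \eqref{system}, one verifies by direct algebra that every $Z^k$-coefficient in $H_x - 2\alpha Z\cdot H$ cancels, so $H_x = 2\alpha Z\cdot H$; combined with $H(0,y) = 0$, this yields $H\equiv 0$.

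\emph{Harmonicity and well-definedness.} Differentiating $2\omega_x = -\alpha e^{-\omega} - \beta e^\omega$ in $x$ and eliminating $\omega_x$ yields
\begin{equation*}
4\omega_{xx} = -2\alpha' e^{-\omega} - 2\beta' e^\omega - \alpha^2 e^{-2\omega} + \beta^2 e^{2\omega},
\end{equation*}
while differentiating the equivalent form $4\omega_y^2 = e^{2\omega}\mathfrak{P}(x, e^{-\omega})$ of $H\equiv 0$ in $y$ produces the negative of this expression for $4\omega_{yy}$; hence $\Delta\omega = 0$. For well-definedness, $4Z_y^2 = \mathfrak{P}(x,Z)\geq 0$ combined with $\mathfrak{P}(x,0) = -\beta^2 \leq 0$ and leading coefficient $-\alpha^2 \leq 0$ traps $Z(x,\cdot)$ in a bounded component $[Z_-(x), Z_+(x)] \subset (0,+\infty)$ of $\{Z : \mathfrak{P}(x,Z)\geq 0\}$ that continuously deforms from $[2/\eta, 2/(\eta\tau^2)]$ at $x=0$. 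By Lemma \ref{lem:xaxb}, $\alpha$ and $\beta$ do not vanish simultaneously on $(x_b^-, x_a^+)$, so $Z_\pm(x)$ remain simple, positive, and strictly separated on compact subsets. Hence $\omega = -\log Z$ is well defined, smooth, and harmonic on $(x_b^-, x_a^+)\times\R$.

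\emph{Non-extendability.} The evenness $z(-y) = z(y)$ and the linear ODE $Z_{xy} = \alpha Z Z_y$ force $Z_y(x,0) = Z_y(x,T/2) \equiv 0$, so $Z(x,0)$ and $Z(x,T/2)$ are specific analytic branches of roots of $\mathfrak{P}(x,\cdot)$, coinciding with $Z_\mp(x)$. At $x = x_a^+$, $\alpha = 0$; the first integral $\kappa_1 = \alpha'\beta - \alpha\beta' = 2\alpha'(0) > 0$ (see \eqref{khami}, \eqref{abd}) and Lemma \ref{lem:xaxb} give $\alpha'(x_a^+) < 0$, so $\mathfrak{P}(x_a^+, \cdot)$ is a cubic with positive leading coefficient, whose admissible region extends to $+\infty$; a continuity argument shows that the branch $Z_+(x) = Z(x,T/2)$ escapes, giving $\omega(x, T/2) \to -\infty$ as $x \to x_a^+$. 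The analogous argument at $x = x_b^-$ (where $\beta = 0$ makes $Z = 0$ a root of $\mathfrak{P}(x_b^-, \cdot)$) yields $Z_-(x) \to 0$ and $\omega(x,0) \to +\infty$. Therefore $\omega$ cannot be extended to $[x_b^-, x_a^+] \times \R$.

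The main obstacle is the algebraic identity $H_x = 2\alpha Z \cdot H$ in the first step, requiring all non-$H$ terms to cancel through the precise structure of \eqref{system}; a secondary subtlety is identifying, in the last step, that the branches $Z_\pm(x)$ are indeed the ones escaping as $x \to x_a^+$ and degenerating as $x \to x_b^-$, rather than bounded branches of the degenerate cubic $\mathfrak{P}$ at the endpoints.
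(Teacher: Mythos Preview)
Your approach is essentially the paper's: you derive the same polynomial first integral $4Z_y^2=\mathfrak{P}(x,Z)$ (the paper's $Q$ in \eqref{eq:roypol}) via the transport identity $\partial_x\mathfrak{P}=2\alpha Z\,\mathfrak{P}$, then use it to prove harmonicity and to trap $Z$ between two positive roots until one of them degenerates at $x_b^-$ or $x_a^+$. Your harmonicity computation in terms of $\omega$ is just the paper's $Z\Delta Z=Z_x^2+Z_y^2$ rewritten. For non-extendability at $x_a^+$ you track the escaping branch $Z(x,\vart)=\rho_2(x)\to\infty$, whereas the paper argues that the limiting ODE $4Z_y^2=Q(x_a^+,Z)$ blows up in finite $y$-time; these are equivalent.

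There is one genuine gap: the case $\tau=1$. Your harmonicity step differentiates $4\omega_y^2=e^{2\omega}\mathfrak{P}(x,e^{-\omega})$ in $y$ and divides by $\omega_y$, but for $\tau=1$ one has $z(y)\equiv 2/\eta$, hence $\omega_y\equiv 0$, and your argument yields only the tautology $0=0$. The paper handles this separately by introducing $\widetilde Q:=\partial_Z Q$ and showing $\widetilde Q_x=2\alpha Q+\alpha Z\widetilde Q$, so that $Q\equiv 0$ forces $\widetilde Q\equiv 0$; then $ZZ_{xx}-Z_x^2=-\tfrac{Z}{8}\widetilde Q+\tfrac14 Q=0$ gives harmonicity directly. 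You should add this (or an equivalent direct argument that $\omega$ is linear when $\tau=1$, cf.\ Remark~\ref{re:omegatau1}). A smaller point: your claim that $Z_\pm(x)$ ``remain simple, positive, and strictly separated'' deserves one line of justification---the non-collision $\rho_1\neq\rho_2$ follows because the Riccati flow $2Z_x=\alpha Z^2+\beta$ preserves the inequality $Z(x,0)\neq Z(x,\vart)$ inherited from $z(0)\neq z(\vart)$ when $\tau\in(0,1)$.
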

\begin{proof}
Let us see first that $\omega(x,y)$ satisfies \eqref{roy}. Let $Z(x,y):=e^{-\omega(x,y)}$ and define 
\begin{equation}\label{eq:roypol}
Q(x,Z) = - \alfa(x)^2 Z^{4}  -4 \alpha'(x) Z^{3} +(6 \alfa(x) \beta(x) -4 \delta)Z^{2}  +4 \beta'(x) Z -\beta(x)^2. 
\end{equation} 
Then, \eqref{roy} can be rewritten as  
\begin{equation}\label{eq:Zy}
4Z_y^2=Q.
\end{equation}
By \eqref{rox}, $Z(x,y)$ satisfies 
\begin{equation}\label{eq:Zx}
2Z_x=\alpha Z^2 + \beta,
\end{equation}
and therefore, $(4Z_y^2)_x = 2\alpha Z (4Z_y^2).$
On the other hand, direct computations for \eqref{eq:roypol} using \eqref{system} and \eqref{eq:Zx} show that 
\begin{equation}\label{ec:au1}
Q_x = 2\alpha Z Q.
\end{equation}
Thus, there exists a function $F=F(y)$ such that 
$$4Z_y^2=F(y)Q.$$
Evaluating at $x=0$ and using \eqref{abd} and \eqref{eq:zy},  we obtain $$4Z_y(0,y)^2=p(Z(0,y))=Q(0,Z(0,y)).$$
If $\tau = 1$, we have that $Z_y\equiv 0$, so in particular $Q(0,y) \equiv 0$. By \eqref{ec:au1}, $Q(x,Z) \equiv 0$, so \eqref{eq:Zy} holds trivially. If $\tau \in (0,1)$, then $Z_y(0,y) \not \equiv 0$, so we deduce that $F(y)\equiv 1$. This proves that \eqref{eq:Zy}, and thus \eqref{roy}, holds. 

Let us now define
\begin{equation}\label{eq:maQ}
\widetilde{Q}(x,Z):= -4\alpha(x)^2Z^3 - 12\alpha'(x)Z^2 + 2(6\alpha(x)\beta(x) - 4\delta)Z + 4\beta'(x).
\end{equation}
It follows from \eqref{system} and \eqref{eq:Zx} that $\widetilde{Q}$ satisfies
$$\widetilde{Q}_x = 2\alpha Q + \alpha Z \widetilde{Q}.$$
In particular, if $\tau = 1$, we have that $Q \equiv 0$ and $\widetilde{Q}(0,Z(0,y)) = \widetilde{Q}(0,2/\eta) \equiv 0$, so $\widetilde{Q}(x,Z) \equiv 0$.

We will now prove that 
\begin{equation}\label{eq:armonicidadZ}
Z \Delta Z = Z_x^2 + Z_y^2.
\end{equation}
Since $Z=e^{-\omega}$, the above equation is equivalent to the harmonicity of $\omega$. If $\tau \in (0,1)$, we differentiate in \eqref{eq:Zx} with respect to $x$ to obtain an expression for $Z_{xx}$, and in \eqref{eq:Zy} with respect to $y$ to obtain an expression for $Z_{yy}$, both in polynomial terms with respect to $Z$. We compute on the other hand the expressions for $Z_x^2$ and $Z_y^2$ in \eqref{eq:Zx} and \eqref{eq:Zy} respectively. Comparing the resulting terms, we arrive at \eqref{eq:armonicidadZ}. In the case $\tau = 1$, we have that $Z_y = Z_{yy} \equiv 0$. By \eqref{eq:Zx}, we deduce that
$$ZZ_{xx} - Z_x^2 = -\frac{Z}{8}\widetilde Q + \frac{1}{4}Q = 0,$$
showing \eqref{eq:armonicidadZ} in this case.

Next, we describe the maximal domain of $\omega(x,y)$.  We first assume that $\tau\in (0,1)$.

Recall that $Z(0,y)=z(y)$ takes values in $[2/\eta,2/(\eta\tau^2)]$ and is defined for all $y$. In this case, $Q(0,Z)$ in \eqref{eq:roypol} has degree three and negative leading term, since $\alfa(0)=0$ and $\alfa'(0)>0$. For $x_0$ small enough, the polynomial $Q(x_0,Z)$ has then degree four, with two positive roots $\rho_1(x_0),\rho_2(x_0)$ and two negative roots. 
By \eqref{eq:Zy}, for $x_0$ small enough $Z(x_0,y)$ takes values in $[\rho_1(x_0),\rho_2(x_0)]$, and is defined for all $y$. This means that $\omega(x,y)$ is well defined on the strip $(-x_0,x_0)\times \R$.

This situation is maintained as long as the positive roots $\rho_1(x_0),\rho_2(x_0)$ remain that way, that is, until one of the next three situations happen:
\begin{enumerate}
\item[(1)]
$\rho_1(x_0)=\rho_2(x_0)$.
\item[(2)]
$\rho_1(x_0)=0$.
\item[(3)]
 $\rho_2(x_0)=\8$.
\end{enumerate}

Case (1) cannot happen. Indeed, should (1) hold for some $x_0$, the function $Z(x_0,y)$ would be constant, and so, by the uniqueness of solutions to \eqref{rox} with respect to the initial conditions, this would imply that $Z(x,y)$ does not depend on $y$. This is a contradiction, as we know that $z(y):=Z(0,y)$ is not constant (recall that $\tau\neq 1$).

For the remaining two cases (2), (3) above, notice that, by \eqref{eq:roypol}, $\rho_1(x_0)=0$ implies $\beta(x_0)=0$ and $\rho_2(x_0)=\8$ implies $\alpha(x_0)=0$. Thus, by Lemma \ref{lem:xaxb},  $\omega(x,y)$ is defined in $(x_b^-,x_a^+)\times \R$.

We next prove that $\omega(x,y)$ cannot be extended to a larger band than $(x_b^-,x_a^+)\times \R$. For this, we will use that $\omega(x,y)=\omega(x,-y)$, which is proved in more generality in Lemma \ref{lem:sigma} below. This implies that $\omega_y(x,0)=0$, and so, $Z(x,0)=\rho_1(x)$.

If $x=x_b^-$, by \eqref{eq:roypol}, the polynomial $Q(x_b^-,Z)$ has a simple root at $Z=0$ with positive derivative, since $\beta'(x_b^-)>0$. This implies that $\rho_1(x)\to 0$ as $x\to x_b^-$. Since $e^{-\omega(x,0)}=Z(x,0)=\rho_1(x)$, this shows that $\omega$ cannot be extended to $x=x_b^-$.

If $x=x_a^+$, then $Q(x_a^+,Z)$ has degree three, positive leading coefficient, two negative roots and one positive root at $\rho_1(x_a^+)$. In that situation, $y\mapsto Z(x_a^+,y)$ takes its values in $[\rho_1(x_a^+),\8)$ and, by \eqref{eq:Zy}, there is some $y_0>0$ such that $Z(x_a^+,y)\to \8$ as $y\to y_0^-$. This again shows that $\omega$ cannot be extended to $x=x_a^+$ and concludes the proof in the case $\tau\in (0,1)$.

We finally consider the case $\tau=1$. For later use, we state the argument as a remark.
\begin{remark}\label{re:omegatau1}
If $\tau=1$, then $z(y)$ in \eqref{edoz} is constant. Then, by \eqref{rox} and \eqref{iniconro}, we obtain that $\omega$ depends only on $x$. Since $\Delta \omega=0$, then $\omega(x)$ is linear.
\end{remark}
Recalling that when $\tau=1$ we have $x_b^-=-\8$ and $x_a^+=\8$ (see Lemma \ref{lem:xaxb}), this concludes the proof of Lemma \ref{lem:lio}.
\end{proof}

The next lemma summarizes the symmetries of $\omega(x,y)$: 

\begin{lemma}\label{lem:sigma}
Assume that $\tau \in (0,1)$. Then, there exists $\vart=\vart(\tau,\eta)>0$ such that $\omega(x,y)$   satisfies 
\begin{equation}\label{simroo}
 \omega(x,k\vart+y)=\omega(x, k\vart - y)
\end{equation}
for every $k\in\Z$.  In particular, $\omega(x,y+2\vart)=\omega(x,y)$. 
Moreover, $\vart$ is the first positive value for which $e^{-\omega(0,\vart)}=2/(\eta\tau^2)$, and can be computed as 
\begin{equation}\label{eq:sigma}
\vart = \int_{2/\eta }^{2/(\eta \tau^2)} \frac{2}{\sqrt{p(z)}} dz
\end{equation}
where $p(z)$ is the polynomial defined in \eqref{depe}.  
\end{lemma}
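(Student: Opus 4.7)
The plan is to first establish the stated symmetries for the boundary trace $y\mapsto \omega(0,y)=-\log z(y)$ directly from the ODE \eqref{eq:zy}, and then propagate them to the full function $\omega(x,y)$ on $(x_b^-,x_a^+)\times\R$ using uniqueness of solutions to the $x$-evolution equation \eqref{rox}. The point is that \eqref{rox} is an ODE in $x$ alone (its coefficients $\alpha(x),\beta(x)$ do not involve $y$), so any symmetry in $y$ imposed on the initial slice $\{x=0\}$ is automatically preserved as $x$ varies.

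For the boundary trace, the first step is to exploit the fact that since $\tau\in(0,1)$ the two positive roots $2/\eta$ and $2/(\eta\tau^2)$ of $p(z)$ in \eqref{depe} are distinct and simple, with $p>0$ strictly between them. The relation $4z'(y)^2=p(z(y))$, together with $z(0)=2/\eta$ and $z'(0)=0$, shows that $z$ ascends monotonically from $2/\eta$ to $2/(\eta\tau^2)$, reaching the upper value at the finite time
\begin{equation*}
\vart=\int_{2/\eta}^{2/(\eta\tau^2)}\frac{2\,dz}{\sqrt{p(z)}},
\end{equation*}
the integral being convergent because the integrand has only mild $(z-c)^{-1/2}$ singularities at simple roots. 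This yields \eqref{eq:sigma} and shows that $\vart$ is the first positive instant with $z'(\vart)=0$ and $z(\vart)=2/(\eta\tau^2)$. Two applications of uniqueness for \eqref{edoz} then give the symmetries $z(-y)=z(y)$ (reflecting the critical point $y=0$) and $z(\vart+y)=z(\vart-y)$ (reflecting the critical point $y=\vart$). Combining them yields $2\vart$-periodicity and $z(k\vart+y)=z(k\vart-y)$ for every $k\in\Z$, which translates into the analogous identities for $\omega(0,y)=-\log z(y)$.

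To propagate these identities to all $x\in(x_b^-,x_a^+)$, fix $k\in\Z$ and $y\in\R$, and consider the two functions
\begin{equation*}
x\longmapsto \omega(x,k\vart+y),\qquad x\longmapsto \omega(x,k\vart-y).
\end{equation*}
By the construction in Section \ref{sec:omega}, each of them solves the first-order ODE \eqref{rox} in the single variable $x$, with the same coefficients $\alpha(x),\beta(x)$, and they share the same initial value at $x=0$ thanks to the boundary symmetry just established for $\omega(0,\cdot)$. Uniqueness of the Cauchy problem for \eqref{rox} forces them to coincide on the whole interval $(x_b^-,x_a^+)$ on which $\omega$ is defined (Lemma \ref{lem:lio}), which is exactly \eqref{simroo}. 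Setting $k=0$ and $k=1$ and combining gives the period $2\vart$.

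The main (and only genuinely delicate) obstacle in this outline is the finite-time convergence of the integral \eqref{eq:sigma} and the fact that $z$ actually reaches the upper root with zero derivative rather than merely approaching it asymptotically; both facts rely crucially on the simplicity of the positive roots of $p$, which is why the statement excludes $\tau=1$ (in which case the two positive roots collapse and $z$ becomes constant, as noted in Remark \ref{re:omegatau1}). Once this is in place, everything else is uniqueness of ODEs.
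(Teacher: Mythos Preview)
Your proof is correct and follows the same overall structure as the paper's: first establish the symmetries of $z(y)$ from the ODE \eqref{eq:zy}, then propagate to all $x$. The difference lies in the propagation step. The paper proves $\omega_y(x,k\vart)=0$ for every $x$ by differentiating \eqref{rox} in $y$ to obtain the linear ODE $2\omega_{xy}=(\alpha e^{-\omega}-\beta e^{\omega})\omega_y$ in $x$, and then invokes uniqueness of harmonic functions with given Cauchy data on the line $y=k\vart$. Your argument is more direct: since by construction $\omega(x,y)$ is \emph{defined} for each fixed $y$ as the solution of the first-order ODE \eqref{rox} in $x$ with initial value $-\log z(y)$, and since the coefficients $\alpha(x),\beta(x)$ do not involve $y$, any symmetry of the initial data $y\mapsto\omega(0,y)$ passes automatically to $y\mapsto\omega(x,y)$ by uniqueness of that ODE. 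This bypasses the need to use the harmonicity of $\omega$ (Lemma~\ref{lem:lio}) at all, making the propagation step a one-liner.
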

\begin{proof} 
Let us assume first that $x=0$, and so $e^{-\omega(0,y)}=z(y)$, where $z(y)$ is the solution to \eqref{eq:zy}. By \eqref{eq:zy}, there exists $\vart>0$ such that $z(y)$ is strictly increasing from $[0,\vart]$ to $[2/\eta, 2/(\eta\tau^2)]$, and $z(y):\R\flecha [2/\eta, 2/(\eta\tau^2)]$ is $2\vart$-periodic, with $z(k\vart +y)=z(k\vart-y)$. Then, by \eqref{eq:zy}, we obtain \eqref{eq:sigma} and by \eqref{iniconro} we have
\begin{equation}\label{oecuu}
\omega(0,k\vart+y)=\omega(0, k\vart - y).
\end{equation}
Observe also that $z'(k\vart)=0$ for any $k\in\Z$. 

Consider now the functions $f_1(x,y):=\omega(x,k\vart-y)$ and $f_2(x,y):=\omega(x,k\vart+y)$, which are harmonic functions that coincide along $y=0$. We will next prove that they also have the same $y$-derivatives along $y=0$, what shows that they coincide, and so \eqref{simroo} holds. First, differentiating \eqref{rox} we have $$2\omega_{xy} = (\alfa(x) e^{-\omega} - \beta(x) e^{\omega} ) \omega_y.$$ Viewing this expression as an ODE along the horizontal line $(x,k\vart)$, and using that $\omega_y(0,k\vart)=-z'(k\vart)/z(k\vart)=0$, we deduce that $\omega_y(x,k\vart)=0$. This implies that $(f_1)_y=(f_2)_y$ along $y=0$ and finishes the proof. 
\end{proof}
\begin{remark}\label{re:sigma1}
The change of variable 
\begin{equation}\label{chava}
z=\frac{1}{h(t)}, \hspace{0.5cm} h(t):=\frac{\eta\left((\tau^2-1)t+1\right)}{2},
\end{equation} 
in the integral expression in \eqref{eq:sigma} allows to express $\vart$ in \eqref{eq:sigma} as 
\begin{equation}\label{eq:cdvvart}
\vart =\vart(\eta,\tau) = \int_0^1 \frac{2 dt}{ \sqrt{ t \left(1-t\right) \left( 1+t(\tau^2-1) \right) \left(1+\eta^2 ( 1+t(\tau^2-1) ) \right)    }  }.
\end{equation}
Thus, $\vart(\eta,\tau)$ extends analytically to $\tau=1$ with 
\begin{equation}\label{eq:sigma1}
\vart(\eta,1)=  \dfrac{2\pi}{\sqrt{1+\eta^2}}.
\end{equation}
Moreover, as $\omega$ depends only on $x$ in this case (see Remark \ref{re:omegatau1}), then \eqref{simroo} trivially holds for $\tau=1$ and $\vart$ as in \eqref{eq:sigma1}. 

\end{remark}

\subsection{Symmetries of $g(z)$}\label{sec:g}

From the harmonic function $\omega:(x_b^-,x_a^+)\times \R\flecha \R$, we can construct a holomorphic map $g(x+iy):(x_b^-,x_a^+)\times \R\flecha \C$ given by $e^{2\omega}=|g'|^2$. We remark that $g(z)$ is determined by the above process from the constants $(\eta,\tau)\in \Lambda$, but only up to translations $g(z)+{\bf v}$ and rotations $e^{i\theta}g(z)$, i.e., up to Euclidean isometries. In particular, it is unique once we fix $g(0)$ and $g'(0)$. We will chose for convenience the initial conditions 
\begin{equation}\label{inig}
g(0)=-1, \hspace{0.5cm} g'(0)=\frac{\eta}{2}.
\end{equation}
Note that $|g'(0)|=e^{\omega(0,0)}$, i.e., our choice of $g'(0)$ in \eqref{inig} is consistent with the initial value chosen in the construction $\omega(x,y)$ in our previous computations.

We next study the symmetries of $g(z)$.
\begin{lemma}\label{lem:sim}
Given $(\eta,\tau)\in \Lambda$ and $g(z)$ as above, then:
\begin{enumerate}
\item The curve $g(iy):\R\flecha \C$ infinitely covers the unit circle $\S^1$ with negative orientation.
\item Let  $\vart=\vart(\eta,\tau)>0$ be as in Lemma \ref{lem:sigma} if $\tau \in (0,1)$ and as in Remark \ref{re:sigma1} if $\tau=1$. Then, for any $k\in\Z$ the
length $\Theta=\Theta(\eta,\tau)$ of the arc of $g(iy)$ between $y=k\vart$ and $y=(k+1)\vart$ is independent from $k\in\Z$, and can be computed as  
\begin{equation}\label{eq:Theta}
\begin{array}{lll}
\Theta(\eta,\tau) &=& \displaystyle\int_{2/\eta}^{2/\eta\tau^2}  \dfrac{2}{z \sqrt{p(z)} }dz \quad \rm{if}\; \tau\in (0,1), \\
\\
\Theta(\eta,1)&=& \dfrac{\pi\eta}{\sqrt{1+\eta^2}} 
\end{array}
\end{equation}
where $p(z)$ is the polynomial in \eqref{depe}. Moreover, $\Theta\in (0,\pi)$, 
\item
For $k\in \Z$, let $L_k$ be the line passing through the origin and $g(ik\vart)\in \S^1$, and let $\Psi_k$ denote the symmetry in $\R^2$ with respect to $L_k$. Then, denoting $g(x,y)=g(x+iy)$, we have
\begin{equation}\label{sim:g}
g(x,k\vart-y)=\Psi_k (g(x,k\vart +y)).
\end{equation}
\end{enumerate}
\end{lemma}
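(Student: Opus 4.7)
The plan addresses the three parts in order, leveraging Lemmas~\ref{lem:curvatura} and \ref{lem:sigma}. For part (1), I would identify $\gamma(y) := g(iy)$ as a reparametrization of $\S^1$ via its curvature and initial data. Along $x=0$ we have $\alpha(0)=0$, so equation~\eqref{curcapi} gives constant curvature $\kappa_\gamma = -\beta(0)/2 = -1$. With $\gamma(0) = -1$ and initial velocity $\gamma'(0) = ig'(0) = i\eta/2$ pointing in the positive $y$-direction by \eqref{inig}, the uniqueness of plane curves with prescribed position, tangent and constant curvature forces $\gamma$ to trace $\S^1$ clockwise. Since $|\gamma'(y)| = e^{\omega(0,y)} > 0$, this parametrization never stalls and thus covers $\S^1$ infinitely many times as $y$ ranges over $\R$.

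For part (2), the arc length from $y = k\vart$ to $y = (k+1)\vart$ equals $\int_{k\vart}^{(k+1)\vart} e^{\omega(0,y)}\,dy$, which is independent of $k$ by the $2\vart$-periodicity and mirror symmetry of $\omega(0,\cdot)$ established in Lemma~\ref{lem:sigma}. For $\tau\in(0,1)$ I would use the substitution $z = e^{-\omega(0,y)}$, which by \eqref{eq:zy} gives $dy = 2\,dz/\sqrt{p(z)}$ while $z$ increases from $2/\eta$ to $2/(\eta\tau^2)$ on $[0,\vart]$; this yields the first line of \eqref{eq:Theta}. The case $\tau=1$ is immediate from $\omega(0,y) = \log(\eta/2)$ together with \eqref{eq:sigma1}. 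To prove $\Theta \in (0,\pi)$, positivity is clear; for $\Theta < \pi$ I would apply the substitution \eqref{chava} of Remark~\ref{re:sigma1} to rewrite $\Theta$ as
\[
\Theta = \int_0^1 \frac{\eta\,\sqrt{s(t)}}{\sqrt{t(1-t)\,(1+\eta^2 s(t))}}\,dt, \qquad s(t):= 1 + (\tau^2-1)t,
\]
and bound the integrand by the elementary inequality $\eta\sqrt{s}/\sqrt{1+\eta^2 s} < 1$ to conclude $\Theta < \int_0^1 dt/\sqrt{t(1-t)} = \pi$.

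For part (3), set $F(z) := g(ik\vart + z)$ and $\widetilde H(z) := \overline{g(ik\vart + \bar z)}$. The map $\widetilde H$ is holomorphic by Schwarz reflection, with $\widetilde H'(z) = \overline{g'(ik\vart + \bar z)}$. The symmetry \eqref{simroo} of Lemma~\ref{lem:sigma} yields $|\widetilde H'(z)| = e^{\omega(x,k\vart-y)} = e^{\omega(x,k\vart+y)} = |F'(z)|$, so the holomorphic quotient $\widetilde H'/F'$ is a unimodular constant $e^{i\theta_k}$; integration gives $\widetilde H(z) = e^{i\theta_k}F(z) + c_k$ for some $\theta_k \in \R$, $c_k \in \C$. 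To identify these constants I would evaluate this identity and its $z$-derivative at $z=0$: writing $g(ik\vart) = e^{i\alpha_k}$ and noting that, by part (1), the clockwise tangent of $\gamma$ at this point equals $-i\lambda_k e^{i\alpha_k}$ with $\lambda_k > 0$, we get $g'(ik\vart) = -\lambda_k e^{i\alpha_k}$. The derivative equation at $z=0$ then forces $e^{i\theta_k} = e^{-2i\alpha_k}$, and the value equation subsequently forces $c_k = 0$. Undoing the substitutions yields $g(x,k\vart-y) = e^{2i\alpha_k}\overline{g(x,k\vart+y)} = \Psi_k(g(x,k\vart+y))$, which is \eqref{sim:g}.

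I expect the main obstacle to lie in part (3), where the vanishing of $c_k$ -- and hence the fact that \eqref{sim:g} is a pure reflection rather than a glide reflection -- depends on the precise orientation of $g'(ik\vart)$ as a \emph{negative} real multiple of $e^{i\alpha_k}$, itself a consequence of the clockwise orientation proved in part~(1); without this input one would obtain a spurious translational shift. The inequality $\Theta < \pi$ in part~(2) is also mildly delicate and hinges on the right choice of substitution from Remark~\ref{re:sigma1}.
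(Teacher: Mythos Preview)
Your proposal is correct and follows essentially the same approach as the paper. The only notable difference is in part~(3): the paper argues more tersely that the symmetry~\eqref{simroo} of $\omega$ forces $g(x,k\vart-y)$ and $g(x,k\vart+y)$ to differ by an isometry of $\R^2$, identifies this isometry as the reflection through the line containing $g(0,k\vart)$ with direction $g_x(0,k\vart)$, and then observes that this line must pass through the origin because $g(i\R)\subset\S^1$; your explicit computation of $e^{i\theta_k}$ and $c_k$ via evaluation at $z=0$ is just a concrete unpacking of that same isometry argument.
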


\begin{proof}
Since $\alfa(0)=0$ and $\beta(0)=2$, it follows from Lemma \ref{lem:curvatura} that $\Gamma(y):=g(iy)$ covers a circle of radius one with negative orientation. The initial conditions \eqref{inig} imply that this circle is precisely $\S^1$. This proves item (i).

To prove item (ii), we compute the length of $\Gamma(y)$ from $y=k\vart$ to $y=(k+1)\vart$ as 
\begin{equation}\label{tein}
\Theta=\int_{k\vart}^{(k+1)\vart} |\Gamma'(y)| dy=  \int_{k\vart}^{(k+1)\vart} e^{\omega(0,y)}dy = \int_0^{\vart} e^{\omega(0,y)}dy,
\end{equation}
where the last equality comes from \eqref{simroo}. In particular, $\Theta$ does not depend on $k$.

Equation \eqref{eq:Theta} for $\tau=1$ is a consequence of \eqref{eq:sigma1} and the fact that $e^{\omega(0,y)} \equiv \eta/2$, as discussed in Section \ref{sec:omega}. For  $\tau\in (0,1)$, \eqref{eq:Theta} is obtained as a consequence of \eqref{eq:zy} after making in the integral of equation \eqref{tein} the change $z(y)=e^{-\omega(0,y)}$; one should recall here that we already discussed how $z(y)$ is a strictly increasing map from $[0,\vart]$ into $[2/\eta,2/(\eta\tau^2)]$.

Finally, to see that $\Theta\in (0,\pi)$, we consider the change of variable $z=1/h(t)$ of \eqref{chava} in the integral of  \eqref{eq:Theta}. This gives 
\begin{equation}\label{eq:cdv}
\Theta=\Theta(\eta,\tau)=\int_0^1 \frac{ \eta  \sqrt{\left(\tau ^2-1\right) t+1}}{ \sqrt{ 1+\eta^2\left(    (\tau^2-1)t + 1   \right)  } } 
\frac{dt}{\sqrt{t (1-t) } }.
\end{equation}
The integral of the second factor in the above product is equal to $\pi$, whereas straightforward computations show that the first factor lies in $(0,1)$. Therefore, $\Theta\in (0,\pi)$.  This completes the proof of item (ii).

To prove item (iii), we first note that, by the uniqueness of $g(z)$ from $\omega$ up to isometries of $\R^2$ and the symmetry equation \eqref{simroo} for $\omega$, we obtain directly
$$g(x,k\vart-y)=\Psi_k (g(x,k\vart+y)),$$ where $\Psi_k$ is the symmetry in $\R^2$ with respect to the line $L_k$ that passes through $g(0,k\vart)$ with tangent vector $g_x(0,k\vart)$. Since $g(iy)\in \S^1$ for every $y$, this line $L_k$ passes through the origin. This proves item (iii).
\end{proof}

\subsection{Periodicity of $g(z)$}\label{sec:per} 
Let $\Theta=\Theta(\eta,\tau)$ be given by \eqref{eq:Theta}, and define $\rm{Per}:\Lambda  \to (0,1)$ by
\begin{equation}\label{def:per}
\rm{Per}(\eta,\tau)=\dfrac{1}{\pi}\Theta(\eta,\tau).
\end{equation}
Note that ${\rm Per}(\eta,1):\R\flecha (0,1)$ is bijective, by \eqref{eq:Theta}. Also, from the expression for $\Theta$ in \eqref{eq:cdv} we have
\begin{equation}\label{eq:Per0}
\lim_{\tau\to 0} \rm{Per}(\eta,\tau) = \frac{2}{\pi}\arctan(\eta).
\end{equation}
The map $\rm{Per}(\eta,\tau)$ controls the periodicity of the holomorphic map $g(x,y)\equiv g(x+iy)$ in the $y$-variable. Indeed, assume that ${\rm Per}(\eta,\tau)=1/n$ for some $n\in\mathbb{N}$, $n\geq 2$, and denote $\Gamma(y):=g(iy)$. Then, as explained in the proof of Lemma \ref{lem:sim}, the restriction of $\Gamma(y)$ to any interval of the form $[k\vart,(k+1)\vart]$ describes an arc of the unit circle of length $\Theta= \pi/n$. Thus, the angle between two consecutive lines $L_k, L_{k+1}$ of item (iii) in Lemma \ref{lem:sim} is $\pi/n$, independent of $k$. This means that $g(0,y)=g(0,y+2n\vart)$, and so, since $g$ is holomorphic, 
%
\begin{equation}\label{eq:periodic}
g(x,y+2n\vart)=g(x,y),
\end{equation} 
i.e., $g(x,y)$ is defined on the quotient vertical strip $((x_b^-,x_a^+)\times \R,\sim)$, with $(x,y)\sim (x,y+2n\vart)$. 

Note that, if ${\rm Per}(\eta,\tau)=1/n$, the previous arguments show that set $\{L_k :k\in \Z\}$ of symmetry lines is just a finite set $\{L_0,\dots, L_{n-1}\}$, with $L_n=L_0$. Since $g(0)=-1$ (see \eqref{inig}), $L_0$ is the $x_2=0$ axis, and each $L_k$ is a rotation of $L_0$ of angle $k\pi/n$, for $k=1,\dots, n-1$.

We now study the behavior of the level curves of the map $\rm{Per}(\eta,\tau)$. 

\begin{lemma}\label{lem:period}
For each $q_0\in (0,1)$, the level set ${\rm Per}^{-1}(q_0)\subset \Lambda$ is a graph $\eta=\eta_{q_0}(\tau)$, where $\eta_{q_0}: (0,1]\flecha (0,\8)$ is a strictly decreasing analytic function. In particular, for each $n\in\N$, $n\geq 2$, there exists such a function
\begin{equation}\label{percurva}
\eta_n:(0,1]\to\R^+
\end{equation}
such that ${\rm Per}(\eta_n(\tau),\tau)=1/n$. This curve satisfies 
\begin{equation}\label{eq:etalimit}
\eta_n(1)= \frac{1}{\sqrt{n^2-1}}, 
\qquad 
\lim_{\tau\to 0} \eta_n(\tau) =\tan\left(\frac{\pi}{2n}\right).
\end{equation}
Moreover, for any $\tau\in (0,1)$ the holomorphic map $g(z)$ associated to $\eta=\eta_n(\tau)$ and $\tau$ satisfies \eqref{eq:periodic}, where $\vartheta=\vartheta(\eta_n(\tau),\tau)$ is given by \eqref{eq:sigma} (see also \eqref{eq:cdvvart}). 
\end{lemma}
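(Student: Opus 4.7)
\textbf{Proof plan for Lemma \ref{lem:period}.}
The strategy is to analyze the integral formula \eqref{eq:cdv} for $\Theta(\eta,\tau)$ and apply the analytic implicit function theorem to the equation $\mathrm{Per}(\eta,\tau)=q_{0}$. First I would write the integrand in \eqref{eq:cdv} as $f(\eta,A)/\sqrt{t(1-t)}$, where
\[
f(\eta,A)=\frac{\eta\sqrt{A}}{\sqrt{1+\eta^{2}A}},\qquad A=A(\tau,t):=1+(\tau^{2}-1)t.
\]
For $(\eta,\tau)\in(0,\infty)\times(0,1]$ and $t\in(0,1)$ one has $A\in[\tau^{2},1]\subset(0,1]$, so $f$ is bounded and real-analytic in $(\eta,\tau)$, and $1/\sqrt{t(1-t)}$ is an integrable majorant for the integrand on compact subsets of $(0,\infty)\times[0,1]$. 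Differentiation under the integral thus yields that $\Theta$, and therefore $\mathrm{Per}$, extends to a real-analytic function on $(0,\infty)\times[0,1]$.

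Next I would establish strict monotonicity in each variable. The identity $f(\eta,A)^{2}=1-(1+\eta^{2}A)^{-1}$ shows that $f$ is strictly increasing in both $\eta>0$ and $A>0$. Since $\partial_{\tau}A=2\tau t>0$ for $\tau>0$ and $t\in(0,1)$, this implies that $\Theta(\eta,\tau)$ is strictly increasing separately in each variable throughout $\Lambda$. Moreover, dominated convergence gives $\mathrm{Per}(\eta,\tau)\to 0$ as $\eta\to 0^{+}$, and $f\to 1$ pointwise as $\eta\to\infty$, so together with $\int_{0}^{1}dt/\sqrt{t(1-t)}=\pi$ we obtain $\mathrm{Per}(\eta,\tau)\to 1$ as $\eta\to\infty$. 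Hence for each fixed $\tau\in(0,1]$, the map $\eta\mapsto\mathrm{Per}(\eta,\tau)$ is a real-analytic bijection from $(0,\infty)$ onto $(0,1)$. By the analytic implicit function theorem, the level set $\mathrm{Per}^{-1}(q_{0})\cap\Lambda$ is the graph of a real-analytic function $\eta_{q_{0}}:(0,1]\to(0,\infty)$; the fact that $\mathrm{Per}$ is strictly increasing also in $\tau$ forces $\eta_{q_{0}}$ to be strictly decreasing.

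To compute the endpoint values of $\eta_{n}$ I would evaluate directly. At $\tau=1$, \eqref{eq:Theta} gives $\mathrm{Per}(\eta,1)=\eta/\sqrt{1+\eta^{2}}$, so $\mathrm{Per}(\eta,1)=1/n$ reduces to $\eta^{2}(n^{2}-1)=1$, yielding $\eta_{n}(1)=1/\sqrt{n^{2}-1}$. For $\tau\to 0^{+}$ I would invoke \eqref{eq:Per0}: the equation $(2/\pi)\arctan(\eta)=1/n$ gives $\eta=\tan(\pi/(2n))$, and by continuity of $\eta_{n}$ at $\tau=0$ (a consequence of the analytic extension from the first step) this is the desired limit. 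Finally, the periodicity \eqref{eq:periodic} is the immediate content of the paragraph preceding the lemma: once $\mathrm{Per}(\eta_{n}(\tau),\tau)=1/n$, the symmetry lines $L_{k}$ of Lemma \ref{lem:sim} reduce to a finite set $\{L_{0},\dots,L_{n-1}\}$, so $g(0,y+2n\vartheta)=g(0,y)$, and holomorphicity propagates this to $g(x,y+2n\vartheta)=g(x,y)$ on the entire strip.

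The principal obstacle, if any, is the monotonicity step, which rests on the compact rewriting $f^{2}=1-(1+\eta^{2}A)^{-1}$; the remaining pieces (analytic extension up to $\tau=0$, limit identification, and transfer of periodicity to $g$) are routine once monotonicity is in hand.
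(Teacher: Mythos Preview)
Your proposal is correct and follows essentially the same route as the paper: both arguments differentiate under the integral in \eqref{eq:cdv} to get strict monotonicity of $\mathrm{Per}$ in each variable, invoke the implicit function theorem, and read off the endpoint values from \eqref{eq:Theta} and \eqref{eq:Per0}. Your version is more detailed (the identity $f^{2}=1-(1+\eta^{2}A)^{-1}$ and the explicit verification that $\eta\mapsto\mathrm{Per}(\eta,\tau)$ is onto $(0,1)$ are not spelled out in the paper), but the underlying idea is identical.
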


\begin{proof}
By \eqref{eq:cdv}, the $\partial_\eta$ and $\partial_\tau$-derivatives of $\rm{Per}(\eta,\tau)$ are positive. So, by the implicit function theorem, the level sets of $\rm{Per}(\eta,\tau)$ can be written as graphs over the $\tau$ variable: $\eta=\eta(\tau)$. To check that these curves are decreasing it suffices to derive in the expression ${\rm Per}(\eta(\tau),\tau)=q_0$ to infer that $\eta'(\tau)<0$. 
The limit values of $\eta_n(\tau)$ at $\tau=1$ and $\tau\to 0$ are a consequence of \eqref{eq:Theta} and \eqref{eq:Per0} respectively, using that $\Theta=\pi/n$ Finally, \eqref{eq:periodic} is a consequence of Lemma \ref{lem:sim}, as explained in the paragraph before Lemma \ref{lem:period}.
\end{proof}


\subsection{Summary of the construction}\label{sec:summary}

Fix $n\in \N$, $n\geq 2$. Then, for any $\tau\in (0,1]$ we have constructed a holomorphic map 
\begin{equation}\label{getau}
g(z): \cA\flecha \C,
\end{equation} 
where $\cA=\cA_\tau$ is a quotient vertical strip of the form
\begin{equation}\label{ec:banda}
\cA=\left( (x_b^-,x_a^+)\times \R, \sim \right), \hspace{0.5cm} (x,y)\sim (x,y + 2 n\vart).
\end{equation}
Here $g(z)=g(z;\tau)$, $\vart=\vart(\tau)>0$ as well as $x_b^-=x_b^-(\tau)<0$ and $x_a^+=x_a^+(\tau)>0$ all depend analytically on $\tau$. The fact that $g(z)$ is well defined on $\cA$ comes from \eqref{eq:periodic}.

The holomorphic map $g$ satisfies $g'(z)\neq 0$ for any $z\in \cA$, and $g(iy)\in \S^1$ for every $y$. Also, $g(z)$ is symmetric with respect to $n$ equiangular lines passing through the origin, as expressed by \eqref{sim:g}. Moreover, the harmonic function $\omega:= \log |g'|$ on $\cA$ satisfies the overdetermined \emph{capillarity} foliation structure described by \eqref{rox} with respect to adequate real analytic functions $\alfa(x),\beta(x)$.

In this way, $g$ is locally injective, and its restriction to any closed quotient band of the form $\cU:= ([s_1,s_2]\times \R,\sim)$ defines an \emph{immersed} (i.e. possibly with self-intersections) bounded doubly connected domain $\Omega$ in $\R^2$, given by $\overline\Omega:=g(\cU)$. This domain $\Omega$ has dihedral symmetry $D_{n}$ of order $2n$ with respect to the origin. Moreover, $\Omega$ is foliated by capillary curves in the sense of Definition \ref{def:capring}.  


\subsection{The radial case ($\tau=1$)}\label{sec:tau1} 
We will now prove that the holomorphic map $g$ constructed above is rotationally symmetric when $\tau=1$. Fix $n\in\N$, $n\geq 2$, and $(\tau,\eta_1)=(1,\eta_n(1) ) \in\Lambda$.  Let us recall that $\eta_1=1/\sqrt{n^2-1}$, see Lemma \ref{lem:period}.

As explained in Remark \ref{re:omegatau1}, when $\tau=1$ the harmonic map $\omega(x,y)$ is linear and depends only on $x$. Also, it satisfies  $e^{\omega(0,y)}\equiv \eta_1/2$. By  \eqref{rox} and \eqref{abd}, we also have $\omega_x(0,y)\equiv -\eta_1/2$. Thus,  
\begin{equation}\label{eq:omega1}
\omega(x,y)=-c_1 x + \log(c_1),\qquad c_1:=\dfrac{\eta_1}{2} =\frac{1}{2\sqrt{n^2-1}}.
\end{equation}
Since $|g'|=e^\omega$, we see from the initial conditions \eqref{inig} that $g(z)=-e^{-c_1 z}$. In particular, $g$ is well defined and holomorphic on the quotient $(\C,\sim)$, with $(x,y)\sim (x,y+4 \pi \sqrt{n^2-1})$.  

Note that each curve $y\mapsto g(x_0+iy)$ covers (with negative orientation) the circle of radius $R=e^{-c_1 x_0}$ around the origin, and the restriction of $g(z)$ to any domain $([s_1,s_2]\times \R,\sim)$ in the above quotient is a bijection onto a radial annulus.




\subsection{The limit case $\tau = 0$}\label{sec:tau0} 

The construction of the map $g(z)$ can also be carried out for $\tau=0$, with some modifications, as we travel along the level curve ${\rm Per}(\eta,\tau)=1/n$. 
First, notice that the initial conditions in  \eqref{abd} give $\alpha(x)\to 0$ as $\tau\to 0$. For our purposes, it will then be useful to work instead with 
$$\tilde{\alfa}(x):=\frac{\alfa(x)}{\tau^2}.$$ 
By \eqref{system}, $(\tilde\alfa,\beta)$ satisfy 
\begin{equation}\label{systemtil}
\left\{ \def\arraystretch{1.5}\begin{array}{lll} \tilde\alfa'' & = &  \delta \tilde\alfa - 2\tau^2 \tilde\alfa^2 \beta, \\ \beta'' & = &  \delta \beta - 
2 \tau^2\tilde\alfa \beta^2, \end{array} \right.
 \end{equation}
with $\tilde\alfa(0)=0$ and $\tilde\alfa'(0)= \eta/8>0$, by \eqref{abd}.
Assume now that $\tau= 0$. In this case, \eqref{abd} gives $\delta= -1/4$, and from \eqref{systemtil} we have
\begin{equation}\label{eq:alfatilde}
 \tilde\alfa(x) = \frac{\eta_0}{4}\sin\left({x}/{2}\right), 
\quad \beta(x) = 2\cos(x/2)+c_2 \sin(x/2), 
\end{equation}
for $c_2:=\eta_0-\frac{1}{\eta_0}$ and $\eta_0=\tan(\frac{\pi}{2n})$ (see \eqref{eq:etalimit} in Lemma \ref{lem:period}). So, $c_2=-2\cot(\pi/n)$.

The functions $\omega$ and $g$ defined in Sections \ref{sec:omega} and \ref{sec:g} can also be explicitely computed. Indeed, the solution $\omega(x,y)$ to \eqref{rox} and \eqref{iniconro} is given in our case by 
$$e^{-\omega} =\frac{2}{\sin(\frac{\pi}{n})} \left( \cos \left(\frac{x}{2}-\frac{\pi}{n}\right)+\cosh \left(\frac{y}{2}\right)\right),$$ 
and the associated holomorphic map $g$ is  
\begin{equation}\label{gt0}
g(z)=-\cos\left( \frac{\pi}{n} \right) +\sin \left(\frac{\pi }{n}\right) \tan \left(\frac{z}{4}-\frac{\pi }{2 n}\right).
\end{equation}
Note that $g(z)$ is well defined in the domain 
$$ \mathcal{I}_1\times\R= \left(-2\pi + \frac{2\pi}{n},2\pi + \frac{2\pi}{n}\right) \times \R.$$
Also, observe that $g(z)$ in \eqref{gt0} is not periodic in the $y$-variable in this case (indeed, $\vart\to\8$  as $\tau\to 0$, see \eqref{eq:cdvvart}), and it satisfies 
\begin{equation}\label{eq:p0p1}
\def\arraystretch{1.8}\begin{array}{lll}
\displaystyle \lim_{y\to-\8} g(x+iy) &=& -\cos\left( \frac{\pi}{n} \right) - i\sin\left( \frac{\pi}{n}\right) =:{\bf p}_0 \\
\displaystyle \lim_{y\to\8} g(x+iy) &=&  -\cos\left(\frac{\pi}{n} \right) + i\sin\left( \frac{\pi}{n}\right)=:{\bf p}_1
\end{array}
\end{equation}
Moreover, for each $x\in \mathcal{I}_1$, the curve $y\mapsto g(x+iy)$ is a circle arc joining ${\bf p}_0$ and ${\bf p}_1$.  Following \eqref{curcapi}  the radius of the circle is $R(x)=1/|\kappa(x)|$, where 
\begin{equation}\label{curt0}
\kappa(x) = \frac{\sin\left(\frac{x}{2}-\frac{\pi}{n}\right)}{\sin(\frac{\pi}{n})}.
\end{equation}
Thus,  in the limit case $\tau=0$ we obtain a holomorphic bijection 
$$g:  \left(-2\pi + \frac{2\pi}{n},2\pi + \frac{2\pi}{n}\right) \times \R \to \C\setminus L$$
where $L=\{-\cos(\pi/n)+ i y \in\C : |y|> \sin(\pi/n)\}$.


\section{Immersed Serrin ring domains}\label{sec:immersed}

In this section we will show how the holomorphic maps $g(z)$ constructed in Section \ref{sec:gaussmaps} define a $2$-parameter analytic family of doubly connected \emph{immersed} (i.e. maybe self-intersecting) domains of Serrin type. This concept is made precise by the definition below.


\begin{definition}\label{imser}
Let $g(z):\cU\flecha \C\equiv \R^2$ be a holomorphic map from a vertical quotient strip $$\cU:=([s_1,s_2]\times \R, \sim), \hspace{0.5cm} (x,y)\sim (x,y+T),$$ and assume that $g'(z)\neq 0$ for any $z=x+iy\in \cU$. We say that $\overline\Omega:=g(\cU)\subset\R^2$ is an \emph{immersed Serrin ring domain} if there exists $v(x,y):\cU\flecha \R$ such that:
\begin{enumerate}
\item
$v$ is a solution to $\Delta v + 2e^{2\omega}=0$, where $\omega:= \log |g'|$.
\item
Along each boundary curve $x=s_j$, $j=1,2$, there exist $a_j,b_j\in \R$ such that $$v(s_j,y)=a_j, \hspace{0.5cm} v_x(s_j,y)=b_j\, e^{\omega(s_j,y)}$$ hold for every $y\in \R$.
\end{enumerate}
\end{definition}

\begin{remark}\label{cuandoserrin}
Assume that $g(z)$ is injective in Definition \ref{imser} above. Then, the exterior unit normal $\nu$ of $\parc \Omega$ is given by $\nu=-e^{-\omega}\parc_x$ (resp. by $\nu=e^{-\omega}\parc_x$) at the boundary curve $x=s_1$ (resp. $x=s_2$). From here, we easily see that $u:=v\circ g^{-1}$ solves \eqref{overeq00} for $a_1,a_2,-b_1,b_2$. 
Therefore, $\Omega:= {\rm int}(\overline\Omega)$ is a Serrin ring domain in $\R^2$; see the discussion at the beginning of Section \ref{sec:confpa}.
\end{remark}

Fix $n\in \N$, $n\geq 2$. In Section \ref{sec:gaussmaps} we constructed, for each $\tau \in (0,1]$,  a holomorphic map $g(z)$ defined on a vertical strip $(x_b^-,x_a^+)\times \R$, with $g'(z)\neq 0$.  By \eqref{eq:periodic}, this holomorphic map $g$ is periodic and defined on the quotient band $\cA$ in \eqref{ec:banda}; 
see the summary of these properties in Section \ref{sec:summary}. 

We are going to show that from these holomorphic maps arises a real analytic, $2$-parameter family of immersed Serrin ring domains. For that, we start by defining the parameter space
\begin{equation}\label{wimm}
\cW^n_{\rm imm} :=\{(s,\tau) : \tau \in (0,1), x_b^-(\tau)<s<0\}.
\end{equation}
For any $\tau\in (0,1]$, we let $(\alpha(x),\beta(x)):\R\to\R^2$ be the solution to system \eqref{system} with initial conditions \eqref{abd}; here, $(\eta,\tau)=(\eta,\eta_n(\tau))$, with $\eta_n(\tau)$ as in \eqref{percurva}. The zeros of $\alpha(x)$ and $\beta(x)$ were described in Lemma \ref{lem:lio}. We have then:

\begin{lemma}\label{lem:t}
For each $s\in (x_b^-,0)$ there exists a unique $s^*\in (x_b^+,x_a^+)$ such that 
\begin{equation}\label{eseses}
\frac{\beta(s)}{\alfa(s)}=\frac{\beta(s^*)}{\alfa(s^*)}.
\end{equation}
Moreover, the map $s\mapsto s^*$ is strictly increasing.
\end{lemma}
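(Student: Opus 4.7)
The plan is to study the ratio
\[
F(x) := \frac{\beta(x)}{\alfa(x)},
\]
separately on the two intervals $(x_b^-,0)$ and $(x_b^+,x_a^+)$, and to show that on each of them $F$ is a strictly decreasing bijection onto $(-\8,0)$. The existence and uniqueness of $s^*$ will then be immediate, and the monotonicity of $s\mapsto s^*$ will follow because the composition of two strictly decreasing bijections is strictly increasing.

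First, using Lemma \ref{lem:xaxb} I would check the signs: on $(x_b^-,0)$ one has $\alfa<0$ and $\beta>0$, so $F<0$; at $x=x_b^-$ we have $\beta=0$ (and $\alfa(x_b^-)\ne 0$), giving $F(x_b^-)=0$; and as $x\to 0^-$, $\alfa\to 0^-$ while $\beta(0)=2>0$, so $F(x)\to -\8$. Analogously on $(x_b^+,x_a^+)$ we have $\alfa>0$, $\beta<0$, so $F<0$, with $F(x_b^+)=0$ and $F(x)\to-\8$ as $x\to x_a^+$ (since $\alfa(x_a^+)=0$ and $\beta(x_a^+)<0$). So $F$ sweeps the range $(-\8,0)$ on each of the two intervals.

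The key step is monotonicity. I would compute
\[
F'(x) \;=\; \frac{\beta'\alfa-\beta\alfa'}{\alfa^2} \;=\; -\frac{\alfa'\beta-\beta'\alfa}{\alfa^2} \;=\; -\frac{\kappa_1}{\alfa^2},
\]
where $\kappa_1=\alfa'\beta-\beta'\alfa$ is the first integral \eqref{khami} of system \eqref{system}. By the initial conditions \eqref{abd} one has $\kappa_1 = \alfa'(0)\beta(0)-\beta'(0)\alfa(0)=2\alfa'(0)=\eta\tau^2/4>0$. Therefore $F'(x)<0$ wherever $\alfa(x)\neq 0$, and in particular on each of $(x_b^-,0)$ and $(x_b^+,x_a^+)$. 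Combined with the boundary values above, this shows that $F$ restricts to a strictly decreasing bijection onto $(-\8,0)$ on each interval.

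Given $s\in(x_b^-,0)$, the element $s^*\in(x_b^+,x_a^+)$ satisfying \eqref{eseses} is then uniquely characterized as $s^* = \left(F|_{(x_b^+,x_a^+)}\right)^{-1}\!\bigl(F(s)\bigr)$. Both $F|_{(x_b^-,0)}$ and the inverse $\left(F|_{(x_b^+,x_a^+)}\right)^{-1}$ are strictly decreasing, so their composition $s\mapsto s^*$ is strictly increasing. I do not foresee any real obstacle here; the only thing to be careful about is checking the sign of $\kappa_1$ explicitly from the chosen initial data and the sign information recorded in Lemma \ref{lem:xaxb}, which makes the reduction to a one-variable monotonicity argument clean.
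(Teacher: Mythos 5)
Your proposal is correct and follows essentially the same route as the paper: the paper also introduces $t(x)=\beta(x)/\alfa(x)$, uses the first integral $\kappa_1$ from \eqref{khami} to get $t'(x)=-\kappa_1/\alfa(x)^2<0$ on each component, and reads off existence, uniqueness and the monotonicity of $s\mapsto s^*$ from the resulting decreasing bijections. Your version just spells out the sign bookkeeping from Lemma \ref{lem:xaxb} a bit more explicitly.
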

\begin{proof}
Denote 
\begin{equation}\label{defit}
t(x):=\frac{\beta(x)}{\alfa(x)}:(x_b^-,0)\cup(0,x_a^+)\flecha \R.
\end{equation} 
From \eqref{khami}, \eqref{abd} we have $t'(x)=-2\alfa'(0)/\alfa(x)^2=-\kappa_1/\alfa(x)^2<0$, and so $t(x)$ is strictly decreasing on each connected component of its domain. Also, by Lemma \ref{lem:lio}, $t(x_b^-)=t(x_b^+)=0$, and $t(x)\to -\8$ as $x\to 0^-$ or as $x\to x_a^+$. This proves the existence of $s^*$. The fact that the map $s\mapsto s^*$ is increasing follows directly from the monotonicity of $t(x)$.
\end{proof}

\begin{theorem}\label{th:imm}
For any $(s,\tau)\in \cW_{\rm imm}^n$, let
\begin{equation}\label{ubueno}
\cU=([s,s^*]\times\R,\sim)  
\end{equation}
where  $s^*>0$ is defined in Lemma \ref{lem:t} and $(x,y)\sim (x,y+2n\vart)$, as in \eqref{ec:banda}. Then,  $\overline\Omega_{(s,\tau)}:=g(\cU)$ is an immersed Serrin ring domain, as in Definition \ref{imser}.
\end{theorem}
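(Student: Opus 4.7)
\medskip
\noindent\textbf{Proof proposal.} The plan is to exhibit an explicit $v(x,y)$ meeting conditions (1) and (2) of Definition~\ref{imser}, using the ansatz from Section~\ref{sec:local} but rewritten so that the singularity at the zero $x=0$ of $\alpha$ is absorbed. Set
\[
\mathfrak{q}:=\frac{\alpha(s)}{2\beta(s)}, \qquad f(x):=\alpha(x)-2\mathfrak{q}\,\beta(x),\qquad \kappa_1:=\alpha'(0)\beta(0)-\beta'(0)\alpha(0)=2\alpha'(0),
\]
so that $f(s)=0$ and, by the defining relation \eqref{eseses} of $s^*$ from Lemma~\ref{lem:t}, also $f(s^*)=0$. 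Then define
\[
v(x,y):=\frac{2}{\kappa_1}\left[ f(x)\,e^{\omega(x,y)}+\int_0^x f(t)\,\alpha(t)\,dt\right],
\]
which is manifestly real analytic on $\cU$ because $\alpha,\beta$ are entire solutions of \eqref{system}, $\omega$ is well-defined on $(x_b^-,x_a^+)\times\R$ by Lemma~\ref{lem:lio}, and $\kappa_1\neq 0$ since $\alpha'(0)=\eta\tau^2/8>0$ by \eqref{abd}.

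The first task is to verify $\Delta v+2e^{2\omega}=0$ by direct computation. Using $\Delta\omega=0$ together with the ODE \eqref{rox} for $\omega_x$ and the first integral \eqref{roy} for $\omega_y^2$, one finds after expanding that
\[
\Delta v=\frac{2}{\kappa_1}\Bigl[f''(x)\,e^{\omega}+\bigl(f(x)\beta'(x)-f'(x)\beta(x)\bigr)\,e^{2\omega}+f(x)\bigl(2\alpha(x)\beta(x)-\delta\bigr)e^{\omega}\Bigr],
\]
after the terms involving $\omega_x$ and those coming from $F'(x)=f(x)\alpha(x)$ cancel pairwise. The system \eqref{system} gives the linear identity $f''=(\delta-2\alpha\beta)f$, killing the first and third summands; and expanding the antisymmetric combination shows $f\beta'-f'\beta=\alpha\beta'-\alpha'\beta=-\kappa_1$ by the Hamiltonian first integral \eqref{khami}. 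Hence $\Delta v=-2e^{2\omega}$, which is precisely condition (1) of Definition~\ref{imser}.

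For the overdetermined boundary conditions, evaluate at $x=s_1:=s$: since $f(s)=0$,
\[
v(s,y)=\frac{2}{\kappa_1}\int_0^{s} f(t)\alpha(t)\,dt=:a_1,\qquad v_x(s,y)=\frac{2}{\kappa_1}\bigl[f'(s)+f(s)\omega_x(s,y)\bigr]e^{\omega(s,y)}+\frac{2}{\kappa_1}f(s)\alpha(s)=\frac{2f'(s)}{\kappa_1}\,e^{\omega(s,y)},
\]
so $b_1:=2f'(s)/\kappa_1$. The identical computation at $s_2:=s^*$ uses $f(s^*)=0$ to give $v(s^*,y)=a_2$ and $v_x(s^*,y)=b_2\,e^{\omega(s^*,y)}$. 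Finally, $v$ descends to the quotient $\cU$ because the only $y$-dependence is through $e^{\omega(x,y)}$, and $\omega(x,y+2n\vartheta)=\omega(x,y)$ by Lemma~\ref{lem:sigma} and the periodicity established in Section~\ref{sec:per}.

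The main technical obstacle is the algebraic verification that $\Delta v+2e^{2\omega}=0$; although the combinatorics of cancellations is quite tight, it rests on two essentially trivial facts, namely the $f''=(\delta-2\alpha\beta)f$ identity (linearity of \eqref{system} in the combination $\alpha-2\mathfrak{q}\beta$) and the Wronskian-type invariant $\alpha\beta'-\alpha'\beta=-\kappa_1$. These are exactly the two ingredients that make the ansatz $v=a(x)+c(x)\sigma$ of Section~\ref{sec:local} collapse into a globally smooth expression across the zero of $\alpha$ at $x=0$, even though the individual functions $a(x),c(x)$ derived in that section develop poles there.
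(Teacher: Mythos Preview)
Your proof is correct and rests on the same underlying idea as the paper's---namely, the foliation ansatz $v=a(x)+c(x)\sigma$ from Section~\ref{sec:local} together with the observation that $c(s)=c(s^*)=0$ forces the Serrin boundary conditions. The execution, however, is packaged differently and somewhat more efficiently. The paper keeps the four pieces $a(x),b(x),c(x),\sigma(x,y)$ separate, which forces it to (i) verify that the poles of $a$ and $c\sigma$ at $x=0$ cancel, and (ii) check $\Delta\sigma+2e^{2\omega}=0$ as an independent (and omitted) computation before deducing the equation for $v$. Your substitution $f(x)=c(x)\alpha(x)=\alpha(x)-2\mathfrak{q}\beta(x)$ collapses the ansatz into the single formula $v=\tfrac{2}{\kappa_1}\bigl(fe^{\omega}+\int_0^x f\alpha\bigr)$, which is visibly real analytic across $x=0$ and whose Laplacian reduces to the two clean identities $f''=(\delta-2\alpha\beta)f$ and $f\beta'-f'\beta=-\kappa_1$. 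What the paper's longer route buys is the interpretation $\sigma=-|g|^2/2$ (proved afterwards), which connects $v$ back to the paraboloid foliation picture; your shortcut gets to Definition~\ref{imser} faster but leaves that geometric identification implicit.
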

\begin{proof}
To prove the theorem we will construct a solution $v(x,y)$ to \eqref{pdeom} in the quotient band \eqref{ubueno} 
that satisfies $$v(s,y)=a_1, \hspace{0.5cm} v_x(s,y)=b_1\, e^{\omega(s,y)}$$ and $$v(s^*,y)=a_2, \hspace{0.5cm} v_x(s^*,y)=b_2\, e^{\omega(s^*,y)}$$ for any $y\in \R$, with respect to adequate constants $a_j, b_j$.



For that, we will rely on our local computations of Section \ref{sec:folicapi} regarding solutions to $\Delta u+2=0$ foliated by capillary curves, and follow a reverse path. First, we denote 
\begin{equation}\label{eq:q}
\mathfrak{q}:=\frac{1}{2t(s)}=\frac{1}{2 t(s^*)}<0,
\end{equation}
with $t(x)$ as in \eqref{defit}, and define in terms of $\alfa(x),\beta(x)$ the functions $a(x),b(x),c(x)$ given by \eqref{relcon3} and \eqref{relcon5}, that is, 
\begin{equation}\label{abcbis}
b(x)=\frac{4 \mathfrak{q}}{\alfa(x)}, \hspace{0.5cm} c(x)=1-\frac{2\mathfrak{q}\beta(x)}{\alfa(x)}, \hspace{0.5cm} a'(x)=-\frac{4\mathfrak{q}}{\alfa(x)^2} \left( \hat{c}_0+\int_0^x \alfa(s)^2 ds\right).
\end{equation}
Here, $\hat{c}_0=-\kappa_1 |g(0)|^2/4$, see \eqref{hatcero}, which in our case is, by \eqref{abd}, \eqref{inig},
\begin{equation}\label{hace0}
\hat{c}_0 := -\frac{\eta \tau^2}{16} \hspace{0.5cm} \left(= -\frac{\alfa'(0)}{2}\right).
\end{equation} 
The function $a(x)$ is defined up to an additive constant that does not affect the arguments below. 
Note that $a(x),b(x),c(x)$ are only defined in $(x_b^-,x_a^+)\setminus\{0\}$, since $\alfa(x)\neq 0$ in that set. At $x=0$ all of them blow up (recall that $\alfa(0)=0$). Also, note that $c(s)=c(s^*)=0$, by \eqref{eq:q}.

In addition, we define a map $\sigma(x,y)$ in terms of $\alfa$ and $\omega$ as in \eqref{ecusi}, that is,
\begin{equation}\label{ecusi2}
\sigma(x,y)= \frac{2}{\kappa_1}\left(\alfa(x) e^{\omega(x,y)} + \hat{c}_0+ \int_0^x \alfa(s)^2 ds\right).
\end{equation} 
It is clear that $\sigma(x,y)$ is real analytic and well defined on the quotient strip $\cU$. From here, we define 
\begin{equation}\label{eq:v}
v(x,y):=a(x) +c(x)\sigma(x,y).
\end{equation}
Let us check that $v$ is a real analytic function on $\cU$. For that, it suffices to prove that $v(x,y)$ is analytic when $x=0$. Since $\alfa(x)\equiv x(\kappa_1/2 + \cdots)$, we see from \eqref{abcbis} that $$a(x)= \frac{16\mathfrak{q} \hat{c}_0}{\kappa_1^2} \, \frac{1}{x} + \text{analytic}(x).$$ A similar argument for $c(x)\sigma(x,y)$ using \eqref{abcbis} and \eqref{ecusi2} gives
$$c(x)\sigma(x,y)= \frac{-16\mathfrak{q} \hat{c}_0}{\kappa_1^2} \, \frac{1}{x} + \text{analytic}(x,y).$$ Thus, these singularities cancel out, and $v(x,y)$ is real analytic around $x=0$.

Also, the above definitions in \eqref{abcbis}, \eqref{ecusi2} make the compatibility condition \eqref{compati} hold, i.e.,
$$a'(x) + c'(x)\sigma =b(x) e^{\omega}.$$ Therefore, $v(x,y)$ is a solution to system \eqref{sobredet4}, that is,
\begin{equation}\label{dosev}
v= a(x) + c(x) \sigma, \hspace{0.5cm} v_x = b(x) e^{\omega} + c(x) \sigma_x.
\end{equation}

On the other hand, the function $\sigma(x,y)$ in \eqref{ecusi2} satisfies 
\begin{equation}\label{edpsi}
\Delta \sigma +2e^{2\omega}=0.
\end{equation} 
This is a rather long but direct computation that we omit. In it, when computing the Laplacian of $\sigma(x,y)$ in \eqref{ecusi2}, we need to take into account that $\omega$ is harmonic, that $\omega_x$ satisfies \eqref{rox} and $\omega_y^2$ satisfies \eqref{roy}, as well as the differential system \eqref{system} for $(\alfa,\beta)$.

We can now compute, using \eqref{dosev}, 
$$\Delta v= (v_x)_x+v_{yy} = b' e^{\omega} + b \omega_x e^{\omega} + c' \sigma_x + c\Delta \sigma.$$
Plugging now into this expression \eqref{rox}, \eqref{abcbis}, \eqref{ecusi2} and \eqref{edpsi}, we deduce that $$\Delta v+2e^{2\omega}=0.$$
Finally, since $c(s)=c(s^*)=0$, we see that the conditions in Definition \ref{imser} are fulfilled. Thus, $\overline\Omega:=g(\cU)$ is an immersed Serrin ring domain, what completes the proof.
%
%
%
\end{proof}

\begin{corollary}\label{re:symmetries}
Any of the immersed Serrin domains $\overline\Omega_{(s,\tau)}$ of Theorem \ref{th:imm} has a dihedral symmetry group $D_n$ of order $2n$ centered at the origin. 
\end{corollary}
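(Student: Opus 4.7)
The plan is to show that the reflections $\Psi_k$ produced by Lemma \ref{lem:sim}(iii) descend to isometries of $\overline\Omega_{(s,\tau)}$, and then to check that they generate a copy of $D_n$.

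First, since by construction $\eta=\eta_n(\tau)$ satisfies $\mathrm{Per}(\eta_n(\tau),\tau)=1/n$, Lemma \ref{lem:sim}(ii) gives $\Theta=\pi/n$. As explained in Section \ref{sec:per}, this forces the family $\{L_k : k\in\Z\}$ of symmetry lines to reduce to exactly $n$ distinct lines $L_0,\ldots,L_{n-1}$ through the origin, with $L_0$ the $x_1$-axis and $L_k$ obtained from $L_0$ by rotation of angle $k\pi/n$. It also yields the periodicity $g(x,y+2n\vart)=g(x,y)$, which is exactly the identification on the quotient strip $\cU=([s,s^\ast]\times\R,\sim)$ of \eqref{ubueno}.

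Next, I would apply Lemma \ref{lem:sim}(iii) to each $k\in\{0,\ldots,n-1\}$, obtaining $g(x,k\vart-y)=\Psi_k(g(x,k\vart+y))$. The map $(x,y)\mapsto (x,k\vart-y)$ preserves the identification $(x,y)\sim(x,y+2n\vart)$ (it sends pairs of identified points to pairs of identified points) and leaves $[s,s^\ast]$ fixed pointwise in the $x$-coordinate, hence descends to an involution of $\cU$. Consequently, the reflection $\Psi_k$ maps $\overline\Omega_{(s,\tau)}=g(\cU)$ onto itself.

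It remains to argue that the group $G$ generated by $\Psi_0,\Psi_1,\ldots,\Psi_{n-1}$ is the dihedral group $D_n$ of order $2n$. Since any two reflections across lines through the origin making angle $\pi/n$ compose to a rotation of angle $2\pi/n$, the composition $\Psi_1\circ\Psi_0$ generates a cyclic group $C_n$ of $n$ rotations around the origin. Together with the $n$ reflections $\Psi_k$, one obtains $2n$ distinct Euclidean isometries forming $D_n$. All of these act as symmetries of $\overline\Omega_{(s,\tau)}$ centered at the origin, which proves the corollary. The only delicate point is the verification that the horizontal reflections descend to the quotient strip and keep the $x$-band $[s,s^\ast]$ invariant; this is automatic from our explicit formulas but should be stated to justify the symmetry of the image.
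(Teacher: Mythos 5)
Your argument establishes only half of the corollary: that $\overline\Omega_{(s,\tau)}$ is \emph{invariant} under the dihedral group generated by the reflections $\Psi_0,\dots,\Psi_{n-1}$. This part is correct and coincides with the first few lines of the paper's proof (it is essentially a restatement of Lemma \ref{lem:sim}(iii) together with the discussion after \eqref{eq:periodic}). But the corollary, as it is used later (e.g.\ in the proof of item (2) of Theorem \ref{th:main}, where one deduces $n_1=n_2$ from a similarity between two domains), asserts that the symmetry group of $\overline\Omega_{(s,\tau)}$ \emph{is} $D_n$, not merely that it contains a copy of $D_n$. Your proof never rules out that the domain could accidentally have a larger symmetry group $D_{n'}$ with $n'>n$ (or even be radial), and this is the genuinely delicate point — not, as you suggest at the end, the descent of the reflections to the quotient strip.

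The paper closes this gap with a curvature argument. Since the domain is not radial, its isometry group is some dihedral group $D_{n'}$, $n'\geq n$. If $n'>n$, a boundary component $\gamma$ (say $x=s$) would admit an extra symmetry line $L'$, and at the points $\gamma\cap L'$ the curvature $\kappa_\gamma$ would have a critical point. But by \eqref{curcapi} one has $-2\kappa_\gamma=\beta(s)+\alfa(s)\,e^{-2\omega(s,y)}$, so the critical points of $\kappa_\gamma$ are exactly those of $y\mapsto\omega(s,y)$, which by Lemma \ref{lem:sigma} occur only at $y=k\vart$, i.e.\ precisely on the lines $L_0,\dots,L_{n-1}$. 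Hence $L'$ must coincide with one of the $L_j$, a contradiction. You would need to add an argument of this kind (or some other mechanism pinning down the critical points of the boundary curvature) to complete the proof.
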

\begin{proof}
We start by recalling the symmetries of $g(z)$ in Lemma \ref{lem:sim} and the fact that $\Theta(\eta,\tau)=\pi/n$ for $\eta=\eta_n(\tau)$. As discussed after \eqref{eq:periodic}, $g(z)$ is then invariant under the reflections with respect to $n$ lines $L_0,\ldots,L_{n-1}$, where $L_0$ is the horizontal axis in $\R^2$, and $L_k$  is the rotation of $L_0$ of angle $k \pi /n$ with respect to the origin, $k=1,\ldots,n-1$. Thus, $\overline\Omega:=\overline\Omega_{(s,\tau)}$ has these symmetries. In particular, since $\overline\Omega$ is not radial, the isometry group of $\overline\Omega$ is a dihedral group $D_{n'}$ of order $2n'$ for some $n'\geq n$, and contains the dihedral subgroup $D_n$ generated by all these previous reflections. We want to show that $n=n'$.

Arguing by contradiction, assume $n'>n$, and let $\gamma$ be, for example, the component of $\parc \Omega$ corresponding to $x=s$ in \eqref{ubueno}. Thus, there exists an additional symmetry line $L'$ for $\gamma$. At the intersection points of $\gamma$ and $L'$, the curvature $\kappa_\gamma$ of $\gamma$ should have a maximum or a minimum. Nonetheless, by \eqref{curcapi}, the critical points of $\kappa_{\gamma}$ are those of $y\mapsto \omega(s,y)$, and these placed exactly at the intersection points of $\gamma$ with the symmetry lines $L_0,\dots, L_{n-1}$. This means that $L'$ agrees with one of the lines $L_j$, which is a contradiction that yields $n=n'$.
\end{proof}

%
%
%
%

We next show that the ring domains $\overline\Omega_{(s,\tau)}$ of Theorem \ref{th:imm} are foliated by capillary curves lying in paraboloids, with the $x_3$-axis as their common rotational axis:
\begin{proposition}
For any $x_0\in (s,s^*)-\{0\}$, the regular space curve 
\begin{equation}\label{curpara}
y\mapsto (g(x_0+iy),v(x_0+iy))
\end{equation} 
associated to the immersed ring domain $\Omega_{(s,\tau)}$ lies in the rotational paraboloid of $\R^3$ with equation 
\begin{equation}\label{parabo}
x_3=a(x_0)-\frac{c(x_0)}{2} (x_1^2+x_2^2).
\end{equation}
\end{proposition}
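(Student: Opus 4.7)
The plan is to exploit the factorization $v(x,y) = a(x) + c(x)\sigma(x,y)$ from \eqref{eq:v} and reduce the proposition to proving the identity $\sigma(x,y) = -\tfrac{1}{2}|g(x+iy)|^2$ throughout $\cU$. Once this is known, writing $g(x_0+iy) = x_1 + i x_2$ immediately gives $v(x_0,y) = a(x_0) - \tfrac{c(x_0)}{2}(x_1^2 + x_2^2)$, which is exactly \eqref{parabo}. The hypothesis $x_0 \neq 0$ is natural because $a(x)$ and $c(x)$ individually blow up at $x=0$, even though the combination $v$ is real-analytic there (this is precisely the cancellation checked inside the proof of Theorem \ref{th:imm}).

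To prove the identity, set $\tilde\sigma := -\tfrac{1}{2}|g|^2$ and $\Phi := \sigma - \tilde\sigma$. Both $\sigma$ and $\tilde\sigma$ satisfy the inhomogeneous equation $\Delta w + 2 e^{2\omega} = 0$: for $\sigma$ this is the identity \eqref{edpsi} established in the proof of Theorem \ref{th:imm}, and for $\tilde\sigma$ it follows from $\tilde\sigma_{z\bar z} = -\tfrac{1}{2} g'\,\overline{g'} = -\tfrac{1}{2} e^{2\omega}$ since $g$ is holomorphic. Thus $\Phi$ is harmonic on $\cU$, and it suffices to show that $\Phi$ and $\Phi_x$ vanish identically along the axis $\{x=0\}$.

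For the value, Lemma \ref{lem:sim}(i) gives $|g(iy)| = 1$, so $\tilde\sigma(0,y) \equiv -1/2$. Evaluating \eqref{ecusi2} at $x=0$ and using $\alfa(0)=0$ yields $\sigma(0,y) = 2\hat{c}_0/\kappa_1$; substituting $\hat{c}_0 = -\eta\tau^2/16$ from \eqref{hace0} and $\kappa_1 = 2\alfa'(0) = \eta\tau^2/4$ from \eqref{abd}, this also equals $-1/2$, so $\Phi(0,y) \equiv 0$. Differentiating \eqref{ecusi2} in $x$ and setting $\alfa(0) = 0$ gives $\sigma_x(0,y) = (2\alfa'(0)/\kappa_1)\, e^{\omega(0,y)} = e^{\omega(0,y)}$. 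For $\tilde\sigma_x(0,y) = -\mathrm{Re}(\bar g\, g')(iy)$, I would parametrize $g(iy) = e^{i\theta(y)}$; by Lemma \ref{lem:sim}(i) the curve $g(iy)$ traces $\S^1$ with negative orientation, so $\theta'(y) < 0$, and the Cauchy--Riemann relation $g_y = i g'$ forces $g'(iy) = e^{i\theta(y)}\theta'(y)$. Hence $\tilde\sigma_x(0,y) = -\theta'(y) = |\theta'(y)| = |g'(iy)| = e^{\omega(0,y)}$, so $\Phi_x(0,y) \equiv 0$ as well.

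A harmonic function on a connected real-analytic domain whose value and normal derivative vanish along a smooth curve vanishes identically (unique continuation for the Cauchy problem, or equivalently, its holomorphic complexification is zero on a complex line). Therefore $\Phi \equiv 0$ on $\cU$, which proves the desired identity and hence the proposition. The main obstacle in this plan is the sign-check for $\Phi_x(0,y)$: the cancellation hinges on $g$ winding around $\S^1$ with \emph{negative} orientation as stated in Lemma \ref{lem:sim}(i), since otherwise one would obtain $\tilde\sigma_x(0,y) = -e^{\omega(0,y)}$ instead of $+e^{\omega(0,y)}$ and the Cauchy data would not match.
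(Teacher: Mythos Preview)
Your argument is correct and reaches the same reduction as the paper (namely, proving $\sigma = -\tfrac{1}{2}|g|^2$), but you establish this identity by a different route. The paper argues that $\sigma$ has vanishing Hopf differential $\sigma_{zz}-2\omega_z\sigma_z=0$ (a computation they explicitly omit as ``long but direct''), invokes the structural characterization below Remark~\ref{rem:arriba} to write $\sigma=-\tfrac12|Ag+B|^2$, and then pins down $A=1$, $B=0$ by matching the three real numbers $\sigma(0,0)$, $\sigma_x(0,0)$, $\sigma_y(0,0)$. You instead match the full Cauchy data $\Phi(0,y)\equiv\Phi_x(0,y)\equiv 0$ along the axis $x=0$ and conclude by unique continuation for harmonic functions (equivalently, $2\Phi_z=\Phi_x-i\Phi_y$ is holomorphic and vanishes on $i\R$). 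Your approach trades the omitted Hopf-differential verification for an elementary curve-wise check; both are equally valid, and yours is arguably more self-contained here. Your sign check for $\tilde\sigma_x(0,y)$ via the negative orientation of $g(iy)$ is correct and is precisely where the normalization $g'(0)=\eta/2>0$, $g(0)=-1$ enters.
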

\begin{proof}
By the first equation in \eqref{dosev}, to prove the result it suffices to show that $\sigma$ in \eqref{ecusi2} is given by $\sigma(x,y)=-|g(z)|^2/2$, with $z=x+iy$. We do this next.

Recall that $\sigma(x,y)$ solves \eqref{edpsi}. By a similar direct but long computation to the one leading to \eqref{edpsi}, that we also omit, one obtains that the Hopf differential of $\sigma(x,y)$ is zero, that is, $\sigma$ satisfies \eqref{sizz}.
In this way, as explained in our discussion below Remark \ref{rem:arriba}, $\sigma(x,y)$ must be of the form $$\sigma(x,y)= - \frac{|A g(z)+B|^2}{2},$$ for $A,B\in \C$, $A\neq 0$. We now use \eqref{rox} and the expression for $\alfa'(0)=\kappa_1/2$ in \eqref{abd} to obtain from \eqref{hace0} the following initial values for $\sigma$ in \eqref{ecusi2}: $$\sigma(0,0)=\frac{2\hat{c}_0}{\kappa_1}=-\frac{1}{2}, \hspace{0.5cm} \sigma_x(0,0)=\frac{\eta}{2}, \hspace{0.5cm }\sigma_y (0,0)=0.$$
Since $g(0)=-1$ and $g'(0)=\eta/2$, see \eqref{inig}, we have $\sigma(x,y)=-|g(z)|^2/2$, as claimed.
\end{proof}

\begin{remark}
Let $x_0=0$. Then, then the curve \eqref{curpara} lies in a cylinder over the unit circle, which appears as the limit of the paraboloids in \eqref{parabo} as $x_0\to 0$. So, strictly speaking, the curve $g(iy)$ is only a limit of capillary curves.
\end{remark}


\section{Embeddedness}\label{sec:embedded}

In this section we determine which among the immersed Serrin domain $\overline\Omega_{(s,\tau)}$ of Theorem \ref{th:imm} are actually embedded. That is, we describe the values of the parameters $(s,\tau)\in \cW^n_{\rm imm}$ for which $g(z)$ is a bijection from the conformal quotient strip $\cU$ of \eqref{ubueno} into $\overline\Omega_{(s,\tau)}$, and so $\Omega:={\rm int}(\overline\Omega_{(s,\tau)})$ defines a Serrin ring domain in $\R^2$; see Remark \ref{cuandoserrin}. We first study the embeddedness property for curves $y\mapsto g(x_0+iy)$, and then for the whole domains.

\subsection{Embeddedness of curves}\label{sec:embecur}

In what follows we fix $n\geq 2$ and $\tau\in (0,1)$, and use the notations in Sections \ref {sec:gaussmaps} and \ref{sec:immersed}.

Given $x_0\in (x_b^-,x_a^+)$, denote $\gamma(y):=g(x_0+iy)$, and $\S^1:=\R/\sim$, with $y\sim y+2n\vart$, where $\vart>0$ is defined in \eqref{eq:sigma}. In this way, $\gamma:\S^1\flecha \C$ is a closed regular real analytic curve. We study next its embeddedness, i.e., the injectivity of $\gamma$.

\begin{figure}[h]
  \centering
  \includegraphics[width=0.65\textwidth]{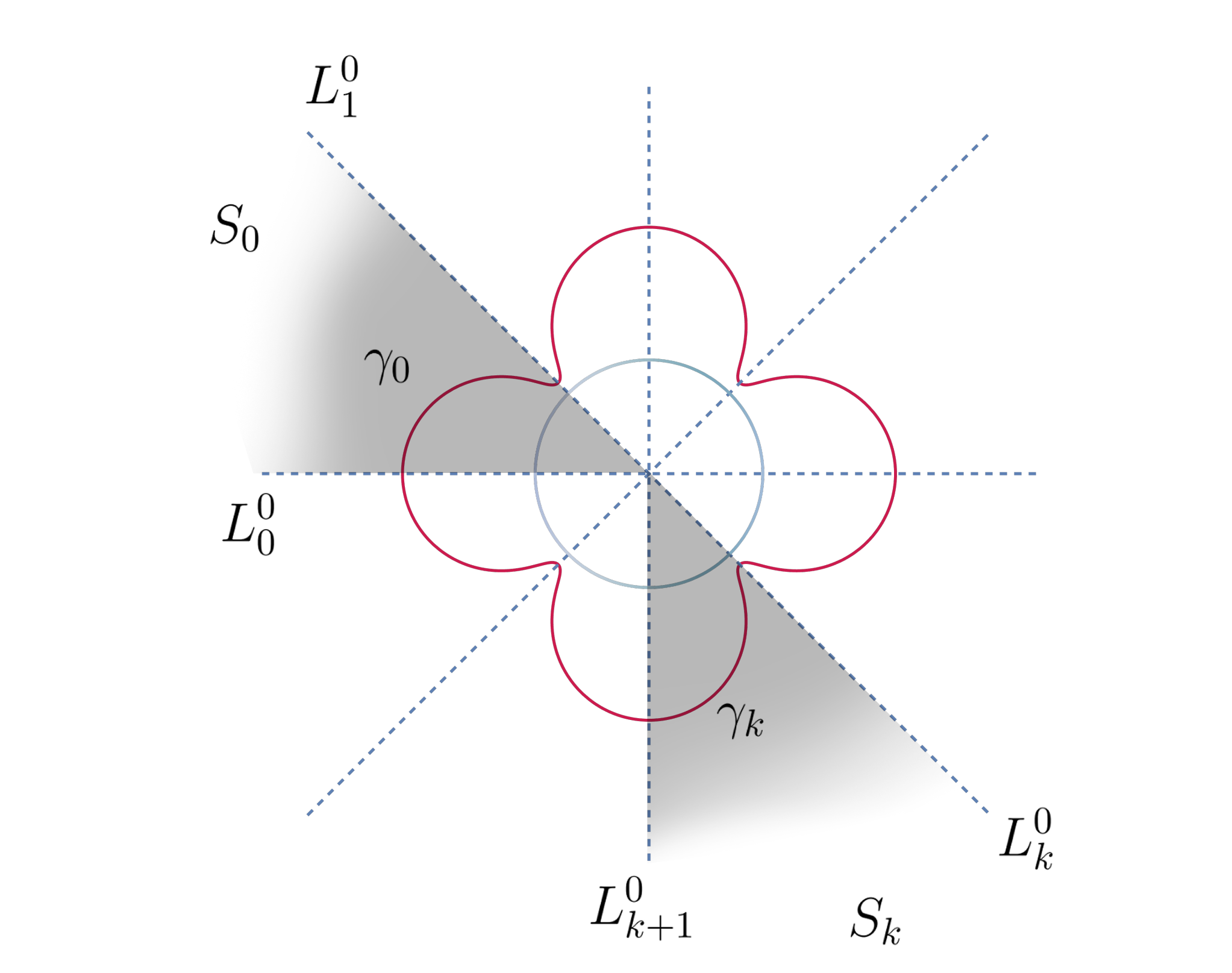} 
 \caption{The sectors $S_k$ and the arcs $\gamma_k$ of the closed curve $\gamma$. \label{sectores}}
\end{figure}

For that, we fix some notation. Let $L_0, \dots, L_{n-1},$ with $L_n=L_0$, $L_{n+1}=L_1$, etc., denote the symmetry lines of $g$ according to Lemma \ref{lem:sim}. We denote  by $L_k^0$ the half-line in $L_k$ starting at the origin and passing through $g(i k \vart)\in \S^1$. Since ${\rm Per}(\eta,\tau)=1/n$ and $g(0)=-1$, $L_k^0$ is given by ${\rm arg}(x_1+ix_2)=\pi- k\pi/n$; see our discussion below \eqref{eq:periodic}. We will also denote by $S_k$ the open sector in $\C$ with boundary $L_k^0\cup L_{k+1}^0$, and by $\gamma_k$ the open arc $\{\gamma(y): y\in (k\vart,(k+1)\vart)\}$. Note that $\gamma(k\vart)\in L_k^0$ for every $k$, and so the endpoints of $\gamma_k$ lie in $L_k^0$ and $L_{k+1}^0$, respectively.

In Lemma \ref{lem:embe1} below we use a description of $g(z)$ in terms of Weierstrass elliptic $\wp$-functions that is explained in detail in the appendix.

\begin{lemma}\label{lem:embe1}
Each arc $\gamma_k$ is embedded. Moreover, if $x_0\neq 0$, the distance function to the origin along each $\gamma_k$ is strictly monotonic. 

As a consequence, $\gamma:\S^1\flecha \C$ is embedded if and only if $\gamma_0\subset S_0$.
\end{lemma}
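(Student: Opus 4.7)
The plan is to prove item (2) first (monotonicity of the distance to the origin), deduce item (1) from it, and then use the reflection symmetries of Lemma \ref{lem:sim} to establish item (3). The appendix's description of $g(z)$ via Weierstrass $\wp$-functions is useful for concrete verification, but the essential argument only needs the expressions already derived in Sections \ref{sec:folicapi}--\ref{sec:gaussmaps}.

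\textbf{Monotonicity (item 2).} Since $\sigma(x,y)=-|g(x+iy)|^2/2$, differentiating the explicit formula \eqref{ecusi2} gives
\begin{equation*}
\parc_y |g(x_0+iy)|^2 \;=\; -\frac{4\alpha(x_0)}{\kappa_1}\, e^{\omega(x_0,y)}\, \omega_y(x_0,y).
\end{equation*}
For $x_0\in (x_b^-,x_a^+)\setminus\{0\}$, Lemma \ref{lem:xaxb} ensures $\alpha(x_0)\neq 0$. The reflection symmetry \eqref{simroo} forces $\omega_y(x_0,k\vart)=0$, and the first order ODE \eqref{eq:Zy} for $Z=e^{-\omega}$ shows that the zeros of $Z_y(x_0,\cdot)$ are precisely the points where $Q(x_0,Z(x_0,\cdot))$ vanishes, which are exactly the turning points $y=k\vart$. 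Hence $\omega_y(x_0,\cdot)$ has constant nonzero sign on each interval $(k\vart,(k+1)\vart)$, and the same holds for $\parc_y|g(x_0+iy)|^2$. This proves item (2).

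\textbf{Embeddedness of $\gamma_k$ (item 1).} If $x_0\neq 0$, strict monotonicity of $y\mapsto |\gamma(y)|$ on $(k\vart,(k+1)\vart)$ immediately rules out $\gamma(y_1)=\gamma(y_2)$ for distinct $y_1,y_2$ in this interval. If $x_0=0$, then by Lemma \ref{lem:sim}(i) the image of $\gamma$ is contained in $\S^1$, and the arc length of $\gamma_k$ equals $\Theta(\eta,\tau)=\pi/n<\pi$, so $\gamma_k$ is an embedded subarc of the unit circle.

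\textbf{Global embeddedness (item 3).} Lemma \ref{lem:sim}(iii) gives $\gamma_k=\Psi_k(\gamma_{k-1})$, where $\Psi_k$ is the reflection across $L_k$. A direct angle computation using that consecutive half-lines $L_k^0,L_{k+1}^0$ form an angle $\pi/n$ shows $\Psi_k(S_{k-1})=S_k$. Therefore, if $\gamma_0\subset S_0$, induction yields $\gamma_k\subset S_k$ for every $k\in\{0,\dots,2n-1\}$; since the open sectors $S_k$ are pairwise disjoint and each $\gamma_k$ is embedded with endpoints on the separating half-lines, the whole curve $\gamma$ is embedded. Conversely, if some interior point $\gamma(y^*)$ with $y^*\in(0,\vart)$ lies on $L_1^0$ or $L_0^0$, then the symmetry $\gamma(2\vart-y^*)=\Psi_1(\gamma(y^*))=\gamma(y^*)$ (resp.\ $\gamma(-y^*)=\Psi_0(\gamma(y^*))=\gamma(y^*)$) contradicts injectivity, so $\gamma$ is not embedded.

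The main obstacle will be the converse direction of item (3): a priori $\gamma_0$ could touch a boundary half-line tangentially at an interior point without transversely crossing it, and one must check this still breaks embeddedness. This is handled by the observation above that any interior contact with $L_k$ at parameter $y^*\notin \vart\Z$ automatically produces a coincidence $\gamma(y^*)=\gamma(2k\vart-y^*)$ via the reflection identity, regardless of the order of contact.
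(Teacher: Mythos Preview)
Your argument is correct, and for item (3) it follows the same symmetry reasoning as the paper. For item (2), however, you take a genuinely different route. The paper locates the critical points of $y\mapsto |g(x_0+iy)|^2$ by rewriting the condition ${\rm Im}(g'/g)=0$ via the Weierstrass formula \eqref{fgp} from the Appendix, reducing it to $\wp(z+\omega_1+\omega_2)\in\R$ and invoking the lattice structure of $\wp$. You instead use the identity $\sigma=-|g|^2/2$ (proved in Section \ref{sec:immersed}) together with the explicit expression \eqref{ecusi2}, which gives $\parc_y|g|^2=-\tfrac{4\alpha(x_0)}{\kappa_1}e^{\omega}\omega_y$ and reduces the question to the zeros of $\omega_y(x_0,\cdot)$, handled by the first order relation \eqref{eq:Zy}. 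Your approach is more self-contained, avoiding the elliptic function machinery entirely; the paper's approach is quicker once the Appendix is in place and makes the role of the half-period lattice transparent. One small point worth making explicit in your write-up: the fact that $\rho_1(x_0)\neq\rho_2(x_0)$ (ruled out as case (1) in the proof of Lemma \ref{lem:lio}) is what guarantees that $Z(x_0,\cdot)$ has exactly two turning points per period, so that the zeros of $\omega_y(x_0,\cdot)$ are precisely $y\in\vart\Z$.
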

\begin{proof}
The critical points of $y\mapsto |\gamma(y)|^2=|g(x_0+iy)|^2$ are given by $\esiz \gamma',\gamma\esde=0$, that is, by 
\begin{equation}\label{disto}
{\rm Im} \left(\frac{g'(x_0+iy)}{g(x_0+iy)}\right) =0.
\end{equation} 
On the other hand, the function $g'(z)/g(z)$ admits a simple expression in terms of a Weierstrass elliptic function $\wp(z)$, see \eqref{fgp}, \eqref{cteswei} in the appendix. It follows from \eqref{fgp} that \eqref{disto} holds at $z_0=x_0+iy_0$ if and only if $\wp(z_0+\omega_1+\omega_2)\in \R$, where $\omega_1>0$ and $\omega_2\in i\R_+$ denote the half-periods of $\wp(z)$. By basic properties of Weierstrass functions, this happens if and only if $z_0$ lies in the lattice spanned by the half-periods $\omega_1,\omega_2$, i.e. either $x_0= m \omega_1$ or $iy_0=m \omega_2$ for some $m\in \Z$. Now, by the final remark (2) in the Appendix, we have that $x_0\in (x_b^-,x_a^+)\subset (-\omega_1,\omega_1)$. Also, since $g(z_0)\in \gamma_k$, we have $y_0\in (k\vart,(k+1)\vart)$. This means by the final remark (3) in the Appendix that $iy_0\in (k\omega_2,(k+1)\omega_2)$. Therefore, \eqref{disto} can only happen along $\gamma_k$ if $x_0
=0$, what proves the stated monotonicity property, as well as the embeddedness of $\gamma_k$ if $x_0\neq 0$. 

In the case $x_0=0$, we know that $g(iy)$ parametrizes the unit circle of $\C$, and each $\gamma_k$ corresponds to the segment $S_k\cap \S_1$. So, each $\gamma_k$ is also embedded in that case.

We finally consider the global embeddedness of $\gamma:\S^1\flecha \C$. If $\gamma_0\subset S_0$, then by symmetry $\gamma_k\subset S_k$ for all $k$, see \eqref{sim:g}. Since the $S_k$'s are disjoint, $\gamma$ is embedded. Conversely, assume that $\gamma_0$ is not contained in $S_0$. So, $\gamma(y_0)\in L_0^0\cup L_1^0$ for some $y_0\in (0,\vart)$. Then, again by the symmetry relation \eqref{sim:g} for $k=0$ or $k=1$, then either: (i) $\gamma(y_0)=\gamma(-y_0)$ if $\gamma(y_0)\in L_0^0$; or (ii) $\gamma(y_0)=\gamma(2\vart-y_0)$ if $\gamma(y_0)\in L_1^0$. In any case, $\gamma$ is not embedded.
\end{proof}
\begin{figure}[h]
  \centering
  \includegraphics[width=0.45\textwidth]{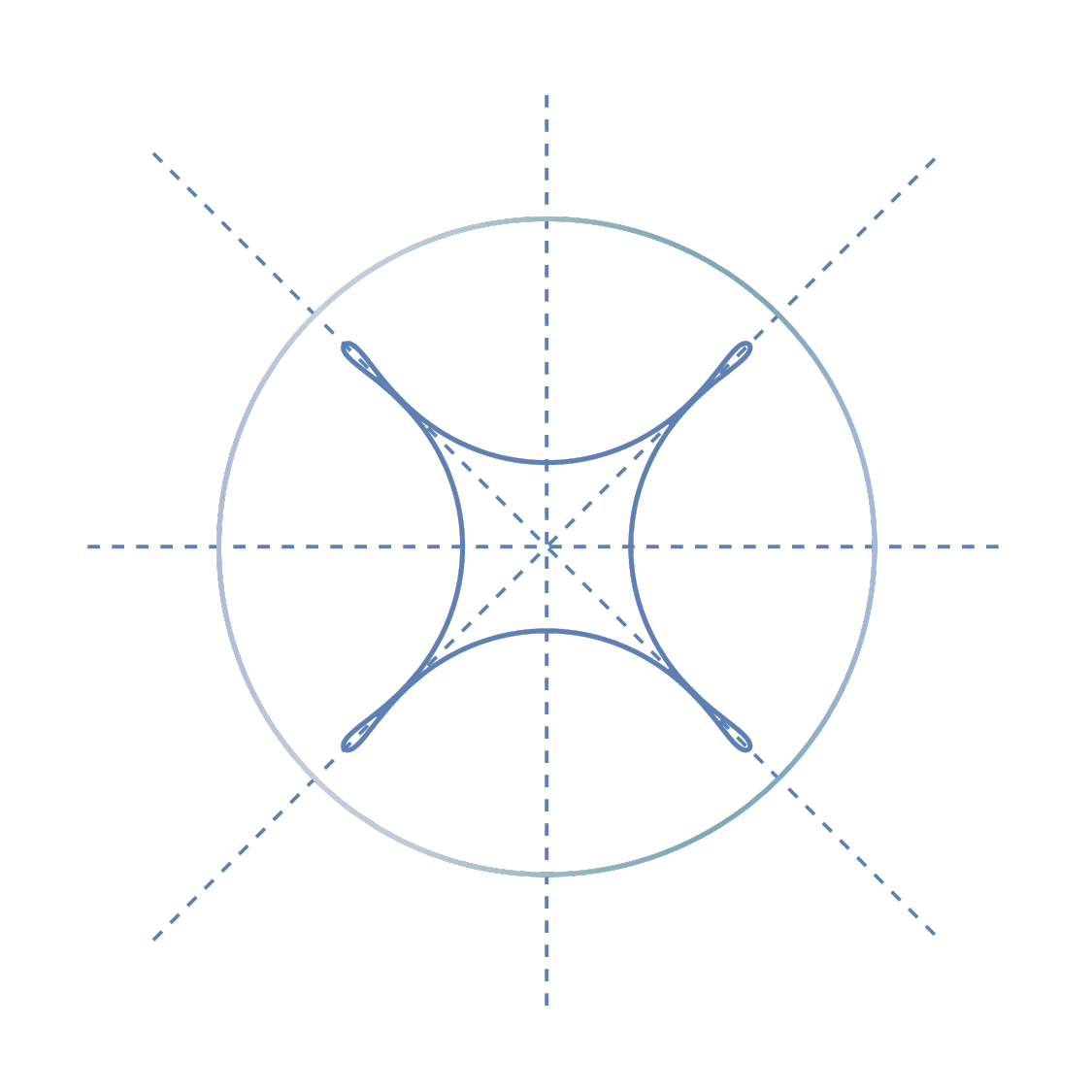}  \hspace{0.4cm}   \includegraphics[width=0.45\textwidth]{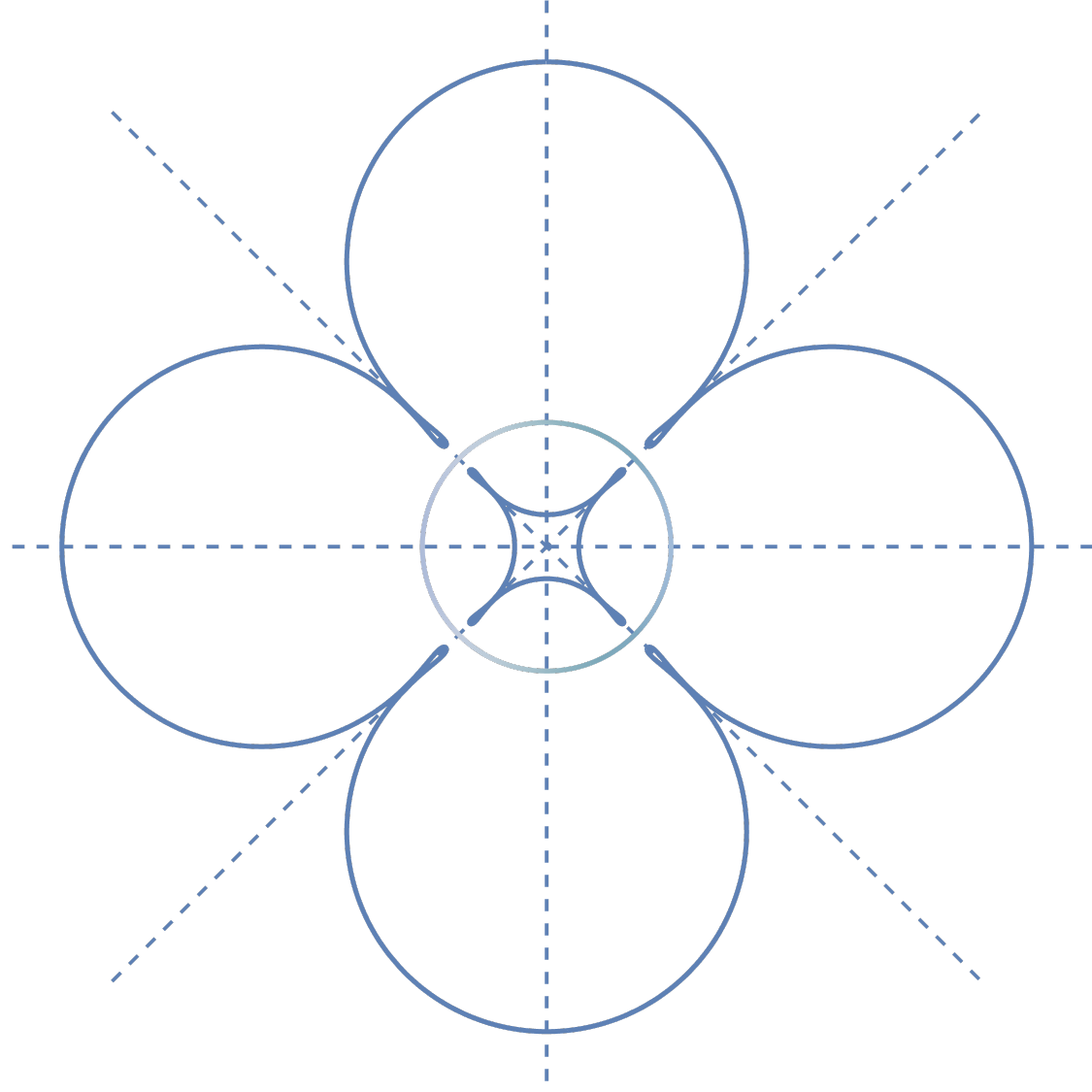}  
  \caption{Left: the moment $\hat{x}$ in Proposition \ref{noem} where the embeddedness of $y\mapsto g(x+iy)$ is lost, for an example with $n=4$. Right: The interior curve $g(\hat{x}+iy)$ and the exterior curve $g(-\hat{x}+iy)$ differ by an $\S^1$-inversion.\label{fihat}}
\end{figure}
\begin{proposition}\label{noem}
There exists $\hat{x}\in (x_b^+,x_a^+)$ such that $\gamma(y):=g(x_0+iy):\S^1\flecha \C$ is embedded if and only if $x_0\in (-\hat{x},\hat{x})$.
\end{proposition}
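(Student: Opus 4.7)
The plan is to characterize the set
$$
E := \bigl\{x_0\in(x_b^-, x_a^+): \gamma_{x_0}(y):=g(x_0+iy) \text{ is injective on } \S^1\bigr\}
$$
as an open interval $(-\hat{x},\hat{x})$ with $\hat{x}\in(x_b^+, x_a^+)$, where $\S^1=\R/(2n\vartheta\Z)$. By Lemma \ref{lem:embe1}, $x_0\in E$ iff the open arc $\gamma_0=\{g(x_0+iy):y\in(0,\vartheta)\}$ lies inside the open sector $S_0$; this is an open condition, so $E$ is open, and $0\in E$ because $g(iy)$ parametrizes $\S^1$ once (Lemma \ref{lem:sim}(i) together with $\Theta(\eta_n(\tau),\tau)=\pi/n$). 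Thus the component of $E$ containing $0$ is an interval $(\hat{x}^-,\hat{x}^+)$ with $\hat{x}^-\in[x_b^-,0)$, $\hat{x}^+\in(0,x_a^+]$.

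First I would establish the bounds on $\hat{x}^+$. For $x_0\in(0,x_b^+]$, Lemma \ref{lem:xaxb} gives $\alpha(x_0)>0$ and $\beta(x_0)\geq 0$, so \eqref{curcapi} yields $\kappa_\gamma<0$ everywhere on $\gamma_{x_0}$. As $g'$ never vanishes, $\gamma_{x_0}$ is a regular homotopy of $\gamma_0$, so its turning number is preserved and equal to $-1$; a closed $C^\infty$ plane curve with turning number $\pm 1$ and nowhere-vanishing curvature is an embedded convex Jordan curve, giving $[0, x_b^+]\subset E$ and hence $\hat{x}^+>x_b^+$. To see $\hat{x}^+<x_a^+$, recall from the proof of Lemma \ref{lem:lio} that as $x_0\uparrow x_a^+$, $e^{\omega(x_0,y)}$ collapses to $0$ at some interior $y_0\in(0,\vartheta)$; this pinches $\gamma_0$ at a self-tangency on a symmetry line, forcing non-embeddedness before $x_a^+$ is reached.

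The crux -- and the main obstacle -- is to prove $\hat{x}^-=-\hat{x}^+$. For this I would establish the $\S^1$-inversion identity
\begin{equation}\label{eq:inv-plan}
g(x+iy)\cdot\overline{g(-x+iy)}=1,\qquad\text{whenever both } x,\, -x\in(x_b^-, x_a^+).
\end{equation}
Setting $h(z):=1/\overline{g(-\bar z)}$, a routine Cauchy--Riemann check shows $\overline{g(-\bar z)}$ is holomorphic in $z$, so $h$ is meromorphic with $|h'(z)|=|g'(-\bar z)|/|g(-\bar z)|^2$, and $\omega_h:=\log|h'|$ is harmonic where defined. On the axis $\{x=0\}$: using that $g(iy)\in\S^1$ is traversed with angular velocity $-e^{\omega(0,y)}$ (Lemma \ref{lem:sim}(i) and \eqref{inig}), together with $\alpha(0)=0$ and $\beta(0)=2$ in \eqref{rox}, one checks $\omega_h(0,y)=\omega(0,y)$ and $(\omega_h)_x(0,y)=\omega_x(0,y)=-e^{\omega(0,y)}$. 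Uniqueness of the real-analytic Cauchy problem for Laplace's equation then forces $\omega_h\equiv\omega$. Hence $h'/g'$ is a unimodular holomorphic function, thus constant; matching $h(0)=-1=g(0)$ and $h'(0)=\eta/2=g'(0)$ yields $h\equiv g$, which is \eqref{eq:inv-plan}.

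By \eqref{eq:inv-plan}, $\gamma_{-x_0}=I\circ\gamma_{x_0}$, where $I(w):=1/\bar w$ is inversion across $\S^1$, a bijection of $\C\setminus\{0\}$. Since $g$ does not vanish on $\cA$ (as $g$ is a deformation of a map into a radial annulus centered at the origin; this can also be read off from the Weierstrass $\wp$-representation in the Appendix), $\gamma_{x_0}$ avoids $0$, so $\gamma_{x_0}$ is embedded iff $\gamma_{-x_0}$ is. Therefore $E=-E$, giving $\hat{x}^-=-\hat{x}^+$; setting $\hat{x}:=\hat{x}^+$ gives $E=(-\hat{x},\hat{x})$ with $\hat{x}\in(x_b^+, x_a^+)$, as required.
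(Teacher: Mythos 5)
Your outline reuses several genuine ingredients of the paper's argument (the reduction via Lemma \ref{lem:embe1} to the condition $\gamma_0\subset S_0$, convexity of the curves for $x_0\in[0,x_b^+]$ via \eqref{curcapi}, and the $\S^1$-inversion identity $g(x+iy)\cdot\overline{g(-x+iy)}=1$, which the paper also invokes via Schwarz reflection to reduce to $x_0\geq 0$). But there is a genuine gap at the heart of the matter: you only analyse the \emph{connected component of $E$ containing $0$} and then silently identify it with all of $E$ in the last line. The proposition asserts that the full embedded set is a single symmetric interval, i.e.\ that embeddedness, once lost, is never regained as $x_0$ increases towards $x_a^+$. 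Nothing in your argument rules out $E$ having further components in $(\hat x^+,x_a^+)$. This monotonicity is exactly what the paper's proof is designed to establish: it studies the nodal sets of the harmonic functions $\varphi_1={\rm Im}(e^{-i(\pi-\pi/n)}g)$ and $\varphi_0={\rm Im}(g)$ on $[0,x_a^+)\times[0,\vart]$, which detect when $g(x_0+iy)$ meets the symmetry lines $L_1$ and $L_0$. Since $g$ vanishes at $x=-x_b^-$ and $g'\neq 0$, there is a distinguished nodal curve $\cN_0$ starting at $(-x_b^-,0)$ which, by the maximum principle and the boundary analysis, must run to $x=x_a^+$; consequently the set of $x_0$ for which the arc meets $L_0\cup L_1$ at an interior parameter is a terminal interval $[\hat x,x_a^+)$, which is precisely the connectedness statement you are missing.

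Relatedly, your justification that $\hat x^+<x_a^+$ (that embeddedness actually fails before $x_a^+$) is not an argument: the fact that $e^{\omega(x_0,\cdot)}$ degenerates as $x_0\to x_a^+$ does not by itself produce a self-intersection, and "this pinches $\gamma_0$ at a self-tangency" is asserted, not proved. In the paper this too falls out of the nodal curve $\cN_0$: its projection covers $[-x_b^-,x_a^+)$, so already for $x_0\geq -x_b^-$ the arc meets $L_1$ at an interior point and Lemma \ref{lem:embe1} applies. A smaller point: the openness of the condition $\gamma_0\subset S_0$ is not automatic, since the closure of $\gamma_0$ has its endpoints on $\partial S_0$; one needs the observation (from \eqref{sim:g}) that $\gamma$ crosses $L_0$ and $L_1$ perpendicularly at $y=0,\vart$, away from the origin. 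The inversion step itself is fine and is the same device the paper uses.
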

\begin{proof}
Denote $\cD:=[0,x_a^+)\times [0,\vart]$, and let $\varphi_1$ be the harmonic function on $\cD$ $$\varphi_1(x,y):= {\rm Im}\left( e^{-i(\pi-\pi/n)}g(x+iy)\right).$$ Note that $\varphi_1(x,y)=0$ if and only if $g(x+iy)\in L_1$, because $e^{i(\pi-\pi/n)}\in L_1$. As $g'\neq 0$ at every point, the nodal set $\varphi_1^{-1}(0)$ is a disjoint union of regular analytic curves in $\cD$. One of such curves is $\cN_1:= \cD\cap \{y=\vart\}$, since $g(x+i\vart)\in L_1$ for every $x$.

Assume next that $x\in [0,x_b^+]$. Then, by \eqref{curcapi}, we see that $\gamma(y):=g(x+iy)$ is convex (recall that $\alfa(x),\beta(x)\geq 0$ in $[0,x_b^+]$). So, by Lemma \ref{lem:embe1}, $g(x+iy)$ lies in the open sector $S_0$, for any $y\in (0,\vart)$ and any $x\in [0,x_b^+]$. Therefore the set $\cN-\cN_1$ of nodal curves of $\varphi_1$ different from $\cN_1$ is at a positive distance from $\cD\cap \{x\leq x_b^+\}$.

\begin{figure}[h]
  \centering
  \includegraphics[width=0.55\textwidth]{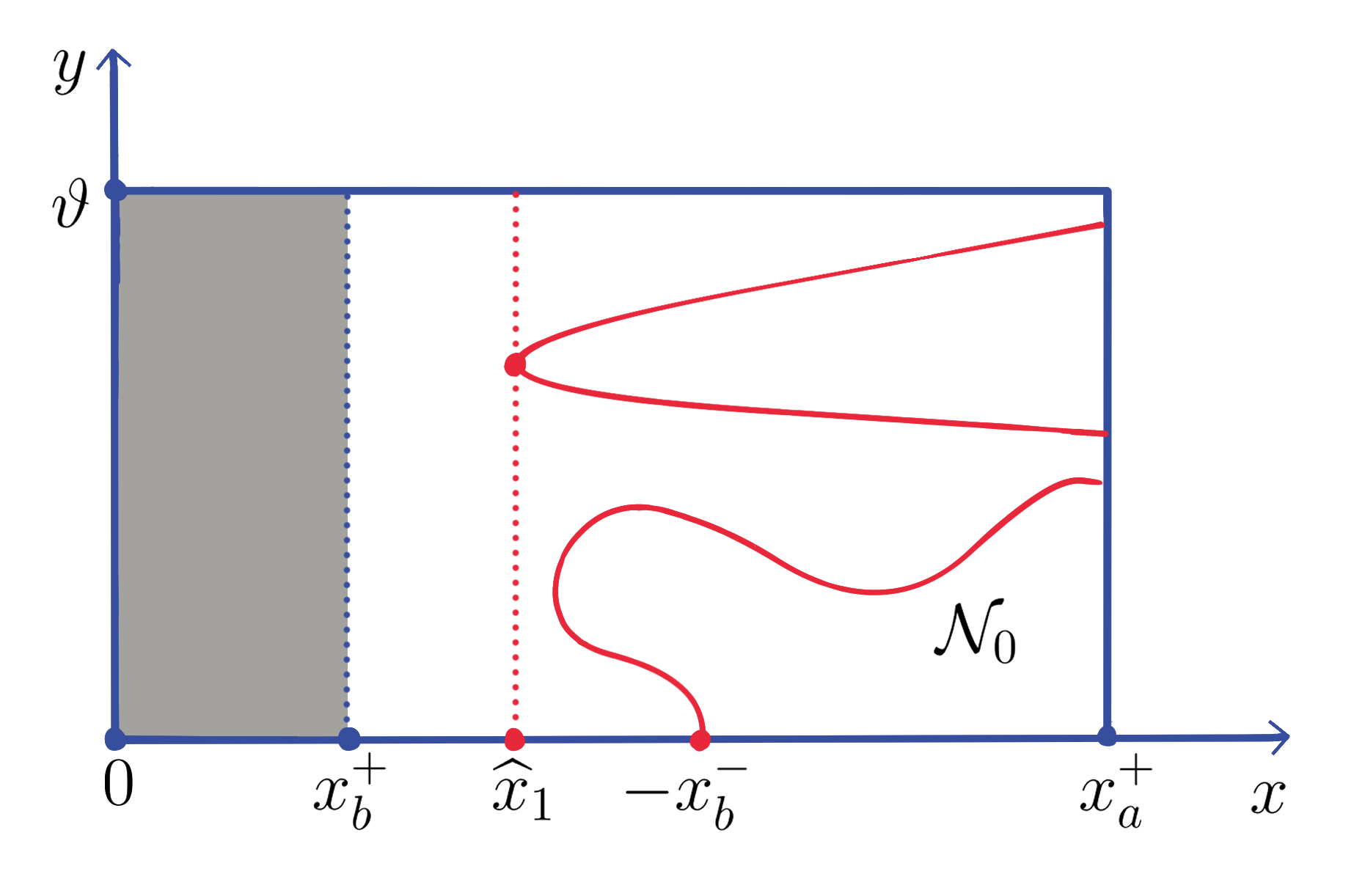} 
 \caption{The nodal curve $\cN_0$ and the value $\hat{x}_1$ in the proof of Proposition \ref{noem}. \label{rectang}}
\end{figure}


We next look at $\cD\cap \{y=0\}$. Since $g(x)\in \R$ with $g'(x)>0$ for every $x\in [0,x_a^+)$ and $g(-x_b^-)=0$ (see the final remark (1) at the end of the Appendix), we deduce that there exists a unique nodal curve $\cN_0\subset \cN$ that meets $\cD\cap \{y=0\}$. See Figure \ref{rectang}. This curve $\cN_0$ starts at $(-x_b^-,0)\in \cD$ and does not return to that point, by the maximum principle. Also, by the previous discussion, $\cN_0$ does not end at any point in $\cD\cap \{x\leq x_b^+\}$, or in $\cD\cap \{y=\vart\}$, or in $\cD\cap \{y=0\}$. That is, $\cN_0$ must end at some point of $x=x_a^+$, and in particular this means that $\cN_0\cap \{x=x_0\}$ is non-empty for any $x_0\geq -x_b^-$. Note that there could exist other nodal curves in $\cN$ other than $\cN_0, \cN_1$, but none of them approaches $x=x_b^+$. This implies that the set of values $x_0$ for which $\cD\cap \{x=x_0\}$ intersects the nodal set $\cN-\cN_1$ is an interval of the form $[\hat{x}_1,x_a^+)$, with $\hat{x}_1>x_b^+$.

We now apply the same argument to $\varphi_0(x,y):={\rm Im} (g(x+iy)):\cD\flecha \R$, which detects the points where $g(x+iy)\in L_0$. We obtain that either the nodal set $\varphi_0^{-1}(0)$ is composed only by $\cD\cap \{y=0\}$, or there exists some $\hat{x}_0>x_b^+$ such that $\cD\cap \{x=x_0\}$ intersects  $\varphi_0^{-1}(0)$ if and only if $x_0\in [\hat{x}_0,x_a^+)$.

Once here we define $\hat{x}:={\rm min}(\hat{x}_0,\hat{x}_1)$ if $\hat{x}_0$ exists, or merely $\hat{x}:=\hat{x}_1$ otherwise. Then, for $x_0\in [0,x_a^+)$ the arc $\gamma_0=g(x_0+iy)$ with $y\in (0,\vart)$ lies in the sector $S_0$ if and only if it never intersects $L_0\cup L_1$, i.e., if and only if $x_0< \hat{x}$. Applying Lemma \ref{lem:embe1} we prove the desired statement if $x_0\geq 0$.

When $x_0\in (x_b^-,0)$, the result follows directly from the case $x_0\geq 0$ we just proved, since $g(x+iy)$ and $g(-x+iy)$ always differ by an inversion with respect to the unit circle $\S^1\subset \C$. This property follows by Schwarz's reflection, since $g(iy)\in \S^1$ for every $y$. This completes the proof.
\end{proof}

\begin{remark}\label{xgoran}
By construction, the map $\hat{x}(\tau):(0,1)\flecha (0,\8)$ associated to the value $\hat{x}$ in Proposition \ref{noem} is continuous and piecewise real analytic in $\tau$, since it is given by the maximum or minimum of a finite collection of real analytic maps depending on $\tau\in 0,1)$.
\end{remark}

\begin{remark}\label{re:bordeOmega}
By construction, $\hat{x}$ is the smallest $x_0>0$ for which the arc $\gamma_0(y):= g(x_0+iy):[0,\theta]\flecha \C$ intersects $L_0^0\cup L_1^0$ at an interior point $y^0\in (0,\theta)$. Using the dihedral symmetry of the curve $g(\hat{x}+iy):\S^1\flecha \C$, we deduce that $g(\hat{x}+iy)$ has $n$ points of self-intersection, all lying in the union of the symmetry lines $L_j$. See Figure \ref{fihat}. By inversion with respect to $\S^1$, the same obviously holds for $g(-\hat{x}+iy)$.
\end{remark}

\subsection{Embeddedness of the domains $\Omega_{(s,\tau)}$}

\begin{theorem}\label{embe:domain}
For any $\tau\in (0,1)$ there exist values $\mathfrak{h}_0(\tau)<\mathfrak{h}_1(\tau)<0$ such that the immersed Serrin domain $\overline\Omega_{(s,\tau)}$ of Theorem \ref{th:imm} is embedded if and only if $s\in (\mathfrak{h}_0(\tau),\mathfrak{h}_1(\tau))$.

Moreover, the mappings $\mathfrak{h}_0(\tau),\mathfrak{h}_1(\tau):(0,1)\flecha (-\8,0)$ are continuous and piecewise real analytic in $\tau$.
\end{theorem}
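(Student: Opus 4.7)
The plan is to identify $\mathfrak{h}_0(\tau)$ and $\mathfrak{h}_1(\tau)$ explicitly via the critical value $\hat{x}(\tau)$ of Proposition \ref{noem}, and to reduce embeddedness of $\Omega_{(s,\tau)}$ to embeddedness of its two boundary curves.

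First I would set $\mathfrak{h}_0(\tau):=-\hat{x}(\tau)$, and define $\mathfrak{h}_1(\tau)\in(x_b^-(\tau),0)$ as the unique value whose image under the strictly increasing real-analytic bijection $s\mapsto s^*$ of Lemma \ref{lem:t} (from $(x_b^-,0)$ onto $(x_b^+,x_a^+)$) equals $\hat{x}(\tau)$; this is well defined because $\hat{x}(\tau)\in(x_b^+,x_a^+)$ by construction. The continuous, piecewise real-analytic dependence of $\mathfrak{h}_0,\mathfrak{h}_1$ on $\tau$ then follows from Remark \ref{xgoran}, from the analytic dependence of $\alfa,\beta$ on $\tau$, and from the implicit function theorem applied to the defining relation $\beta(s)\alfa(s^*)=\alfa(s)\beta(s^*)$.

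For necessity, if $\Omega_{(s,\tau)}=\mathrm{int}(g(\cU))$ is embedded, then both boundary curves $y\mapsto g(s+iy)$ and $y\mapsto g(s^*+iy)$ are Jordan curves in $\C$, and Proposition \ref{noem} forces $s,s^*\in(-\hat{x}(\tau),\hat{x}(\tau))$. Since $s<0<s^*$ and $\hat{x}>0$, these reduce to $s>-\hat{x}(\tau)$ and $s^*<\hat{x}(\tau)$, which by monotonicity of $s\mapsto s^*$ are precisely $s\in(\mathfrak{h}_0(\tau),\mathfrak{h}_1(\tau))$.

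For sufficiency, fix $s\in(\mathfrak{h}_0,\mathfrak{h}_1)$, so both boundary curves $\Gamma_{\mathrm{out}}:=\{g(s+iy):y\in\R\}$ and $\Gamma_{\mathrm{in}}:=\{g(s^*+iy):y\in\R\}$ are $D_n$-symmetric embedded Jordan curves, by Corollary \ref{re:symmetries}. Using the initial data $g(0)=-1$, $g'(0)=\eta/2>0$ together with $g(i\R)=\S^1$, the curve $\Gamma_{\mathrm{out}}$ lies strictly outside $\S^1$ and $\Gamma_{\mathrm{in}}$ strictly inside, so they bound an annular Jordan region $A\subset\C$ containing $\S^1$. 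The holomorphic map $g:\cU\to\C$, being a local diffeomorphism (since $g'\neq 0$), is orientation-preserving and extends continuously to $\partial\cU$ as a degree-one map onto each component of $\partial A$. A standard covering-space argument then identifies $g:\cU\to\overline{A}$ as a degree-one covering, hence a homeomorphism, and $\Omega_{(s,\tau)}=A$ is embedded.

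The hardest step will be showing the interval is non-empty for every $\tau\in(0,1)$, i.e., $(-\hat{x}(\tau))^*<\hat{x}(\tau)$. I would settle it first in the radial limit $\tau\to 1^-$: the explicit computation of Section \ref{sec:tau1} gives $\hat{x}(\tau)\to\infty$, $x_b^-(\tau)\to-\infty$, $x_a^+(\tau)\to\infty$, so $\mathfrak{h}_0(\tau)\to-\infty$ and $\mathfrak{h}_1(\tau)\to 0$, and the strict inequality holds. To propagate it throughout $(0,1)$, my plan is to exploit the Schwarz reflection symmetry $g(-\overline{z})=1/\overline{g(z)}$ coming from $g(i\R)\subset\S^1$: this inversion sends the $n$-fold self-intersection configuration of $g(\hat{x}+iy)$ described in Remark \ref{re:bordeOmega} to that of $g(-\hat{x}+iy)$ on the same symmetry lines, whereas the Serrin pairing $s\mapsto s^*$ is governed instead by the capillarity function $t(x)=\beta(x)/\alfa(x)$ together with the first integrals $\kappa_1,\kappa_2$ of \eqref{khami}, \eqref{khami2}. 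Combining these two different structures with the strict monotonicity of $t$ on $(x_b^+,x_a^+)$ should force $(-\hat{x})^*<\hat{x}$. This coupling between the global inversion symmetry and the capillary pairing is the most delicate point of the argument.
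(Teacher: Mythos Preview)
Your overall structure matches the paper's: you define $\mathfrak{h}_0=-\hat{x}$ and $\mathfrak{h}_1$ via $(\mathfrak{h}_1)^*=\hat{x}$, and the necessity/sufficiency arguments are essentially the same (the paper's sufficiency is even terser than your covering argument, simply noting that the two embedded boundary curves lie in different components of $\C\setminus\S^1$ and invoking $g'\neq 0$).

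The genuine gap is exactly where you flag it: showing $\mathfrak{h}_0(\tau)<\mathfrak{h}_1(\tau)$, equivalently $(-\hat{x})^*<\hat{x}$. Your plan to ``couple the inversion symmetry with the capillary pairing'' is not an argument, and I do not see how to make it one: the inversion $g(-\bar z)=1/\overline{g(z)}$ relates the \emph{embeddedness} thresholds $\pm\hat{x}$ to each other, but tells you nothing about how the \emph{Serrin pairing} $s\mapsto s^*$ (governed by $t(x)=\beta(x)/\alpha(x)$) behaves at those thresholds. There is no evident reason why $t(-\hat{x})>t(\hat{x})$ should follow from symmetry and monotonicity alone.

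The paper resolves this with a different idea. It introduces an auxiliary threshold $\hat{r}\in(x_b^+,\hat{x})$, the first $x_0>0$ at which the curve $g(x_0+iy)$ ceases to be a \emph{radial graph} over the origin, and sets $\varrho_0=-\hat{r}$, $(\varrho_1)^*=\hat{r}$. Trivially $\mathfrak{h}_0<\varrho_0$ and $\varrho_1<\mathfrak{h}_1$, so it suffices to prove $\varrho_0(\tau)<\varrho_1(\tau)$. This is done by a continuity argument in $\tau$: the inequality holds near $\tau=1$ (radial case), and at a hypothetical first $\tau_0$ with $\varrho_0=\varrho_1$ one derives a contradiction. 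The key inputs are (i) at $(\pm\hat{r},y_0)$ the tangent line is radial \emph{and} the curvature vanishes, giving via \eqref{curcapi} the relation $-\beta(\pm\hat{r})/\alpha(\pm\hat{r})=e^{-2\omega(\pm\hat{r},y_0)}$; (ii) from the elliptic representation \eqref{fgp}, $g'/g$ takes the same value at $x+iy$ and $-x+iy$, forcing the same $y_0$ on both sides; (iii) if in addition $\beta/\alpha$ agrees at $\pm\hat{r}$ (i.e.\ $(-\hat{r})^*=\hat{r}$), then $|g'|$ and hence $|g|$ coincide at $(\pm\hat{r},y_0)$, impossible since $|g|>1$ for $x<0$ and $|g|<1$ for $x>0$. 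This curvature--radial-graph mechanism is the missing ingredient in your proposal.
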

\begin{proof}
Recall first of all that $g(x_b^+ + iy):\S^1\flecha \C$ is a regular convex Jordan curve, and that, by Proposition \ref{noem} there exists $\hat{x}\in (x_b^+,x_a^+)$ such that $y\mapsto g(x_0+iy)$ parametrizes an embedded curve if and only if $x_0\in (-\hat{x},\hat{x})$. 

We now look at the property that $\gamma(y):=g(x_0+iy):\S^1\flecha \C$ is a \emph{radial graph} with respect to the origin, that is, the property that $\gamma'(y)$ is transverse to $\gamma(y)$ for all $y$. This property holds if and only if ${\rm Re}(\gamma'/\gamma)\neq 0$ on $\S^1$, and it trivially implies that $\gamma(\S^1)$ is a starshaped curve in $\C$.

Recall that, by \eqref{curcapi}, the curves $g(x_0+iy):\S^1\flecha \C$ are stricly convex if $x_0\in(0,x_b^+)$, since $\alfa(x_0), \beta(x_0)>0$. On the other hand, if $\hat{x}>0$ is the value given by Proposition \ref{noem}, then $g(\hat{x}+iy)$ is not embedded, and in particular it is not a radial graph. So, there exists a smallest value $\hat{r}\in (x_b^+,\hat{x})$ at which $g(\hat{r}+iy)$ loses the property of being a radial graph. By Schwarz reflection using that $g(iy)\in \S^1$, we deduce then that $g(x_0+iy):\S^1\flecha \C$ is a radial graph for all $x_0\in (-\hat{r},\hat{r})$, but fails to have that property at $x_0=\pm \hat{r}$. 

Equivalently, $\hat{r}$ is the first positive value for which there exists some $y_0\in \R$ such that 
\begin{equation}\label{couga}
{\rm Re} \left(\frac{g'(\hat{r}+iy_0)}{g(\hat{r}+iy_0)}\right) =0.
\end{equation} 
By the dihedral symmetry of $g(\hat{r}+iy)$, we can assume that $y\in (0,\vart)$. 

From \eqref{couga}, the tangent line of $g(\hat{r}+iy)$ at $y=y_0$ is collinear with $g(\hat{r}+iy_0)$. This means that $g(\hat{r}+iy)$ does not lie around $y=y_0$ on one side of its tangent line (if it did, $g(x_0+iy)$ would not be a radial graph around $y=y_0$ for $x_0<\hat{r}$ close to $\hat{r}$). So, we deduce that the curvature of the planar curve $g(\hat{r}+iy)$ vanishes at $y=y_0$. Thus, by \eqref{curcapi} we have 
\begin{equation}\label{curga}
-\frac{\beta(\hat{r})}{\alfa(\hat{r})} = e^{-2\omega(\hat{r},y_0)}.
\end{equation}
The same argument obviously applies to $-\hat{r}$ when dealing with negative values of $x$; that is, there exists some $y_1\in (0,\vart)$ such that equations \eqref{couga} and \eqref{curga} hold at $(-\hat{r},y_1)$. On the other hand, it is direct from \eqref{fgp} that $g'/g$ has the same values at $x+iy$ and $-x+iy$, for any $x,y$. Thus, by comparing \eqref{couga} at $(\hat{r},y_0)$ and $(-\hat{r},y_1)$, we obtain $y_1=y_0$. So, by \eqref{curga}, 
\begin{equation}\label{curga2}
-\frac{\beta(\hat{r})}{\alfa(\hat{r})} = e^{-2\omega(\hat{r},y_0)}, \hspace{0.5cm} -\frac{\beta(-\hat{r})}{\alfa(-\hat{r})} = e^{-2\omega(-\hat{r},y_0)}.
\end{equation}

We next recall that, by Lemma \ref{lem:t}, for any $s\in (x_b^-,0)$ there is a unique $s^*\in (0,x_a^+)$ such that  \eqref{eseses} holds, and the immersed Serrin domain $\overline\Omega_{(s,\tau)}$ is given by $\overline\Omega_{(s,\tau)} = g([s,s^*]\times \S^1)$. 

Given $\tau\in (0,1)$, we let $\mathfrak{h}_0:=-\hat{x}$, and $\mathfrak{h}_1<0$ be given by $(\mathfrak{h}_1)^* =\hat{x}$, where $\hat{x}>0$ is defined in Proposition \ref{noem}. Note that $\mathfrak{h}_0(\tau)$ and $\mathfrak{h}_1(\tau)$ are piecewise real analytic, by their definition and Remark \ref{xgoran}.

Similarly, we denote $\varrho_0:= -\hat{r}<0$ and $\varrho_1<0$ be defined by $(\varrho_1)^*=\hat{r}$. We claim that $$\mathfrak{h}_0(\tau)<\varrho_0(\tau), \hspace{0.5cm} \varrho_1(\tau) <\mathfrak{h}_1(\tau).$$ Indeed, the first inequality is direct by definition, since $\hat{r}<\hat{x}$, while the second one is a consequence of the fact that the map $s\mapsto s^*$ is strictly increasing (Lemma \ref{lem:t}).

We show next that $\varrho_0(\tau)<\varrho_1(\tau)$ for every $\tau\in (0,1)$, what implies $\mathfrak{h}_0(\tau)<\mathfrak{h}_1(\tau)$.

When $\tau=1$, the curves $g(x_0+iy):\S^1\flecha \C$ are circles centered at the origin, for any $x_0\in \R$; see Section \ref{sec:tau1}. In particular, these curves are radial graphs. Thus, $\varrho_0(\tau)\to -\8$ and $\varrho_1(\tau)\to 0$ as $\tau\to 1$. This means that $\varrho_0(\tau)<\varrho_1(\tau)$ for $\tau$ close to $1$. 
Arguing by contradiction, assume that $\varrho_0(\tau_0)=\varrho_1(\tau_0)$ for some $\tau_0\in (0,1)$, and that $\tau_0$ is the largest value with such property. This means that, for $\tau_0$, one has $(-\hat{r})^*=\hat{r}$ and therefore by \eqref{eseses},
\begin{equation}\label{curga3}
\frac{\beta(\hat{r})}{\alfa(\hat{r})} =\frac{\beta(-\hat{r})}{\alfa(-\hat{r})} <0.
\end{equation}
By \eqref{curga2}, this implies $e^{2\omega(\hat{r},y_0)} = e^{2\omega(-\hat{r},y_0)}$. Since $e^{2\omega}= |g'|^2$ and $g'/g$ has the same values at $x+iy$ and $-x+iy$, we deduce that $$|g(-\hat{r}+iy_0)|= |g(\hat{r}+iy_0)|.$$ This is a contradiction, since $|g(x+iy)|>1$ for $x\in (x_b^-,0)$ while $|g(x+iy)|<1$ for $x\in (0,x_a^+)$.

This contradiction proves that $\varrho_0(\tau)<\varrho_1(\tau)$ for every $\tau\in (0,1)$, and in particular that $\mathfrak{h}_0(\tau)<\mathfrak{h}_1(\tau)$.

We are now ready to complete the proof. Consider the immersed Serrin domain $\overline\Omega_{(s,\tau)} = g(\cU)$, with $\cU$ as in \eqref{ubueno}. If $s\in (x_b^-(\tau),\mathfrak{h}_0(\tau)]$, the closed curve $g(s+iy):\S^1\flecha \C$ is not embedded by   Proposition \ref{noem}, and thus neither is $\overline\Omega_{(s,\tau)}$. For $s\in [\mathfrak{h}_1(\tau), 0)$, we have $s^*\geq (\mathfrak{h}_1)^* = \hat{x}$, see Lemma \ref{lem:t}. Thus, both $g(s^*+iy):\S^1\flecha \C$ and $\overline\Omega_{(s,\tau)}$ are, again, not embedded. 

The same argument shows that the closed curves $\gamma:=g(s+iy):\S^1\flecha \C$ and $\gamma^*:=g(s^*+iy):\S^1\flecha \C$ are both embedded if $s\in (\mathfrak{h}_0(\tau),\mathfrak{h}_1(\tau))$. Also, $\gamma$ and $\gamma^*$ are disjoint, since each of them lies in a different connected component of $\C\setminus \S^1$. A direct application of the fact that $g'\neq 0$ at every point shows then that $\parc \overline\Omega_{(s,\tau)} =\gamma\cup \gamma^*$ and that $g:\cU\flecha \overline\Omega_{(s,\tau)}$ is a bijection. This completes the proof. 
\end{proof}

%

\section{Limit necklaces} \label{sec:necklaces}

We discuss here the limit case $\tau=0$ of our construction. We fix $n\in\N$, $n\geq 2$, and consider the Serrin domains $\overline\Omega_{(s,\tau)}=g(\cU)$ in Theorem \ref{embe:domain} and the associated solutions $v(x,y)$ to \eqref{pdeom} constructed in  Theorem \ref{th:imm}.

\subsection{Description of the domains}\label{sec:domain0} 
Let us recall that $\overline\Omega_{(s,\tau)}=g(\mathcal{U})$, where $g(z)=g(z;\tau)$ is the holomorphic map defined in Section \ref{sec:g} (see also Section \ref{sec:summary}) and $\mathcal{U}=([s,s^*]\times\R,\sim)$ is the vertical quotient strip in \eqref{ubueno}. When $\tau\to 0$, we have $\vart\to\8$ (see \eqref{eq:cdvvart}) and $s^*\to s+2\pi$ (see Lemma \ref{lem:t} and \eqref{eq:alfatilde}). 

The limit case of the developing maps $g(z)$ for the domains when $\tau= 0$ was studied in Section \ref{sec:tau0}  and is given by \eqref{gt0}.  Each curve $y\mapsto g(x+iy)$ is a circular arc of curvature \eqref{curt0} joining the points ${\bf p}_0,{\bf p}_1$ in \eqref{eq:p0p1}.  

Then for each $s\in (-2\pi+2\pi/n,0)$,  $g(z)$ maps bijectively the domain $\cU=[s, s+2\pi]\times \R$ into a twice-punctured closed disk $\cD_0=\cD\setminus \{{\bf p}_0,{\bf p}_1\}$, where ${\bf p}_0$ and ${\bf p}_1$ are points of $\parc \cD$. Each of the curves $g(s,y)$ and $g(s+2\pi ,y)$ covers an arc of $\parc \cD\setminus \{{\bf p}_0,{\bf p}_1\}$, both starting at ${\bf p}_0$ and ending at ${\bf p}_1$.

The tangent lines to $\parc \cD$ at ${\bf p}_0$, ${\bf p}_1$ intersect at $(\hat{c},0)$, where
$$\hat{c}= -\cos\left(\frac{\pi}{n} \right)-\sin\left(\frac{\pi}{n}\right) \tan\left(\frac{s}{2}-\frac{\pi}{n}\right)\in\R.$$
The intersection angle between these tangent lines is ${2\pi}/{n}-s$. The radius of the disk $\cD$ is  $R=1/|\kappa(s)|$, where $\kappa$ is given by  \eqref{curt0}.

By dihedral symmetry, this configuration can be extended to a collection of $n$ twice-punctured discs along $\S^1$: 
$$\cD_0\cup\ldots\cup \cD_{n-1},$$
where $\cD_k$ is given by the rotation of angle $2k\pi/n$ of $\cD_0= \cD\setminus \{{\bf p}_0,{\bf p}_1\}$ defined above.  In particular, this  configuration is embedded if and only if $\cD$ is orthogonal to $\S^1$ at ${\bf p}_0$ and ${\bf p}_1$, or equivalently if $\hat{c}=0$, which occurs only for $s=-\pi$.  See Figure \ref{discosnoemb}. When $n=2$, $\cD$ is not a disk, but the half-plane $\{(x_1,x_2): x_1\leq 0\}$. 

\begin{figure}[h]
  \centering
  \includegraphics[width=0.3\textwidth]{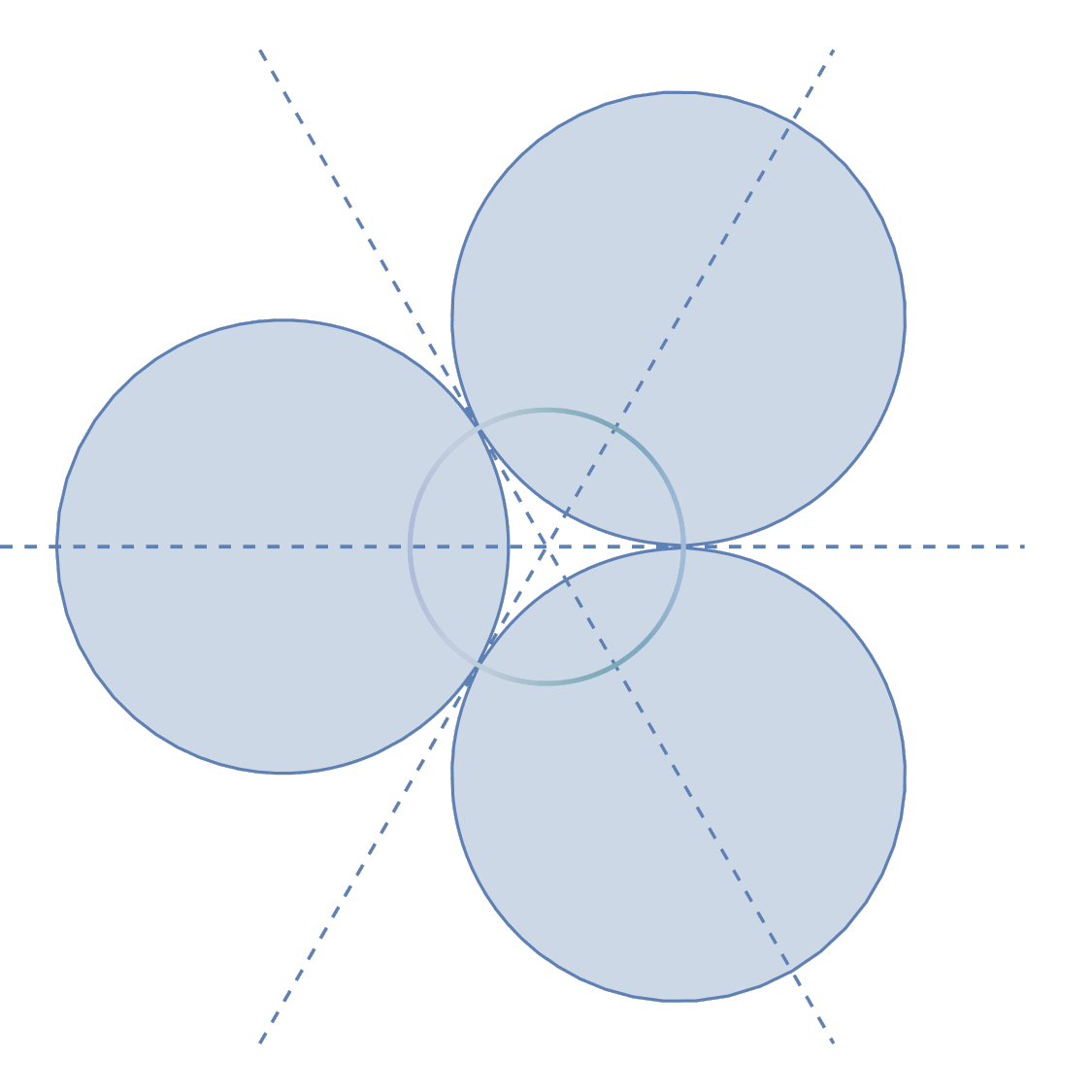} \hspace{0.4cm} \includegraphics[width=0.3\textwidth]{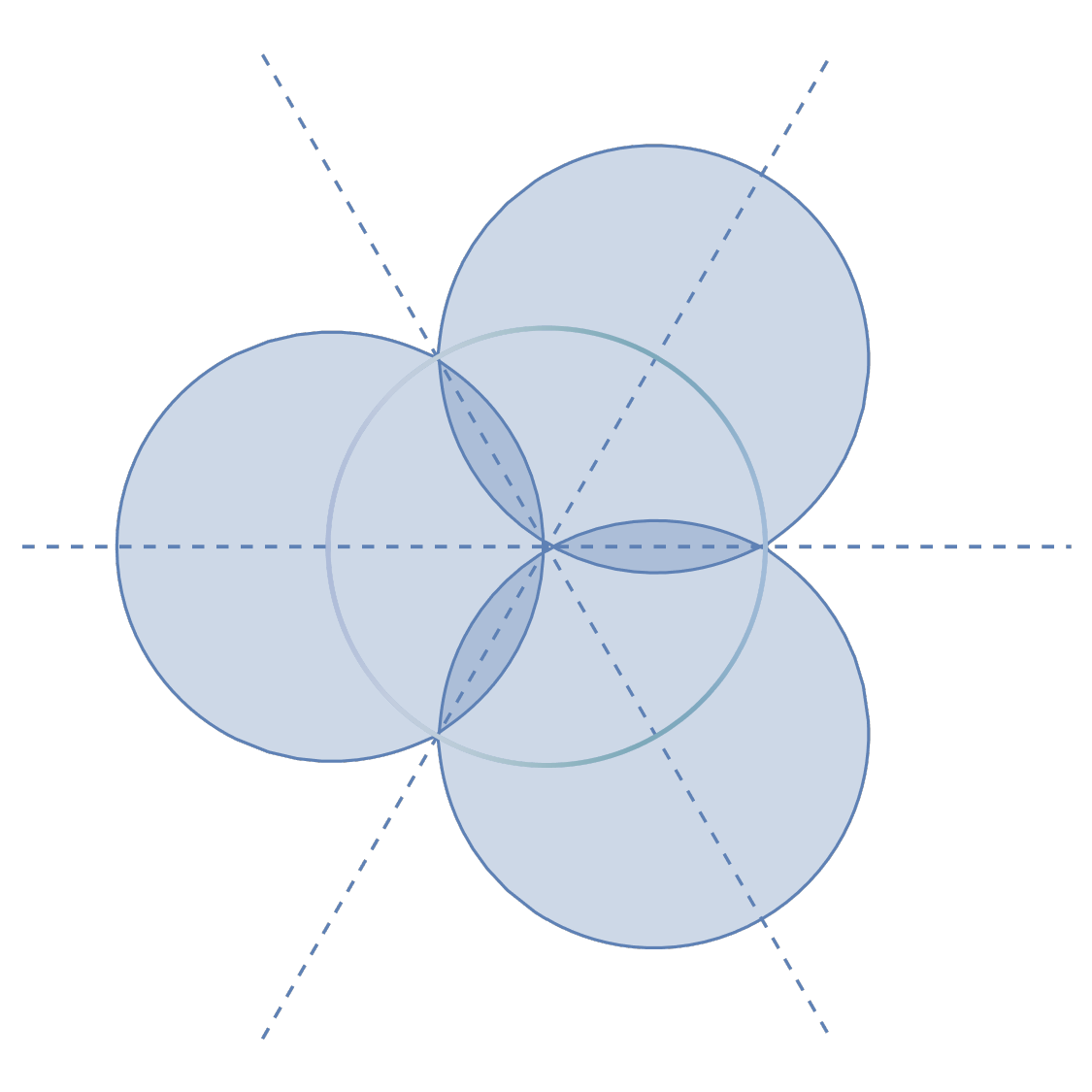}  \hspace{0.4cm} \includegraphics[width=0.3\textwidth]{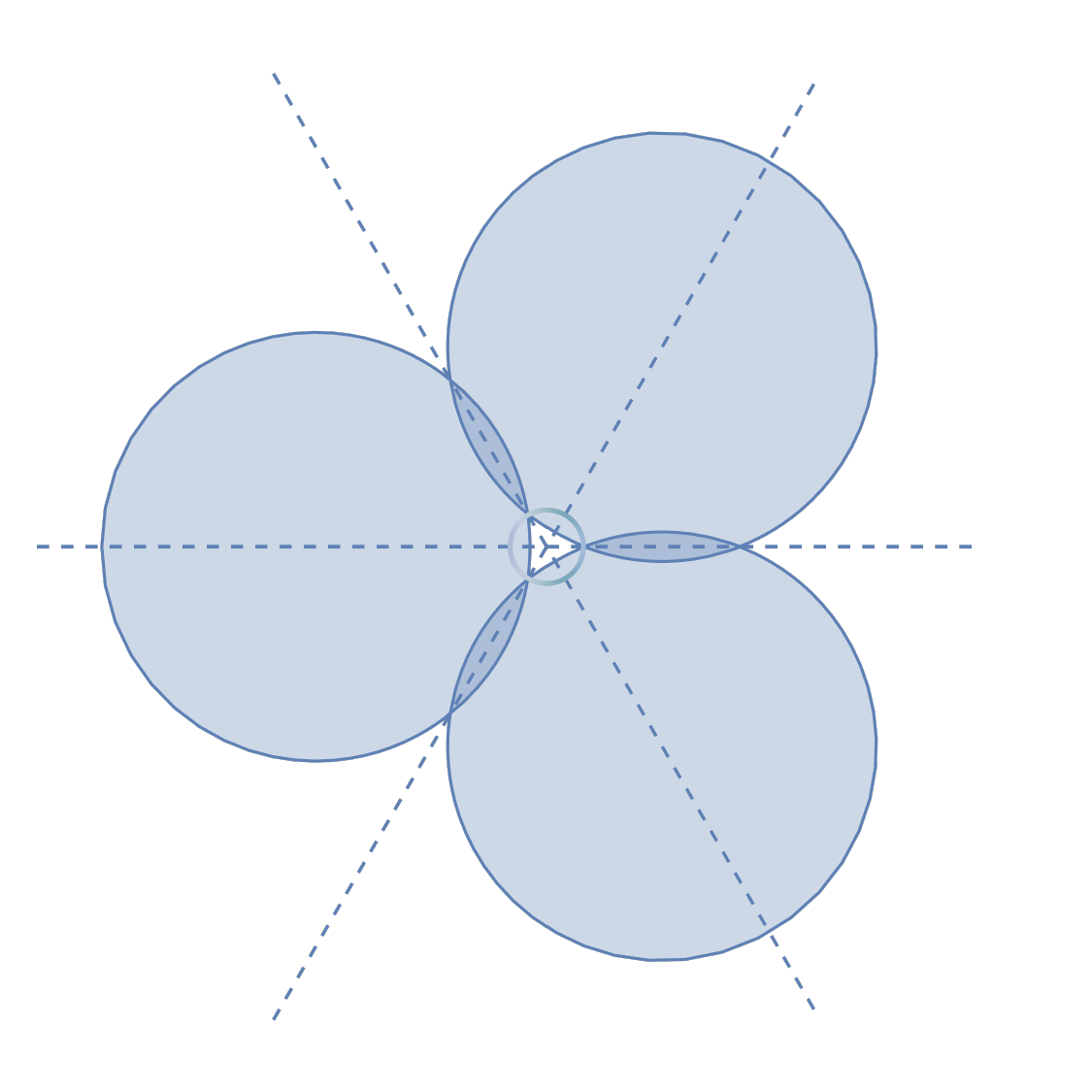}
 \caption{Limit necklaces for $n=3$. Left: the embedded case ($s=-\pi$). Middle: $s>-\pi$. Right: $s<-\pi$. In all figures the highlighted circle is $\S^1$.\label{discosnoemb}}
\end{figure}

\begin{remark}[Description of the necks]
In the limit configuration as $\tau\to 0$, the domains $\overline\Omega_{(s,\tau)}$ converge to a dihedral necklace of disks. The \emph{necks}, that is, the shape of $\overline\Omega_{(s,\tau)}$ near the singular points of the necklace as $\tau\to 0$, are all contained in the intersection of $\S^1$ with the symmetry lines, and must approach after blow-up the \emph{hairspin} domain by Helein, Hauswirth and Pacard \cite{HHP}. This is a consequence of the classification theorem by Traizet \cite{T}. It also can be checked directly in our setting by noting that, after some computations that we omit, $$\lim_{\tau\to 0} \frac{1}{\tau^2} g(z+\vart) = g^*(z):=-1 +\eta_n\left(\frac{z}{2}+\sin\left(\dfrac{z}{2}\right)\right), \hspace{0.5cm} \eta_n=\tan\left(\frac{\pi}{2n}\right).$$ The domain $\Omega^*:=g^*((-\pi,\pi)\times \R)$ is, up to similarity, the hairspin domain in \cite{HHP}.
%
\end{remark}

\subsection{Description of the solution $v(x,y)$}\label{sec:solution0}

We will now study the behaviour of the solutions $v(x,y)$ to \eqref{pdeom}  constructed in  Theorem \ref{th:imm} as $\tau\to 0$.  

We recall that $v(x,y)=a(x)+c(x)\sigma(x,y)$ is defined in terms of  the solution $(\alpha(x),\beta(x))$ to \eqref{system} with initial conditions \eqref{abd} for $(\eta,\tau)=(\eta_n(\tau),\tau)$ as in  Lemma \ref{lem:period}. The limit behaviour of $(\alpha(x),\beta(x))$ as $\tau\to 0$ was studied in Section \ref{sec:tau0} (see \eqref{eq:alfatilde}). 

We first look at $b(x)=4\mathfrak{q}/\alfa(x)$ in \eqref{abcbis}. Let us recall that $\mathfrak{q}=\mathfrak{q}(\tau)<0$ was defined by \eqref{eq:q}, where $t(x)=\beta(x)/\alpha(x)$. Therefore, we can write 
$$b(x)= \dfrac{ 2\ \tilde{\alpha}(s) }{ \beta(s)\ \tilde{\alpha}(x)},$$
where $\tilde{\alpha}(x)=\alpha(x)/\tau^2$. 
When $\tau=0$, we obtain then from \eqref{eq:alfatilde} that
\begin{equation}\label{ecub0}
b(x)= \dfrac{\vart_0}{\sin(x/2)},
\end{equation}
where $\vart_0$ is a non-zero constant.



We next look at $c(x)=1- 2\mathfrak{q} \beta(x)/\alfa(x)$ in \eqref{abcbis}. Writing $4\mathfrak{q}= b(x)\alfa(x)$ and using \eqref{ecub0} we have, with the same arguments, that $c(x)$ has a limit at $\tau=0$, equal to
$$c(x)=1-\vartheta_0\left(\frac{\cos(x/2)}{\sin(x/2)} + \frac{\eta_0-1/\eta_0}{2}\right)= 1-\vartheta_0\left(\frac{\cos(x/2)}{\sin(x/2)} - \frac{\cos(\pi/n)}{\sin(\pi/n)}\right).$$

Finally, we study $a(x)$. We have from \eqref{abcbis}
$$a'(x)= \frac{\mathfrak{q} \eta}{4\tau^2 \tilde\alfa(x)^2} - \frac{4\mathfrak{q}}{\tilde\alfa(x)^2} \int_0^x \tilde\alfa(s)^2 ds.$$ 
The second term goes to zero as $\tau\to 0$, since $\mathfrak{q}(\tau)\to 0$. For the first term, we obtain using again $4\mathfrak{q}= b(x)\alfa(x)$ and \eqref{ecub0} that, at $\tau=0$, we have
$$a'(x) = \frac{\vartheta_0}{4 \sin^2(x/2)}.$$ 
So, 
$$a(x)= a_0 -\frac{\vartheta_0}{2} \frac{\cos(x/2)}{\sin(x/2)},$$ 
for some constant $a_0$, that only amounts to adding a constant to the solution $v(x,y)$. 
So, from now on, we change $a_0$ freely in the next computations. 
Choosing $a_0$ adequately, the functions $v(x,y)=a(x)+c(x)\sigma(x,y)$  converge as $\tau\to 0$ to 
$$v(x,y)= \sigma(x,y) +\vartheta_0 \, \frac{\sin(x/2-\pi/n)}{\cos(x/2-\pi/n) + \cosh(y/2)},$$ where $\sigma(x,y)=-|g(x+iy)|^2/2$, as usual.
Again working up to additive constants and using that $g(z)$ for $\tau=0$ is given by \eqref{gt0}, we obtain
$$v(x,y)=\sigma(x,y) +\frac{\vartheta_0}{\sin(\pi/n)} {\rm Re}(g(z)).$$ 
Or finally, 
$$v(x,y)= -\frac{|g(z)-A_0|^2}{2}, \hspace{0.5cm} A_0:= \frac{\vartheta_0}{2\sin(\pi/n)}.$$ 
Thus, each $v(x,y)$ is a canonical solution (a paraboloid). Since $s^*\to s + 2\pi$, the formulas above for $a(x),b(x),c(x)$ give $a_1=a_2$ and $b_1=-b_2$.

\section{Proof of Theorem \ref{th:main}}\label{sec:mainth}


We are now ready to complete the proof of Theorem \ref{th:main}.  Fix $n\in  \N$, $n\geq 2$. In Section \ref{sec:immersed} we constructed, for each $(s,\tau)\in \cW_{imm}^n$ as in \eqref{wimm}, an 
{\em immersed Serrin domain} $\overline\Omega_{(s,\tau)}\subset\R^2$ depending analytically on $(s,\tau)$ (see Definition \ref{imser} and Theorem \ref{th:imm}). 
By Theorem \ref{embe:domain}, these domains are embedded when $s\in (\mathfrak{h}_0(\tau),\mathfrak{h}_1(\tau))$. Thus, for each $(s,\tau)\in \cW_0\subset \cW_{imm}^n$, with $\cW_0$ defined in \eqref{def:wo}, the planar sets $\Omega_{(s,\tau)}:={\rm int}(\overline\Omega_{(s,\tau)})\subset \R^2$  are Serrin ring domains (see Remark \ref{cuandoserrin}), and they have dihedral symmetry of order $2n$ (see Corollary \ref{re:symmetries}). This proves item (1). The description of the developing maps of the domains in terms of elliptic functions can be seen in the Appendix; thus, item (7) also holds.
\vspace*{0.2cm}

{\it Proof of items (3) and (4):} By definition,  $\mathfrak{h}_0(x)=-\hat{x}$, where $\hat{x}$ is given by Proposition \ref{noem}, and $\mathfrak{h}_1(x)^*=\hat{x}$ (see Lemma \ref{lem:t}).  Thus, items (3) and (4) are a straightforward consequence of Remark \ref{re:bordeOmega}. 

\vspace*{0.2cm}

{\it Proof of item (5):}
  Let us recall that $\overline\Omega_{(s,\tau)}=g(\mathcal{U})$,  where $g(z)=g(z;\tau)$ is the holomorphic map constructed in Section \ref{sec:gaussmaps} (see  Section \ref{sec:summary} for a summary of its properties)  and $\mathcal{U}$ is the vertical quotient strip in \eqref{ubueno}. 
 By our analysis in Section \ref{sec:tau1}, when $\tau= 1$ we have $g(z)=-e^{-c_1 z}$ for a certain constant $c_1>0$. So, in this case, $(x_b^-,x_a^+)=\R$, and the restriction of $g(z)$ to any domain $([s_1,s_2]\times \R,\sim)\subset \cU$ is a bijection onto a radial annulus. Since all such domains are embedded, we obtain
$$\lim_{\tau\to 0} \mathfrak{h}_0(\tau)=-\8 ,\quad \lim_{\tau\to 0}\mathfrak{h}_1(\tau)=0.$$

So, to prove item (5) we only need to check that if $(s,\tau)\to (\hat{s},1)$ for some $\hat{s}<0$, then the values $s^*$ associated to $s$ converge to a (finite) negative number as $\tau\to 1$. To do this, we consider the function $t(x)$, defined in \eqref{defit}, and recall that the values of $s<0$ and $s^*>0$ are related by $t(s)=t(s^*)$. When $\tau=1$, $t(x)$ is defined in $\R-\{0\}$, since by Lemma \ref{lem:xaxb} it holds 
 $x_b^-=-\8$ and $x_a^+=\8$ in that case. Since $t(s)$ is strictly decreasing on both connected components of its domain of definition, in order to see that $(\hat{s})^*$ exists if $\tau=1$ it suffices to show that, for $\tau=1$, it holds
 \begin{equation}\label{eq:limitt}
\lim_{x\to 0^+}t(x) = \8, \qquad \lim_{x\to \8} t(x) = -\8.
\end{equation}
The first equality is trivial, since $\alfa(0)=0$ and $\beta(0)=2$. Regarding the second one, 
by \eqref{eq:omega1} and using \eqref{rox} we obtain 
\begin{equation}\label{ec1}
2c_1=\alpha(x)\dfrac{1}{c_1} e^{c_1 x} + \beta(x)c_1 e^{-c_1 x}.
\end{equation}
As shown in the proof of Lemma \ref{lem:xaxb}, when $\tau=1$ the function $\mathfrak{s}(x)=\alpha(x)\beta(x)$ satisfies  $\mathfrak{s}(x)\to -1/4$ as $x\to \pm\8$. Thus, we obtain from \eqref{ec1} $$\alfa(x)\left(2 c_1 -\frac{1}{c_1}\alfa(x) e^{c_1x} \right) = c_1 \mathfrak{s}(x) e^{-c_1 x} \to 0 \hspace{0.5cm} \text{as $x\to \8$}.$$ This means that $\alfa(x)\to 0$ as $x\to \8$. Using finally that $t(x)=\mathfrak{s}(x)/\alfa(x)^2$ and that $\alfa(x)>0$ if $x>0$, we obtain \eqref{eq:limitt}. This finishes the proof of item (5). 
%

\vspace*{0.2cm}

{\it Proof of item (6):}   
Assume that $(s,\tau)\in \cW_0$ converges to $(\hat{s},0)$ for some $\hat{s}\in \R$, and let $\overline\Omega_{(s,\tau)}$ denote their corresponding annular regions. It was proved in Section \ref{sec:domain0} that, in that case, $\overline\Omega_{(s,\tau)}$ converge to a singular necklace of disks along $\S^1$. Since the regions $\overline\Omega_{(s,\tau)}$ are embedded when $(s,\tau)\in \cW_0$ by item (1), this necklace should also be embedded, and we proved in Section \ref{sec:domain0} that this only happens if $\hat{s}=-\pi$. From here, it follows that, in this $\tau=0$ case, $(\hat{s})^*=\hat{s}+2\pi =\pi$, and $\overline\Omega_{(s,\tau)}$ converge to a  singular necklace of $n$ pairwise tangent disks of the same radius along the unit circle $\S^1$. This proves item (6).
Notice that this also shows  that  
$$\lim_{\tau\to 0} \mathfrak{h}_0(\tau)= \lim_{\tau\to 0}\mathfrak{h}_1(\tau)=-\pi.$$

\vspace*{0.2cm}

{\it Proof of item (2):}  Let $\Omega_j :=\Omega_{(s_j,\tau_j)}$, with $j=1,2$, be two of the Serrin ring domains in item (1), associated to $n_j\geq 2$. Note that $\tau_j\in (0,1)$. In all that follows, the quantities associated to $\Omega_j$ will be denoted with a $j$ subscript without additional clarification, unless necessary.

Assume that $\Omega_2=F(\Omega_1)$ for some similarity $F$ of $\R^2$. Due to their dihedral symmetry groups, it is then clear that $n_1=n_2=n$ and that this similarity fixes the origin. So, $F$ has a rotation factor and a dilation factor. Again by the dihedral symmetry of $\Omega_j$, the rotation factor must be of the form $2\pi/n$, and can be assumed to be the identity. To rule out the dilation term, we note that $\Omega_j$ contains the unit circle $\S^1$ in its interior as the only element of the foliation of $\Omega_j$ by capillary curves where $\alfa_j(x)=0$. Thus, $\Omega_2$ cannot be a proper dilation of $\Omega_1$, since $F$ must preserve this special $\S^1$ curve.

Therefore, $F={\rm Id}$, and $\Omega_2=\Omega_1$. We now prove that this implies $(s_1,\tau_1)=(s_2,\tau_2)$.

After a dilation of the conformal parameters $(x,y)$ of $\Omega_2$, we may assume that the Hopf differentials $q_j$ of $\Omega_j$ coincide, i.e., $\mathfrak{q}_j=\mathfrak{q}<0$. Let $\gamma$ be, say, the exterior boundary curve of $\Omega_1 (=\Omega_2)$, parametrized by $g_j(s_j,y)$. Let $\mathfrak{s}$ denote the arc-length parameter of $\gamma$, which is independent from $j$. Denoting $\alfa_j= \alfa_j(s_j)$, $\beta_j:=\beta_j(s_j)$, we have by construction that $2\mathfrak{q}=\beta_j/\alfa_j$. Thus, letting $Z_j(y):=e^{-\omega_j(s_j,y)},$ we have from \eqref{curcapi} that $$-2\kappa_\gamma = \alfa_j \left(Z_j^2 +2\mathfrak{q}\right),$$ and so, in terms of $\mathfrak{s}$, we arrive at
\begin{equation}\label{czetas}
\alfa_1 \left(Z_1(\mathfrak{s})^2 +2\mathfrak{q}\right)=\alfa_2 \left(Z_2(\mathfrak{s})^2 +2\mathfrak{q}\right).
\end{equation}
We next observe that, by \eqref{eq:Zy}, it holds 
\begin{equation}\label{czetas4}
4 (Z_j)_y^2 = Q_j (Z_j),
\end{equation} 
where $Q_j(Z)$ is the polynomial $Q(s_j,Z)$ in \eqref{eq:roypol}. Since $|g_j'|^2=e^{2\omega_j}$, we have 
%
%
%
\begin{equation}\label{czetas2}
4\left(\frac{dZ_j}{d\mathfrak{s}}\right)^2= Z_j^2  Q_j (Z_j), \hspace{0.5cm} Z_j=Z_j(\mathfrak{s}).
\end{equation}
Differentiating \eqref{czetas} and comparing the result with \eqref{czetas2} we arrive at
\begin{equation}\label{czetas3}
\alfa_1^2 \,Z_1(\mathfrak{s})^4 \, Q_1(Z_1(\mathfrak{s}))=\alfa_2^2 \, Z_2(\mathfrak{s})^4 \, Q_2(Z_2(\mathfrak{s})).
\end{equation}
We next prove that \eqref{czetas}, \eqref{czetas3} cannot hold simultaneously unless $(s_1,\tau_1)=(s_2,\tau_2)$.

Consider first the case $\alfa_1=\alfa_2 (<0)$. By \eqref{czetas}, $Z_1(\mathfrak{s})=Z_2(\mathfrak{s})$, and so by \eqref{czetas3} we deduce that $Q_1=Q_2$, i.e. all the coefficients of these two polynomials coincide. In particular, $\delta_1=\delta_2$, from where $\tau_1=\tau_2$ by \eqref{abd} (we recall that $\eta$ in \eqref{abd} is determined by $\tau$ in our situation, since ${\rm Per}(\eta,\tau)=1/n$). This easily implies $s_1=s_2$, by uniqueness of the solution to $2\mathfrak{q}=\beta(x)/\alfa(x)$ for $x\in (x_b^-,0)$. So, $(s_1,\tau_1)=(s_2,\tau_2)$ in this case.

We finally rule out the case $\alfa_1\neq \alfa_2$. Recall that $\alfa_1,\alfa_2<0$ since we are working with the exterior boundary curve of our ring domain. Assume for definiteness that $\alfa_1<\alfa_2$, and recall that $\mathfrak{q}<0$. Then, by \eqref{czetas} there exists a real analytic function $\varphi=\varphi(\mathfrak{s})$ such that 
\begin{equation}\label{argucosh}
Z_1(\mathfrak{s}) =A_1 \cosh(\varphi(\mathfrak{s})), \hspace{0.5cm} Z_2(\mathfrak{s})=A_2\sinh (\varphi(\mathfrak{s})), \hspace{0.5cm} A_j:=\sqrt{\frac{2\mathfrak{q}(\alfa_1-\alfa_2)}{\alfa_j}}>0.
\end{equation} 
Introducing this expression into \eqref{czetas3} creates a non-constant real analytic function $\Phi(x)$ for which $\Phi(\varphi(\mathfrak{s}))=0$. Thus, $\varphi(\mathfrak{s})$ is constant, what means that both $Z_j(\mathfrak{s})$ are constant. This can only happen if $\tau=1$, a contradiction. This completes the proof of item (2), and so of Theorem \ref{th:main}.

\subsection*{One-dimensional bifurcation from radial annuli} 


We show next that, for each $n\geq 2$, the moduli space $\cW_0$ of Theorem \ref{th:main} contains a real analytic curve $\Upsilon_1$ of Serrin ring domains $\Omega_{(s,\tau)}$ that solve \eqref{overeq00} with $b_1=b_2$. This curve $\Upsilon_1$ starts at some point $(s_1,1)$, with $s_1<0$, of the ``radial'' boundary component $\tau=1$ of the moduli space $\cW_0$. In this sense, the curve $\Upsilon_1$ bifurcates from the family of radial annuli. A similar argument proves that there exists a real analytic curve $\Upsilon_2\subset \cW_0$ that provides solutions to \eqref{overeq00} with $a_1=a_2$.

To show the existence of the curve $\Upsilon_1$ and to detect the bifurcation value $s_1<0$, let us analyze our construction in the case $\tau=1$. Some aspects of this $\tau=1$ case were already treated in Section \ref{sec:tau1}. For example, $\omega$ only depends on $x$ and is given by \eqref{eq:omega1}, and $g(z)=-e^{-c_1 z}$. So, $g$ maps $y$-curves to circles; specifically, each $y\mapsto g(x_0+iy)$ is a circle centered at the origin of radius $R=e^{-c_1 x_0}$.

Take now $s<0$. Using \eqref{eq:limitt}, we can see that Lemma \ref{lem:t} also holds in this $\tau=1$ case, with the same proof. Thus, there exists a unique $s^*>0$ so that $t(s)=t(s^*)$, where $t(x):=\beta(x)/\alfa(x):\R-\{0\}\flecha \R$. The map $s\mapsto s^*$ is the real analytic extension to $\tau=1$ of the one in Lemma \ref{lem:t}. In particular:
\begin{enumerate}
\item
$s\mapsto s^*$ is strictly increasing and positive.
\item
$s^* \to \8$ as $s\to 0^-$.
\end{enumerate}
On the other hand, we consider next the function $\varrho(x):=\alfa(x)\beta(x)$, that was introduced in the proof of Lemma \ref{lem:xaxb} (in that lemma we used the notation $s(x)=\alfa(x)\beta(x)$). When $\tau=1$, $\varrho(x)$ satisfies $$\varrho'(x)^2 = 4(\varrho^2+1/4)^2 (\varrho-\eta^2/4), \hspace{0.5cm} \varrho(0)=0.$$ Because of this equation, $\varrho(x)\to -1/4$ as $x\to \pm \8$, and it has a unique critical point $x_0>0$ (a maximum), at which $\varrho(x_0)=\eta^2/4$. Since $\varrho(0)=0$, this implies that for any $s<0$ there exists a unique $s^\flat>0$ such that $\varrho(s)=\varrho(s^\flat)$. Moreover,
\begin{enumerate}
\item
The map $s\mapsto s^\flat$ is strictly decreasing, with a finite positive limit at $s=0$.
\item
$s^\flat\to \8$ as $s\to -\8$
\end{enumerate}
From the specified properties of the maps $s\mapsto s^*$ and $s\mapsto s^\flat$ we easily deduce the existence of a unique $s<0$ such that $s^*=s^\flat$, that we denote by $s_1$. Therefore, we have $$\alfa(s_1)=-\alfa(s_1^*)<0, \hspace{0.5cm} \beta(s_1)=-\beta(s_1^*)>0.$$ From \eqref{abcbis}, we see that $b(s_1)=-b(s_1^*)$, and the function $s\mapsto b(s)+b(s^*)$ changes sign at $s=s_1$, because $s\mapsto \alfa(s)+\alfa(s^*)$ does.

Once here, for any $(s,\tau)\in \cW_0$, with $\cW_0$ given by \eqref{def:wo}, let $b(s;\tau)$ and $b(s^*;\tau)$ denote the values of the function $b(x)$ in our construction (see \eqref{abcbis}) at the points $x=s$ and $x=s^*$. These are two real analytic functions on the moduli space $\cW_0$, that extend analytically to $\tau=1$. Denote $\Psi(s,\tau):= b(s;\tau)+b(s^*;\tau)$. Then, we have proved that $\Psi(s,1)$ has a change of sign at $s=s_1$. By real analyticity, this means that there exists a real analytic curve $\Upsilon_1$ in $\cW_0$ along which $\Psi(\Upsilon_1)=0$. So, for any $(s,\tau)\in \Upsilon_1$, the Serrin ring domain $\Omega_{(s,\tau)}$ has the property that its associated solution $u$ to \eqref{overeq00} satisfies $b_1=b_2$; see Remark \eqref{cuandoserrin}. This proves our claim.

A similar argument, that we omit, shows that there exists a real analytic curve $\Upsilon_2$ in $\cW_0$ starting from some point $(s_2,1)$ in the $\tau=1$ line, and along which $a_1=a_2$ in \eqref{overeq00} holds.

Recall that our domains $\Omega_{(s,\tau)}$ have a dihedral symmetry group $D_n$ of order $2n$. Thus, each of these curves $\Upsilon_1$, $\Upsilon_2$ can be seen as a one-dimensional bifurcation of Serrin ring domains from the family of radial annuli in $\R^2$, with a fixed symmetry group $D_n$. In these conditions, it can be proved from the bifurcation analysis in \cite{ABM,KS} that the domains $\Omega_{(s,\tau)}$ with $(s,\tau)\in \Upsilon_2$ (resp. $\Upsilon_1$) correspond, when $\tau$ is close to $1$, to the Serrin ring domains by Agostiniani, Borghini and Mazzieri \cite{ABM} (resp. by Kamburov and Sciaraffia \cite{KS} for $n=2$). The discussion of this fact requires some technical considerations from bifurcation theory that we omit. It also relies on noting that all examples in \cite{ABM,KS} actually have a dihedral symmetry group. That is, the ring domains that, by their construction in \cite{ABM,KS}, are invariant by a cyclic group $G$ of rotations of $\R^2$, actually have a larger symmetry group that includes reflections.

\section{The periodic Serrin problem: proof of Theorem \ref{th:bandas}}\label{sec:bandas}

In this section we prove Theorem \ref{th:bandas}, which describes a $1$-dimensional real analytic family of periodic Serrin bands. The arguments here are similar to those used for the construction of ring domains in Theorem \ref{th:main}, but more direct. The key idea is that this time we will seek solutions to $\Delta u+2=0$ that are foliated by \emph{planar} capillary curves. This makes the process simpler, but we need to change many aspects of the proof, because this time the planes containing these curves do not have a common vertical axis.

We will first describe in Section \ref{sec:gflats} a $1$-parameter family of holomorphic developing maps $g(z;\tau)$, with $\tau\in (0,1)$. In Section \ref{sec:vflats} we will construct solutions $v(x,y)$ to $\Delta v+2e^{2\omega}=0$, where $\omega=\log |g'|$. These solutions will be foliated by planar capillary curves. Then in Section \ref{sec:demflats} we will complete the proof of Theorem \ref{th:bandas}.

\subsection{The developing maps}\label{sec:gflats} Fix $\tau\in (0,1]$, and let $\alfa(x)$ be the solution to
\begin{equation}\label{aplan1}
\alfa''=\delta \alfa +2\alfa^3, \hspace{0.5cm} \delta :=-\left(\tau+\frac{1}{\tau}\right),
\end{equation} 
with the initial conditions $\alfa(0)=0$, $\alfa'(0)=1$. By uniqueness, $\alfa(-x)=-\alfa(x)$. Also, from \eqref{aplan1}, we have $$(\alfa')^2 = \alfa^4  +\delta \alfa^2 + 1.$$ Note that the polynomial $q(x)=x^4  +\delta x^2 + 1$ has real roots at $\{\pm \sqrt{\tau},\pm 1/\sqrt{\tau}\}$. 

If $\tau=1$, then $\alfa(x)=\tanh(x)$. If $\tau\in (0,1)$, then $\alfa(x)$ is periodic with $\alfa(\R)=[-\sqrt{\tau},\sqrt{\tau}]$. We denote by $x_a^+$ the first positive zero of $\alfa(x)$. If $\tau=1$, then $x_a^+=\8$.

Let us denote
\begin{equation}\label{def:betaflat}
\beta(x):=-\alfa(x).
\end{equation} 
It is direct then to check that $(\alfa(x),\beta(x))$ is the unique solution to system \eqref{system} with the initial conditions $\alfa(0)=\beta(0)=0$ and $\alfa'(0)=-\beta'(0)=1$.
 
%
\begin{lemma}\label{lioflat}
There exists a unique harmonic function $\omega(x,y)$ defined in $(-x_a^+,x_a^+)\times \R$ such that 
\begin{equation}\label{oveflat}
\omega_x(x,y) = \alfa(x) \sinh \omega(x,y)
\end{equation}
and $e^{\omega(0,0)} =1/ \tau$. If $\tau\in (0,1)$, then $\omega(x,y)$ cannot be extended to $[-x_a^+,x_a^+]\times\R$.
\end{lemma}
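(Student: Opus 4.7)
The plan is to mirror the proof of Lemma \ref{lem:lio}, exploiting the simplification $\beta=-\alfa$. The guiding observation is that if $\omega$ is harmonic and satisfies \eqref{oveflat}, then $Z:=e^{-\omega}$ satisfies both the first-order ODE $2Z_x=\alfa(Z^2-1)$ and the algebraic relation obtained by specializing \eqref{roy} to $\beta=-\alfa$, namely
\[
4Z_y^2=Q(x,Z),\qquad Q(x,Z):=-\alfa^2 Z^4-4\alfa' Z^3-(6\alfa^2+4\delta)Z^2-4\alfa' Z-\alfa^2.
\]
At $x=0$, using $\alfa(0)=0$, $\alfa'(0)=1$ and $\delta=-(\tau+1/\tau)$, this collapses to $4z'(y)^2=-4z(z-\tau)(z-1/\tau)$ for $z(y):=Z(0,y)$, and the normalization $e^{\omega(0,0)}=1/\tau$ forces $z(0)=\tau$, which is a simple root of the cubic, so that $z'(0)=0$ is automatic.

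Accordingly, I would first define $z(y):\R\to[\tau,1/\tau]$ as the unique solution of the above ODE with $z(0)=\tau$, $z'(0)=0$. When $\tau=1$ the cubic has a double root and $z\equiv 1$; when $\tau\in(0,1)$, $z$ is a real analytic, even, periodic function oscillating between $\tau$ and $1/\tau$. I would then define $\omega(x,y)$ on $(-x_a^+,x_a^+)\times\R$ as the unique real analytic solution of \eqref{oveflat} with initial datum $\omega(0,y)=-\log z(y)$, equivalently the solution of the Riccati-type equation $2Z_x=\alfa(Z^2-1)$ with $Z(0,y)=z(y)$. The solution stays strictly positive on the whole strip because, as shown below, $Z(x,\cdot)$ is trapped between the two positive roots of $Q(x,\cdot)$, which stay in $(0,\infty)$ as long as $x\in(-x_a^+,x_a^+)$.

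The crux is verifying that $\omega$ is harmonic. For this I would upgrade $4Z_y^2=Q(x,Z)$ from $\{x=0\}$ to the whole strip by a conservation argument. Using \eqref{system} specialized to $\beta=-\alfa$ (which gives $\alfa''=\delta\alfa+2\alfa^3$) together with $2Z_x=\alfa(Z^2-1)$, a direct polynomial computation yields
\[
\partial_x Q+Q_Z\,Z_x=2\alfa Z\,Q,
\]
while on the other hand $(4Z_y^2)_x=8Z_y Z_{xy}=8\alfa Z\,Z_y^2=2\alfa Z\cdot (4Z_y^2)$. Thus $4Z_y^2/Q$ is independent of $x$ along each $y$-line, and its value at $x=0$ is $1$ by construction, so $4Z_y^2\equiv Q$ globally. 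Differentiating in $y$ gives $Z_{yy}=Q_Z/8$, and combining this with the explicit $Z_{xx}$ from the $x$-ODE, a polynomial cancellation (using the first integral $(\alfa')^2=\alfa^4+\delta\alfa^2+1$ to kill the top and bottom degree terms) produces $Z\Delta Z=Z_x^2+Z_y^2$, which is equivalent to $\Delta\omega=0$. Uniqueness follows by running the chain in reverse: any harmonic solution of \eqref{oveflat} satisfies both PDEs by the same polynomial identities, and so $z(y):=Z(0,y)$ must solve $4z'^2=Q(0,z)$ with $z(0)=\tau$, which determines it; then the $x$-ODE determines $\omega$ everywhere.

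Finally, for the non-extendability when $\tau\in(0,1)$, I would track the positive roots of $Z\mapsto Q(x,Z)$. Since $\alfa(x_a^+)=0$ and $(\alfa'(x_a^+))^2=1$ with $\alfa$ decreasing at $x_a^+$, one has $\alfa'(x_a^+)=-1$, so
\[
Q(x_a^+,Z)=4Z(Z+\tau)(Z+1/\tau)
\]
is strictly positive for $Z>0$. For $x<x_a^+$ close to $x_a^+$, $Q(x,\cdot)$ is a quartic with negative leading coefficient and $Q(x,0)=-\alfa^2<0$, hence has two simple positive roots $\rho_1(x)<\rho_2(x)$ bounding the region $\{Q>0\}$. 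Since the limiting cubic has only a simple root at $0$ and none at infinity, $\rho_1(x)\to 0^+$ and $\rho_2(x)\to+\infty$. By the even symmetry of $z$ propagated to $Z$ through the $x$-ODE, $Z(x,0)=\min_y Z(x,y)=\rho_1(x)\to 0$, so $\omega(x,0)\to+\infty$ as $x\to x_a^+$; the symmetry $x\mapsto -x$ handles $-x_a^+$. The main technical obstacle in the whole scheme is the propagation identity $\partial_x Q+Q_Z Z_x=2\alfa Z Q$, which carries the algebraic relation $4Z_y^2=Q$ from the line $x=0$ to the full strip; once this is available, the harmonicity identity $Z\Delta Z=Z_x^2+Z_y^2$ reduces to the same kind of polynomial manipulation using the $\alfa$-ODE and its first integral.
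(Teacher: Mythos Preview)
Your proposal is correct and follows exactly the route the paper takes: it defers to the argument of Lemma~\ref{lem:lio} specialized to $\beta=-\alfa$, and you have carried out that specialization in detail (the propagation identity $\partial_x Q+Q_Z Z_x=2\alfa Z Q$, the harmonicity reduction $Z\Delta Z=Z_x^2+Z_y^2$, and the root-tracking for non-extendability). Your treatment is in fact more explicit than the paper's one-line proof, and your uniqueness sketch---pulling back $4Z_y^2=Q$ from harmonicity via the computations of Proposition~\ref{pro:sis}, which then forces $z'(0)=0$ and hence determines $z(y)$---fills in a point the paper leaves implicit.
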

\begin{proof}
The proof is the same as that of Lemma \ref{lem:lio}, using \eqref{def:betaflat}.
\end{proof}
We remark that $\omega(x,y)$ in Lemma \ref{lioflat} still satisfies equation \eqref{roy}. Specifically, by the computations at the beginning of the proof of Proposition \ref{pro:sis} we obtain \eqref{eq:wy2} and \eqref{eqa}, where $\beta(x)$ is given by \eqref{def:betaflat}. Then, we use that $(\alfa(x),\beta(x))$ solve \eqref{system} and obtain \eqref{roy} directly from \eqref{eq:wy2}, \eqref{eqa}.

Thus, using \eqref{def:betaflat} and \eqref{roy}, we have in our situation that $Z(y):=e^{-\omega(0,y)}$ satisfies $$Z_y^2 = -\alfa'(0)Z^3 -\delta Z^2 -\alfa'(0)Z,$$ that is, 
\begin{equation}\label{zyflat}
Z_y^2 =p(Z):= -Z (Z^2+\delta Z +1).
\end{equation} 
Observe that $p(z)$ has zeros at $z=0,\tau,1/\tau$, and that $Z(0)=\tau$. So, $Z(y)$ is periodic, and $Z(\R)\subset [\tau,1/\tau]$. If $\tau=1$, then $Z(y)\equiv 1$.

We next define a holomorphic map $g(z)$ on the strip $(-x_a^+,x_a^+)\times \R$, by 
\begin{equation}\label{rel:omg}
e^{2\omega}=|g'|^2,
\end{equation} 
with initial conditions $g(0)=0$, $g'(0)=i/\tau$. Note that $g(z)$ is unique. Since $\alfa(0)=\beta(0)=0$, then by \eqref{curcapi} $g(iy)$ parametrizes a straight line in $\C$, which is actually the $x_1$-axis due to the chosen initial conditions. So, we have $g'(iy)=i/Z(y)\in i\R_+$. From here, $G(z):=-ig'(iz)$ takes positive real values if $z\in \R$, satisfies $G(0)=1/\tau$, and solves the elliptic equation 
\begin{equation}\label{edoG}
(G')^2= p(G) = 	-G \left(G^2 +\delta G +1\right) = -G (G-\tau)(G-1/\tau).
\end{equation} 
This means that $g(z)$ can be described by elliptic functions. We show next how to write $g(z)$ explicitly in terms of a Weierstrass $\wp$-function. To start, let $e_j=e_j(\tau)$, $j=1,2,3$, be
\begin{equation}\label{ess}
e_1= \frac{2-\tau^2}{12 \tau}, \hspace{0.5cm}  e_2= \frac{2\tau^2-1}{12 \tau}, \hspace{0.5cm} e_3=- \frac{\tau^2+1}{12 \tau}.
\end{equation} 
Note that $e_1+e_2+e_3=0$, and all $e_j$ are real with $e_1>e_2>e_3$. Also, let $\wp(z)=\wp(z;\tau)$ be the Weierstrass $\wp$-function given by 
\begin{equation}\label{edope}
\wp'(z)^2 = 4(\wp(z) -e_1)(\wp(z)- e_2)(\wp(z)-e_3).
\end{equation}
By the properties of $e_j$, it is well known then that $\wp(z)$ is doubly periodic over a rectangular lattice in $\C$, and its half-periods $\omega_1>0$, $\omega_2\in i\R_+$ are determined by the values $e_j$, i.e., $\omega_1=\omega_1(\tau)$ and $\omega_2=\omega_2(\tau)$. Moreover, $\wp(\omega_1)=e_1$, $\wp(\omega_1+\omega_2)=e_2$ and $\wp(\omega_2)=e_3$, again by standard properties of the theory. From here, we have:
\begin{proposition}
With the above notations, it holds
\begin{equation}\label{gflat}
g(z)= i \left( -4 \zeta(z+\omega_1) + \frac{(\tau+1/\tau)z}{3} + 4\zeta(\omega_1)\right).
\end{equation}
Here $\zeta(z)=\zeta(z;\tau)$ is the Weirstrass zeta function associated to $\wp$, which satisfies $\zeta'(z)=-\wp(z)$.
\end{proposition}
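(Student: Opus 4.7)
The plan is to verify the formula by reducing to an ODE uniqueness argument via the characterization $G(z) = -ig'(iz)$ together with the ODE \eqref{edoG}.

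Let $\hat g(z)$ denote the right-hand side of the claimed formula. First I would differentiate using $\zeta'=-\wp$ to obtain $\hat g'(z) = i\bigl[4\wp(z+\omega_1) + (\tau+1/\tau)/3\bigr]$, and then verify that the initial data of $g$ are reproduced: $\hat g(0)=0$ is immediate from the additive constant $4\zeta(\omega_1)$, and $\hat g'(0) = i/\tau$ reduces to the arithmetic identity $4e_1 + (\tau+1/\tau)/3 = (2-\tau^2)/(3\tau) + (\tau^2+1)/(3\tau) = 1/\tau$, using \eqref{ess} and $\wp(\omega_1)=e_1$.

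The core of the proof is to show that $\hat G(z) := -i\hat g'(iz) = 4\wp(\omega_1+iz) + (\tau+1/\tau)/3$ satisfies \eqref{edoG}. The key observation, which I would check directly from \eqref{ess}, is that
\begin{equation*}
\frac{\tau+1/\tau}{3} = -4e_3, \qquad \frac{\tau+1/\tau}{3} - \tau = -4e_2, \qquad \frac{\tau+1/\tau}{3} - \frac{1}{\tau} = -4e_1.
\end{equation*}
These yield the factorisations $\hat G = 4(\wp-e_3)$, $\hat G-\tau = 4(\wp-e_2)$, $\hat G-1/\tau = 4(\wp-e_1)$, so by \eqref{edope},
\begin{equation*}
\hat G(\hat G-\tau)(\hat G-1/\tau) = 64(\wp-e_1)(\wp-e_2)(\wp-e_3) = 16\,\wp'(\omega_1+iz)^2.
\end{equation*}
On the other hand the chain rule gives $\hat G'(z) = 4i\,\wp'(\omega_1+iz)$, so $\hat G'(z)^2 = -16\,\wp'(\omega_1+iz)^2$, and equation \eqref{edoG} follows.

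For uniqueness I would differentiate \eqref{edoG} to obtain the second-order ODE $2G''=-3G^2+2(\tau+1/\tau)G-1$, which both $G$ and $\hat G$ satisfy with the same Cauchy data $(G(0),G'(0)) = (1/\tau,0)$; note that $G'(0)=0$ follows from \eqref{edoG} since $1/\tau$ is a root of $p$. By standard ODE uniqueness $G\equiv\hat G$ on the real line, and hence on their common complex domain by analytic continuation. Consequently $g'$ and $\hat g'$ coincide along the imaginary axis, hence on the whole strip by holomorphy, and since $g(0)=\hat g(0)=0$ we conclude $g\equiv\hat g$. The main bookkeeping hurdle lies in selecting the correct half-period $\omega_1$ among $\{\omega_1,\omega_2,\omega_1+\omega_2\}$ so that $\hat G(0) = 4\wp(\omega_1) + (\tau+1/\tau)/3 = 4e_1+(\tau+1/\tau)/3$ equals the required value $1/\tau$; the arithmetic above singles out $e_1$ uniquely.
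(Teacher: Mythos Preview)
Your proof is correct and follows essentially the same route as the paper: verify the initial data, show that $\hat G(z)=-i\hat g'(iz)$ satisfies \eqref{edoG} via the Weierstrass relation \eqref{edope}, pass to the second-order ODE $2G''=p'(G)$, and conclude by Cauchy uniqueness. Your explicit factorisations $\hat G=4(\wp-e_3)$, $\hat G-\tau=4(\wp-e_2)$, $\hat G-1/\tau=4(\wp-e_1)$ make the verification of \eqref{edoG} slightly more transparent than in the paper, but the argument is otherwise identical.
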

\begin{proof}
Let $\phi(z)$ denote the holomorphic function on the right-hand side of \eqref{gflat}. Then, $$\phi(0)=0, \hspace{0.5cm} \phi'(0)= i \left(4\wp(\omega_1) + \frac{\tau+1/\tau}{3}\right) = \frac{i}{\tau},$$ where we have used that $\wp(\omega_1) =e_1$, and \eqref{ess}. So, $g(0)=\phi(0)$ and $g'(0)=\phi'(0)$.

We denote next $\Phi(z):= -i \phi'(iz)$. Then, a computation from \eqref{edope} and \eqref{ess} gives $$(\Phi')^2 =-\Phi (\Phi-\tau)(\Phi-1/\tau),$$ i.e. $\Phi(z)$ solves \eqref{edoG}. By differentiation of \eqref{edoG}, both $\Phi(z),G(z)$ solve $G''=p'(G)/2$. Since $\Phi(0)=G(0)=1/\tau$ and $\Phi'(0)=G'(0)=0$, we deduce that $\Phi(z)=G(z)$, and since $g(0)=\phi(0)$, we obtain finally that \eqref{gflat} holds.
\end{proof}
The function $g'(z)$ is doubly periodic, and $g(z)$ is holomorphic in $(-\omega_1,\omega_1)\times \R$. In this way, $x_a^+=\omega_1$, see Lemma \ref{lioflat}. Also, $g(z)$ has the following symmetries, by Schwarz's reflection:
\begin{enumerate}
\item
$g(-x+iy)=\overline{g(x+iy)}$, since $g(i\R)\subseteq \R$.
\item
$g(z+k\omega_2)=-\overline{g(\bar{z}+k\omega_2)} +2 g(k\omega_2)$ for any $z\in \Z$ and $k\in \Z$, since $g'(\R+k\omega_2)\subset i\R$ and $g(k\omega_2)=kg(\omega_2)\in \R$.
\end{enumerate}

Denoting $g(x+iy)=g(x,y)$ and $\omega_2=i\vart$ for $\vart>0$, the second equation above indicates that
\begin{equation}\label{sigaflat}
g(x,y+k \vart) = \Psi_k(g(x,-y+k\vart)), \hspace{0.5cm} \text{for all $k\in \Z$},
\end{equation} 
where $\Psi_k$ is the symmetry of $\R^2$ with respect to the vertical line $x_1= k \,g(0,\vart)$. From \eqref{sigaflat} we also obtain
\begin{equation}\label{periflat}
g(x,y+2\vart)=g(x,y)+g(0,2\vart).
\end{equation}
%
By computations similar to those in Lemma \ref{lem:sigma} we can compute 
\begin{equation}\label{teflat}
\vart=\int_\tau^{1/\tau} \frac{1}{\sqrt{p(z)}}\, dz= \int_0^1 \frac{\sqrt{\tau}}{\sqrt{t(1-t)(t+\tau^2 (1-t))}} \, dt.
\end{equation}
This time we used the change of variable $z=-t(\tau-1/\tau) +\tau$.


\subsection{The solutions in conformal parameters}\label{sec:vflats}
We define next real functions $a(x),b(x)$ by 
\begin{equation}\label{abflat}b(x)=-\frac{2}{\alfa(x)}, \hspace{0.5cm} a'(x)=\frac{2\int_0^x \alfa(s)^2 ds}{\alfa(x)^2}.
\end{equation} 
Note that $a(x)$ is well defined in $(-x_a^+,x_a^+)$, but $b(x)$ is not defined at $x=0$. Also, $a(x)$ is defined up to an additive constant $a_0$ that will be specified later on, and it satisfies $a(x)=a(-x)$, since $\alfa(x)$ is odd.

Let $\mathfrak{h}(x):(-x_a^+,x_a^+)\flecha \R$ be the unique solution to the linear ODE
\begin{equation}\label{edoache}
\mathfrak{h}'' = (\delta +2\alfa^2) \mathfrak{h}, \hspace{0.5cm} \mathfrak{h}(0)=2,  \hspace{0.5cm} \mathfrak{h}'(0)=0.
\end{equation}
From \eqref{aplan1} we see that $\alfa'(x) \mathfrak{h}(x)-\mathfrak{h'}(x)\alfa(x) =2$. Thus, denoting 
\begin{equation}\label{fflat}
f(x):=\frac{\mathfrak{h}(x)}{\alfa(x)}:(-x_a^+,x_a^+)-\{0\}\flecha \R,
\end{equation}
we see that $f'(x)=-2/\alfa(x)^2$. Consider next
\begin{equation}\label{def:eleflat}
L(x,y):=\frac{b(x) e^{\omega}-a'(x)}{f'(x)} = \alfa(x) e^{\omega} + \int_0^x \alfa(s)^2 ds.
\end{equation}
Observe that $L(x,y)$ is real analytic, with $L(0,y)=0$ and $L_x(0,y)=1/Z(y)$.
We prove below that $L(x,y)$ is harmonic. First, from the harmonicity of $\omega$ it follows 
$$\Delta L = \alfa'' e^{\omega} +2\alfa' (\alfa+e^{\omega} \omega_x) + \alfa e^{\omega} \left(\omega_x^2+\omega_y^2\right).$$
We now put into this equation the values of $\alfa''$ in \eqref{aplan1}, of $\omega_x$ in \eqref{oveflat} and of $\omega_y^2$ in \eqref{roy} for $\beta(x)=-\alfa(x)$ as in \eqref{def:betaflat}. A computation shows then that the right side of the above expression cancels, and so $L(x,y)$ is harmonic.  From there, we deduce that
%
\begin{equation}\label{def:L2}
L(x,y)= {\rm Im} \, g(x+iy),
\end{equation}
since both harmonic functions in \eqref{def:L2} have the same Cauchy data along $(0,y)$.

We define next $v(x,y)$ as the solution to system 
\begin{equation}\label{vflat}
v(x,y)=a(x)+f(x) L(x,y), \hspace{0.5cm} v_x(x,y)=b(x) e^{\omega(x,y)} + f(x) L_x(x,y).
\end{equation} 
The compatibility of this system follows directly from the definition of $L(x,y)$.
Then, using the harmonicity of $L$ and \eqref{vflat} we can compute 
$$\Delta v = (v_x)_x +v_{yy}= b'e^{\omega} + b e^{\omega} \omega_x + f' L_x =-2e^{2\omega},$$ where for the last equality we use \eqref{oveflat}, \eqref{abflat}, \eqref{fflat} and \eqref{def:eleflat}. Even though we will not use it directly, we remark that the Hopf differential $\mathfrak{q}$ of $v(x,y)$ can be computed in a similar way, yielding 
\begin{equation}\label{normaq}
\mathfrak{q}= v_{zz}-2\omega_z v_z = -\frac{1}{2}.
\end{equation}
The map $v(x,y)$ is well defined and real analytic around $x=0$. This is immediate, since $a(x)$ is real analytic at $x=0$, and $f(x)L(x,y)$ also is, by \eqref{fflat}, \eqref{def:eleflat}. 

%
%


We also note that, for any $x_0\in (-x_a^+,x_a^+)$, equation \eqref{ove1} holds for the constants $c=0$, $d_1=0$, $d_2=f(x_0)$. In other words, the image of the curve $y\mapsto (g(x_0+iy), v(x_0,y))$ lies in a plane of $\R^3$ that is actually parallel to the $x_1$-axis. 
In this way, our foliation structure is this time by \emph{planar capillary curves}, as already advertised. 

\begin{lemma}\label{simex}
It holds $v(-x,y)=v(x,y)$.
\end{lemma}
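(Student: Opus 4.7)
The plan is to verify the symmetry factor by factor in the defining expression $v(x,y) = a(x) + f(x)L(x,y)$ from \eqref{vflat}. I will show that $a$ is even in $x$ while both $f$ and $L(\cdot,y)$ are odd in $x$, so that $f(x)L(x,y)$ is even; added to the even $a(x)$, this gives $v(-x,y) = v(x,y)$. No heavy computation will be needed; every piece will reduce to a uniqueness argument for an ODE together with the Schwarz reflection symmetry of $g$.

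For the evenness of $a$, I will use that $\alpha$ is odd (noted in Section \ref{sec:gflats} as an immediate consequence of the initial conditions for \eqref{aplan1}). Then $\alpha^2$ is even, so the integral $\int_0^x \alpha(s)^2\,ds$ in \eqref{abflat} is odd, and hence $a'$ is odd; this makes $a$ even up to an additive constant, which may be absorbed into the free constant $a_0$ left unspecified in the construction. For the oddness of $f$, I will observe that the auxiliary function $\mathfrak h$ defined by \eqref{edoache} solves a linear homogeneous second order ODE whose coefficient $\delta + 2\alpha^2$ is even in $x$, with even initial data $\mathfrak h(0) = 2$, $\mathfrak h'(0) = 0$. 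Uniqueness then forces $\mathfrak h(-x) = \mathfrak h(x)$, and since $f = \mathfrak h/\alpha$ with $\alpha$ odd, $f$ is odd.

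The third ingredient is the oddness of $L$ in $x$, which follows directly from the identification $L(x,y) = \text{Im}\, g(x+iy)$ in \eqref{def:L2} together with the Schwarz reflection symmetry $g(-x+iy) = \overline{g(x+iy)}$ stated just after \eqref{gflat}: taking imaginary parts gives $L(-x,y) = -L(x,y)$. Combining the three symmetries yields
\[
v(-x,y) = a(-x) + f(-x)\,L(-x,y) = a(x) + (-f(x))(-L(x,y)) = v(x,y),
\]
which is the claim. The argument has no real obstacle; the only slightly non-trivial point is the evenness of $\mathfrak h$, and that is a standard application of uniqueness to a linear ODE with even coefficient and evenness-compatible Cauchy data.
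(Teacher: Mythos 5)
Your proof is correct and follows essentially the same route as the paper: the paper's own argument likewise combines the evenness of $a$ (already noted after \eqref{abflat}, from $\alfa$ odd), the oddness of $f$ from \eqref{fflat}, and the oddness of $L$ in $x$ from \eqref{def:L2} together with the Schwarz reflection $g(-x+iy)=\overline{g(x+iy)}$. Your extra justification of the evenness of $\mathfrak h$ via ODE uniqueness is a correct filling-in of a step the paper calls ``direct.''
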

\begin{proof}
Is is direct form \eqref{fflat} that $f(x)$ is odd. Also, we realize at once from \eqref{def:L2} that $L(x,y)$ is odd in $x$ as well, and hence $f(x) L(x,y)$ is even in $x$. Finally, since we know that $a(x)$ is even, we deduce from \eqref{vflat} the desired symmetry.
\end{proof}

%

Let $x_0\in (-x_a^+,x_a^+)$. By repeating the argument in the proof of Lemma \ref{lem:lio} for the situation (1) there, we obtain that the map $y\mapsto e^{\omega(x_0,y)}$ is not constant unless $\tau=1$. So, for $\tau\in (0,1)$, this means that $y\mapsto v(x_0,y)$ will be constant if and only if $f(x_0)=0$. We detect next the zeros of $f(x)$. Since $f'(x)=-2/\alfa(x)^2$, $f(x)$ is strictly decreasing, and diverging to $\8$ and $-\8$ at $x=0$ and $x=x_a^+$ respectively. So, there exists a unique $x^*\in (0,x_a^+)$ with $f(x^*)=0$. Since $f(x)$ is odd, $-x^*$ is the only root of $f(x)$ in $(-x_a^+,0)$.


Note that, by construction, the zeros $\{-x^*,x^*\}$ of $f(x)$ are those of $\mathfrak{h}(x)$ in \eqref{edoache}.
%
%
%
%
%
%
%
%
\subsection{Proof of Theorem \ref{th:bandas}}\label{sec:demflats}

We start by collecting some notation. For any $\tau\in (0,1)$, let $g(z)=g(z;\tau)$ denote the holomorphic map \eqref{gflat}. Also, let $\alfa(x)$ be given by \eqref{aplan1}, and let $x^*=x^*(\tau)$ denote the unique $x^*>0$ such that $f(x^*)=0$, as explained at the end of the previous section. Denote $\cU_\tau:= [-x^*,x^*]\times \R$, and consider the restriction of $g(z)$ to $\cU_\tau$. Also, let $\omega=\log |g'|$, which is a well-defined harmonic function on $\cU_\tau$, since $g'\neq 0$ at every point. Recall that, by \eqref{rel:omg}, $\omega(x,y)$ is the harmonic function of Lemma \ref{lioflat}.

Consider also the harmonic function $L(x,y)$ in $\cU_\tau$ defined by \eqref{def:L2}, and $v(x,y)$ defined in \eqref{vflat}, i.e., $$v(x,y):= a(x)+f(x) L(x,y):\cU_\tau\flecha \R,$$ where $a(x),f(x)$ are defined in \eqref{abflat}, \eqref{fflat}. Recall that $f(x)$ is odd, while $a(x)$ is even and defined up to an additive constant $a_0$. From now on, we chose this constant $a_0$ so that 
\begin{equation}\label{defa0}
a(x^*)=a(-x^*)=0.
\end{equation}
 As we have shown, $v(x,y)$ is even with respect to $x$ (Lemma \ref{simex}), and solves $\Delta v+2e^{2\omega}=0$.

Assume for one moment that $g(z)$ is injective on $\cU_\tau$, and denote $\overline\Omega_\tau:= g(\cU_\tau)$. Observe that $\gamma_0(y):=g(iy)$ parametrizes the $x_1$-axis with $\gamma_0'(y)<0$ (since $\gamma_0'(0)=-1/\tau<0$). Thus, $g(-x^*+iy)$ (resp. $g(x^*+iy)$) parametrizes the component of $\parc \Omega$ for which $x_2<0$ (resp. $x_2>0$). The exterior unit normal $\nu$ of $\parc \Omega_\tau$ is given, in the conformal $(x,y)$ coordinates, by $\nu = e^{-\omega}\parc_x$ (resp. by $\nu=-e^{-\omega}\parc_x$) along $x=x^*$ (resp. $x=-x^*$).

With this, we define $u:= v\circ g^{-1}: \overline\Omega_\tau\flecha \R$. Note that $u$ is analytic and, due to the previous equations, it satisfies $\Delta u+2=0$ in $\Omega_\tau$. Also, by \eqref{vflat} and \eqref{defa0}, $u$ has the boundary conditions $$u=0, \hspace{0.5cm} \frac{\parc u}{\parc \nu} =b \hspace{0.5cm} \text{along $\parc \Omega_\tau$},$$ where are are denoting $b:=b(x^*)=-b(-x^*)<0$. We also note that every curve $y\mapsto g(x_0+iy)$ has the vertical symmetries given by \eqref{sigaflat}, and the periodicity in \eqref{periflat}.

In other words: $\Omega_\tau:={\rm int}(\overline\Omega_\tau)$ will be a non-flat periodic Serrin band as long as $g(z)$ is injective on $\cU_\tau$. We prove next this property:

{\bf Claim:} \emph{$g:\cU_\tau\flecha \overline\Omega_\tau$ is a holomorphic bijection for all $\tau\in (0,1)$.}

\noindent \emph{Proof of the claim:} To start, we consider the limit case $\tau=1$. From \eqref{zyflat} we have $Z(y)\equiv 1$ for any $y$, that is, $\omega(0,y)=0$. By \eqref{oveflat}, we have $\omega_x(0,y)=0$ and so, by uniqueness of the Cauchy problem for harmonic functions, $\omega\equiv 0$. Thus, $g(z)=iz$ and, by \eqref{def:L2}, $L(x,y)=x$. We now use that $\alfa(x)=\tanh(x)$ when $\tau=1$ to obtain from \eqref{abflat} and \eqref{fflat} that
\begin{equation}\label{efeuno}
b(x)=-2\coth(x), \hspace{0.5cm} a(x)=a_0+x^2 -2 x \coth(x), \hspace{0.5cm} f(x)= 2(\coth(x)-x),
\end{equation} 
So, by \eqref{vflat}, we obtain $v(x,y)=a_0-x^2$. Thus, the solution $u(x_1,x_2)$ to $\Delta u +2=0$ defined by $u=v\circ g^{-1}$ only depends on $x_2$ and is given by $u(x_1,x_2)= a_0 -x_2^2$.
Moreover, from the expression of $f(x)$ in \eqref{efeuno}, we see that $x^*>0$ is, in this $\tau=1$ case, the unique positive solution $x^\sharp$ to 
\begin{equation}\label{criti}
x^\sharp=\coth(x^\sharp).
\end{equation} 
Also by \eqref{efeuno}, $b(x^\sharp)=-2x^\sharp$ and $a(x^\sharp)= a_0-(x^\sharp)^2$. By \eqref{defa0}, we obtain then $a_0 =(x^\sharp)^2$. 

Therefore, if we denote $\cU_1:=[-x^\sharp,x^\sharp]\times \R$, with $g(z)=g(z;1)=iz$ and $\overline\Omega_1:=g(\cU_1)$, then $g(z):\cU_1 \flecha \overline\Omega_1$ is a holomorphic bijection (in this case, $\overline\Omega_1$ is a flat band of width $2x^\sharp$). 

Consider next the case $\tau\in (0,1)$. For any such $\tau$, we denote $\mathcal{K}_\tau:=[-x^*,x^*]\times [0,2\vart]$, where $\vart=\vart(\tau)>0$ is given by \eqref{teflat}. Then, $\mathcal{K}_\tau$ is a (compact) fundamental domain of $g(z;\tau)$; see \eqref{periflat} for the periodicity of $g(z;\tau)$. Now, taking limits in the second integral of \eqref{teflat} we see that $\vart \to \pi$ as $\tau\to 1$. Thus, $\mathcal{K}_\tau\to \mathcal{K}_1:= [-x^\sharp,x^\sharp]\times [0,2\pi]$ analytically as $\tau\to 1$, where $x^\sharp>0$ is given by \eqref{criti}. Also, $g(z;\tau)$ converges analytically to $g(z;1)=iz$ as $\tau\to 1$.



Thus, since $g(z;1):\cU_1 \flecha \overline\Omega_1$ is a holomorphic bijection, there exists some $\tau_0 \in [0,1)$ such that $g(z;\tau)$ defines a bijection from the compact fundamental domain $\mathcal{K}_\tau$ into $g(\mathcal{K}_\tau)$, for all $\tau\in (\tau_0,1]$. By the periodicity of $g(z;\tau)$ in \eqref{periflat}, we deduce that $g(z;\tau):\cU_\tau\flecha \overline\Omega_\tau$ is also a holomorphic bijection for all $\tau\in (\tau_0,1]$.


Let now $\tau_1\in [0,1)$ denote the smallest value of $\tau_0$ with the above property. We prove next that $\tau_1=0$. Otherwise, we would have that $g(z;\tau):\cU_\tau\flecha \overline\Omega_\tau$ is bijective for all $\tau\in (\tau_1,1)$ but not at $\tau_1$. Note that $g(z):=g(z;\tau)$ is a local diffeomorphism (since $g'$ never vanishes), and that the two boundary components of $\overline\Omega_{\tau}$ never intersect one another, as each of them lies in one of the open half-planes $\{x_2<0\}$ or $\{x_2>0\}$. So, if $g(z;\tau_1)$ ceases to be bijective, it will be because one (and hence, by symmetry, both) of the boundary curves in $\parc \overline\Omega_{\tau_1}$ self-intersects. This means that for $\tau>\tau_1$ close enough to $\tau_1$, the boundary curves of $\overline\Omega_\tau$ cannot be graphs, i.e., that the periodic Serrin band $\Omega_\tau$ cannot be of the form $\Omega^{\varphi}$ in \eqref{bigraph} for any periodic function $\varphi(x)$. This contradicts Theorem \ref{rosic} by Ros-Sicbaldi, and finishes the proof of the claim above.

We now complete the proof of Theorem \ref{th:bandas}. We have already constructed a real analytic $1$-parameter family of periodic Serrin bands $\{\Omega_\tau : \tau\in (0,1]\}$ so that items (1), (2) and (6) hold. 

Regarding item (3), we have from \eqref{gflat} that, for $\tau\in (0,1)$, 
\begin{equation}\label{derig}
g'(z)= i\left(4\wp(z+\omega_1) +\frac{\tau+1/\tau}{3}\right).
\end{equation}
For any $x_0\in (-\omega_1,\omega_1)$, denote $\phi_{x_0}(y):= {\rm Im}(g(x_0+iy))$. Clearly, $|\phi_{x_0}(y)|$ gives the distance of $g(x_0+iy)$ to the $x_1$-axis. Then, by \eqref{derig}, 
$$\phi_{x_0}'(y)= -4 \,{\rm Im} \left(\wp(x_0+iy+ \omega_1)\right).$$ Now, the points where $\wp(x+iy)$ is real are those where $x=k\omega_1$ or $iy =k\omega_2$, for $k\in \Z$. This means that, if $x_0\neq 0$, then $\phi_{x_0}(y)$ is not constant and has critical points exactly when $y= k\, {\rm Im}(\omega_2)$ for $k\in \Z$. We recall here that $\vart>0$ in \eqref{periflat} is given by $\omega_2=i\vart$.  So, along its fundamental period domain $[0, 2\vart]$, $\phi_{x_0}(y)$ has exactly two critical points;  one of them ($y=\vart$) corresponds to a point of minimum distance from the curve $g(x_0+iy)$ to the $x_1$-axis, while the maximum such distance is attained at $y=0$ (or equivalently, at $y= 2\vart$).

If we now take $x_0=x^*$ or $x_0=-x^*$, then $g(x_0+iy)$ parametrizes one of the boundary curves of $\parc \Omega_\tau$. Thus, item (3) holds by the above properties.

Regarding item (4), take $\tau,\tau'\in (0,1)$, and assume that $\Omega_{\tau'}=F(\Omega_{\tau})$ holds for some similarity $F$ of $\R^2$. Since both domains are symmetric bigraphs over the $x_1$-axis, and are symmetric with respect to the $x_2$-axis, we deduce that this similarity $F$ must actually be the composition of a dilation and a horizontal translation, i.e., $\Omega_{\tau'}= \landa \Omega_\tau + {\bf v}$, where $\landa>0$ and ${\bf v} =(v_1,0)\in \R^2$. Now, each of $\Omega_\tau$, $\Omega_{\tau'}$ is periodic, with a $\Z$-family of vertical symmetry lines, and $x_1=0$ is one such line for both of them. Moreover, the maximum distance of $\parc\Omega_\tau$ to the $x_1$-axis happens at $x_1=0$, and the same holds for $\parc \Omega_{\tau'}$. Thus, we can assume that ${\bf v}=(0,0)$, and so $F(x_1,x_2)=(\landa x_1,\landa x_2)$ for some $\landa>0$.

In what follows, we will label quantities with sub or superindices $\tau$ or $\tau'$ when referring to objects defined on $\Omega_\tau$ or $\Omega_{\tau'}$. Let $g_\tau:[-x_\tau^*,x_\tau^*]\times \R\flecha \overline{\Omega_\tau}$ and $g_{\tau'}:[-x_{\tau'}^*,x_{\tau'}^*]\times \R\flecha \overline{\Omega_{\tau'}}$ denote the respective developing maps of $\Omega_{\tau},\Omega_{\tau'}$. Define then 
\begin{equation}\label{changeg}
g_1(z):= \landa g_\tau(z/\landa), \hspace{0.5cm} g_2(z):= g_{\tau'}(z).
\end{equation} 
Denoting $\overline{\Omega_j}$ as the image of $g_j$, we have $\Omega_1=F(\Omega_\tau)=\Omega_{\tau'}=\Omega_2$.

By \eqref{changeg} and the general relation $|g'|^2=e^{2\omega}$, the harmonic function $\omega_1$ associated to $g_1$ is given by $$\omega_1(x,y)= \omega^\tau (x/\landa,y/\landa).$$ Then, by \eqref{oveflat} and \eqref{aplan1} we obtain $$ \alfa_1(x)=\frac{1}{\landa}\alfa^\tau(x/\landa), \hspace{0.5cm} \delta_1= \frac{\delta^\tau}{\landa^2}.$$
Regarding $\tau'$, we have $\alfa_2(x)=\alfa^{\tau'}(x)$ and $\delta_2= \delta^{\tau'}$. So, $\alfa_j(x)$ satisfies \eqref{aplan1} with respect to $\delta_j$. We note for later use that $\alfa_1(0)=\alfa_2(0)=0$, while $\alfa_1'(0)=1/\landa^2$ and $\alfa_2'(0)=1$.


Let us introduce some further notations. We label $s_1:=-x_{\tau}^*/\landa$ and $s_2:=-x_{\tau'}^*$, as well as $\beta_j(x):=-\alfa_j(x)$. With all this notations, we can mimic the proof of item (2) of Theorem \ref{th:main}, see Section \ref{sec:mainth}. Specifically, we obtain that equations \eqref{czetas} and \eqref{czetas3} hold in our context, for $\mathfrak{q}=-1/2$. As in Section \ref{sec:mainth} we have two cases.

If $\alfa_1(s_1)=\alfa_2(s_2)$, then by the argument after \eqref{czetas3}, the polynomials $Q_1$ and $Q_2$ agree. This implies that $\delta_1=\delta_2$ and that $\alfa_1(x)$ and $\alfa_2(x)$ have the same initial values at $x=s_1$ and $x=s_2$, respectively. Thus, by \eqref{aplan1}, we have $\alfa_1(x)=\alfa_2(x+s_2-s_1)$. Since $x=0$ is the only zero of $\alfa_j(x)$, we deduce that $\alfa_1(x)=\alfa_2(x)$, for every $x$. Since $\alfa_1'(0)=1/\landa^2$ and $\alfa_2'(0)=1$, we obtain $\landa=1$. Therefore $\delta_1=\delta_2$, what implies $\tau=\tau'$.

The case $\alfa_1(s_1)\neq \alfa_2(s_2)$ can be ruled out with the same argument described around equation \eqref{argucosh} in Section \ref{sec:mainth}. This proves item (4) of Theorem \ref{th:bandas}.

In order to prove item (5) we must analyze the behavior of the domains $\Omega_\tau$ as $\tau\to 0$. In our construction, we used the normalization that the Hopf differential of the examples is always $\mathfrak{q}=-1/2$, see \eqref{normaq}, since this simplified the description considerably. But with this normalization, the horizontal distance separating two consecutive extrema of the boundary curves of $\Omega_\tau$ diverges to $\8$ as $\tau\to 0$, and the maximum height of $\parc \Omega_\tau$ with respect to the $x_1$-axis also blows up. Similar degeneration problems appear with our conformal parameters $(x,y)$.

To deal with this situation, we make a rescaling of our construction, by considering
\begin{equation}\label{gemu}
g_\mu(z):= \mu g(\mu z), \hspace{0.5cm} \mu:=\sqrt{\tau}\in (0,1).
\end{equation} 
Then, $\Omega_\tau^*:={\rm int}( g_\mu(\cU_\tau^*))$, where $\cU_\tau^* :=[-x^*/\mu,x^*/\mu]\times \R$, satisfies $\Omega_\tau^*=\mu \Omega_\tau$, i.e., the new Serrin band $\Omega_\tau^*$ is a dilation by factor $\sqrt{\tau}$ of $\Omega_\tau$. We seek to control the limit of $\Omega_\tau^*$ as $\tau\to 0$.

For this, we first prove that 
\begin{equation}\label{limuu}
\lim_{\tau\to 0} \frac{x^*}{\mu}= \frac{\pi}{2},
\end{equation} where we recall that $x^*=x^*(\tau)>0$ is the first positive zero of the function $\mathfrak{h}(x)$ in \eqref{edoache}. Denote $\mathfrak{h}_\mu(x):=\mathfrak{h}(\mu x)$. Then, 
\begin{equation}\label{edoache3}
\mathfrak{h}_\mu''(x) = \mu^2\left(\delta +2\alfa(\mu x)^2\right) \mathfrak{h}_\mu(x), \hspace{0.5cm} \mathfrak{h}_\mu(0)=2,  \hspace{0.5cm} \mathfrak{h}_\mu '(0)=0.
\end{equation}
The first positive zero of $\mathfrak{h}_\mu(x)$ is obviously $x^*/\mu$. Also, note that $\mu^2 \delta \to 1$ and $\alfa(\mu x)\to 0$ as $\tau\to 0$. Taking limits in \eqref{edoache3} we obtain $\mathfrak{h}_0''=\mathfrak{h}_0$, and hence $\mathfrak{h}_0(x)= 2\cos(x)$. This proves \eqref{limuu}.

Also observe that, by \eqref{teflat} and \eqref{gemu}, we have 
\begin{equation}\label{geflatmu}
g_\mu(x,y+2\hat{\vart}) = g_\mu(x,y)+g_\mu(2\hat{\vart}),\hspace{0.5cm} \hat\vart := \frac{\vart}{\mu}.
\end{equation}
Now, denoting $\Phi_\mu(z):= -i g_\mu'(iz)$ in \eqref{gemu}, we have $\Phi_\mu(z)= \mu^2 G(\mu z)$, where $G(z):=-i g'(iz)$. So, from \eqref{edoG},
\begin{equation}\label{edogest}
(\Phi_\mu' )^2 = -\Phi_\mu^3 + (\tau^2 +1) \Phi_\mu^2 - \tau^2 \Phi_\mu.
\end{equation}
Also, $\Phi_\mu(0)= \mu^2 G(0)= 1$. Thus, $\Phi_\mu(z)$ converges analytically as $\tau\to 0$ to the non-constant solution $\Phi_0(z)$ to $(\Phi_0')^2= -\Phi_0^2 (\Phi_0-1)$ with $\Phi_0(0)=1$, which is given by $$\Phi_0(z)=\frac{1}{\cosh^2(z/2)}.$$ Hence $$\lim_{\tau\to 0}g_\mu'(z) =i \Phi_0(-iz)= \frac{2i}{1+\cos(z)},$$ from where, using that $g_\mu(0)=0$, $$\lim_{\tau\to 0}g_\mu(z) =2 i \tan(z/2)=:g_0(z).$$

For $z=x_0+iy$ with $x_0> 0$, the curve $y\in \R\mapsto g_0(x_0+iy)$ is a circle arc of radius $2/\sin(x_0)$ centered at $(0,-2\cot(x_0))$ that goes from ${\bf p}_0:=(2,0)$ to ${\bf p}_1:=(-2,0)$ as $y$ increases from $-\8$ to $\8$. When $x_0=\pi/2$, this circle arc is centered at the origin and $g_0([-\pi/2,\pi/2]\times \R)$ is the disk $D(0;2)$ of radius $2$, punctured at ${\bf p}_0$ and ${\bf p}_1$. 

On the other hand, let $\mathcal{K}_\tau$ denote the fundamental period domain of $g_\mu(z)$, which by \eqref{geflatmu} is given by $[-x^*/\mu,x^*/\mu]\times [0,2\vart/\mu]$. As $\tau\to 0$, it follows from \eqref{teflat} and \eqref{limuu} that $$\lim_{\tau\to 0} \mathcal{K}_\tau = \left[-\frac{\pi}{2},\frac{\pi}{2}\right]\times \R.$$

Thus, the fundamental compact pieces $g_\mu(\mathcal{K}_{\tau})\subset \R^2$ of $\Omega_\tau^*$ converge as $\tau\to 0$ to the set $g_0([-\pi/2,\pi/2]\times \R)$, i.e., to the disk $D(0;2)$ of radius $2$, punctured at ${\bf p}_0$ and ${\bf p}_1$, as above. By repeated reflections along the vertical symmetry lines of $\Omega_\tau^*$ and the previous description, we deduce that $\Omega_0^*:=\lim_{\tau\to 0} \Omega_\tau^*$ is formed by $D(0;2)$ and its reflections along vertical lines of the form $x_1=2k$, with $k\in \Z$. Thus, $\Omega_0^*$ is a chain of tangent disks of radius $2$.

In other words, we just proved that the Serrin bands $\Omega_\tau^*$ satisfy item (5) of Theorem \ref{th:bandas}. By their own definition in terms of the domains $\Omega_\tau$ via \eqref{gemu}, it is obvious that the bands $\Omega_\tau^*$ also satisfy the rest of the items, because we have already proved that the Serrin bands $\Omega_\tau$ do.
Thus, by relabeling the domains $\Omega_\tau^*$ as $\Omega_\tau$ for $\tau\in (0,1)$, we conclude the proof of Theorem \ref{th:bandas}.

\begin{remark}
The flat strip domain $\Omega_1 (=\Omega_1^*)$ in Theorem \ref{th:bandas} is actually $\Omega^{\varphi_0}$ in \eqref{bigraph} with $\varphi_0\equiv x^\sharp$, where $x^\sharp>0$ is the unique positive solution to $x=\coth(x)$, see \eqref{criti}. This is exactly the bifurcation value $\landa^*$ for flat strip domains detected in \cite{FMW1} by means of a spectral analysis of the linearized problem. In our situation, the process of arriving at this bifurcation value follows the opposite path. We first construct the non-flat domains $\Omega_\tau$ with $\tau\in (0,1)$, and then we show that they converge to $\Omega^{\varphi_0}$ with $\varphi_0=x^\sharp$ as above.
\end{remark}

\section{Capillary domains and the mKdV hierarchy}\label{sec:kdv}
In this section we prove Theorem \ref{th:kdvintro}. We will actually prove a more general result that deals with \emph{capillary ring domains} (Definition \ref{def:capicu}). Recall that all Serrin ring domains are capillary. 

In Section \ref{sec1:kdv}, as a motivation, we show that certain infinitesimal deformations in conformal parameters of a capillary ring domain $\Omega$ are described by harmonic functions in $\Omega$ that satisfy an adequate Robin boundary condition. In Section \ref{sec2:kdv} we review some well-known facts about the mKdV hierarchy. In Section \ref{sec3:kdv} we use the mKdV hierarchy to create, along a capillary curve, an infinite recursive family of harmonic functions that satisfy the Robin condition described in Section \ref{sec1:kdv}. This will be used in Section \ref{sec4:kdv} to prove our main result: any capillary ring domain comes from an algebro-geometric mKdV potential of a certain order $\mathfrak{m}\in \N$. In Section \ref{sec5:kdv} we will give a characterization of the solutions to $\Delta u+2=0$ foliated by capillary curves as those coming from mKdV potentials with $\mathfrak{m}\leq 1$.

\subsection{Conformal deformation of Serrin ring domains}\label{sec1:kdv}
Let $\{\Omega^t : t\in (-\ep,\ep)\}$ denote a smooth $1$-parameter family of capillary ring domains, and let $u^t:\Omega^t\flecha \R$ denote their associated solutions to \eqref{overeq00}. We assume that all $\Omega^t$ are conformally equivalent. Thus, there exist holomorphic developing maps $g_t:\cU\flecha \overline\Omega^t$ from a fixed vertical quotient strip $\cU$ as in \eqref{quoban}, with $\overline\Omega^t=g_t(\cU)$. Let $v^t(x,y)$ be defined, as usual, by $v^t=u^t \circ g_t:\cU\flecha \R$, and let $\mathfrak{q}^t\in \R$ be the (constant) Hopf differential of $u^t$ in the complex parameters $(x,y)$; see Section \ref{seccapi} for the corresponding definitions.

Let $\parc _j\Omega^t$ denote the boundary component of $\parc \Omega^t$ corresponding to the curve $\{x=s_j\}\subset \parc \cU$, $j=1,2$. By the capillarity condition on $\parc \Omega$, there exist $a^t_j, b^t_j, c^t_j, (d_1)_j^t, (d_2)_j^t\in \R$, for $j=1,2$ and $t\in (-\ep,\ep)$, so that \eqref{ove1}, \eqref{def:ele} and \eqref{ove2} hold for $v^t$ along each $x=s_j$. We note that the capillary ring domains $\Omega^t$ are actually Serrin ring domains if and only if $c_1^t=c_2^t=0$ for every $t$.

We assume additionally:
\begin{enumerate}
\item
$\mathfrak{q}^t \equiv \mathfrak{q}\in \R-\{0\}$, independent of $t$.
\item
$b_j^t \equiv b_j\neq 0$ and $c_j^t\equiv c_j\in \R$ for $j=1,2$, independent of $t$.
\end{enumerate}
Then, denoting $\omega^t:= \log |g_t'|$, which is harmonic on $\cU$, we have from Proposition \ref{ekicapi} that there exist $\alfa_j,\beta_j\in \R$ with $\alfa_j\neq 0$ such that
\begin{equation}\label{skt}
2\omega_x^t (s_j,y)= -\alfa_j e^{-\omega^t(s_j,y)}-\beta_j e^{\omega^t(s_j,y)}, \hspace{0.5cm} \forall y \in \R.
\end{equation} 
Here, the constants $\alfa_j,\beta_j$ are defined in terms of $\mathfrak{q},b_j, c_j$ by \eqref{relcon}. Recall that $\omega^t$ determines $g_t$ up to rotations and translations in $\R^2$.

Denote now $\xi:=\left.\frac{d}{dt}\right|_{t=0} \omega^t$. Then, $\xi$ is a well-defined harmonic function on $\cU$ that, by \eqref{skt}, is a solution to the Robin-type problem
%
%
\begin{equation}\label{jacobi}\def\arraystretch{1.9}\left\{\begin{array}{lll} \xi_{xx}+\xi_{yy}=0 & \text{ in } & \cU, \\
\displaystyle \frac{\parc \xi}{\parc x} = \frac{1}{2}\left(\alfa_j e^{-\omega} + \beta_j e^{\omega}\right) \xi  & \text{ along } & \{x=s_j\}\subset \parc \cU, 
\end{array} \right.
\end{equation} 
for $j=1,2$. From there, the next definition is motivated by the notion of \emph{parametric Jacobi fields} for CMC surfaces by Pinkall and Sterling \cite{PS}.
\begin{definition}\label{conjafi}
A \emph{conformal Jacobi field} on a capillary ring domain $\Omega\subset \R^2$ with developing map $g:\cU\flecha \overline\Omega$ is a solution $\xi$ to problem \eqref{jacobi}, where $\omega:=\log |g'|$.
\end{definition}
Thus, conformal Jacobi fields describe, for conformal parameters $(x,y)$ in a fixed quotient band $\cU$, ``infinitesimal'' deformations of capillary ring domains that preserve the Hopf differential $\mathfrak{q}\in \R$ and the capillarity constants $b_j$, $c_j$ of $\parc \Omega$.

The notion of conformal Jacobi field in Definition \ref{conjafi} still makes obvious sense when $\Omega\subset\R^2$ is a periodic capillary (or Serrin) band.

\subsection{Recursive operators and higher order mKdV equations}\label{sec2:kdv}
The Korteweg-de Vries (KdV) equation is the evolution equation for $u=u(x,t)$ given by
\begin{equation}\label{kdv}
u_t+6u u_x+u_{xxx}=0.
\end{equation} 
It was introduced in the 19th century in \cite{KDV} as a simplified model for shallow water waves, i.e, waves travelling long distances without changing their shape. The KdV equation is fundamental to Mathematical Physics as one of the basic equations of integrable systems theory. This theory includes the study of many non-linear PDEs for which special solutions can be found by algebraic methods.

Rather than working with the KdV equation, it is better for us to work here with another classical non-linear equation: the (modified) mKdV equation, given for $\eta=\eta(x,t)$ by 
\begin{equation}\label{mkdv}
\eta_t-6 \eta^2 \eta_x+\eta_{xxx}=0.
\end{equation} 
The famous Miura transform $u=\eta_x-\eta^2$ sends solutions of \eqref{mkdv} to solutions of \eqref{kdv}. In the literature one finds different normalizations of both the KdV and mKdV equations, that change the coefficients in \eqref{kdv}, \eqref{mkdv}. All of them are equivalent up to a change of variable. We follow here the normalization in \cite{Jos}.

Both KdV and mKdV equations are often studied as the first elements of a sequence of nonlinear higher order evolution equations, called the KdV and mKdV hierarchies, respectively. All these higher order equations share KdV's integrable nature, are defined in a recursive way, and induce pairwise commuting flows. The theory regarding these objects is extensive and sophisticated, so we will restrict here to a brief, self-contained presentation strictly directed to our main objective.

We start by defining the mKdV hierarchy. We consider the sequence of polynomial operators $Q_n[\eta]$ acting on holomorphic functions $\eta=\eta(z)$ and their derivatives, and determined recursively by $Q_{-1}[\eta]=0$, $Q_0[\eta]=\eta$ and
\begin{equation}\label{mkdv3}
Q_{n+1}' -\frac{\eta'}{\eta}Q_{n+1} = Q_n''' -\frac{\eta'}{\eta} Q_n'' -4\eta^2 Q_n'.
\end{equation}
Here we are denoting $$Q_j' = Q_j'[\eta] = \frac{d}{dz}(Q_j[\eta]).$$ The solution to the linear differential equation \eqref{mkdv3} for $Q_{n+1}$ contains an additive term $k_n \eta(z)$, with $k_n$ constant. We will work with the \emph{homogeneous} mKdV hierarchy, i.e, $k_n=0$ for all $n\geq 1$.

The operators $Q_n[\eta]$ have a polynomial nature. From \eqref{mkdv3} we can easily compute the first few operators $Q_n$ as
$$Q_0 [\eta]=\eta, \hspace{0.5cm} Q_1[\eta]= \eta''-2\eta^3,\hspace{0.5cm} Q_2[\eta] = \eta^{(4)}-10 \eta'' \eta^2 -10 \eta (\eta')^2 +6 \eta^5.$$ Each $Q_n[\eta]$ is a polynomial in $(\eta,\dots, \eta^{(2n)})$ that is ``homogeneous'' of order $2n+1$ when we count the order of $\eta^{(k)}$ as $k+1$, for each $k\in \{0,\dots,n\}$.

Denoting $Q_n=Q_n[\eta]$, by our choice $k_n=0$ and \eqref{mkdv3} we have the following properties:
\begin{enumerate}
\item[(1)] If $\eta(x)\in i\R$ for every $x\in \R$, then $Q_n(x)\in i\R$ for every $x\in \R$ and every $n\in \N$.
\item[(2)] If $\eta(iy)\in \R$ for every $y\in \R$, then $Q_n(iy)\in \R$ for every $y\in \R$ and every $n\in \N$.
\end{enumerate}
For any $n\in \N$ we can define a higher-order mKdV equation as the evolution equation for $\eta=\eta(z,t)$ given by 
\begin{equation}\label{evom}
\frac{\parc \eta}{\parc t} = -Q_n'[\eta].
\end{equation} 
Note that \eqref{evom} for $n=1$ is the mKdV equation \eqref{mkdv}, and that \eqref{evom} has order $2n+1$ with respect to $z$-derivatives.

We call \emph{mKdV hierarchy} to the the sequence of these higher-order mKdV equations.

The \emph{stationary} solutions to the $n$-th mKdV equation \eqref{evom} are given by $Q_n[\eta]={\rm const}$. One obtains from \eqref{mkdv3} and $k_n=0$ that if $\eta=\eta(z)$ satisfies $Q_{n}[\eta]={\rm const}$, then $Q_{m}[\eta]=0$ for every $m>n$.

\begin{definition}\label{defi:ag}
An \emph{algebro-geometric potential} of order $\mathfrak{m}\in \N$ of the mKdV hierarchy is a function $\eta(z)$ solving 
\begin{equation}\label{agdefi}
Q_{\mathfrak{m}}[\eta] =a+\sum_{j=0}^{\mathfrak{m}-1} c_j Q_j[\eta], \hspace{0.5cm} \text{for some } a,c_0,\dots, c_{\mathfrak{m-1}} \in \C.
\end{equation}
We remark that \eqref{agdefi} for $\mathfrak{m}=0$ means $\eta(z)=a$, since $Q_{-1}[\eta]=0$ and $Q_0[\eta]=\eta$.
\end{definition}

Algebro-geometric mKdV potentials correspond to higher order stationary solutions of the (non-homogeneous) mKdV hierarchy, i.e., the hierarchy given by \eqref{evom}, \eqref{mkdv3}, this time with non-zero integration constants $k_n$. They can be expressed in terms of Riemann's theta-function on a (maybe singular) compact Riemann surface of genus $\mathfrak{m}$; see e.g. \cite{GW,IM}.

It will be useful for us to consider next the formal expression
\begin{equation}\label{kpelan}
\mathcal{Q}_{\landa} := \sum_{j=0}^{\8} Q_j[\eta] (-4\landa)^{-j-1} = \frac{Q_0[\eta]}{(-4\landa)}  +\frac{Q_1[\eta]}{(-4\landa)^2} +\frac{Q_2[\eta]}{(-4\landa)^3} + \cdots.
\end{equation}
We have then the following differential equation for $\cQ_\landa$, that follows after an easy computation from \eqref{mkdv3} and \eqref{kpelan}, using also that $Q_0[\eta]=\eta$:
\begin{equation}\label{kedopelan}
\mathcal{Q}_\landa'''-\frac{\eta'}{\eta} \cQ_\landa'' + 4(\landa-\eta^2) \mathcal{Q}_\landa ' -4\landa \frac{\eta'}{\eta} \cQ_\landa =0.
\end{equation}

\subsection{The capillary condition and the mKdV hierarchy}\label{sec3:kdv} In this section we prove that if a harmonic function $\omega(x,y)$ satisfies 
$$2\omega_x (0,y)= -\alfa e^{-\omega(0,y)}-\beta e^{\omega(0,y)}, \hspace{0.5cm} \alfa,\beta\in \R,$$
then the mKdV hierarchy generates a sequence $\{h_n\}_{n\in \N}$ of harmonic functions satisfying $$2\frac{\parc h_n}{\parc x}(0,y)=\left(\alfa e^{-\omega(0,y)} -\beta e^{\omega(0,y)}\right) h_n(0,y);$$ 
compare with \eqref{skt}. For simplicity of the proof, and only in this section, in Theorem \ref{th:finite} below we will exchange the roles of $x$ and $y$ and work along the $x$-axis, instead of working along $(0,y)$. In the next section we will go back to our usual context.
\begin{theorem}\label{th:finite}
Let $\omega(x,y)$ be a harmonic function on a domain $\Omega\subset \R^2$ such that $\Omega\cap \{y=0\}$ is a real interval $I$. Assume that 
\begin{equation}\label{bcy}
2\omega_y(x,0)= -\alfa e^{-\omega(x,0)} - \beta e^{\omega(x,0)} \hspace{0.5cm} \text{for all $x\in I$,}
\end{equation}
for some constants $\alfa,\beta\in \R$. For every $n\geq 0$, consider the harmonic function $h_n:= {\rm Re}(Q_n[\omega_z])$ on $\Omega$, where $\{Q_n\}_{n\in\N}$ are the recursive mKdV operators \eqref{mkdv3}. Then it holds 
\begin{equation}\label{bocon}
2\frac{\parc h_n}{\parc y}(x,0)= \left(\alfa e^{-\omega(x,0)} - \beta e^{\omega(x,0)}\right)\,h_n(x,0) \hspace{0.5cm} \text{for all $x\in I$.} 
\end{equation}
\end{theorem}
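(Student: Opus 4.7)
The plan is to argue by induction on $n$ after recasting everything in holomorphic terms. Set $\eta:=\omega_z$; since $\omega$ is harmonic, $\eta$ is holomorphic on $\Omega$, and so $\Phi_n:=Q_n[\eta]$ is holomorphic with $h_n=\Re(\Phi_n)$. Holomorphicity gives $\parc h_n/\parc y=-\Im(\Phi_n')$, where $'$ denotes $d/dz$. Writing $A:=\alpha e^{-\omega}+\beta e^{\omega}$ and $B:=\alpha e^{-\omega}-\beta e^{\omega}$ as real-valued functions of $\omega(x,0)$ along $I$, one has $A_\omega=-B$, $B_\omega=-A$, and the compatibility identity $A^2-B^2=4\alpha\beta$. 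The hypothesis \eqref{bcy} becomes $\Im(\eta)|_{y=0}=A/4$, while the target \eqref{bocon} is the vanishing on $I$ of the functional
\[
\ell[\Phi]\;:=\;-2\Im(\Phi')-B\,\Re(\Phi)
\]
applied to $\Phi=\Phi_n$. The problem thus reduces to a purely holomorphic statement: $\ell[Q_n[\eta]]|_I=0$ for every $n\geq 0$, given only $\Im(\eta)|_I=A/4$.

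The base case $n=0$ is immediate: differentiating $\Im(\eta)|_I=A/4$ in $x$ (which equals $d/dz$ along $y=0$ by holomorphicity) gives $\Im(\eta')|_I=A'/4=-B\omega_x/4=-(B/2)\Re(\eta)|_I$, i.e.\ $\ell[\Phi_0]=0$.

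For the inductive step, assuming $\ell[\Phi_m]|_I=0$ for $m\leq n$, the plan is to multiply the recursion \eqref{mkdv3} by $\eta$ to obtain
\[
\eta\,\Phi_{n+1}'-\eta'\,\Phi_{n+1}\;=\;\eta\,\Phi_n'''-\eta'\,\Phi_n''-4\eta^3\,\Phi_n',
\]
then extract $\ell[\Phi_{n+1}]$ along $I$. The key computational device is the differentiation lemma
\[
\ell[\Phi']\;=\;\frac{d}{dx}\ell[\Phi]-A\,\omega_x\,\Re(\Phi),
\]
obtained by direct differentiation using $B'=-A\omega_x$, together with its iterates that express $\ell[\Phi'']$ and $\ell[\Phi''']$ in terms of $\ell[\Phi]$, its derivatives, and the $\Re(\Phi^{(k)})$'s. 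Under the inductive hypothesis $\ell[\Phi_n]|_I=0$, all $\ell[\Phi_n^{(k)}]$-pieces collapse to explicit multiples of $\Re(\Phi_n^{(j)})$, and I expect the surviving contributions of $-4\eta^3\Phi_n'$, $\eta\Phi_n'''$, and $\eta'\Phi_n''$ to cancel identically (using $A^2-B^2=4\alpha\beta$ and $\Re(\eta)=\omega_x/2$), leaving an equation of the form $(\text{nonzero factor})\cdot\ell[\Phi_{n+1}]=0$ on $I$. At points where $\Re(\eta)=\omega_x/2$ happens to vanish, ODE uniqueness closes the argument by real-analyticity.

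The main obstacle is the algebraic bookkeeping in the inductive step: substituting $\eta|_I=\omega_x/2+iA/4$ into $-4\eta^3\Phi_n'$ produces a long sum of cubic products in $\omega_x$, $A$, $B$ and derivatives of $\Phi_n$, which must match precisely the contributions of $\eta\Phi_n'''-\eta'\Phi_n''$. If the direct induction proves unwieldy, a cleaner alternative is to package everything into the formal generating series $\mathcal{Q}_\landa$ of \eqref{kpelan}, which satisfies the third-order ODE \eqref{kedopelan}: verifying the single formal identity $\ell[\mathcal{Q}_\landa]|_I=0$ via \eqref{kedopelan}, the boundary identity $\Im(\eta)|_I=A/4$, and $A^2-B^2=4\alpha\beta$ would then yield \eqref{bocon} for every $n\geq 0$ simultaneously by extracting coefficients of $(-4\landa)^{-n-1}$.
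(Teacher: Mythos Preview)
Your setup and base case are correct and match the paper's framework (your $A,B$ are the paper's $2t,2s$, and your $\ell[\Phi]=0$ is the paper's $\psi_n'+sh_n=0$). However, the inductive step contains a genuine gap: the expectation that the recursion collapses to ``(nonzero factor)$\cdot\ell[\Phi_{n+1}]=0$'' is false. What actually emerges from $\eta\Phi_{n+1}'-\eta'\Phi_{n+1}=\eta\Phi_n'''-\eta'\Phi_n''-4\eta^3\Phi_n'$ under the inductive hypothesis is not an algebraic identity but a first-order linear ODE for $J_{n+1}:=\ell[\Phi_{n+1}]$ along $I$, of the form $A_1 J_{n+1}'+A_0 J_{n+1}=0$ with $A_0=-A_1'$. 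This integrates to $J_{n+1}=c\cdot A_0$ for some constant $c$, and nothing in your scheme determines $c$. (Along any specific boundary $I$ one has $A^2-B^2=4\alpha\beta=\text{const}$, so the kernel of the ODE is genuinely one-dimensional.)

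The paper closes this gap by a structural device you do not have: it lifts the entire computation to a polynomial ring $\Lambda_n$ in formal variables $(s,t,x_0,x_1,\dots)$ with a derivation $\cD$ mimicking $d/dx$, so that $Q_n$, $J_{n+1}$, $A_0$ become honest multivariable polynomials independent of any particular $\omega$. In this ring $\cD(J_{n+1}/A_0)=0$ forces $J_{n+1}=f(s^2-t^2)\,A_0$ for some function $f$, and then a \emph{KdV-homogeneity} argument (each $Q_n$ is homogeneous of weight $2n{+}1$ when $s,t$ have weight $1$ and $x_j$ has weight $j{+}1$) pins down $f(x)=c_nx^n$. Finally, a monomial-counting argument (tracking which monomials in $s$ can appear in $\psi_{n+1}$) shows $c_n=0$. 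Your alternative via the generating series $\cQ_\lambda$ and \eqref{kedopelan} is indeed the route the paper takes to reach the ODE for $\cJ_\lambda$, but it still lands on the same first-order ODE for $J_{n+1}$ and requires the homogeneity/monomial argument to finish; verifying $\ell[\cQ_\lambda]|_I=0$ ``via \eqref{kedopelan}'' alone is not enough.
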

\begin{proof}
We introduce the notation $\eta:=\omega_z$ and $Q_n[\eta]:=h_n +i \psi_n$. Both $\eta,Q_n[\eta]$ are holomorphic on $\Omega$. Note that if $\eta(x)\equiv 0$ along $I$, then $\omega(x,y)$ is constant and the theorem trivially holds. So, we assume without loss of generality (by analyticity) that $\eta(x)\neq 0$ in $I$. Define along $I:=\Omega\cap \{y=0\}$ the real functions
\begin{equation}\label{def:fi}
s(x):=\frac{1}{2}\left(\alfa e^{-\omega(x,0)} - \beta e^{\omega(x,0)}\right), \hspace{0.5cm} t(x):=\frac{1}{2}\left(\alfa e^{-\omega(x,0)} + \beta e^{\omega(x,0)}\right).
\end{equation}
Note that 
\begin{equation}\label{derst}
s'(x)=-\omega_x (x,0)t(x), \hspace{0.5cm} t'(x)=-\omega_x(x,0) s(x).
\end{equation}
Applying Cauchy-Riemann to $Q_n[\eta]$, the relation \eqref{bocon} we seek is rewritten using \eqref{def:fi} as
\begin{equation}\label{bocon2}
\psi_n'(x)+s(x) h_n(x)=0, \hspace{0.5cm} \text{for every $n\geq 0$}.
\end{equation}
In order to prove \eqref{bocon2}, it is convenient to follow a more algebraic approach. 

Let $\Lambda_n$ denote the space of complex-valued real analytic functions $\phi:\R^{n+3}\flecha \C$, acting on ${\bf x}:=(s,t,x_0,\dots, x_n)$. We start by defining a derivation operator  $\cD:\Lambda_n\flecha \Lambda_{n+1}$ as follows: for any $\phi\in \Lambda_n$, we define $\cD\phi\in \Lambda_{n+1}$ by
\begin{equation}\label{dero}
\cD \phi({\bf x},x_{n+1}):=-x_0 \left(t \frac{\parc \phi}{\parc s}({\bf x}) +s \frac{\parc \phi}{\parc t}({\bf x})\right)+\sum_{j=0}^n x_{j+1} \frac{\parc \phi}{\parc x_j} ({\bf x}).
\end{equation}
The motivation for the definition of this operator $\cD$ is as follows: given any $\phi\in \Lambda_n$, we have in our conditions directly from \eqref{dero} and \eqref{derst} that, for any $x\in I$,
\begin{equation}\label{reldero}
\frac{d}{dx} \left(\phi\left(s(x),t(x),\omega_x(x,0),\dots, \omega_x^{(n)}(x,0)\right)\right) = \cD \phi\left(s(x),t(x),\omega_x(x,0),\dots, \omega_x^{(n)}(x,0)\right).
\end{equation}
Here, $\omega_x^{(n)}(x,0)$ denotes the $n$-th derivative of $\omega_x(x,0)$. Let us explain how this operator $\cD$ applies to our purposes. 

First, observe that by \eqref{bcy}, \eqref{def:fi} we have
\begin{equation}\label{defv}
\eta(x,0):=\omega_z(x,0)=\frac{1}{2}\left(\omega_x(x,0)+i \,t(x)\right).
\end{equation} 
Also, recall that $Q_n\equiv Q_n[\eta]$ is a polynomial in the variables $(\eta,\dots, \eta^{(2n)})$. Using then \eqref{defv} and \eqref{derst} it follows that $Q_n(x,0)$ can be written as a multivariable polynomial $Q_n\in \Lambda_{2n}$ in the variables $(s,t,x_0,\dots, x_{2n})$, where we denote
\begin{equation}\label{varpoli}
s=s(x), \hspace{0.5cm} t=t(x), \hspace{0.5cm} x_j=\omega_x^{(j)}(x,0) \hspace{0.2cm} \text{ for any $j\in \N$}.
\end{equation}
The $k$-th derivative $Q_n^{(k)}$ of the holomorphic function $Q_n[\eta]$ also satisfies that $Q_n^{(k)}(x,0)$ can be written as a polynomial $Q_n^{(k)}\in \Lambda_{2n+k}$ in the variables \eqref{varpoli}. Under this identification we obtain from \eqref{reldero} that $Q_n'(x,0)\equiv \cD Q_n$, and in the same way $Q_n''(x,0)\equiv \cD(\cD Q_n)$, etc.

For instance, when viewed as polynomials, we have from \eqref{derst} that 
\begin{equation}\label{derst2}
s'=\cD s = -x_0t, \hspace{0.5cm} t' =\cD t= -x_0 s, \hspace{0.5cm} s''=\cD(\cD s)= -x_1 t +x_0^2 s, \hspace{0.5cm} \text{etc.}
\end{equation}
So, $s',t'\in \Lambda_0$. Also, $s''\in \Lambda_1$, etc. Similarly, from \eqref{defv},
\begin{equation}\label{defv2}
\eta= \frac{1}{2}\left(x_0+i t\right)\in \Lambda_0, \hspace{0.5cm} \eta'=\cD \eta= \frac{1}{2}\left(x_1-i  x_0 s\right)\in \Lambda_1.
\end{equation}

\noindent \underline{{\bf Convention}:} For the rest of the proof of Theorem \ref{th:finite}, we will consider all quantities such as $s(x)$, $t(x)$, $Q_n(x,0)$, $\cQ_\landa(x)$, etc. as well as their $k$-th derivatives in $x$, as real analytic functions depending on the variables $(s,t,x_j)$ as in \eqref{varpoli}. 
Using \eqref{reldero}, in this convention we will identify $\frac{d}{dx} \phi(x,0)\equiv \cD\phi$ for any such function $\phi(x,0)$. Thus, we will use $\phi'$ to mean $\cD\phi$, and $\phi''$ to mean $\cD(\cD\phi)$, etc. For instance, we will write (see \eqref{kpelan}) $$\cQ_\landa' =\cD (\cQ_\landa)=\sum_{j=0}^\8 \cD Q_j (-4\landa)^{-j-1},$$ and the differential equation \eqref{kedopelan} holds formally (i.e., term by term) with this convention.


We can now continue with the proof of Theorem \ref{th:finite}. Recall that our objective is to show \eqref{bocon2}. In our multivariable notation, $\psi_n'+s \,h_n$ is an element of $\Lambda_{2n+1}$, and \eqref{bocon2} is written as 
\begin{equation}\label{bocon20}
\psi_n'+s \,h_n=0, \hspace{0.5cm} \text{for every $n\geq 0$}.
\end{equation}
Let us express this condition as a formal series. For that, we define 
\begin{equation}\label{def:fila}
\Phi_\landa:= \left(\cQ_{\landa}'+ i s\cQ_{\landa}\right).
\end{equation} 
Here, $\cQ_{\landa}$ is the formal series in \eqref{kpelan}, where with our convention $Q_n\in \Lambda_{2n}$. Denoting 
\begin{equation}\label{defjota}
\cJ_{\landa}:={\rm Im}(\Phi_{\landa}),
\end{equation} 
we have then $$\cJ_{\landa}=\sum_{j=0}^{\8} \frac{\psi_j'+s\, h_j}{(-4\landa)^{j+1}} \hspace{0.5cm}\left(:=\sum_{j=0}^\8 \frac{J_j}{(-4\landa)^{j+1}}\right).$$ So, in particular, \eqref{bocon20} is equivalent to 
\begin{equation}\label{bocon3}
\cJ_{\landa} \equiv 0.
\end{equation}

To prove \eqref{bocon3} we will show that $\cJ_\landa$ satisfies a (formal) linear homogeneous differential equation; see \eqref{odeseries2} below. Our next objective is to arrive at \eqref{odeseries2}.


We first observe that, by \eqref{kedopelan}, we can express $\cQ_{\landa}'''$ as a linear combination of $\cQ_\landa,\cQ_\landa',\cQ_\landa''$, with coefficients depending only on $\eta,\eta'$ as in \eqref{defv2}, and $\landa$. 
Therefore, if we consider \eqref{def:fila} together with its first and second derivatives, we obtain 
\begin{equation}\label{sistemq}
\left(\def\arraystretch{1.3}\begin{array}{c} \Phi_\landa \\ \Phi_\landa' \\ \Phi_\landa'' \end{array}\right) = \left(\def\arraystretch{1.3}
\begin{array}{ccc} 
i s & 1 & 0 \\ i s' & i s & 1 \\ 4 \landa \frac{\eta'}{\eta}+is'' & -4\landa + 4\eta^2+2is' & \frac{\eta'}{\eta} +is.
\end{array}\right)\left(\def\arraystretch{1.3}\begin{array}{c} \cQ_\landa \\ \cQ_\landa' \\ \cQ_\landa''\end{array}\right).\end{equation}
Let $L$ be the coefficient matrix in \eqref{sistemq}. From \eqref{derst2} and \eqref{defv2} we can compute directly 
\begin{equation}\label{detel}
{\rm det} (L) = \frac{(4\lambda-s^2+t^2) \left(x_1 -s t\right)}{2 \eta}.
\end{equation} 
We will always assume without loss of generality that $\eta\in \Lambda_0$ has no zeros; see our comment at the beginning of the proof of Theorem \ref{th:finite}.

We consider first the special case ${\rm det}(L)\equiv 0$ for all $\landa$. This condition is equivalent to 
\begin{equation}\label{odete}
x_1=st,
\end{equation} 
which is itself equivalent by \eqref{defv2} to $\eta'+i s \eta=0$. Since $Q_0=\eta$, we have $Q_0'+ i s Q_0= 0$. We next prove by induction that in this case $\Phi_{\landa}\equiv 0$, which obviously implies \eqref{bocon3}. Assume that 
\begin{equation}\label{induq}
Q_n'+i s Q_n = 0
\end{equation}
for some $n\in \N$. Using that $\eta'+i s \eta=0$ as explained above, we obtain from \eqref{mkdv3} that $$Q_{n+1}'+ i s Q_{n+1} = Q_n''' + i s Q_n'' -4 \eta^2 Q_n'.$$ Using then the induction hypothesis \eqref{induq} and its derivatives together with \eqref{odete}, 
\eqref{derst2} and \eqref{defv2}, we deduce that the right hand-side of this equation is equal to zero. Consequently, if ${\rm det}(L)\equiv 0$, then $\Phi_{\landa}\equiv 0$, and in particular \eqref{bocon3} holds.

So, we assume in what follows that ${\rm det}(L)\not\equiv 0$. By analyticity, without loss of generality, we assume then that ${\rm det}(L)\neq 0$ at every point. 
Thus, we can invert \eqref{sistemq} to obtain 
\begin{equation}\label{invele}
(\cQ_\landa,\cQ_\landa',\cQ_\landa'')^t=L^{-1}\cdot (\Phi_\landa,\Phi_\landa',\Phi_\landa'')^t.
\end{equation} 
Differentiating the last row of \eqref{sistemq} and using \eqref{kedopelan}, one can express again $\Phi_\landa'''$ as a linear combination of $\cQ_\landa,\cQ_\landa',\cQ_\landa''$, namely,
\begin{equation}\label{fitercera}
\Phi_\landa'''= \cB_0 \cQ_\landa+ \cB_1 \cQ_\landa' + \cB_2 \cQ_\landa'',
\end{equation} 
where 
$$\def\arraystretch{1.7}\begin{array}{lcl}
\cB_0 & =& is'''+\frac{4\landa(\eta''+is \eta')}{\eta},\\\cB_1 \ & = & 3is''+12 \eta \eta'-4i(\landa-\eta^2)s,\\
\cB_2 & = & 3is' +\frac{\eta'+is \eta'}{\eta}+4\eta^2 -4\landa.
\end{array}$$
Thus, by \eqref{invele},
\begin{equation}\label{fitercera2}
\Phi_\landa'''= (\cB_0,\cB_1,\cB_2)\cdot L^{-1} \cdot (\Phi_\landa,\Phi_\landa',\Phi_\landa'')^t.
\end{equation}
Denote $$\cA_3:= \frac{8\eta \,{\rm det}(L)}{4\landa + t^2-s^2}= 4(x_1- s\,t)\in \Lambda_1,$$ see \eqref{detel}. Then, \eqref{fitercera2} yields 
a third order ODE for the formal series $\Phi_\landa$, of the form 
\begin{equation}\label{odeseries}
\cA_3 \Phi_\landa ''' + \cA_2 \Phi_\landa''+ \cA_1 \Phi_\landa' + \cA_0 \Phi_\landa =0.
\end{equation}
The coefficients $\cA_k$ can be computed from \eqref{derst2}, \eqref{defv2}; we obtain
$$\def\arraystretch{1.5}\begin{array}{lcl}
\cA_2 & =& x_2+4x_0\left(s^2+t^2\right),\\\cA_1\ & = & -4(x_1-s\,t)\left(4\landa+t^2-x_0^2\right),\\
\cA_0 & = & -\cA_1' + 4x_0 (x_1-s \,t)(5x_1+ 3 s\, t),
\end{array}$$
where to compute $\cA_1'=\cD \cA_1$ we use \eqref{derst2}. Of these expressions, we will only need that each $\cA_k$ can be written as 
\begin{equation}\label{decoma}
\cA_k=\hat{A}_k + \landa A_k,
\end{equation} 
$k=0,\dots, 3$, where $A_k, \hat{A}_k$ do not depend on $\landa$, and:
\begin{enumerate}
\item[(P1)]
All $\cA_k$ are real. 
\item[(P2)]
$A_2=A_3=0$.
\item[(P3)]
$A_1=-16\left(x_1-s\,t\right)\in \Lambda_1$.
\item[(P4)]
$A_0=-A_1'=16\left(x_2+x_0(s^2+t^2)\right)\in \Lambda_2.$
\end{enumerate}
Now, we recall that $\cJ_{\landa}={\rm Im}(\Phi_\landa)$, see \eqref{defjota}. Since all coefficients $\cA_k$ are real by (P1), we obtain from \eqref{odeseries} and \eqref{decoma} that 
\begin{equation}\label{odeseries2}
\cA_3 \cJ_{\landa}''' + \cA_2 \cJ_{\landa}''+\cA_1 \cJ_{\landa}'+\cA_0 \cJ_{\landa}=0.
\end{equation}

Once we have arrived to the linear differential equation \eqref{odeseries2}, we will see how to obtain \eqref{bocon2} from it. We will proceed by induction.

That \eqref{bocon2} holds for $n=0$ is easy. First, we recall that $\eta=Q_0=h_0+i\psi_0$. Then, we use \eqref{derst2}, \eqref{defv2} to obtain $\psi_0'+s\, h_0=0$, as desired. So, the first term in $\cJ_\landa$ is zero.

Arguing by induction, assume that the first $n$ terms of $\cJ_{\landa}$ vanish, that is, $$\cJ_{\landa}=\sum_{k=n+1}^{\8} \frac{J_k}{(-4\landa)^{k+1}}.$$ Then, we can use \eqref{decoma} to decompose \eqref{odeseries2} in terms of $\landa$. Using then the relations $A_2=A_3=0$ in (P2), we obtain from \eqref{odeseries2} that 
\begin{equation}\label{edoj}
A_0 J_{n+1} + A_1 J'_{n+1}=0.
\end{equation} 
Using now that $A_0=-A_1'$ by (P4) above, \eqref{edoj} yields 
\begin{equation}\label{edoj3}
\left(\frac{J_{n+1}}{A_0}\right)'=0, \hspace{0.5cm} \text{i.e.,} \hspace{0.5cm} \cD\left(\frac{J_{n+1}}{A_0}\right)=0.
\end{equation}
To solve \eqref{edoj3} we use the following elementary lemma for the derivation operator $\cD$.
\begin{lemma}
If $\cD\phi=0$  for some $\phi\in \Lambda_{n}$, then $\phi=f(s^2-t^2)$ for some function $f(x)$.
\end{lemma}
\begin{proof}
Let $j\in \N$ be the largest integer so that $\frac{\parc\phi}{\parc x_j}\neq 0$. Then, from \eqref{dero}, $\cD\phi$ depends linearly on $x_{j+1}$ in a non-trivial way. Thus, since $\cD\phi=0$, we deduce that $\phi$ cannot depend on any $x_j$, i.e. $\phi=\phi(s,t)$. Using again \eqref{dero}, we obtain from $\cD\phi=0$ that $t \phi_s+s\phi_t=0$, i.e. $\phi(s,t)=f(s^2-t^2)$ for some function $f(x)$.
\end{proof}

From this lemma and \eqref{edoj3}, we deduce that 
\begin{equation}\label{relconf}
J_{n+1}=f(s^2-t^2)A_0,
\end{equation} for some real function $f(x)$. So, we need to prove that $f(x)= 0$. For that purpose, we will first compare the ``homogeneity'' orders  of $J_{n+1}$ and $A_0$. Let us explain this concept.

We will say that a polynomial $\phi\in \Lambda_n$ in the $(s,t,x_0,\dots, x_n)$-variables is \emph{KdV-homogeneous} if each of its monomials is ``homogeneous'' of the same order, when we count orders so that $s,t$ have order one and $x_j$ has order $j+1$. This notion is consistent with the homogeneity of the mKdV operators $Q_n[\eta]$ explained in Section \ref{sec2:kdv}., and the relation \eqref{reldero}. Then we have
\begin{lemma}
If $\phi\in \Lambda_n$ is a KdV-homogeneous polynomial of order $k$, then $\cD\phi\in \Lambda_{n+1}$ is a KdV-homogeneous polynomial of order $k+1$.
\end{lemma}
\begin{proof}
Direct from \eqref{dero}.
\end{proof}
Note that $Q_0=\eta$ is KdV-homogeneous of order $1$, by \eqref{defv2}. By the homogeneity properties of the mKdV operators $Q_n[\eta]$, we deduce:
\begin{corollary}
Each polynomial $Q_n\in \Lambda_{2n}$ is KdV-homogeneous of order $2n+1$.
\end{corollary}
From the above corollary, $J_{n+1}:=\psi_{n+1}'+s \, h_{n+1}$ is KdV-homogeneous of order $2n+3$, and $A_0$ has order $3$, see (P4). So, by \eqref{relconf}, we see that $f(x)$ must be a \emph{standard} real homogeneous function of order $n$, i.e. $f(x)=c_n x^n$ for some $c_n\in \R$. Therefore,  \eqref{relconf} and (P4) give
\begin{equation}\label{relconf2}
\psi_{n+1}'+s \, h_{n+1}=16 c_n \,(s^2-t^2)^n\,\left(x_2+x_0(s^2+t^2)\right),
\end{equation}
for some $c_n\in \R$. If $c_n\neq 0$, the right-hand side of \eqref{relconf2} has a non-zero monomial term proportional to $x_0\, t^{2n+2}$. Let us check that this type of monomial cannot happen in the left-hand side of \eqref{relconf2}. This is obvious for $s\, h_{n+1}$, due to the $s$ factor.

So, let $\mu$ be a monomial of $\psi_{n+1}$, and assume that one of the monomials of $\mu'=\cD\mu$ is of the form $x_0\, t^{2n+2}$, multiplied by some constant. By the definition of the operator $\cD$, one easily sees that $\mu$ cannot contain any $x_j$-term, and moreover, it must be of the form 
\begin{equation}\label{estono}
\mu =a_0 \,s \,t^{2n+1},
\end{equation}
for some $a_0\neq 0$. However, $\psi_{n+1}$ does not contain monomials with this structure. To see this, we recall that $\psi_{n+1}={\rm Re}(Q_{n+1})$, and that $Q_{n+1}[\eta]$ is a polynomial depending on $\eta,\dots, \eta^{(2n+2)}$. Also, $s$ does not appear in the expression of $\eta$ in \eqref{defv2}. Hence, the way $s$ appears in $Q_{n+1}$ when viewed as polynomial in $\Lambda_{2n+2}$ is associated to $\eta^{(k)}$ for some $k>0$, after using $\cD t=-x_0 s$ some number of times; see \eqref{derst2}. This means that $s$ always appears in $Q_{n+1}$ multiplied by some of the variables $x_j$. So, the structure \eqref{estono} is impossible for any monomial coming from $\psi_{n+1}$. Therefore, the left-hand side of \eqref{relconf2} has no monomial term of the form $a_0\, x_0\, t^{2n+2}$, with $a_0\neq 0$.

This implies that $c_n=0$ in \eqref{relconf2}, that is, $J_{n+1}=0$. By induction, this shows that \eqref{bocon3} holds. Thus, we obtain \eqref{bocon2}, and so \eqref{bocon}. This completes the proof of Theorem \ref{th:finite}.
\end{proof}

%

%
%

\begin{remark}
First order conditions such as \eqref{bcy} are known in the theory of integrable systems as \emph{integrable boundary conditions}; see e.g. \cite{AGGH,Kha} and references therein. Theorem \ref{th:finite} is inspired by the finite-type property of capillary CMC annuli by Kilian-Smith \cite{KS}. Our approach here, based on recursive operators and the mKdV hierarchy, is different.
\end{remark}

\subsection{Capillary ring domains are algebro-geometric}\label{sec4:kdv}
For Theorem \ref{th:finite2} below, we recall the notion of \emph{capillary ring domain} $\Omega\subset \R^2$ (Definition \ref{def:capicu}), we let $g:\cU\flecha \overline\Omega$ denote its developing map (Definition \ref{develop}), and denote as usual $\omega:=\log |g'|$. We also denote by $b_j$ the capillarity constant (Definition \ref{capilarcons}) of the boundary curve $\parc_j \Omega$, for $j=1,2$. Finally, we recall the notion of \emph{conformal Jacobi field} for $\Omega$ in Definition \ref{conjafi}.

Since Serrin ring domains are capillary, Theorem \ref{th:finite2} below also applies to them.
\begin{theorem}\label{th:finite2}
Let $\Omega$ be a capillary ring domain with capillarity constants $b_j\neq 0$. For any $n\geq 0$ let $h_n:\cU\flecha \R$ be the harmonic function $h_n:={\rm Im} \,Q_n[\omega_z]$, where $\{Q_n\}_{n\in\N}$ are the recursive mKdV operators \eqref{mkdv3}. Then every $h_n$ is a conformal Jacobi field for $\Omega$.

As a consequence, there exist $\mathfrak{m}\in \N$ and constants $c_0,\dots, c_{\mathfrak{m}-1}\in \R$ such that 
\begin{equation}\label{finih}
h_{\mathfrak{m}}=\sum_{j=0}^\mathfrak{m-1} c_j h_j.
\end{equation}
(For $\mathfrak{m}=0$ this means $h_0=0$). Moreover, if \eqref{finih} holds for some $\mathfrak{m}$, then for any $k\geq 0$ the harmonic function $h_{\mathfrak{m}+k}$ is also a linear combination of $h_0,\dots, h_\mathfrak{m-1}$.
\end{theorem}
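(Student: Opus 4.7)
The first assertion should reduce to Theorem \ref{th:finite} applied at each boundary curve $\{x=s_j\}$ of $\cU$. Since $\Omega$ is capillary with capillarity constants $b_j \neq 0$, Proposition \ref{ekicapi} gives constants $\alfa_j, \beta_j \in \R$ with $\alfa_j \neq 0$ such that $2\omega_x(s_j, y) = -\alfa_j e^{-\omega} - \beta_j e^{\omega}$. This is precisely the hypothesis \eqref{bcy} of Theorem \ref{th:finite}, now read along a vertical boundary instead of a horizontal one; after interchanging the roles of $x$ and $y$ in its proof (which replaces real parts by imaginary parts and $\parc_y$ by $\parc_x$ in the conclusion \eqref{bocon}), we obtain for $h_n = \Im(Q_n[\omega_z])$ exactly the Robin-type boundary condition of \eqref{jacobi}. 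Hence each $h_n$ is a conformal Jacobi field.

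For the finiteness statement, I would use that the space $V$ of conformal Jacobi fields on $\cU$ is finite-dimensional. Indeed, $\cU = ([s_1, s_2]\times \R)/{\sim}$ is a compact cylinder with two boundary circles, and the Laplacian on $\cU$ with the Robin boundary condition of \eqref{jacobi} is Fredholm, so its kernel has finite dimension. (The periodic Serrin band case is identical on the analogous quotient cylinder.) Since the infinite family $\{h_n\}_{n\geq 0}$ lies in $V$, there is a smallest $\mathfrak{m} \in \N$ for which $h_\mathfrak{m}$ is an $\R$-linear combination of $h_0, \ldots, h_{\mathfrak{m}-1}$, yielding \eqref{finih}.

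The ``moreover'' claim I would prove by first lifting to the holomorphic level and then propagating through the recursion. The holomorphic $\cU$-periodic function $Q_\mathfrak{m}[\omega_z] - \sum_{j<\mathfrak{m}} c_j Q_j[\omega_z]$ has vanishing imaginary part by \eqref{finih}, hence is a real constant $a_0$; so $Q_\mathfrak{m}[\omega_z] = a_0 + \sum_{j<\mathfrak{m}} c_j Q_j[\omega_z]$. Substituting this identity into the recursion \eqref{mkdv3} and using its linearity, together with the fact that the homogeneous solutions of the first-order linear operator on the left-hand side of \eqref{mkdv3} are the scalar multiples of $\eta = Q_0$, one deduces inductively that $Q_{\mathfrak{m}+k}[\omega_z]$ lies in the $\C$-span of $\{1, Q_0, \ldots, Q_{\mathfrak{m}-1}\}$ for every $k \geq 0$. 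Taking imaginary parts then gives $h_{\mathfrak{m}+k} \in \mathrm{span}_\R(h_0, \ldots, h_{\mathfrak{m}-1})$, as claimed. The main technical hurdle is Theorem \ref{th:finite} itself, which is already established in Section \ref{sec3:kdv}; the finite-dimensionality and hierarchy-closure arguments above are otherwise routine.
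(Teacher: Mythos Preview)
Your proposal is correct and follows essentially the same approach as the paper: apply Proposition~\ref{ekicapi} at each boundary curve to obtain \eqref{capicon}, invoke Theorem~\ref{th:finite} with $x$ and $y$ interchanged to get the Robin condition \eqref{jacobi}, use finite-dimensionality of the Robin kernel on the compact annulus $\cU$ to obtain \eqref{finih}, lift to the holomorphic identity $Q_{\mathfrak m}=a_0+\sum c_jQ_j$, and then propagate through the recursion \eqref{mkdv3} using that the homogeneous solutions of its left-hand side are multiples of $\eta=Q_0$. The paper's induction step is written out more explicitly (computing the new coefficients $d_j=\hat c_{\mathfrak m-1}c_j+\hat c_{j-1}$ and integrating to $H=a_0\hat c_{\mathfrak m-1}+\delta\eta$), but the argument is the same as yours.
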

\begin{proof}
By the capillarity conditions along $\parc \Omega$ and the assumption $b_j\neq 0$, we see from Proposition \ref{ekicapi} that, along each curve $(s_j,y)$ in $\parc \cU$, there exist constants $(\alfa_j,\beta_j)$ such that \eqref{capicon} holds, $j\in \{1,2\}$. By Theorem \ref{th:finite} (interchanging the roles of $x$ and $y$ in it), every $h_n={\rm Im}\, Q_n[\omega_z]$ is a solution on the quotient band $\cU$ in \eqref{quoban} to the Robin problem
\begin{equation}\label{jacobi2}\def\arraystretch{1.9}\left\{\begin{array}{lcl} \Delta h_n =0 & \text{ in } & \cU, \\
\displaystyle \frac{\parc h_n}{\parc x} = \frac{1}{2}\left(\alfa_j e^{-\omega} + \beta_j e^{\omega}\right) h_n  & \text{ along } & \{x=s_j\}\subset \parc \cU, 
\end{array} \right.
\end{equation} 
In other words, every $h_n$ is a conformal Jacobi field for $\Omega$; see Definition \ref{conjafi}.

Now, by elliptic theory, and since the conformal annulus $\cU$ is compact, the space of solutions to \eqref{jacobi} for fixed constants $\alfa_j,\beta_j$ is finite-dimensional. Thus, from \eqref{jacobi2}, we deduce that there exists $\mathfrak{m}\geq 0$ so that \eqref{finih} holds. Denoting $Q_n:=Q_n[\omega_z]$, we obtain
\begin{equation}\label{hipin0}
Q_{\mathfrak{m}}= a_0 + \sum_{j=0}^\mathfrak{m-1} c_j Q_j.
\end{equation} 
for some $a_0\in \R$. That is, $\eta=\omega_z$ is an algebro-geometric mKdV potential (Definition \ref{defi:ag}).


Finally, arguing by induction, assume $h_{\mathfrak{m}+k}=\sum_{j=0}^{\mathfrak{m}-1} \hat{c}_j h_j$ with $\hat{c}_0,\dots, \hat{c}_{\mathfrak{m}-1} \in \R$, for some $k\in \N$. Then,
\begin{equation}\label{hipin}
Q_{\mathfrak{m}+k}=a_k+\sum_{j=0}^{\mathfrak{m}-1} \hat{c}_j Q_j,
\end{equation} 
for some $a_k\in \R$. Using \eqref{hipin0}, \eqref{hipin} and the recursive relation \eqref{mkdv3} we have
$$\def\arraystretch{2.6}\begin{array}{lll}
\displaystyle Q'_{\mathfrak{m}+k+1} -\frac{\eta'}{\eta} Q_{\mathfrak{m}+k+1} & = & \displaystyle \sum_{j=0}^{\mathfrak{m}-1} \hat{c}_j \left(Q_j''' -\frac{\eta'}{\eta} Q_j'' -4\eta^2 Q_j'\right) \\ & = & \displaystyle \sum_{j=0}^{\mathfrak{m}-1} \hat{c}_j \left(Q'_{j+1}-\frac{\eta'}{\eta} Q_{j+1}\right) \\ \displaystyle & = & \displaystyle  \sum_{j=0}^{\mathfrak{m}-1} d_j \left(Q_j'-\frac{\eta'}{\eta} Q_j \right) -a_0 \,\hat{c}_{\mathfrak{m}-1}\, \frac{\eta'}{\eta'}.
\end{array}$$ Here, $d_j:= \hat{c}_{\mathfrak{m}-1} \, c_j + \hat{c}_{j-1}\in \R$ for $j\in 0,\dots, \mathfrak{m}-1$, denoting $\hat{c}_{-1}=0$.

Thus, $H:= Q_{\mathfrak{m}+k+1} -\sum_{j=0}^{\mathfrak{m}-1} d_j Q_j$ satisfies $H'-\frac{\eta'}{\eta}H= -a_0 \,\hat{c}_{\mathfrak{m}-1} \, \frac{\eta'}{\eta},$ which integrates to $H= a_0 \,\hat{c}_{\mathfrak{m}-1} + \delta \eta$ for some $\delta \in \R$. Noting finally that $Q_0=\eta$, we obtain $$Q_{\mathfrak{m}+k+1}= a_0 \,\hat{c}_{\mathfrak{m}-1} + \sum_{j=0}^{\mathfrak{m}-1} \hat{d}_j Q_j,$$ for adequate constants $\hat d_0,\dots, \hat d_{\mathfrak{m}-1}\in \R$. Therefore, $h_{\mathfrak{m}+k+1}=\sum_{j=0}^{\mathfrak{m}-1} \hat{d}_j h_j$. This finishes the proof.
%
%
\end{proof}

Recall that $\Delta u+2=0$ has the (trivial) paraboloid solutions in $\R^2$
\begin{equation}\label{capo}
u({\bf x}):=c -\frac{1}{2}\| {\bf x}-v_0\|^2,  \hspace{0.5cm} c\in \R,\ v_0\in \R^2,
\end{equation} 
and these are characterized by having zero Hopf differential, see Remark \ref{rem:arriba}. As a consequence of Theorem \ref{th:finite2}, we obtain:



\begin{corollary}\label{cor:finite}
Let $\Omega\subset\R^2$ be a Serrin ring domain, or more generally, a capillary ring domain, whose associated solution $u$ is \emph{not} a paraboloid \eqref{capo}. Let $g:\cU\flecha \overline\Omega$ be its developing map. Then, the meromorphic function 
\begin{equation}\label{etapote}
\eta(z):=\frac{g''(z)}{2g'(z)}
\end{equation} 
is an algebro-geometric potential for the meromorphic mKdV hierarchy.
\end{corollary}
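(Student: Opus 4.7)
The plan is to derive Corollary \ref{cor:finite} from Theorem \ref{th:finite2} after an easy algebraic identification. The first step will be to recognize that the potential $\eta(z)=g''(z)/(2g'(z))$ in the statement is precisely $\omega_z$, where $\omega=\log|g'|$: since $g'$ never vanishes, on any simply connected subset of $\cU$ one may locally write $\omega=\tfrac12(\log g'+\overline{\log g'})$, and differentiation in $z$ gives
\[
\omega_z \;=\; \tfrac12\,\frac{g''(z)}{g'(z)} \;=\; \eta(z).
\]
Consequently the mKdV objects $Q_n[\eta]$ coincide with those appearing in Theorem \ref{th:finite2} and Section \ref{sec3:kdv}, and $\eta$ is holomorphic on $\cU$.

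Next I will verify the hypothesis $b_1,b_2\neq 0$ of Theorem \ref{th:finite2}. Suppose $b_j=0$ on a boundary component $\{x=s_j\}$. The intermediate identity \eqref{ecube}, derived in the proof of Proposition \ref{ekicapi} without dividing by $b$, specializes to $(c_j-1)\,e^{2\omega(s_j,y)}=2\mathfrak{q}$ for every $y$. If $c_j=1$, then $\mathfrak{q}=0$ and Remark \ref{rem:arriba} identifies $u$ as a paraboloid of the form \eqref{capo}, contradicting the hypothesis. If $c_j\neq 1$, then $y\mapsto\omega(s_j,y)$ is constant, and a short argument combining this constancy with the global capillary structure forces $\omega$ to depend only on $x$; hence $\Omega$ is a radial annulus. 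In that case Section \ref{sec:tau1} gives $g(z)=-e^{-c_1 z}$ explicitly, so $\eta=-c_1/2$ is already a real constant, i.e., an algebro-geometric mKdV potential of order $\mathfrak{m}=0$, and the conclusion holds trivially.

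Assume therefore $b_1,b_2\neq 0$. Theorem \ref{th:finite2} then provides $\mathfrak{m}\in\N$ and real numbers $c_0,\ldots,c_{\mathfrak{m}-1}$ with $h_\mathfrak{m}=\sum_{j=0}^{\mathfrak{m}-1}c_j\,h_j$ on $\cU$, where $h_n=\mathrm{Im}\,Q_n[\eta]$. The holomorphic function
\[
H(z) \;:=\; Q_\mathfrak{m}[\eta](z)\;-\;\sum_{j=0}^{\mathfrak{m}-1} c_j\,Q_j[\eta](z)
\]
then has identically zero imaginary part on the connected set $\cU$, so $H$ reduces to a real constant $a_0\in\R$. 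Rearranging yields the algebro-geometric identity $Q_\mathfrak{m}[\eta]=a_0+\sum_{j=0}^{\mathfrak{m}-1}c_j\,Q_j[\eta]$ of Definition \ref{defi:ag}, with all coefficients real, which is exactly the conclusion.

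The main obstacle I anticipate lies in the degenerate step of the second paragraph, namely the reduction to the radial case from the one-sided condition $b_j=0$ with $c_j\neq 1$. Promoting the local constancy of $\omega$ on a single boundary component to the global rotational symmetry of $\Omega$ will require combining the harmonicity of $\omega$, the capillary equation along the second boundary curve, and the periodicity imposed by the quotient band $\cU$. Everything else is a direct transcription of Theorem \ref{th:finite2} from $\omega_z$ to $\eta=g''/(2g')$.
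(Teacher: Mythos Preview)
Your main line of argument in the non-degenerate case $b_1,b_2\neq 0$ is correct and coincides with the paper's: identifying $\eta=\omega_z$, applying Theorem \ref{th:finite2} to obtain the real linear relation \eqref{finih}, and then noting that a holomorphic function with vanishing imaginary part is a real constant. That part is fine.

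The gap is in your treatment of the degenerate situation. When exactly one capillarity constant vanishes, say $b_1=0$ and $b_2\neq 0$, your claim that constancy of $y\mapsto\omega(s_1,y)$ ``forces $\omega$ to depend only on $x$'' is not justified and is in fact false in general: a harmonic function on a compact quotient band that is constant on one boundary component need not be independent of $y$ (take $\omega(x,y)=\sinh(cx)\cos(cy)$ on $[0,1]\times\R/T\Z$ for suitable $c$). The capillary condition on the second boundary does not rescue this, since it is a first-order Robin-type relation, not a Dirichlet one. So the reduction to the radial case does not go through for general capillary ring domains. (The paper does remark, after the proof, that for \emph{Serrin} ring domains one $b_j=0$ forces radiality, but this relies on the external rigidity results of Reichel and Agostiniani--Borghini--Mazzieri, not on the argument you sketch.)

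The paper bypasses this difficulty entirely. From $\omega(s_1,\cdot)$ constant one gets $\eta(s_1+iy)\in\R$, and then the symmetry property (2) of the mKdV operators $Q_n$ gives $Q_n[\eta](s_1+iy)\in\R$, hence $h_n=\mathrm{Im}\,Q_n[\eta]=0$ along $x=s_1$. Combined with the Robin condition along $x=s_2$ from Theorem \ref{th:finite}, each $h_n$ solves the mixed Dirichlet--Robin problem \eqref{jacobi4} on the compact annulus $\cU$, whose solution space is again finite-dimensional, and the algebro-geometric conclusion follows as before. When both $b_1=b_2=0$ the same idea gives $h_0=0$ on all of $\partial\cU$, whence $h_0\equiv 0$ by the maximum principle and $\eta$ is a real constant. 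You should replace your second paragraph with this argument.
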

\begin{proof}
Let $b_j$ denote the capillarity constants of $\Omega$ along the boundary curves $x=s_j$ of $\parc \cU$, for $j=1,2$. If both $b_j\neq 0$, the result follows directly from Theorem \ref{th:finite2}, by noting that $2\omega_z=g''/g'$; see equation \eqref{hipin0}.

Assume now that, say, $b_1=0$ and $b_2\neq 0$. Then, by \eqref{ecube} and $\mathfrak{q}\neq 0$, we see that $y\mapsto \omega(s_1,y)$ is constant. Denote $\eta:=\omega_z=\frac{g''}{2g'}$. Then, $\eta(s_1+iy)\in \R$ for every $y$. As a consequence, $h_n:={\rm Im} \, Q_n[\eta]$ also satisfies $h_n(s_1+iy)\in \R$, for every $y$; see property (2) of the mKdV operators $Q_n$ before \eqref{evom}. Thus, $h_n$ solves the mixed Robin-Dirichlet problem (compare with \eqref{jacobi2})
\begin{equation}\label{jacobi4}\def\arraystretch{1.9}\left\{\begin{array}{lcl} \Delta h_n =0 & \text{ in } & \cU, \\\displaystyle h_n  = 0  & \text{ along } & \{x=s_1\}\subset \parc \cU, \\
\displaystyle \frac{\parc h_n}{\parc x} = \frac{1}{2}\left(\alfa_2 e^{-\omega} + \beta_2 e^{\omega}\right) h_n  & \text{ along } & \{x=s_2\}\subset \parc \cU, 
\end{array} \right.
\end{equation} 
on the compact quotient band $\cU$, for some $\alfa_2,\beta_2\in \R$. The space of solutions to \eqref{jacobi4} is again finite dimensional by elliptic theory. Thus, both \eqref{finih} and \eqref{hipin0} hold, and $\eta$ in \eqref{etapote} is an algebro-geometric mKdV potential.

Assume finally that $b_1=b_2=0$. Then $h_0:={\rm Im}(\omega_z)$ is identically zero, since $h_0=0$ in $\parc\cU$. Thus, $\eta=Q_0[\eta]=\omega_z$ is a real constant $a_0$, and so an algebro-geometric mKdV potential, see Definition \ref{defi:ag}. This completes the proof.
\end{proof}

We remark that, in the case of Serrin ring domains, if one of the capillarity constants $b_j$ is zero, then $\Omega$ is a radial annulus. This follows from the uniqueness results of Reichel \cite{Rei}, Agostiniani, Borghini and Mazzieri \cite{ABM} and Borghini \cite{Bo}.

\begin{remark}
Theorem \ref{th:finite2} and Corollary \ref{cor:finite} also hold for periodic Serrin bands and, more generally, for periodic capillary bands, as we detail next. 

Let $\Omega\subset \R^2$ be a capillary band (Definition \ref{def:capicu}). We assume $\Omega$ is periodic, i.e., $\Omega=\Omega +{\bf v}_0$ for some ${\bf v}_0\in \R^2-\{{\bf 0}\}$, and its associated solution $u$ to $\Delta u+2=0$ in $\Omega$ is also periodic, i.e., $u({\bf x}+{\bf v}_0) = u({\bf x})$ for all ${\bf x}\in \Omega$. Let $g(z):[s_1,s_2]\times \R\flecha \overline\Omega$ be its developing map (Definition \ref{develop}), and $\omega:=\log |g'|$. We assume that $u(x_1,x_2)$ is not a (trivial) paraboloid solution \eqref{capo}, i.e., the Hopf differential $\mathfrak{q}$ is non-zero.


If the capillarity constants $b_1,b_2$ are both non-zero, then the proof of Theorem \ref{th:finite2} shows that each $h_n:=Q_n[\omega_z]$ is a conformal Jacobi field, i.e., it solves \eqref{jacobi} on $[s_1,s_2]\times \R$. Moreover, by our periodicity assumptions, $\omega(x,y)$ is $T$-periodic in $y$ for some $T>0$, and all conformal Jacobi fields $h_n$ are well defined on the compact quotient band $\cU$ in \eqref{quoban}, solving \eqref{jacobi2}. The rest of the proof of Theorem \ref{th:finite2} is identical.

The extension of Corollary \ref{cor:finite} follows the same argument using this time \eqref{jacobi4}.
\end{remark}

\subsection{Solutions foliated by capillary curves: characterization}\label{sec5:kdv}
We have seen in Section \ref{sec4:kdv} that if $\Omega\subset\R^2$ is a capillary ring domain or a periodic capillary band with non-zero Hopf differential $\mathfrak{q}$ (i.e. $u$ is not a paraboloid), then the relation \eqref{finih} holds for some $\mathfrak{m}\in \N$. 
\begin{definition}
We call \emph{spectral genus} of the capillary domain $\Omega$ to the smallest $\mathfrak{m}\in \N$ for which \eqref{finih} holds for $\Omega$.
\end{definition}
The study of such capillary domains $\Omega$ with spectral genus $\mathfrak{m}\geq 2$, is quite involved, as the solutions to \eqref{finih} are described in terms of algebraic data on a hyperelliptic curve of genus $\mathfrak{m}$; see e.g. \cite{GW,IM}. When $\mathfrak{m}\leq 1$ the situation is simpler, and we can obtain a full satisfactory characterization:

\begin{theorem}\label{th:n1}
Let $\Omega\subset \R^2$ be a capillary ring domain or a periodic capillary band with $\mathfrak{q}\neq 0$. 
Then, $\Omega$ is foliated by capillary curves (Definition \ref{def:capring}) if and only if the spectral genus $\mathfrak{m}$ of $\Omega$ is $\mathfrak{m}\leq 1$.
\end{theorem}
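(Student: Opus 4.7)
The plan is to establish both implications by expressing $Q_1[\eta]$ explicitly in terms of the foliation data $(\alpha,\beta)$, and then to invert that computation.

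For the direction $(\Rightarrow)$, suppose $\Omega$ is foliated by capillary curves, so that \eqref{omex} holds. The hypothesis $\mathfrak{q}\neq 0$ rules out $\omega_y\equiv 0$ (that case gives a radial annulus or flat strip, for which the spectral genus is $\mathfrak{m}=0$), so Proposition \ref{pro:sis} applies: the functions $(\alpha,\beta)$ solve system \eqref{system} for some $\delta\in\R$, and the auxiliary identity \eqref{roy} for $\omega_y^2$ holds. My plan is to substitute \eqref{omex}, together with its $y$-derivative $\omega_{xy}=\tfrac{1}{2}\omega_y(\alpha e^{-\omega}-\beta e^{\omega})$, into the polynomial expansions of $\eta=\omega_z$ and $\eta''=\tfrac{1}{2}(\omega_{xxx}-i\omega_{xxy})$, using \eqref{system} to eliminate $\alpha''$ and $\beta''$ whenever they appear, and \eqref{roy} to eliminate $\omega_y^2$. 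After grouping monomials in $e^{\pm\omega}$, the identity $Q_1[\eta]=\eta''-2\eta^3=-\tfrac{\delta}{2}\eta+a_0$ should emerge, with $a_0$ depending only on the first integral $\kappa_1=\alpha'\beta-\alpha\beta'$ of \eqref{khami}. The imaginary part of this identity is particularly clean: a direct calculation yields $\operatorname{Im}(Q_1[\eta])=\tfrac{\delta}{4}\omega_y$, which combined with $\operatorname{Im}(\eta)=-\tfrac{1}{2}\omega_y$ forces $c_0=-\delta/2$, proving $\mathfrak{m}\le 1$.

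For the direction $(\Leftarrow)$, the case $\mathfrak{m}=0$ is immediate: $\eta\equiv a_0\in\R$ forces $\omega_x\equiv 2a_0$ and $\omega_y\equiv 0$, giving a radial annulus (or a flat band in the periodic case), and \eqref{omex} holds trivially with $\beta\equiv 0$ and $\alpha(x):=-4a_0 e^{\omega(x)}$. The main obstacle is $\mathfrak{m}=1$, where $\eta$ satisfies $\eta''-2\eta^3=c_0\eta+a_0$ with $c_0,a_0\in\R$. Inverting the computation from $(\Rightarrow)$, I would set $\delta:=-2c_0$ and consider system \eqref{system} with this parameter, matching its first integrals $\kappa_1,\kappa_2$ against the constants of the ODE for $\eta$ (in particular the integration constant $K$ in $(\eta')^2=\eta^4+c_0\eta^2+2a_0\eta+K$). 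The initial data $(\alpha(x_0),\beta(x_0),\alpha'(x_0),\beta'(x_0))$ at a reference $x_0$ would be fixed by demanding that the error function
\[
E(x,y):=2\omega_x+\alpha(x)e^{-\omega}+\beta(x)e^{\omega}
\]
and its first few $y$-derivatives vanish at a chosen point $(x_0,y_0)$ with $\omega_y(x_0,y_0)\neq 0$. The hardest step is then to show $E\equiv 0$ globally: differentiating $E$ in $y$ and using both the ODE for $\eta$ and system \eqref{system}, one should be able to write each $\partial_y^k E(x_0,y_0)$ as a polynomial in already-vanishing lower-order $y$-derivatives, so that induction yields $E$ vanishing to infinite order at $(x_0,y_0)$. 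Real analyticity then propagates $E\equiv 0$ throughout the conformal quotient band \eqref{quoban} (or its universal cover in the periodic band case), producing the foliation. The technical core is precisely this matching of the infinite recursion generated by $\partial_y$ on $E$ with the recursion that the holomorphic ODE $Q_1[\eta]=c_0\eta+a_0$ induces on the jets of $\eta$.
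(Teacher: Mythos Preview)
Your forward direction $(\Rightarrow)$ is correct and in fact cleaner than the paper's argument. Substituting \eqref{omex}, \eqref{roy} and \eqref{system} into the formula $4h_1=2\omega_{yyy}+3\omega_x^2\omega_y-\omega_y^3$ (which is \eqref{cara1} in the paper) one checks that every term in $e^{\pm\omega},e^{\pm 2\omega}$ cancels and only $\delta\,\omega_y$ survives, so $h_1=-\tfrac{\delta}{2}h_0$ and $\mathfrak m\le 1$. The paper instead first derives the third-order identity \eqref{cara0} from \eqref{omex}, then shows $h_1/\omega_y$ depends only on $y$, and finally uses harmonicity to force this quotient to be constant. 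Your direct computation bypasses all that. (A small slip: $\mathfrak q\neq 0$ does \emph{not} rule out $\omega_y\equiv 0$; radial annuli have $\omega_y\equiv 0$ and $\mathfrak q\neq 0$. Your parenthetical already handles that case correctly, so just drop the incorrect justification.)

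The converse $(\Leftarrow)$ for $\mathfrak m=1$ has a real gap. The function $y\mapsto E(x_0,y)$ depends only on the two numbers $\alpha(x_0),\beta(x_0)$, not on $\alpha'(x_0),\beta'(x_0)$; so imposing $E=\partial_yE=\partial_y^2E=\partial_y^3E=0$ at $(x_0,y_0)$ cannot fix four initial data. Worse, once $E=\partial_yE=0$ have determined $\alpha(x_0),\beta(x_0)$ (these are exactly the formulas \eqref{caraal}), a short computation gives
\[
\partial_y^2 E(x_0,y_0)=\frac{2}{\omega_y}\bigl(\omega_y\omega_{xyy}-\omega_{xy}\omega_{yy}-\omega_x\omega_y^3\bigr)\Big|_{(x_0,y_0)}\;+\;(\text{linear in }E,\partial_yE),
\]
so $\partial_y^2E=0$ is \emph{not} a degree of freedom but the pointwise constraint \eqref{cara0}. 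Your induction therefore stalls at $k=2$ unless this constraint already holds, and you never explain why it should at an arbitrary interior point. This is precisely where the capillary hypothesis on $\partial\Omega$ enters, and your plan never uses it in the $(\Leftarrow)$ direction.

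The paper's route is quite different: from $h_1=c_0h_0$ it derives, via two differentiations and the harmonicity of $\omega$, that $\omega_y\omega_{xyy}-\omega_{xy}\omega_{yy}-\omega_x\omega_y^3=k$ holds \emph{globally} for some constant $k$ (equation \eqref{cara4}). The existence of one capillary $y$-curve (here the capillary boundary of $\Omega$) forces $k=0$, giving \eqref{cara0} everywhere. Then $\alpha(x),\beta(x)$ are \emph{defined} by \eqref{caraal}; the identity \eqref{cara0} is exactly what makes these expressions independent of $y$, and \eqref{omex} follows algebraically. No ODE for $(\alpha,\beta)$ is solved and no propagation-by-induction is needed. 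If you want to salvage your approach, take $x_0$ on the boundary so that $E(x_0,\cdot)\equiv 0$ is given, and then argue propagation in $x$; but you will still need a mechanism equivalent to showing $k=0$ in \eqref{cara4}.
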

We will actually prove a more general result, of a local nature (Theorem \ref{th:n3} below), from which Theorem \ref{th:n1} follows immediately.

For that, let $u$ be a solution to $\Delta u+2=0$ on a smooth domain $\Omega\subset \R^2$ for which there exists a conformal eigenline parametrization $$g(z): \cV=(s_1,s_2)\times (t_1,t_2)\flecha \Omega$$ with $\mathfrak{q}\neq 0$; see Definition \ref{eigenline}. We denote $z=x+iy$ and $\omega=\log |g'|$. So, in particular, all $y$-curves $y\mapsto g(x_0+iy)$ are Hessian eigencurves for $u$. 

Consider then the hierarchic sequence of harmonic functions $h_n:={\rm Im} \,Q_n[\omega_z]$, where $\{Q_n\}_{n\in\N}$ are the recursive mKdV operators \eqref{mkdv3}.


\begin{theorem}\label{th:n3}
$\Omega$ is foliated by capillary curves (Definition \ref{def:capring}) if and only if $\Omega$ has at least one capillary $y$-curve with capillarity constant $b\neq 0$, and \eqref{finih} holds on $\Omega$ for $\mathfrak{m}\leq 1$.
\end{theorem}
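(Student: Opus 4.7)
The theorem is an equivalence that I would split into a direct computational forward direction and a matching/uniqueness argument for the converse.

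\textbf{Forward direction.} Suppose $\Omega$ is foliated by capillary curves, so \eqref{omex} holds. First, at any $x_0$ where $\alpha(x_0)\neq 0$, Proposition \ref{ekicapi} gives $b(x_0)=4\mathfrak{q}/\alpha(x_0)\neq 0$ (since $\mathfrak{q}\neq 0$), so the capillary $y$-curve hypothesis is met. For \eqref{finih}, I would compute $h_0,h_1$ from $\omega_z=\tfrac12(\omega_x-i\omega_y)$ and $Q_1[\eta]=\eta''-2\eta^3$, obtaining
$$h_0 = -\tfrac12\omega_y,\qquad h_1 = -\tfrac12\omega_{xxy}+\tfrac14\omega_y(3\omega_x^2-\omega_y^2).$$
Differentiating \eqref{omex} in $y$ gives $2\omega_{xy}/\omega_y=\alpha e^{-\omega}-\beta e^{\omega}$; a further $x$-derivative (using \eqref{omex} again to eliminate $\omega_x$) yields the polynomial form
$$\omega_{xxy}/\omega_y=\tfrac12(\alpha^2 e^{-2\omega}+\beta^2 e^{2\omega})+\tfrac12(\alpha' e^{-\omega}-\beta' e^{\omega}).$$
Combining with $\omega_x^2=\tfrac14(\alpha e^{-\omega}+\beta e^{\omega})^2$ and the polynomial expression for $\omega_y^2$ in \eqref{roy} (which is a consequence of \eqref{omex} and harmonicity, as in the proof of Proposition \ref{pro:sis}), all terms in $e^{\pm k\omega}$ cancel in $\omega_{xxy}/\omega_y-\tfrac12(3\omega_x^2-\omega_y^2)$, leaving the constant $-\delta/2$. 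Hence $h_1=c_0 h_0$ with $c_0=-\delta/2$, so $\mathfrak{m}\leq 1$.

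\textbf{Reverse direction, trivial case.} Conversely, assume the capillary $y$-curve at $x_0$ and \eqref{finih} with $\mathfrak{m}\leq 1$. If $h_0\equiv 0$, then $Q_0[\omega_z]=\omega_z$ is holomorphic with vanishing imaginary part on an open set, hence a real constant; then $\omega$ is linear in $x$ and \eqref{omex} holds trivially (e.g.\ with $\alpha\equiv 0$ and $\beta(x)=-2\omega_x(x)e^{-\omega(x)}$).

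\textbf{Reverse direction, generic case.} Assume $h_0\not\equiv 0$ and $h_1=c_0 h_0$. Since $Q_1[\omega_z]-c_0 Q_0[\omega_z]$ is a holomorphic function whose imaginary part vanishes on an open set, it equals a real constant $a_0$, so $f:=\omega_z$ satisfies the complex ODE $f''-2f^3-c_0 f=a_0$, with first integral $(f')^2=f^4+c_0 f^2+2a_0 f+2C$ for some $C\in\mathbb{C}$. Along the capillary curve at $x_0$ one has $\operatorname{Re}f=\omega_x/2=-\tfrac14(\alpha_0 e^{-\omega}+\beta_0 e^{\omega})$ and $\operatorname{Im}f=-\omega_y/2$, so combining with $\omega_y^2=4|f|^2-4(\operatorname{Re}f)^2$ and evaluating $|f|^2$ using the first integral, the function $\omega_y^2(x_0,y)$ is forced to have the precise polynomial form appearing in \eqref{eq:wy2}. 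Matching coefficients in that formula uniquely determines real numbers $A,B$ and forces $\delta=-2c_0$ (the consistency check from the forward direction). Letting $(\alpha(x),\beta(x))$ be the unique solution of \eqref{system} with this $\delta$ and initial data $\alpha(x_0)=\alpha_0,\beta(x_0)=\beta_0,\alpha'(x_0)=A,\beta'(x_0)=B$, the construction of Lemma \ref{lem:lio} produces $\tilde\omega(x,y)$ satisfying \eqref{omex}; by the forward direction, $\tilde f:=\tilde\omega_z$ solves the same complex ODE with the same $a_0$, and $\tilde\omega=\omega$ along $x=x_0$ (after fixing the additive constant). Since $f,\tilde f$ are holomorphic and agree on the real line $x=x_0$, they coincide by analytic continuation, whence $\omega=\tilde\omega$ and \eqref{omex} holds.

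\textbf{Main obstacle.} The delicate step is verifying in the generic reverse case that the first integral $(f')^2=P(f)$ actually forces $\omega_y^2(x_0,\cdot)$ to be a polynomial in $(e^{-\omega},e^{\omega})$ of the exact shape \eqref{eq:wy2}, and that the matching procedure yields $\delta=-2c_0$. This reduces to an algebraic manipulation relating the coefficients of the quartic $P$ to those of the Laurent polynomial for $\omega_y^2$ via the capillary boundary identity $\operatorname{Re}f=-\tfrac14(\alpha_0 e^{-\omega}+\beta_0 e^{\omega})$; the point is that this identity imposes exactly enough structure for the substitution to close cleanly. Everything else is either direct computation (forward direction) or standard uniqueness for holomorphic ODEs (gluing step).
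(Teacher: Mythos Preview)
Your forward direction is correct and in fact cleaner than the paper's: by substituting \eqref{omex} and \eqref{roy} directly into the formula $4h_1=-2\omega_{xxy}+\omega_y(3\omega_x^2-\omega_y^2)$ you get the explicit value $c_0=-\delta/2$, whereas the paper argues more indirectly via the differential identity $\omega_y\omega_{xyy}=\omega_x\omega_y^3+\omega_{xy}\omega_{yy}$ (equation \eqref{cara0}) to conclude $h_1/\omega_y$ is a function of $y$ alone, and then uses harmonicity to show it is constant. Your $\mathfrak m=0$ case is also fine.

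The reverse implication for $\mathfrak m=1$, however, has a real gap at the step ``evaluating $|f|^2$ using the first integral''. The first integral is the \emph{complex} relation $(f')^2=f^4+c_0f^2+2a_0f+2C$; it does not give $|f|^2$. Separating it into real and imaginary parts along $x=x_0$ produces two equations that still involve $\omega_{yy}$ (through $\operatorname{Re}f'=-\tfrac12\omega_{yy}$), and this quantity is not determined by the capillary condition at $x_0$. So you have not shown that $\omega_y^2(x_0,\cdot)$ has the Laurent-polynomial shape \eqref{eq:wy2} in $e^{\pm\omega}$, which is exactly what you need in order to launch the construction of $\tilde\omega$ via \eqref{rox}. (Lemma \ref{lem:lio} as stated also only treats the specific initial data \eqref{abd}, though its proof could be adapted.) Without this, the comparison argument with $\tilde\omega$ cannot start.

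The paper sidesteps this difficulty entirely. Rather than trying to recover the polynomial form of $\omega_y^2$ along one line and then rebuilding a comparison solution, it works globally with the PDE identity $\omega_y\omega_{xyy}-\omega_x\omega_y^3-\omega_{xy}\omega_{yy}=k$: from $h_1=c_0h_0$ one shows first that the $y$-derivative of the left side vanishes (so $k=k(x)$), then that $k'(x)=0$ (so $k$ is constant), and finally one uses the single capillary curve at $x_0$ to force $k=0$. Once \eqref{cara0} holds identically, the expressions in \eqref{caraal} are automatically functions of $x$ alone and give $\alpha(x),\beta(x)$ satisfying \eqref{omex}. This route never needs to match coefficients in \eqref{roy} or invoke Lemma \ref{lem:lio}; your scheme could likely be repaired, but it would amount to redoing this same computation in a less direct way.
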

\begin{proof}
We divide the proof into two cases.
\vspace{0.1cm}

\underline{\emph{The case $\mathfrak{m}=0$}:} In that situation $h_0={\rm Im}(\omega_z)=0$, i.e., we have $\omega(x)=ax+b$ for constants $a,b\in \R$. We distinguish two possibilities.

First, assume that $a=0$, i.e., $\omega\equiv b$, constant. Then, from $|g'|^2=e^{2\omega}$ we obtain, up to translations and rotations, that $g(z)=e^b\, z$. Observe now that $\mathfrak{q}=v_{zz}-2\omega_z v_z =v_{zz}\in \R$, and recall that $\Delta v +2e^{2\omega}=0$. From here, we easily deduce that $v(x,y)$, and so $u:=v\circ g^{-1}$, is a polynomial of degree two. Specifically, 
\begin{equation}\label{cua0}u(x_1,x_2)=a_1 x_1^2 + a_2 x_2^2 + L(x_1,x_2),
\end{equation} 
with $L$ linear and $a_1+a_2=-1$. In particular, it is immediate that all $x_1$-curves and all $x_2$-curves are capillary curves for $u$, by Lemma \ref{joach}; indeed, each such curve is a Hessian eigencurve of $u$, and $u=P$ along it for some polynomial $P$ as in \eqref{cupo}. For instance, $u(x_1,b)=P(x_1,b)$, for $P=a_1(x_1^2+x_2^2) + (a_2-a_1)b^2 +L(x_1,x_2)$.


Assume next that $a\neq 0$. Again, after rotations and translations, we have $g(z)=c e^{az}$ for some $c>0$. We next use that $v(x,y)$ can be written as $v=\psi -|g|^2/2$, with $\psi(x,y)$ a harmonic function; see \eqref{rep}. Then, $$\mathfrak{q}=v_{zz}-2\omega_z v_z = \psi_{zz} -a \psi_z.$$ Therefore, $\psi_z= \frac{-\mathfrak{q}}{a} + d e^{az}$, for some constant $d$, and thus $$v(x,y)=-\frac{2\mathfrak{q}}{a} x + 2ad \,e^{a x} \cos y - \frac{c^2}{2} e^{2ax}.$$ From here it is easy to check that $u:=v\circ g^{-1}$ is this time of the form 
\begin{equation}\label{ra0}
u(x_1,x_2)= c_1 \log r - \frac{r^2}{2} + L(x_1), \hspace{0.5cm} r:=\sqrt{x_1^2+x_2^2},
\end{equation} 
where $c_1\neq 0$ and $L(x_1)$ is linear. That is, up to the addition of a linear term, $u$ is radial. In particular, $u$ is again foliated by capillary curves. 

\vspace{0.1cm}

\underline{\emph{The case $\mathfrak{m}=1$}:} We start with some preparation for the proof. First, note that $h_0={\rm Im} (\omega_z)=-\omega_y/2$. Using that $\omega$ is harmonic, we can compute $h_1= {\rm Im}(Q_1[\omega_z])={\rm Im}(\omega_{zzz}-2\omega_z^3)$ as
\begin{equation}\label{cara1}
4h_1=2\omega_{yyy}+3 \omega_x^2\omega_{y}- \omega_y^3.
\end{equation}
We assume from now on that $\omega_y$ is not identically zero; indeed, note that if $\omega_y\equiv 0$, then $h_0=h_1\equiv 0$ and we are in the case $\mathfrak{m}=0$ treated above.

Let now $y\mapsto g(x_0+iy)$ be a capillary curve with capillarity constant $b\neq 0$. By Proposition \ref{ekicapi}, there exist $\alfa,\beta\in \R$, $\alfa\neq 0$, so that
\begin{equation}\label{cara20}
2 \omega_x (x_0,y)=-\alfa e^{-\omega(x_0,y)} -\beta e^{\omega(x_0,y)}
\end{equation} 
holds for every $y$. Now, differentiate \eqref{cara20} with respect to $y$, and compute the values of $\alfa,\beta$ in the resulting linear system of this equation and \eqref{cara20}, to obtain along $y\mapsto (x_0,y)$
\begin{equation}\label{caraal}
\alfa= e^{\omega}\left(\frac{\omega_{xy}}{\omega_y}-\omega_x\right), \hspace{0.5cm} \beta= -\frac{e^{-\omega} (\omega_{xy}+\omega_x\omega_y)}{\omega_y}.
\end{equation} 
Next, differentiate any equation in \eqref{caraal} again with respect to $y$, to obtain along $y\mapsto (x_0,y)$
\begin{equation}\label{cara0}
\omega_y \omega_{xyy} = \omega_x \omega_y^3 +\omega_{xy}\,\omega_{yy}.
\end{equation} 

We now proceed with the proof. Assume first that $u$ is foliated by capillary curves, as in Definition \ref{def:capring}. So, we have
\begin{equation}\label{cara2}
2 \omega_x=-\alfa(x) e^{-\omega} -\beta(x) e^{\omega},
\end{equation} 
for functions $\alfa(x),\beta(x)$, with $\alfa(x)\not\equiv 0$. In particular, \eqref{cara0} holds for every $(x,y)$. 
We next differentiate \eqref{cara1} with respect to $x$, and use \eqref{cara0} and its $y$-derivative to show that $$\omega_y (h_1)_x  - \omega_{xy}\, h_1=0,$$ which means that $h_1 = K(y) \omega_y$, for some function $K(y)$. Since $h_1,\omega_y$ are both harmonic, this implies that $K''\omega_y +2K' \omega_{yy}=0$. Assuming that $K(y)$ is not constant, this gives $(\omega_{yy}/\omega_y)_x=0$, that is, $\omega_y \omega_{xyy}=\omega_{xy}\omega_{yy}.$ By \eqref{cara0}, this implies that $\omega_x=0$. But now, this would imply from \eqref{cara2} that $\omega$ is constant, which is not impossible (we are assuming $\omega_y\neq 0$).

So $K(y)$ is constant, and since $\omega_y =-2h_0$, we have $h_1=c_0 h_0$ for some $c_0\in \R$.  That is, \eqref{finih} holds for $\mathfrak{m}= 1$. This proves the first implication.

For the converse, we reverse the process, as follows. Assume that \eqref{finih} holds for $\mathfrak{m}= 1$, that is, $h_1= c_0 h_0$ for some $c_0\in \R$. This lets us obtain from \eqref{cara1} the values of $\omega_{yyy}$ and, by differentiation, of $\omega_{xyyy}$ and $\omega_{yyyy}$, in terms of lower order derivatives and $c_0$. A computation using these values and the harmonicity of $\omega$ shows then that the derivative of \eqref{cara0} with respect to $y$ vanishes, that is, 
\begin{equation}\label{cara40}
\omega_y \omega_{xyy} = \omega_x \omega_y^3 +\omega_{xy}\,\omega_{yy}+K(x),
\end{equation} 
for some function $K(x)$. Now, we differentiate \eqref{cara40} with respect to $x$, and use in the resulting equation the harmonicity of $\omega$ with the expressions obtained above for $\omega_{yyy}, \omega_{xyyy}, \omega_{yyyy}$, and \eqref{cara40} itself. A somewhat lengthy but direct computation gives in this way that $K'(x)=0$. So,
\begin{equation}\label{cara4}
\omega_y \omega_{xyy} = \omega_x \omega_y^3 +\omega_{xy}\,\omega_{yy}+k,
\end{equation} 
holds for some $k\in \R$. We now recall that, by hypothesis, $\Omega$ has at least one capillary curve $y\mapsto g(x_0+iy)$ with $b\neq 0$. In particular, \eqref{cara20}, and so \eqref{cara0}, hold along $y\mapsto (x_0,y)$. This implies therefore that $k=0$ in \eqref{cara4}, i.e., \eqref{cara0} holds for all $(x,y)$. We deduce from \eqref{cara0} that there exist functions $\alfa(x),\beta(x)$ so that \eqref{caraal} is satisfied. Then, \eqref{cara2} follows directly from \eqref{caraal}. Thus, $u$ is foliated by capillary curves.
\end{proof}

Note that we have proved that the level $\mathfrak{m}=0$ of the hierarchy is composed by the most basic solutions to $\Delta u+2=0$,  namely, quadratic polynomials \eqref{cua0} and radial solutions \eqref{ra0}, up to an additive linear term.

\section{Final discussion and open problems}\label{sec:problems}
The limits of Serrin's classical theorem have been tested along the years through the construction of non-trivial \emph{exceptional domains} $\Omega\subset \R^n$ where $\Delta u +f(u)=0$ can be solved with constant (or locally constant) Dirichlet and Neumann values along $\parc \Omega$. Many of these domains are constructed by bifurcation, see e.g. \cite{ABM,CF,DZ,EFRS,FMW1,FMW2,KS,RRS2,Ru,RSW,Sic,SS,Wh}. It is then a fundamental problem of global bifurcation theory to understand the moduli space of such domains from a global perspective. Theorems \ref{th:main} and \ref{th:bandas} in this paper can provide a blueprint for such descriptions. 


Let us discuss a specific example. In \cite{EFRS}, Enciso, Fernández, Ruiz and Sicbaldi constructed, via bifurcation, $1$-parameter families of ring-type domains $\Omega\subset \R^2$ where the Neumann solution to $\Delta u +\landa u=0$ has locally constant Dirichlet data. This result gives a negative solution to a generalization of Schiffer's famous conjecture, see Yau \cite[Problem 80]{Y}. In the view of Theorem \ref{th:main} one is led to expect that the bifurcation branches of such annular domains should persist for large values of the parameter, and end in the form of a dihedral necklace of disks. 

Similarly, a global structure result in the spirit of our Theorem \ref{th:bandas} can be conjectured for the non-trivial exceptional periodic domains in $\R^n$ constructed by Sicbaldi \cite{Sic} and Schlenk-Sicbaldi \cite{SS} for $\Delta u+\landa u=0$, and by Fall, Minlend and Weth \cite{FMW1} for $\Delta u+2 =0$. Finding such a global moduli space structure in these theories seems quite challenging. Our results suggest the possibility that such bifurcation branches could have some adequate \emph{foliation structure} (as we have shown that happens for the examples in \cite{FMW1} when $n=2$). This foliation structure, should it exist, could provide a path to achieve the desired global description.


On the other hand, the torsion equation $\Delta u+2=0$ in $\R^2$ appears naturally associated to other types of overdetermined boundary conditions in some important theories in fluid dynamics. One such theory is the study of water waves of constant vorticity. A very nice account on the current state of this theory and its connection with the torsion equation can be found in \cite{DDMW}. Even though the overdetermined boundary conditions for water waves of constant vorticity are not of Serrin type, many problems in this theory are similar in spirit to the ones treated here. It is a very intriguing question to determine if our theory based on capillary curves can be applied to this classical important context. For instance, it is natural to ask, in view of Theorem \ref{th:bandas}, whether the overhanging solitary water waves discovered by Dávila, Del Pino, Musso and Wheeler \cite{DDMW} can be constructed via elliptic functions through some adequate foliation structure, or if they can be characterized algebraically in terms some KdV-type hierarchy.

Back to the Serrin problem for $\Delta u+2=0$ in unbounded domains $\Omega\subset \R^2$, we expect:
\begin{conjecture}\label{conj:bandas}
Any Serrin band in $\R^2$ is, up to similarity, one of the periodic bands $\Omega_\tau$, $\tau\in (0,1]$, of Theorem \ref{th:bandas}.
\end{conjecture}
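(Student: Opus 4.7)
The plan is to combine three structural results already in the paper: the Ros--Sicbaldi bigraph rigidity (Theorem~\ref{rosic}), the algebro--geometric finiteness of capillary domains (Corollary~\ref{cor:finite} and the remark extending it to periodic bands), and the characterization of foliations by capillary curves as the $\mathfrak{m}\leq 1$ level of the mKdV hierarchy (Theorem~\ref{th:n1}). First, one rules out the case $\mathfrak{q}=0$: indeed, if the Hopf differential vanishes then $u$ is a paraboloid \eqref{capo} whose level sets are circles, so $\partial\Omega$ cannot be the union of two disjoint unbounded curves. Hence $\mathfrak{q}\ne 0$ throughout.

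The first substantive step would be to establish \emph{periodicity}: Theorem~\ref{rosic} only gives $\Omega=\Omega^{\varphi}$ with $\varphi$ a bounded analytic bigraph function, and one needs $\varphi$ to be translation--periodic (or constant). I would approach this by first showing that every Serrin band is algebro--geometric in the sense of Theorem~\ref{th:kdvintro}. The proof of Corollary~\ref{cor:finite} relies on compactness of the conformal annulus $\cU$; in the unbounded band setting, the Ros--Sicbaldi symmetry $u(x_1,-x_2)=u(x_1,x_2)$ combined with boundedness of $\varphi$ yields a uniform bound on all higher derivatives of $\omega=\log|g'|$ along horizontal lines in the conformal strip. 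The sequence of conformal Jacobi fields $h_n={\rm Im}\,Q_n[\omega_z]$ still solves the Robin problem \eqref{jacobi} in $[s_1,s_2]\times\R$, but with uniformly bounded Cauchy data. Applying elliptic regularity and an Arzel\`a--Ascoli argument along translated strips, one should deduce that the family $\{h_n\}$ spans a finite--dimensional space, hence a relation \eqref{finih} holds for some spectral genus $\mathfrak{m}\in\N$. Periodicity of $\omega$ in the $y$--variable then follows because $\omega$ becomes a solution of a finite--dimensional ODE system along horizontal slices with real analytic, bounded coefficients.

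The crux is to prove $\mathfrak{m}\leq 1$. My proposal is to exploit the bigraph symmetry, which in conformal coordinates $z=x+iy$ (normalized so that $x=0$ parametrizes the $x_1$--axis $\ell\subset\Omega$) reads $\omega(x,y)=\omega(-x,y)$, equivalently $\eta(z)+\overline{\eta(-\bar z)}=0$. This parity condition has two immediate consequences: all $h_{2k}$ vanish on $\ell$ (Dirichlet boundary values), and the algebraic variety of mKdV potentials satisfying \eqref{agdefi} restricted to this parity locus is of real dimension $\mathfrak{m}$ rather than $2\mathfrak{m}$. One should then combine this with the additional Dirichlet boundary conditions at $x=s_1,s_2$ (Serrin conditions give $h_0$ vanishing at both boundaries because $y\mapsto\omega(s_j,y)$ is forced to be constant by $b_j\ne 0$ in \eqref{capicon} only when the capillary constant... here one must carefully check whether $h_0$ can be non--constant); in any case, counting boundary conditions against the effective dimension of the algebro--geometric moduli should force $\mathfrak{m}\leq 1$. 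Once this is achieved, identification is clean: $\mathfrak{m}=0$ forces $\eta$ constant, hence $g(z)=az+b$ (the exponential alternative gives rotationally symmetric annuli, not a band), yielding the flat band $\Omega_1$; and $\mathfrak{m}=1$ together with Theorem~\ref{th:n1} yields a foliation by capillary curves. Such curves must be planar, because a foliation by curves lying in paraboloids with a common vertical axis would impose rotational symmetry around that axis, incompatible with the band topology given by Ros--Sicbaldi. Finally, the uniqueness statement (item (4) of Theorem~\ref{th:bandas}) identifies $\Omega$ with one of the $\Omega_\tau$, $\tau\in(0,1)$.

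The main obstacle is the spectral genus bound $\mathfrak{m}\leq 1$, which is essentially a counting argument that must be converted into a rigorous rigidity statement: algebro--geometric mKdV potentials with reflection symmetry and whose associated developing map is \emph{injective} on an infinite vertical quotient strip should simply not exist beyond $\mathfrak{m}=1$. A careful study of the theta--functional representation of such potentials on the underlying genus--$\mathfrak{m}$ spectral curve --- in the spirit of the analyses by Bobenko \cite{Bob} or Kilian--Smith \cite{KSm} for CMC tori and capillary annuli --- is probably unavoidable, and adapting those techniques to the band (quasi--periodic rather than doubly--periodic) setting is what makes the conjecture difficult. A parallel strategy worth exploring is a Pohozaev/moving planes argument in the spirit of \cite{Bo,Rei,Sir} directly on the band, to force $h_1\equiv c\,h_0$ and bypass the algebraic analysis altogether.
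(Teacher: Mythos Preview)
This statement is a \emph{conjecture} in the paper, listed in Section~\ref{sec:problems} as an open problem; the paper does not prove it. Your proposal is not a proof but a research outline, and you acknowledge as much when you write that the bound $\mathfrak{m}\leq 1$ ``is probably unavoidable'' to study via theta--functions and that ``adapting those techniques\ldots is what makes the conjecture difficult.'' A proof attempt that identifies its own central step as unresolved is not a proof.

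Beyond the acknowledged gap at $\mathfrak{m}\leq 1$, there are further genuine problems. First, your periodicity argument is circular: the finite--dimensionality of the space of conformal Jacobi fields in Corollary~\ref{cor:finite} (and the remark after it) relies on compactness of the quotient strip, which in the band case requires periodicity as a \emph{hypothesis}. Your proposed substitute---uniform bounds plus Arzel\`a--Ascoli on translated strips---does not yield finite dimensionality of the solution space of \eqref{jacobi} on $[s_1,s_2]\times\R$; bounded solutions to a Robin problem on an infinite strip can form an infinite--dimensional family. Second, your claim that $h_0$ vanishes on $\parc\cU$ because ``$y\mapsto\omega(s_j,y)$ is forced to be constant by $b_j\neq 0$'' is backwards: it is precisely when $b_j=0$ that $\omega(s_j,\cdot)$ is constant (see \eqref{ecube} and the proof of Corollary~\ref{cor:finite}). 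Third, your argument that $\mathfrak{m}=1$ forces \emph{planar} capillary curves is incorrect as stated: the ring domains of Theorem~\ref{th:main} are foliated by capillary curves lying in paraboloids with a common vertical axis and are \emph{not} rotationally symmetric, so the implication ``common axis $\Rightarrow$ rotational symmetry'' fails. Theorem~\ref{th:n1} only gives the general foliation structure of Definition~\ref{def:capring}; reducing to the specific ansatz $\beta(x)=-\alfa(x)$ of Section~\ref{sec:bandas} requires additional work that you have not supplied. Finally, even granting a planar foliation, item~(4) of Theorem~\ref{th:bandas} distinguishes the $\Omega_\tau$ from each other but does not assert that every planar--foliated Serrin band is among them.
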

This conjecture can be seen as an overdetermined version of the uniqueness of Riemann's minimal examples in $\R^3$ by Meeks, Pérez and Ros \cite{MPR}; see Sections \ref{intro:bands} and \ref{intro:space} regarding this analogy. Conjecture \ref{conj:bandas} is also the natural overdetermined version of an important uniqueness theorem by Korevaar, Kusner and Solomon \cite{KKS}, which states that \emph{any properly embedded CMC surface in $\R^3$ of finite topology and exactly two ends must be a Delaunay surface}. We note that, for the Serrin problem, it follows by the Ros-Sicbaldi theory in \cite{RS} (see e.g. Theorem \ref{rosic}) that any Serrin domain in $\R^2$ of finite connectivity and exactly two unbounded boundary components must be a Serrin band.

The results by Korevaar, Kusner and Solomon \cite{KKS} also show that, if $\Sigma$ is a properly embedded CMC surface in $\R^3$ with finite topology, then each \emph{end} of $\Sigma$ is asymptotic to a Delaunay example. In the context of Serrin's problem, again by Ros-Sicbaldi's theory \cite{RS}, any unbounded Serrin domain $\Omega\subset \R^2$ of finite connectivity has a finite number of \emph{ends}, than can be thought of as \emph{Serrin half-bands} diffeomorphic to $[0,1]\times [0,\8)$. Our Serrin bands $\Omega_\tau$ in Theorem \ref{th:bandas} provide then a $1$-parameter family of different possible asymptotic behaviors of such ends. In the view of \cite{KKS}, the following question is very interesting: \emph{if $\Omega\subset \R^2$ is a Serrin domain of finite connectivity, is every end of $\Omega$ asymptotic to one of the Serrin bands $\Omega_\tau$ of Theorem \ref{th:bandas}?}

For the case of (bounded) Serrin ring domains is it natural to ask, in analogy 
with Conjecture \ref{conj:bandas}: \emph{let $\Omega\subset \R^2$ be a Serrin ring domain with boundary values $u=0$ along $\parc \Omega$. Must then $\Omega$ be, up to similarity, one of the ring domains $\Omega_{(s,\tau)}$ of Theorem \ref{th:main}?} We recall here that the annuli constructed by Agostiniani, Borghini and Mazzieri in \cite{ABM} satisfy these hypotheses. The question can be conceived then as whether all Serrin ring domains with zero Dirichlet data belong to the bifurcation branch of the ring domains of \cite{ABM}, or equivalently, to the bifurcation curve $\Upsilon_2 \subset \cW_0$ constructed at the end of Section \ref{sec:mainth}.


The above uniqueness problem for ring domains can be viewed as the Serrin problem version of the classical Lawson conjecture \cite{La}, famously proved by Brendle \cite{B}, according to which the Clifford torus is the only embedded minimal torus in the $3$-sphere. 

\appendix 
\section{Appendix: Developing maps via elliptic functions}

In this section we give an explicit formula for the two-parameter family of developing maps $g(z)$ constructed in Section \ref{sec:gaussmaps}. This formula is given in terms of the Weierstrass elliptic $\wp$-function defined on an adequate rectangular lattice whose associated half-periods $(\omega_1,\omega_2)$ are functions of the parameters $(\eta,\tau)$ associated to $g(z)$. For some properties of $\wp(z)$ that will be used here, we refer to the books \cite{Ch,WW}.

Recall our usual notation $Z(y):=e^{-\omega(0,y)}$. Let $\phi(z)$ be the holomorphic function defined by $\phi (iy)= Z(y)$. Note that $\phi(iy)>0$ for all $y$, and $\phi(0)=2/\eta$. Define now 
\begin{equation}\label{defiha}
\hat\phi(z):= c \, \phi(z/c), \hspace{0.5cm} c^3 := -\frac{\eta \tau^2}{8}<0.
\end{equation} 
This time $\hat\phi(iy)<0$ for every $y$, with $\hat\phi(0)=2c/\eta$. Due to the equation $4Z_y^2=p(Z)$, we have $$4\phi'(z)^2 = -p(\phi(z)),$$ and so the function $\hat\phi(z)$ satisfies 
\begin{equation}\label{odefi}
(\hat\phi'(z))^2 =(\phi'(z/c))^2= -\frac{1}{4} p(\phi(z/c))=-\frac{1}{4} p(\hat\phi(z)/c).
\end{equation} 
Using the expression for $c<0$ in \eqref{defiha}, we have
 $$(\hat\phi')^2 = -\hat{\phi}^3 + \mathfrak{p} \hat\phi^2 + \mathfrak{q} \hat\phi +1,$$ for adequate real constants $\mathfrak{p},\mathfrak{q}$ in terms of $\eta,\tau$: $$\mathfrak{p}=\frac{\left(\eta ^2-1\right) \tau ^2-1}{\left(\eta  \tau ^2\right)^{2/3}}, \hspace{0.5cm} \mathfrak{q}:= \frac{\tau ^2 \left(\eta ^2 \left(\tau ^2+1\right)-1\right)}{\left(\eta  \tau ^2\right)^{4/3}}.$$ We next define $$b:=\frac{\mathfrak{p}}{3}, \hspace{0.5cm} \hat{g}_2 :=\frac{\mathfrak{p}^2+3\mathfrak{q}}{12}, \hspace{0.5cm} \hat{g}_3 := -\frac{1}{432}(2 \mathfrak{p}^3 + 9 \mathfrak{p}\mathfrak{q} +27).$$
 Let $\hat\wp(z)$ be the Weierstrass $P$-function (with pole at the origin) that satisfies 
 \begin{equation}\label{odep1}
 (\hat\wp')^2 =4 \hat\wp^3-\hat{g}_2 \hat\wp -\hat{g}_3
 \end{equation} 
 for the previous choices of $\hat{g}_2,\hat{g}_3$. The modular discriminant $\Delta_{\rm mod}$ of $\hat\wp$ is given by 
 $$\Delta_{\rm mod} := \hat{g}_2^3-27\hat{g}_3^2 =\frac{(1+\eta^2)^2(1-\tau^2)^2(1+\tau^2 \eta^2)^2}{256 \eta^4 \tau^4}\geq 0.$$ Hence, if $\tau\in (0,1)$, we have $\Delta_{\rm mod}>0$, and this implies that the fundamental lattice of $\hat\wp(z)$ is rectangular. In addition, one can easily check that the function $$P(z):=\frac{b-\hat\phi(z)}{4}$$ satisfies the differential equation \eqref{odep1} of $\hat\wp(z)$, due to \eqref{odefi}. Since the only solutions to \eqref{odep1} are those of the form $\hat\wp(z+\landa)$ for $\landa \in \C$, we have $$\hat\phi(z)=b-4 \hat\wp(z+\landa),\hspace{0.5cm} \landa \in \C.$$ 
 Let $\hat\omega_1>0, \hat\omega_2\in i\R_+$ be the half-periods of $\hat\wp(z)$, so that $\hat\wp(z)$ is $2\hat\omega_1$-periodic and $2\hat\omega_2$-periodic. The function $\hat\wp(z)$ only takes real values along lines of the form $x=k\hat\omega_1$ or $iy= k\hat\omega_2$, for $k\in \Z$ and $z=x+iy$. In particular, $\hat\wp(z)$ takes real values at $z=\hat\omega_1$ and $z=\hat\omega_1+\hat\omega_2$, and it is well known that, in these conditions, 
 \begin{equation}\label{todohats}
 \hat\wp(\hat\omega_1)>\hat\wp(\hat\omega_1+\hat\omega_2).
 \end{equation}
These points represent the maximum and minimum values of $y \mapsto \hat\wp(\hat\omega_1+iy)$.

Recall next that $\hat\phi(iy)\in \R$ for every $y\in \R$. Since this property only holds for $\hat\wp$ along vertical lines of the form $\hat\omega_1+ iy$ (and its horizontal translations by $2\hat\omega_1$), we conclude that $\landa=\hat\omega_1+i \vartheta$ for some $\vartheta\in \R$. Taking into account now that $\hat\phi(iy)$ has a minimum at $y=0$, we deduce from \eqref{todohats} and the previous discussion that $$\hat\phi(z)= b-4\hat\wp(z+\hat\omega_1+\hat\omega_2).$$ Therefore, 
\begin{equation}\label{ecuap1}
\phi(z)= \frac{1}{c}\left(b-\frac{4}{c^2} \wp (z+\omega_1+\omega_2))\right),
\end{equation}
where $\wp$ is the Weierstrass function associated with $$g_2:= c^4 \hat{g}_2, \hspace{0.5cm} g_3:= c^6 \hat{g}_3.$$ This function is related to $\hat\wp$ by $\wp(z)= c^2 \hat\wp (cz),$ with half-periods $\omega_1 =\hat\omega_1/c$, $\omega_2 =\hat\omega_2/c$.

We prove next that
\begin{equation}\label{ecuap2}
g'(z)= \frac{1}{\phi(z)}\, \exp \left(-\int_0^z \frac{1}{\phi(w)} \, dw \right).
\end{equation} 
First we note that \eqref{ecuap2} holds at $z=0$, since $g'(0)=2/\eta$. Thus, it suffices to prove that the derivative of \eqref{ecuap2} holds along the $x=0$ axis, i.e., that for every $z=iy$ we have
\begin{equation}\label{ecuap3}
\frac{g''(z)}{g'(z)} = -\frac{1+\phi'(z)}{\phi(z)} .
\end{equation} 
To do so, we first see that, since $e^{2\omega}=|g'(z)|^2$, we have $$\frac{g''(iy)}{g'(iy)} = 2\omega_z (0,y) = -e^{\omega(0,y)} -i \omega_y (0,y),$$ where we have used \eqref{rox} together with the initial conditions $\alfa(0)=0$ and $\beta(0)=2$. Using now that $\phi(iy)=e^{-\omega(0,y)}$, we directly obtain the validity of \eqref{ecuap3}, and hence of \eqref{ecuap2}. Now, by \eqref{ecuap1} and \eqref{ecuap2} we have $$g'(z)= \frac{c^3}{b c^2 -4\wp (z+\omega_1+\omega_2)}\, \exp \left(-c^3 \int_0^z \frac{1}{b c^2 -4\wp (w+\omega_1+\omega_2)} \, dw \right).$$  Since $g'(x)\in \R$ if $x\in \R$ and $g(0)\in \R$, we deduce that $g(x)\in \R$. Also,
\begin{equation}\label{fgp}
\frac{g'(z)}{g(z)} = \frac{-c^3}{b c^2 -4\wp (z+\omega_1+\omega_2)} \hspace{0.3cm} \left(= \frac{-1}{\phi(z)} \right).
\end{equation} 
We point out that, in \eqref{fgp},
\begin{equation}\label{cteswei}
b c^2 = \frac{1}{12}(\eta^2 \tau^2-\tau^2-1) = \frac{\delta}{3}, \hspace{0.5cm} c^3 := -\frac{\eta \tau^2}{8}<0.
\end{equation}



Let us find a better expression for $g(z)$. The computations are similar to \cite[Lemma 5.4]{FHM}, so we will omit some details. Let $\zeta(z),\sigma(z)$ denote the classical Weierstrass zeta and sigma functions associated with $\wp(z)$. They satisfy $\zeta'(z)=\wp(z)$ and $\sigma'(z)/\sigma(z)= \zeta(z)$. In addition, let $\mu$ be the unique point of the rectangular lattice spanned by the half-periods $\{\omega_1,\omega_2\}$ where $\wp(z)= bc^2/4$. The values of $\wp(z)$ at the half-periods are $\wp(\omega_1)=e_1$, $\wp(\omega_2)=e_2$ and $\wp(\omega_1+\omega_2)=e_3$, where $e_1<e_3<e_2$ are given by the relation $$\wp'(z)^2 = 4(\wp(z)-e_1)(\wp(z)-e_2)(\wp(z)-e_3).$$ One can check that $bc^2/4\in (e_3,e_2)$, and so ${\rm Im}(\mu)=\omega_2$, i.e., $\mu$ lies in the segment between $\omega_2$ and $\omega_1+\omega_2$. At this point, from the previous differential equation for $\wp(z)$ and the fact that $\wp'(\mu)<0$, one has $\wp'(\mu)=c^3/4$.

Consider next, in terms of these quantities, the holomorphic function given by $$G(z)=g_0 \exp(-2\zeta(\mu)(z+\omega_1+\omega_2))\left[\frac{\sigma(\mu+(z+\omega_1+\omega_2))}{\sigma(\mu-(z+\omega_1+\omega_2))}\right],$$ where $g_0\in \C$ is a constant defined so that $G(0)=-1$.
Then, 
$$\frac{G'(z)}{G(z)} = \frac{\wp'(\mu)}{\wp(\mu)-\wp(z+\omega_1+\omega_2)} = \frac{c^3}{b c^2 -4\wp (z+\omega_1+\omega_2)}.$$ Comparing this equation with \eqref{fgp}, we obtain $g(z)=1/G(z).$ So,
\begin{equation}\label{ecuapen4}
g(z)=g_0 \exp(2\zeta(\mu)(z+\omega_1+\omega_2))\left[\frac{\sigma(\mu-(z+\omega_1+\omega_2))}{\sigma(\mu+(z+\omega_1+\omega_2))}\right].
\end{equation}

Finally, we remark some facts that follow quite directly from the previous discussion, and that are used at some point throughout the text.



\begin{enumerate}
\item
From \eqref{fgp} and the definition of $\mu$ above, we see that $\phi(z)$ has a pole at $z=\mu-(\omega_1+\omega_2)$. This is a negative real number that we call $x_\mu$, and $\phi(x+iy)$ is defined for any $x\in (x_\mu,0]$. Thus, $\omega(x,y)$ is well defined in $(x_\mu,0]\times \R$ and cannot be extended to $x=x_\mu$. This means that $x_\mu=x_b^-$, see Lemma \ref{lem:lio}. In particular, by \eqref{fgp}, $g$ has a pole at $z=x_b^-$. Since $g(i\R)\subset \S^1$, we see by inversion that $g(-x_b^-)=0$.
\item
By a similar argument, we obtain $x_a^+=\omega_1$. Moreover, $g(x)\in \R$ for any $x\in (x_b^-,x_a^+)$, with $g'(x)>0$, since $g'(0)=2/\eta>0$ and $g'(z)$ never vanishes in $(x_b^-,x_a^+)\times \R$. In particular, note that $(x_b^-,x_a^+)\subset (-\omega_1,\omega_1)$.
\item
Also, by comparing the $y$-periodicity of $\omega$ and \eqref{fgp}, we obtain $i\vart=\omega_2$, where $\vart>0$ was defined in Lemma \ref{lem:sigma}.
\end{enumerate}

%
%



\def\refname{References}

\vskip 0.2cm

\noindent Alberto Cerezo

\noindent Departamento de Geometría y Topología \\ Universidad de Granada (Spain) \\ Departamento de Matemática Aplicada I \\ Universidad de Sevilla (Spain)

\noindent  e-mail: {\tt cerezocid@ugr.es}

\vskip 0.2cm

\noindent Isabel Fernández

\noindent Departamento de Matemática Aplicada I,\\ Instituto de Matemáticas IMUS \\ Universidad de Sevilla (Spain).

\noindent  e-mail: {\tt isafer@us.es}

\vskip 0.2cm

\noindent Pablo Mira

\noindent Departamento de Matemática Aplicada y Estadística,\\ Universidad Politécnica de Cartagena (Spain).

\noindent  e-mail: {\tt pablo.mira@upct.es}

\vskip 0.4cm

\noindent 
This research has been financially supported by Projects PID2020-118137GB-I00 and PID2024-160586NB-I00 funded by MCIN/AEI/10.13039/501100011033, and CARM, Programa Regional de Fomento de la Investigación, Fundación Séneca-Agencia de Ciencia y Tecnología Región de Murcia, reference 21937/PI/22


\begin{thebibliography}{9}

{\small

\bibitem{Ab} U. Abresch, Constant mean curvature tori in terms of elliptic functions, {\it J. Reine Angew. Math.} {\bf 394} (1987), 169--192.

\bibitem{AGGH} V. Adler, B. Gurel, M. Gurses, I. Habibullin, Boundary conditions for integrable equations, {\it J. Phys. A: Math. Gen.} {\bf 30} (1997), 3505--3513.

\bibitem{ABM} V. Agostiniani, S. Borghini, L. Mazzieri, On the Serrin problem for ring-shaped domains, {\it J. Eur. Math. Soc.} {\bf 27} (2024), 2705--2749.

\bibitem{A1} A.D. Alexandrov, Uniqueness theorems for surfaces
in the large, I, {\it Vestnik Leningrad Univ.} {\bf 11} (1956),
5--17. (English translation: {\it Amer. Math. Soc. Transl.}  {\bf 21} (1962), 341--354).



%

\bibitem{B} S. Brendle, Embedded minimal tori in $\S^3$ and the Lawson conjecture, {\it Acta Math.} {\bf 211} (2013), 177--190.

\bibitem{Bo} S. Borghini, Symmetry results for Serrin-type problems in doubly connected domains, {\it Math. Eng.} {\bf 5} (2023), 1--16.

\bibitem{Ce} A. Cerezo, Free boundary minimal annuli in geodesic balls of $\H^3$, arXiv:2502.20303.

\bibitem{CFM} A. Cerezo, I. Fernández, P. Mira, Annular solutions to the partitioning problem in a ball, {\it J. Reine Angew. Math.} {\bf 828} (2025), 57--81.

\bibitem{CFM2} A. Cerezo, I. Fernández, P. Mira, Free boundary CMC annuli in spherical and hyperbolic balls, {\it Calc. Var.} {\bf 64}, 37 (2025).


\bibitem{Bob} A. Bobenko, All constant mean curvature tori in $\R^3$, $\S^3$, $\H^3$ in terms of theta-functions, {\it Math. Ann.} {\bf 290} (1991), 209--245.

%



\bibitem{CF} G. Cao-Labora, A.J. Fernández, A contractible Schiffer counterexample on the half-sphere, arXiv:2510.05732.

\bibitem{Ch} K. Chandrasekharan, Elliptic Functions, Grundlehren der mathematischen Wissenschaften, {\bf 281}, Springer Verlag, 1985.




\bibitem{DZ} G. Dai, Y. Zhang. Sign-changing solution for an overdetermined elliptic problem
on unbounded domain. {\it J. Reine Angew. Math.} {\bf 803} (2023), 267--293.

\bibitem{DDMW} J. Dávila, M. Del Pino, M. Musso, M.H. Wheeler, Overhanging solitary water waves, arXiv:2409.01182

\bibitem{DPW} M. Del Pino, F. Pacard, J. Wei. Serrin's overdetermined problem and constant mean curvature
surfaces, {\it Duke Math. J.} {\bf 164} (2015), 2643--2722.

%

\bibitem{EFRS} A. Enciso, A.J. Fernández, D. Ruiz. P. Sicbaldi, A Schiffer-type problem for annuli with applications to stationary planar Euler flows, {\it Duke Math. J.} {\bf 174}, No. 6, 2025.

\bibitem{FMW1} M. M. Fall, I. A. Minlend T. Weth, Unbounded periodic solutions to Serrin's overdetermined boundary value problem, {\it Arch. Rational Mech. Anal.} {\bf 223} (2017), 737--759.


\bibitem{FMW2} M. M. Fall, I. A. Minlend T. Weth, The Schiffer problem on the cylinder and on the $2$-sphere, {\it J. Eur. Math. Soc.}, to appear (2025).


\bibitem{FHM} I. Fernández, L. Hauswirth, P. Mira, Free boundary minimal annuli immersed in the unit ball, {\it Arch. Rational Mech. Anal.} {\bf 247} (2023), 108.

\bibitem{FL} A. Fraser, M. Li, Compactness of the space of embedded minimal surfaces with free boundary in three-manifolds with nonnegative Ricci curvature and convex boundary, {\it J. Differential Geom.} {\bf 96} (2014), 183--200.

\bibitem{GW} F. Gesztesy, R. Weikard, Lamé potentials and the stationary (m)KdV hierarchy, {\it Math. Nachr.} {\bf 176} (1995), 73--91.

\bibitem{HHP} F. Helein, L. Hauswirth, F. Pacard, A note on some overdetermined problems, {\it Pacific  J. Math.} {\bf
250}
(2011), 319--334.



%

%
%


%
%


\bibitem{Hi} N. Hitchin, Harmonic maps from a $2$-torus to the $3$-sphere, {\it J. Diff. Geom.} {\bf 31} (1990), 627--710.


%
%
%

\bibitem{KS} N. Kamburov, L. Sciaraffia. Nontrivial solutions to Serrin's problem in annular domains, {\it Ann. Inst. H. Poincaré C}, 2020.
%
%
%



\bibitem{KDV} D.J. Korteweg, G. de Vries, On the change of form of long waves advancing in a
rectangular canal, and a new type of long stationary waves. {\it Phil. Mag.} {\bf 39} (1895) 422--443.

\bibitem{Ho} H. Hopf, H. Uber Flachen mit einer Relation zwischen den Hauptkrummungen, {\it Math. Nachr.} {\bf 4} (1951), 232--249.

\bibitem{IM} A.R. Its, V.B. Matveev, Schrodinger operators with the finite-band spectrum and the
N-soliton solutions of the Korteweg-de Vries equation, {\it Theoret. and Math. Phys.} {\bf 23} (1975), 343--355.

\bibitem{Jos} N. Joshi, The second Painleve hierarchy and the stationary KdV hierarchy, {\it Publ. Res. Inst. Math. Sci.} {\bf 40} (2004), 1039--1061.

\bibitem{Kha} I.T. Khabibullin, Sine-Gordon equation on the semi-axis. {\it Theor. Math. Phys.} {\bf 114} (1998), 90--98.

\bibitem{KSV} D. Khavinson, A.Y. Solynin, D. Vassilev, Overdetermined boundary value problems, quadrature domains and applications, {\it Comput. Methods Funct. Theory} {\bf 5} (2005), 19--48.

\bibitem{KSm} M. Kilian, G. Smith, On the elliptic sinh-Gordon equation with integrable boundary conditions, {\it Nonlinearity}, {\bf 34} (2021), 5119.

\bibitem{KKS} N.J. Korevaar, R. Kusner, B. Solomon. The structure of complete embedded surfaces with constant mean curvature. {\it J. Diff. Geom.} {\bf 30} (1989), 465--503.

\bibitem{La} H.B. Lawson, Complete minimal surfaces in $\S^3$, {\it Ann. Math.} {\bf 92} (1970), 335--374.



%
%

\bibitem{LWW} Y. Liu, K. Wang, J. Wei, On Smooth Solutions to One Phase-Free Boundary Problem in $\R^n$, {\it Int. Math. Res. Notices}, {\bf 2021} (2021), 15682--15732.

\bibitem{Me} W. H. Meeks, The topology and geometry of embedded surfaces of constant mean curvature, {\it J. Diff. Geom.} {\bf 27} (1988), 539--552.

\bibitem{MPR} W.H. Meeks, J. Pérez, A. Ros, Properly embedded minimal planar domains, {\it Ann. Math.} {\bf 181} (2015),  473--546.

\bibitem{NT} C. Nitsch, C.Trombetti, The classical overdetermined Serrin problem, {\it Complex Var. Elliptic Equ.} {\bf 63} (2018), 1107--1122.
 
\bibitem{Nit} J.C.C. Nitsche, Stationary partitioning of convex bodies, \emph{Arch. Rational Mech. Anal.} {\bf 89} (1985), 1--19.

\bibitem{PP} L.E. Payne, G.A. Philippin, On two free boundary problems in potential theory. {\it J. Math. Anal. Appl.} {\bf 161} (1991), 332--342.

\bibitem{PS} U. Pinkall, I. Sterling, On the classification of constant mean curvature tori, {\it Ann. Math.} {\bf 130} (1989), 407--451.

\bibitem{Rei} W. Reichel. Radial symmetry by moving planes for semilinear elliptic BVPs on annuli and other non-convex domains. In Elliptic and parabolic problems (Pont-a-Mousson, 1994), volume 325 of Pitman Res. Notes Math. Ser., pages 164--182. Longman Sci. Tech., Harlow, 1995.

\bibitem{RRS1} A. Ros, D. Ruiz, P. Sicbaldi, A rigidity result for overdetermined elliptic problems in the plane, {\it Comm. Pure Appl. Math.}, {\bf 70} (2017), 1223--1252. 

\bibitem{RRS2} A. Ros, D. Ruiz, P. Sicbaldi, Solutions to overdetermined elliptic problems in nontrivial exterior domains, {\it J. Eur. Math. Soc.} {\bf 22} (2020), 253--281.

\bibitem{RS} A. Ros, P. Sicbaldi, Geometry and topology of some overdetermined elliptic problems, {\it J. Differential Equations}, {\bf 255} (2013), 951-977.

\bibitem{Ru} D. Ruiz, Nonsymmetric sign-changing solutions to overdetermined elliptic problems in bounded domains, {\it J. Eur. Math. Soc.}, to appear.

\bibitem{RSW} D. Ruiz, P. Sicbaldi, J. Wu, Overdetermined elliptic problems in onduloid-type
domains with general nonlinearities. {\it J. Funct. Anal.} {\bf 283} (2022), Paper No. 109705,
26.

\bibitem{Se} J. Serrin, A symmetry problem in potential theory, {\it Arch. Rational Mech. Anal.} {\bf 43} (1971), 304--318.

\bibitem{Sic} P. Sicbaldi, New extremal domains for the first eigenvalue of the Laplacian in flat tori. {\it Calc. Var.} {\bf 37} (2010), 329--344.

\bibitem{Sic2} P. Sicbaldi, A short survey on overdetermined elliptic problems in unbounded domains. Current trends in analysis, its applications and computation, 451--461, Trends Math. Res. Perspect., Birkhauser/Springer,
Cham, 2022.

\bibitem{SS} F. Schlenk, P. Sicbaldi, Bifurcating extremal domains for the first eigenvalue of the Laplacian, {\it  Adv. Math.} {\bf 229} (2012), 602--632.

\bibitem{Sk} E.K. Sklyanin, Boundary conditions for integrable equations, {\it Funct. Anal. Appl.} {\bf 21} (1987), 164--166.

\bibitem{Sir} B. Sirakov, Symmetry for exterior elliptic problems and two conjectures in potential theory,
{\it Ann. Inst. H. Poincaré Anal. Non Linéaire} {\ bf 18} (2001), 135--156. 

\bibitem{Sir2} B. Sirakov, Overdetermined Elliptic Problems in Physics. In: Berestycki, H., Pomeau, Y. (eds) Nonlinear PDE's in Condensed Matter and Reactive Flows. NATO Science Series, vol 569. Springer, Dordrecht.

%

%
%
%
%
%

\bibitem{Sh} M. Shiffman, On Surfaces of Stationary Area Bounded by Two Circles, or Convex Curves, in Parallel Planes, {\it Ann. Math.} {\bf 63} (1956), 77--90.

\bibitem{T} M. Traizet. Classification of the solutions to an overdetermined elliptic problem in the plane,
{\it Geom. Funct. Anal.} {\bf 24} (2014), 690--720.



%
%
%

\bibitem{Wn} H.F. Weinberger, Remark on the preceeding paper of Serrin, {\it Arch. Rational Mech. Anal.} {\bf 43} (1971), 319--320.


\bibitem{W0} H.C. Wente, Counterexample to a conjecture of H. Hopf, {\it Pacific J. Math.} {\bf 121} (1986), 193--243.

\bibitem{W} H.C. Wente, Constant mean curvature immersions of Enneper type, {\it Mem. Amer. Math. Soc.} {\bf 478} (1992).

\bibitem{Wh} M.H. Wheeler, Non-symmetric solutions to an overdetermined problem for the Helmholtz equation in the plane, arXiv:2509.00455.
%
%

\bibitem{WGS} N.B. Willms, G.M.L. Gladwell, D. Siegel, Symmetry theorems for some overdetermined boundary value problems on ring domains, {\it Z. Angew. Math. Phys.} {\bf 45} (1994).
556--579.

\bibitem{Y} S.T. Yau, Problem section, in \emph{Seminar on differential geometry}, Ann.
of Math. Stud. {\bf 102}, Princeton Univ. Press, Princeton, 1982, 669--706.

\bibitem{W2} H.C. Wente, Tubular capillary surfaces in a convex body. Advances in geometric analysis and continuum mechanics, Edited by P. Concus and K. Lancaster, International Press 1995, 288--298.

\bibitem{WW} E.T. Whittaker, G.N. Watson, A course of modern analysis, Cambridge University Press.}

\end{thebibliography}
\end{document}